\newcommand{\rev}[1]{#1}
\renewcommand\subsection{\@startsection{subsection}{2}%
	\z@{.5\linespacing\@plus0\linespacing}{.5\linespacing}%
	{\normalfont\bfseries}}
\renewcommand\part{\@startsection{part}{2}%
	\z@{0.5\linespacing\@plus1\linespacing}{\linespacing}%
	{\normalfont\large\scshape\bfseries\centering}}
\definecolor{red}{rgb}{1,0,0}
\definecolor{orange}{rgb}{0.7,0.3,0}
\definecolor{blue}{rgb}{.2,.6,.75}
\definecolor{green}{rgb}{.4,.7,.4}
\newtheorem{maintheorem}{Theorem}
\newtheorem{theorem}{Theorem}[section]
\newtheorem{lemma}[theorem]{Lemma}
\newtheorem{proposition}[theorem]{Proposition}
\newtheorem{maincorollary}{Corollary}
\newtheorem{corollary}[theorem]{Corollary}
\newtheorem{conjecture}{Conjecture}
\theoremstyle{definition}
\newtheorem{mainproblem}{Problem}
\newtheorem{problem}[theorem]{Problem}
\newtheorem{definition}[theorem]{Definition}
\theoremstyle{remark}
\newtheorem{remark}{Remark}[section]
\newtheorem*{remark*}{Remark}
\newtheorem*{remarks*}{Remarks}
\newtheorem{example}[remark]{Example}
\newtheorem*{example*}{Example}
\renewcommand{\leq}{\leqslant}
\renewcommand{\geq}{\geqslant}
\newcommand\id{\operatorname{id}}
\newcommand\Span{\operatorname{Span}}
\newcommand\Supp{\operatorname{Supp}}
\newcommand\res{\operatorname{res}}
\newcommand\nondegenerate{\operatorname{nondegenerate}}
\newcommand{\basic}{\operatorname{basic}}
\newcommand{\semibasic}{\operatorname{semibasic}}
\def\F{\mathbb{F}}
\def\R{\mathbb{R}}
\def\Z{\mathbb{Z}}
\def\E{\mathbb{E}}
\def\P{\mathbb{P}}
\def\Q{\mathbb{Q}}
\def\N{\mathbb{N}}
\newcommand{\HH}{\mathbb{H}}  
\def\A{\mathbf{A}}
\def\eps{\varepsilon}
\def\x{\mathbf{x}}
\newcommand{\md}[1]{\ensuremath{(\operatorname{mod}\, #1)}}
\numberwithin{equation}{section}
\newcommand{\cA}{\ensuremath{\mathcal{A}}}
\newcommand{\cB}{\ensuremath{\mathcal{B}}}
\newcommand{\cE}{\ensuremath{\mathcal{E}}}
\newcommand{\cF}{\ensuremath{\mathcal{F}}}
\newcommand{\cI}{\ensuremath{\mathcal{I}}}
\newcommand{\cJ}{\ensuremath{\mathcal{J}}}
\newcommand{\cK}{\ensuremath{\mathcal{K}}}
\newcommand{\cP}{\ensuremath{\mathcal{P}}}
\newcommand{\cU}{\ensuremath{\mathcal{U}}}
\newcommand{\sI}{\ensuremath{\mathscr{I}}}
\newcommand{\sL}{\ensuremath{\mathscr{L}}}
\newcommand{\sP}{\ensuremath{\mathscr{P}}}
\newcommand{\sS}{\ensuremath{\mathscr{S}}}
\newcommand{\sT}{\ensuremath{\mathscr{T}}}
\newcommand{\sV}{\ensuremath{\mathscr{V}}}
\newcommand{\sW}{\ensuremath{\mathscr{W}}}
\newcommand{\dalign}[1]{\[\begin{aligned} #1 \end{aligned}\]}
\newcommand{\be}{\begin{equation}}
\newcommand{\ee}{\end{equation}}
\newcommand{\ssum}[1]{\sum_{\substack{#1}}}  
\newcommand{\sprod}[1]{\prod_{\substack{#1}}}  
\newcommand{\fl}[1]{{\ensuremath{\left\lfloor {#1} \right\rfloor}}}  
\newcommand{\cl}[1]{{\ensuremath{\left\lceil #1 \right\rceil}}} 
\renewcommand{\(}{\left(}
\renewcommand{\)}{\right)}
\newcommand{\pfrac}[2]{\left(\frac{#1}{#2}\right)}  
\newcommand{\er}{\mathrm{e}}   
\newcommand{\om}{\omega}
\newcommand{\cc}{{\mathbf{c}}}
\newcommand{\bb}{{\mathbf{b}}}
\newcommand{\bn}{{\mathbf{n}}}
\newcommand{\btau}{\boldsymbol{\tau}}
\newcommand\length{\operatorname{length}}
\newcommand\yy{\mathbf{y}}
\newcommand{\bmu}{{\boldsymbol{\mu}}}
\newcommand{\brho}{\boldsymbol{\rho}}
\newcommand\un{1}
\newcommand{\bs}[1]{\boldsymbol{#1}}
\renewcommand{\le}{\leqslant}
\renewcommand{\leq}{\leqslant}
\renewcommand{\ge}{\geqslant}
\renewcommand{\geq}{\geqslant}
\newcommand{\one}{\mathbf{1}}  
\newcommand{\spanone}{\langle \mathbf{1} \rangle}
\newcommand{\dee}{\,\mathrm{d}}
\begin{document}

\title[Equal sums and the concentration of divisors]{Equal sums in random sets and the concentration of divisors}

\author{Kevin Ford}
\address{Department of Mathematics\\ University of Illinois at Urbana--Champaign\\ Urbana\\ Illinois 61801}
\email{ford126@illinois.edu}

\author{Ben Green}
\address{Mathematical Institute\\ Andrew Wiles Building\\ Radcliffe Observatory Quarter \\Woodstock Road\\ Oxford OX2 6GG, UK}
\email{ben.green@maths.ox.ac.uk}

\author{Dimitris Koukoulopoulos}
\address{D\'epartement de math\'ematiques et de statistique\\
	Universit\'e de Montr\'eal\\
	CP 6128 succ. Centre-Ville\\
	Montr\'eal, QC H3C 3J7\\
	Canada}
\email{dimitris.koukoulopoulos@umontreal.ca}



\subjclass[2010]{Primary 11N25; Secondary 05A05,11S05. }


\begin{abstract}
	We study the extent to which divisors of a typical integer $n$ are concentrated. In particular, defining $\Delta(n) := \max_t \# \{d | n, \log d \in [t,t+1]\}$, we show that $\Delta(n) \geq (\log \log n)^{0.35332277\dots}$ for almost all $n$, a bound we believe to be sharp. This disproves a conjecture of Maier and Tenenbaum.  We also prove analogs for the concentration of
	divisors of a random permutation and of a random polynomial
	over a finite field.
	
	Most of the paper is devoted to a study of the following much more combinatorial problem of independent interest. Pick a random set $\mathbf{A} \subset \mathbb{N}$ by selecting $i$ to lie in $\mathbf{A}$ with probability $1/i$. What is the supremum of all exponents $\beta_k$ such that, almost surely as $D \rightarrow \infty$, some integer is the sum of elements of $\mathbf{A} \cap [D^{\beta_k}, D]$ in $k$ different ways?
	
	We characterise $\beta_k$ as the solution to a certain optimisation problem over measures on the discrete cube $\{0,1\}^k$, and obtain lower bounds for $\beta_k$ which we believe to be asymptotically sharp.
\end{abstract}

\maketitle


\renewcommand{\baselinestretch}{0.0}\normalsize
\tableofcontents
\renewcommand{\baselinestretch}{1.0}\normalsize


\clearpage
\thispagestyle{fancy}
\fancyhf{} 
\renewcommand{\headrulewidth}{0cm}
\lhead[{\scriptsize \thepage}]{}
\rhead[]{{\scriptsize\thepage}}
\part{Main results and overview of the paper}\label{intro-part}

\section{Introduction}
\label{intro}
\subsection{The concentration of divisors}
Given an integer $n$, we define the Delta function
\[  \Delta(n) := \max_t \# \{d | n, \log d \in [t,t+1]\},\] that is to say the maximum number of divisors $n$ has in any interval of logarithmic length $1$. Its normal order (almost sure behaviour) has proven quite mysterious, and
indeed it was a celebrated achievement of Maier and Tenenbaum \cite{MT84}, answering a question of Erd\H{o}s from 1948 \cite{Erdos48},
to show that $\Delta(n) > 1$ for \rev{almost all\footnote{A property of natural numbers is said to occur
	for \emph{almost all} $n$ if the number of exceptions below $x$ is $o(x)$ as $x\to\infty$.} $n$.}

Work on the  distribution of $\Delta$ began in the 1970s with Erd\H os and Nicolas \cite{ErdNic75, ErdNic76}. However, it was not until the work of Hooley \cite{Hooley} that the Delta function received proper attention. Among other things, Hooley showed how bounds on the average size of $\Delta$ can be used to count points on certain algebraic varieties. Further work on the normal and average behavior of $\Delta$ can be found in the papers of Tenenbaum \cite{Ten85, Ten86}, Hall and Tenenbaum \cite{HT82, HT84, HT86}, and of Maier and Tenenbaum  \cite{MT84, MT85, MT09}.  See also 
\cite[Ch. 5,6,7]{HT-Divisors}. Finally, Tenenbaum's survey paper \cite[p. 652--658]{Ten13} includes a history of the Delta function and description of  many applications in number theory.

The best bounds for $\Delta(n)$ for ``normal'' $n$
currently known were obtained in a more recent paper of Maier and Tenenbaum \cite{MT09}.

\noindent
\textbf{Theorem MT (Maier--Tenenbaum \cite{MT09})}
\textit{
\rev{Let $\eps>0$ be fixed. Then
\[ (\log \log n)^{c_1 - \eps} \leq  \Delta(n) \leq  (\log \log n)^{\log 2 + \eps},\]
for almost all $n$}, where
\[ c_1 = \frac{\log 2}{\log\big(\frac{1 - 1/\log 27}{1 - 1/\log 3}    \big)    } \approx 0.33827.\] 
}

It is conjectured in \cite{MT09} that the lower bound is optimal.

One of the main results of this paper is a disproof of this conjecture.

\begin{maintheorem}\label{main-thm-1}
\rev{Let $\eps>0$ be fixed. Then
\[ \Delta(n) \geq (\log \log n)^{\eta - \eps}\] 
for almost all $n$}, where $\eta = 0.35332277270132346711\dots$.
\end{maintheorem}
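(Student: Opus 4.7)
The plan is to reduce Theorem~\ref{main-thm-1} to the combinatorial equal-sums problem for the random set $\A$. The bridge is the observation that every divisor $d$ of (the squarefree part of) $n$ factors as $d = \prod_{p \in S} p$ for some subset $S$ of the prime divisors of $n$, so that $\log d = \sum_{p \in S} \log p$. Hence $k$ divisors of $n$ lying in a log-interval of length one correspond to $k$ subsets of the prime factors of $n$ whose log-sums agree to within $O(1)$, and a bound $\Delta(n) \ge k$ follows from exhibiting such $k$ subsets. For uniformly random $n \le x$, the set $\{\log p : p \mid n\}$ is known to behave like a Poisson point process on $[0, \log x]$ of intensity $du/u$, and (once restricted to primes and discretised) this matches the probabilistic model of $\A$ introduced in the abstract.

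Granted the main combinatorial theorem of the paper, which furnishes for each $k \ge 2$ a lower bound on $\beta_k$, the next step is the following. For any $\beta < \beta_k$, almost surely some integer is a sum of elements of $\A \cap [D^{\beta}, D]$ in $k$ distinct ways once $D$ is large enough. Transferred through the Poisson model, this supplies, for a typical $n$, $k$ subsets of $\{p \mid n : \log p \in [D^{\beta}, D]\}$ whose log-sums agree to $O(1)$, yielding $\Delta(n) \ge k$. To boost this one-scale estimate to $(\log\log n)^{\eta - o(1)}$ I would iterate across $J \asymp \log\log\log n / \log(1/\beta_k)$ disjoint scales: pick $D_1 < \cdots < D_J$ with $D_{j+1} \ge D_j^{1/\beta}$ so that the intervals $[D_j^{\beta}, D_j]$ are disjoint, apply the one-scale bound independently at each scale, and union the subsets produced at different scales (each scale involves disjoint primes, so the subsets combine freely). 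Since every scale independently supplies a factor of $k$ in the count of equal-sum subsets, one arrives at
\[ \Delta(n) \;\ge\; k^J \;\asymp\; (\log \log n)^{\log k / \log(1/\beta_k)}. \]
Optimising over $k$ gives the exponent $\eta = \sup_k \log k / \log(1/\beta_k)$, and evaluating this through the explicit lower bound on $\beta_k$ coming from the optimisation over measures on $\{0,1\}^k$ produces the numerical value $\eta = 0.35332\ldots$.

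I expect the combinatorial bound on $\beta_k$ to be the main obstacle: constructing near-extremal measures on $\{0,1\}^k$ that concentrate the weighted coordinate sum is where the new ideas must lie, and where the improvement over the Maier--Tenenbaum constant $c_1$ is generated. The random-to-prime transfer and the multi-scale iteration are technical but follow the Maier--Tenenbaum template. One subtlety to manage is that the log-sums at each scale agree only to $O(1)$, so after combining $J$ scales the aggregated log-sums agree only to $O(J) = O(\log\log\log n)$; this can be absorbed into the $-o(1)$ in the exponent by rounding the prime logarithms suitably and working with sums of integer parts, but it requires anticoncentration estimates to ensure the resulting divisors are distinct and fall inside a short log-window, as well as careful control of correlations that might prevent the independent scales from contributing multiplicatively.
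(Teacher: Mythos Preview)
Your proposal is essentially the paper's approach: transfer the prime factors of a random $n\le X$ to a logarithmic random set, apply a tensor-power iteration across $J\asymp \log\log\log n/\log(1/\beta_k)$ disjoint scales to get $k^J$ equal-sum subsets, and optimise over $k$ along the sequence $k=2^r$ using the lower bound $\beta_{2^r}\ge\theta_r$ from Theorem~\ref{beta-rho}.

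Two remarks on the mechanics you flag as potentially delicate. First, the paper reverses your order of operations: it discretises the log-primes \emph{before} the iteration, bucketing primes into intervals $(e^{i/K},e^{(i+1)/K}]$ with $K=10\log\log X$, so that the random index set $\mathbf{B}=\{i:\text{some }p\mid n\text{ lies in bucket }i\}$ is (up to $o(1)$ total variation) a copy of $\A$ restricted to an interval. The tensor-power argument is then run on $\mathbf{B}$ and produces subsets with \emph{exactly} equal integer sums; the discretisation error at the end is $\le (|B_i|+|B_j|)/K\le (4\log\log X)/K<1/2$, so no $O(J)$ accumulation and no anticoncentration estimates are needed. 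Second, your worry about distinctness of divisors is unfounded: distinct subsets $B_i$ of primes give distinct products $\prod_{b\in B_i}p_b$ by unique factorisation. Finally, note that $\eta$ is not asserted to \emph{equal} $\sup_k \log k/\log(1/\beta_k)$; the paper only proves $\zeta:=\limsup_k \log k/\log(1/\beta_k)\ge\eta$ via Theorem~\ref{beta-rho}, and equality is Conjecture~\ref{main_conj}.
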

The constant $\eta$, which we believe to be sharp, is described in relation \eqref{eta-rho} below, just after the statement of Theorem \ref{beta-rho}.

\subsection{Packing divisors} 

Let us briefly attempt to explain, without details, why it was natural for Maier and Tenenbaum to make their conjecture, and what it is that allows us to find even more tightly packed divisors. 

We start with a simple observation. Let $n$ be an integer, and suppose we can find pairs of divisors $d_i, d'_i$ of $n$, $i = 1,\dots, k$, such that 
\begin{itemize}
\item $1 < d_i/d'_i \leq 2^{1/k}$;
\item The sets of primes dividing $d_id'_i$ are disjoint, as $i$ varies in $\{1,\dots, k\}$. 
\end{itemize}
Then we can find $2^k$ different divisors of $n$ in a dyadic interval, namely 
\rev{all products $a_1\cdots a_k$ where} $a_i$ is either $d_i$ or $d'_i$. 

In \cite{MT09}, Maier and Tenenbaum showed how to find many such pairs of divisors $d_i, d'_i$. To begin with, they look only at the large prime factors of $n$. They first find one pair $d_1, d'_1$ using the technique of \cite{MT84}. Then, using a modification of the argument, they locate a further pair $d_2$ and $d'_2$, but with these divisors not having any primes in common with $d_1, d'_1$. They continue in this fashion to find $d_3, d'_3$, $d_4,d_4'$, etc., until essentially all the large prime divisors of $n$ have been used. After this, they move on to a smaller range of prime factors of $n$, and so on. 

By contrast, we eschew an iterative approach and select $2^k$ close divisors from amongst the large prime divisors of $n$ in one go, in a manner that is combinatorially quite different to that of Maier and Tenenbaum.  We then apply a similar technique to a smaller range of prime factors of $n$, and so on. This turns out to be a more efficient way of locating proximal divisors. 

In fact, we provide a general framework that encapsulates all possible combinatorial constructions one might use to pack many divisors close to each other. To work in this generality it is necessary to use a probabilistic formalism. One effect of this is that, even though our work contains that of Maier and Tenenbaum as a special case, the arguments here will look totally different.

\subsection{Random sets and equal sums}\label{sec:equal sums} For most of the paper we do not talk about integers and divisors, but rather about the following model setting. Throughout the paper, $\A$ will denote a random set of positive integers in which $i$ is included in $\A$ with probability $1/i$, these choices being independent for different $i$s. 
We refer to $\A$ as a \emph{logarithmic random set}.

A large proportion of our paper will be devoted to understanding conditions under which there is an integer which can be represented as a sum of elements of $\A$ in (at least) $k$ different ways. In particular, we wish to obtain bounds on the quantities $\beta_k$ defined in the following problem.

\begin{mainproblem}\label{beta-k-def}
Let $k \geq 2$ be an integer. Determine $\beta_k$, the supremum of all exponents $c < 1$ for which the following is true: \rev{with probability tending to 1 as $D \rightarrow \infty$, there are distinct sets $A_1, \ldots, A_k \subset \A \cap [D^c, D]$ with equal sums, i.e., $\sum_{a \in A_1} a = \cdots = \sum_{a \in A_k} a$.}
\end{mainproblem}

The motivation for the random set $\A$ comes from our knowledge of
the anatomy of integers, permutations and polynomials.
For a random integer $m\le x$, with $x$ large, let $U_k$ be the event \rev{that
$m$ has  a} prime factor in the interval $(e^{k},e^{k+1}]$.
For a random permutation $\sigma\in S_n$, let 
$V_k$ be the \rev{event} that $\sigma$ has a cycle of size $k$,
and for a random monic polynomial $f$ of degree $n$ over $\mathbb{F}_q$, with $n$ large, let $W_k$ be the \rev{event} that
$f$ has an irreducible factor of degree $k$.
Then it is known \rev{(see e.g., \cite{ABT93, AT92, HT-Divisors})}
 that $U_k$, $V_k$ and $W_k$ 
each occur with probability close to $1/k$, and also that the
\rev{$U_k$} are close to independent for $k=o(\log x)$,
the $V_k$ are close to independent for $k=o(n)$,
\rev{and the $W_k$ are close to} independent for $k$ large
and $k=o(n)$.  Thus, the model set $\A$ captures the
factorization structure of random integers, 
random permutations and random polynomials over
a finite field.
 It is then relatively straightforward to
 transfer results about subset sums of $\A$ to 
 divisors of integers, permutations and polynomials.
Section \ref{transference} below contains details of
the transference principle.

The main result of this paper is an asymptotic lower bound on
$\beta_k$.

\begin{maintheorem}\label{beta-rho}
We have $\liminf_{r\to \infty} (\beta_{2^r})^{1/r} \ge \rho/2$,
where $\rho=0.28121134969637466015\dots$ is a specific constant defined	as 
\rev{the unique solution in $[0,1/3]$ of
\be\label{rho-eq-main}
\frac{1}{1 - \rho/2} = \lim_{j\to\infty} \frac{\log a_j}{2^{j-2}} ,
\ee}
where the sequence $a_j$ is defined by
\[ a_1 = 2, 
\quad
a_2 = 2 + 2^{\rho}, \quad
a_j = a_{j-1}^2 + a_{j-1}^{\rho} - a_{j-2}^{2\rho} \qquad  (j \geq 3).\]
\end{maintheorem}

\rev{The proof of Theorem \ref{beta-rho} will occupy the bulk of this paper, and has three basic parts:
	\begin{enumerate}
		\item[(a)] Showing that for every $r\ge 1$, $\beta_{2^r} \ge \theta_r$ for a certain explicitly defined constant $\theta_r$; 
		\item[(b)] Showing that $\lim_{r\to\infty} \theta_r^{1/r}$ exists;
		\rev{\item[(c)] Showing that \eqref{rho-eq-main} has a unique solution
			$\rho\in [0,1/3]$ and that $\rho=2\lim_{r\to\infty} \theta_r^{1/r}$.}
	\end{enumerate}
	
	In the sequel we shall refer to ``Theorem \ref{beta-rho} (a)'',
	``Theorem \ref{beta-rho} (b)'' and ``Theorem \ref{beta-rho} (c)''.
	\rev{Parts (a), (b) and (c) are quite} independent of one another, with the proof of (a) (given in subsection \ref{theta-r}) being by far the \rev{longest of the three.} The definition of $\theta_r$, while somewhat complicated, is fairly self-contained: see Definition \ref{theta-r-def}. \rev{Parts (b) and (c) are then problems of an} analytic and combinatorial flavour which can be addressed largely independently of the main arguments of the paper.  The formula \eqref{rho-eq-main} allows for a 
	quick computation of $\rho$ to many decimal places, as the limit on the
	right side converges extremely rapidly.  See section \ref{sec:analytic} for details.
	
	\medskip
	
	Let us now state an important corollary of Theorem \ref{beta-rho}.
	
\begin{maincorollary}\label{cor:lower bound on zeta}
Define
	\begin{equation}\label{zeta-def}
		\zeta_+ = \limsup_{k\to \infty} \frac{\log k}{\log (1/\beta_{k})} 
		\quad\text{and}\quad
		\zeta_- = \liminf_{k\to \infty} \frac{\log k}{\log (1/\beta_{k})} .
	\end{equation}
	Then
		\begin{equation}\label{eta-rho}
	\zeta_+\ge\zeta_-\ge \eta:= \frac{\log2}{\log(2/\rho)}  = 0.3533227\dots.
	\end{equation}
\end{maincorollary}

\begin{proof} 
Evidently, $\zeta_+\ge\zeta_-$. In addition, observe the trivial bound $\beta_k \le \beta_{k+1}$. Hence,
\begin{equation}\label{beta-dyadic}
\zeta_+ = \limsup_{r\to \infty} \frac{r\log 2}{\log (1/\beta_{2^r})} 
\quad\text{and}\quad
\zeta_- = \liminf_{r\to \infty} \frac{r\log 2}{\log (1/\beta_{2^r})} .
\end{equation}
We then use Theorem \ref{beta-rho} to find that $\zeta_-\ge\eta$.
\end{proof}

We conjecture that our lower bounds on $\beta_k$ are asymptotically sharp, so that the following holds:

\begin{conjecture}\label{main_conj}
We have $\zeta_+=\zeta_- = \eta$.
\end{conjecture}

We will address the exact values of $\beta_k$ in a future paper; in particular, we will show that
\[
\beta_3 = \frac{\log 3 -1}{\log 3 + \frac{1}{\xi}}
=0.02616218797316965133\dots
\]
and
\[
\beta_4 = \frac{\log 3-1}{\log 3 + \frac{1}{\xi} + \frac{1}{\xi \lambda}} = 0.01295186091360511918\dots
\]
where 
\[
\xi = \frac{\log 2 - \log(e-1)}{\log(3/2)}, \qquad
\lambda = \frac{\log 2 - \log(e-1)}{1 + \log 2 - \log(e-1)-\log(1+2^{1-\xi})}.
\]}

%
\subsection{Application to divisors of integers, permutations and polynomials}
%

The link between Problem \ref{beta-k-def} and 
the concentration of divisors  is given by the following 
Theorems.  The proofs are relatively straightforward and given 
in the next section. Recall from \eqref{zeta-def} \rev{the definition of $\zeta_+$.}

\begin{maintheorem}\label{delta-from-model}
\rev{For any $\eps>0$, we have
\[
\Delta(n) \ge (\log \log n)^{\zeta_+ - \eps}
\]
for almost every $n$.}
\end{maintheorem}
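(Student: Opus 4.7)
The proof relies on the standard probabilistic model relating divisors of a typical integer to subsets of its prime factors. For random $n \in [1, N]$, the set $\mathcal{P}(n) := \{\log p : p \mid n\}$ has $\E\#(\mathcal{P}(n) \cap [a,b]) = \sum_{\log p \in [a,b]} 1/p \sim \log(b/a)$ by Mertens' theorem, matching the density of $\A$. A Kubilius-style coupling identifies $\A \cap [D^c, D]$ with $\mathcal{P}(n) \cap [D^c, D]$ up to negligible total variation, provided $D \to \infty$ with $N$ and $D \le \log N$. Each subset $S \subseteq \mathcal{P}(n)$ corresponds to a divisor $d = \prod_{\log p \in S} p$ with $\log d = \sum_{x \in S} x$, so $k$ divisors in a unit log-interval correspond to $k$ subsets whose sums agree up to $O(1)$.

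Given $\eta' < \zeta$, the $\limsup$ definition \eqref{zeta-def} produces an infinite sequence of $k$ with $\beta_k \ge k^{-1/\eta'}$. Set $L := \log\log n$. For $n$ large, pick such a $k$ of order $L^{\eta'}$, so $\beta_k \gg L^{-1}$, and take $c$ slightly below $\beta_k$ together with $D$ an appropriately growing function of $n$ (for instance $D = (\log n)/L^{O(1)}$), ensuring $D^c \to \infty$. By Problem \ref{beta-k-def}, with probability $1 - o(1)$ as $D \to \infty$ there exist $k$ distinct subsets $A_1, \ldots, A_k \subseteq \A \cap [D^c, D]$ with equal integer sums; the coupling above transfers this event to $\mathcal{P}(n)$ with high probability.

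Transporting back to divisors yields $d_1, \ldots, d_k \mid n$ whose $\log d_i$-values cluster in an interval of length bounded by $\max_i |A_i|$ (the rounding discrepancy between integer sums in $\A$ and real sums of $\log p$). The extremal configurations built in the proof of Theorem \ref{beta-rho}(a) use ``balanced'' subsets whose size is sub-polynomial in $D$; splitting the range of $\log d_i$ values into unit subintervals and pigeonholing therefore costs only a factor $L^{o(1)}$, giving $\Delta(n) \gg k \cdot L^{-o(1)} \ge L^{\eta' - o(1)}$. Letting $\eta' \nearrow \zeta$ along a countable dense sequence and applying a Borel--Cantelli-type argument over dyadic windows in $n$ completes the proof.

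The main technical obstacle is the transfer between $\A$ and $\mathcal{P}(n)$: the Kubilius coupling must be executed in the appropriate range of primes, and the passage from exact integer sums in $\A$ to real sums of $\log p$ must preserve the ``equal sum'' structure to within $O(1)$ in the $\log d_i$-values. Both are standard ingredients of the Ford-type probabilistic analysis of prime factors, with the resulting polylogarithmic losses absorbed into the $o(1)$ exponent.
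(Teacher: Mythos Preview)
Your proposal has a genuine gap: it misses the \emph{tensor power amplification} (Lemma~\ref{main-prop}) and, relatedly, the rounding-error accounting does not work.

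You choose $k$ of size $L^{\eta'}$ (with $L=\log\log n$) and invoke $\beta_k$ directly. But $\beta_k$ is defined for \emph{fixed} $k$ as $D\to\infty$; the rate at which the probability in Problem~\ref{beta-k-def} tends to~$1$ may depend on $k$, and no uniformity is established. More seriously, even granting this, the transfer from equal integer sums in $\A$ to near-equal sums of $\log p$ introduces an error of order $|A_i|/K$, where $K$ is the discretisation parameter. With $K=O(1)$ (as implicit in your coupling) and $|A_i|\le |\A\cap[D^c,D]|\asymp \log D\asymp L$, the $\log d_i$ spread over an interval of length $\asymp L$, not $L^{o(1)}$. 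Pigeonholing into unit subintervals then costs a factor $L$, reducing $k\approx L^{\eta'}$ to $L^{\eta'-1}$, which is worthless since $\eta'<1$. Your appeal to ``balanced subsets of sub-polynomial size in $D$'' confuses $D^{o(1)}$ (true) with $L^{o(1)}$ (false).

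The paper's proof avoids both issues. It fixes $k$ and uses Lemma~\ref{main-prop}: partition $[D_1,D_2]$ into roughly $m\asymp \log\log D/\log(1/\beta_k)$ disjoint intervals, obtain $k$ equal-sum subsets in each (independently), and take tensor products to get $M=k^m=(\log D)^{(\log k)/\log(1/\beta_k)-o(1)}$ equal-sum subsets overall. Then it takes $K=10\log\log X$, so that the rounding error is $\ll |B_i|/K\ll (\log\log X)/K<1/2$, and the $M$ resulting divisors genuinely lie in one unit log-interval. Finally, $\zeta$ emerges only by letting $k\to\infty$ \emph{after} this construction.
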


\rev{\begin{remark*}
		In principle, the proof of Theorem \ref{delta-from-model} yields an explicit bound on the size of the set of integers $n$ with $\Delta(n)\le (\log\log n)^{\zeta_+-\eps}$. However, incorporating such an improvement is a very complicated task. In addition, the obtained bound will presumably be rather weak without a better understanding of the theoretical tools we develop (cf.~Section \ref{overview-sec}).
	\end{remark*}}

The same probabilistic setup allows us to quickly make similar conclusions about
the distribution of divisors (product of cycles) of permutations and of polynomials over finite fields.

\begin{maintheorem}\label{delta-perm}
For a permutation $\sigma$ on $S_n$, denote by
\[
\Delta(\sigma) := \max_r \# \{ d| \sigma: \length(d)=r \},
\]
where $d$ denotes a generic divisor of $\sigma$; that is, $d$ is the product of
a subset of the cycles of $\sigma$.  

\rev{Let $\eps>0$ be fixed. If $n$ is sufficiently large in terms of $\eps$, then for at least $(1-\eps)(n!)$ of the permutations $\sigma\in S_n$, we have
\[
\Delta(\sigma) \ge  (\log n)^{\zeta_+ - \eps}.
\]}
\end{maintheorem}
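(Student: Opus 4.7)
Plan. The proof parallels that of Theorem \ref{delta-from-model}, reducing the problem to Problem \ref{beta-k-def} via a coupling between the cycle structure of a uniform $\sigma\in S_n$ and a logarithmic random set. Write $C_i(\sigma)$ for the number of $i$-cycles of $\sigma$. By the Arratia--Barbour--Tavar\'e theorem (equivalently, via the Feller coupling), for any $D\le n^{1-\delta}$ there is a coupling of $(C_1(\sigma),\dots,C_D(\sigma))$ with independent Poissons of means $1/1,\dots,1/D$ whose total variation is $O(n^{-\delta})$. Consequently the random set
\[
\mathcal{A}(\sigma)\,:=\,\{i\le D:C_i(\sigma)\ge 1\}
\]
is coupled, up to $o(1)$ total variation error, with a logarithmic random set on $[1,D]$, since $\Pr[i\in\mathcal{A}]=1-e^{-1/i}=1/i+O(1/i^2)$.

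Fix $\varepsilon>0$. By the definition \eqref{zeta-def} of $\zeta$ there are arbitrarily large $k$ with $\beta_k\ge k^{-1/(\zeta-\varepsilon)}$. Take $D=\lfloor n^{1-\delta}\rfloor$ and let $k=k(n)\to\infty$ be the largest such $k$ satisfying $k\le(\log n)^{\zeta-2\varepsilon}$; then
\[
\beta_k\log D\;\gtrsim\;k^{-1/(\zeta-\varepsilon)}\log n\;\to\;\infty,
\]
so the interval $[D^{\beta_k},D]$ has logarithmic length tending to infinity. Applying (a quantitative version of) Problem \ref{beta-k-def} to the coupled logarithmic random set yields, with probability $1-o_{n\to\infty}(1)$, distinct subsets $A_1,\dots,A_k\subset\mathcal{A}(\sigma)\cap[D^{\beta_k},D]$ satisfying $\sum_{i\in A_1}i=\cdots=\sum_{i\in A_k}i$.

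For each $i\in\mathcal{A}(\sigma)$ choose an arbitrary $i$-cycle $c_i$ of $\sigma$, and for each $j\le k$ set $d_j:=\prod_{i\in A_j}c_i$. Then $d_j\mid\sigma$, the $d_j$ are pairwise distinct (as the $A_j$ are distinct subsets of $\mathcal{A}(\sigma)$), and $\length(d_j)=\sum_{i\in A_j}i$ is independent of $j$. Hence $\Delta(\sigma)\ge k$ for all but $o(n!)$ permutations, and letting $\varepsilon\downarrow 0$ gives $\Delta(\sigma)\ge(\log n)^{\zeta-o(1)}$ almost surely.

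The principal technical point is to upgrade the ``almost surely as $D\to\infty$'' conclusion in Problem \ref{beta-k-def} to a ``with probability $1-o(1)$ as $n\to\infty$'' statement, uniformly over a choice $k=k(n)\to\infty$, and to verify that the $k(n)$ supplied by the limsup definition of $\zeta$ can be arranged to reach $(\log n)^{\zeta-o(1)}$; both points are handled by the same concentration inequalities on counts of elements of $\A$ in intervals that drive Theorem \ref{delta-from-model}, and do not require anything genuinely new beyond the translation between models.
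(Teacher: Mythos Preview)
Your coupling step is fine and matches the paper's use of Lemmas \ref{cycles} and \ref{A-Poisson}. The gap is in the step after: you attempt to apply the definition of $\beta_k$ directly with $k=k(n)\to\infty$. The statement ``a.s.\ as $D\to\infty$ there exist $k$ equal-sum subsets of $\A\cap[D^{\beta_k},D]$'' is proved in the paper (Proposition \ref{bgamk}) only for \emph{fixed} $k$; every constant in that argument (the entropy gap $\eps$, the exponent $p>1$, the threshold for $D$, the implied constants in Proposition \ref{ell-p-bound}) depends on the system $(\sV,\cc,\bmu)$ and hence on $k$. Obtaining a version uniform in $k$ up to $k\sim(\log D)^{\zeta-2\eps}$ is far from routine and is not what ``drives Theorem \ref{delta-from-model}''---that theorem is proved by an entirely different mechanism.

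What you are missing is the tensor-power device, Lemma \ref{main-prop}. The paper fixes a single $k$ (chosen so that $\frac{\log k}{\log(1/\beta_k)}>\zeta-\eps$), partitions $(u,v]=(\log n,\,n/\log n]$ into roughly $m\asymp\frac{\log\log n}{\log(1/\beta_k)}$ nested intervals of the form $(D^{\alpha^{i+1}},D^{\alpha^i}]$, and in each interval independently finds $k$ equal-sum subsets (here $k$ is fixed, so the probability of success in each interval tends to $1$ with no uniformity issue). Taking unions across intervals produces $M=k^{(1-o(1))m}=(\log n)^{\zeta-\eps-o(1)}$ distinct subsets with a common sum. This is then transferred to the cycle set $\mathbf{C}=\{j:C_j(\sigma)\ge1\}$ via the total-variation bound, exactly as you outline. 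So your overall architecture is right, but the engine that converts a fixed-$k$ statement into $(\log n)^{\zeta-o(1)}$ many equal sums is Lemma \ref{main-prop}, not a growing-$k$ application of Problem \ref{beta-k-def}.
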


\begin{maintheorem}\label{delta-polynomial}
Let $q$ be any prime power. For a polynomial $f\in \mathbb{F}_q[t]$,
let
\[
\Delta(f) = \max_r \# \{ g| f : \deg(g)=r \}.
\]

\rev{Let $\eps>0$ be fixed. If $n$ is sufficiently large in terms of $\eps$, then
at least $(1-\eps) q^n$ monic polynomials of degree $n$ satisfy
\[
\Delta(f) \ge  (\log n)^{\zeta_+ - \eps}.
\]}
\end{maintheorem}

\begin{conjecture}\label{conj:Delta}
The lower bounds given in Theorems \ref{delta-from-model}, \ref{delta-perm} and \ref{delta-polynomial} are sharp.  \rev{That is,
corresponding upper bounds with exponent $\zeta_+ + \eps$ hold.}
\end{conjecture}

If both Conjectures \ref{main_conj} and \ref{conj:Delta} hold,
then we deduce that the optimal exponent in the above theorems is equal to $\eta$.

\begin{remark*}
\rev{The exponent $\zeta_+-\eps$} in Theorems \ref{delta-from-model}, \ref{delta-perm} and \ref{delta-polynomial} depends only
on accurate asymptotics for $\beta_k$ as $k\to \infty$ \rev{or, even more weakly, for $\beta_{2^r}$ as  $r\to\infty$ (cf.~\eqref{beta-dyadic}).}
In this work, however, we develop a framework for determining $\beta_k$ exactly for each $k$. 
\end{remark*}

The quantity $\beta_k$ is also closely related to the densest
packing of $k$ divisors of a typical integer.  To be specific,
we define $\alpha_k$ be the supremum of all real numbers $\alpha$ such that
for almost every $n\in \N$, $n$ has $k$ divisors $d_1<\cdots <d_k$
with $d_k \le d_1 (1+ (\log n)^{-\alpha})$.
In 1964, Erd\H os \cite{Erdos64} conjectured that
$\alpha_2 = \log 3 -1$, and this was confirmed by
Erd\H os and Hall \cite{EH79} (upper bound) and 
Maier and Tenenbaum \cite{MT84} (lower bound).
The best bounds on $\alpha_k$ for $k\ge 3$ are given by
Maier and Tenenbaum \cite{MT09}, who showed that
\[
\alpha_k \le \frac{\log 2}{k+1} \qquad (k\ge 3)
\]
and (this is not stated explicitly in \cite{MT09})
\be\label{MT-alpha-lower}
\alpha_k \ge \frac{(\log 3-1)^m 3^{m-1}}{(3\log 3 -1 )^{m-1}}
\qquad (2^{m-1} < k \le 2^m, m\in \N).
\ee
See also \cite[p. 655--656]{Ten13}\footnote{The factor $3^{m-1}$ is missing in the stated  lower bounds for $\alpha_k$ in \cite{Ten13}.}.
In particular, it is not known if $\alpha_3 > \alpha_4$,
although Tenenbaum \cite{Ten13} conjectures that
the sequence 
$(\alpha_k)_{k\ge 2}$ is strictly decreasing.

We can quickly deduce a lower bound for $\alpha_k$ 
in terms of $\beta_k$.

\begin{maintheorem}\label{thm:alphak}
 \rev{For all $k\ge 2$ we have}
$\alpha_k \ge  \beta_k/(1-\beta_k)$. 
\end{maintheorem}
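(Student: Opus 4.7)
The approach is to encode the small prime divisors of a typical integer~$n$ as a random set resembling $\A$ on a rescaled axis, import the equal-sums configuration guaranteed by Problem~\ref{beta-k-def}, and read it back as $k$ divisors of~$n$ with small ratio. Fix $\alpha<\beta_k/(1-\beta_k)$ and choose $c$ with $\alpha/(1+\alpha)<c<\beta_k$; this interval is non-empty precisely because $\alpha<\beta_k/(1-\beta_k)$. For large~$x$ set
\[
D=(\log x)^{1+\alpha},\qquad M=(\log x)^\alpha(\log\log x)^2,
\]
and, for a random $n\in[1,x]$, define the random set
\[
\A_n:=\Bigl\{i\in\N\cap[D^c,D]:\exists\text{ prime }p\mid n\text{ with }\log p\in[i/M,(i+1)/M)\Bigr\}.
\]

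The first step is a coupling/distributional claim: the law of $\A_n$ on $[D^c,D]$ is asymptotically that of $\A\cap[D^c,D]$. Indeed, every prime relevant to $\A_n$ satisfies $\log p\le D/M=\log x/(\log\log x)^2=o(\log x)$, so $p\le x^{o(1)}$; Mertens gives $\sum_{\log p\in[i/M,(i+1)/M)}1/p=\log(1+1/i)=1/i+O(1/i^2)$, and the Kubilius model (or the fundamental lemma of sieve methods) yields that the indicators $(\mathbf 1_{i\in\A_n})_{i\in[D^c,D]}$ are jointly close in total variation to independent Bernoullis with parameters $1/i$. Applying the defining property of $\beta_k$ (with exponent $c<\beta_k$) to $\A$, we deduce that for all but $o(x)$ of the integers $n\in[1,x]$ there exist distinct $A_1,\dots,A_k\subset\A_n\cap[D^c,D]$ with a common sum $S=\sum_{i\in A_j}i$.

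For each $i\in\A_n$, fix one prime $p_i\mid n$ with $\log p_i\in[i/M,(i+1)/M)$; primes chosen for distinct $i$ are automatically distinct, so $d_j:=\prod_{i\in A_j}p_i$ is a divisor of $n$ and distinct $A_j$'s give distinct $d_j$'s. Writing $\log p_i=i/M+\eta_i$ with $\eta_i\in[0,1/M)$, the common sum property gives $\log d_j=S/M+\sum_{i\in A_j}\eta_i$, whence
\[
|\log d_j-\log d_{j'}|\le \frac{|A_j\triangle A_{j'}|}{M}\le \frac{2(1-c)\log D}{M}(1+o(1))=O\!\left(\frac{1}{(\log x)^\alpha\log\log x}\right),
\]
using the Chernoff bound $|\A_n\cap[D^c,D]|\le(1+o(1))(1-c)\log D$. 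Exponentiating yields $d_k/d_1\le 1+(\log n)^{-\alpha}$ for typical $n$, where $\log n=(1+o(1))\log x$; and the crude estimate $\log d_j\le (S+|A_j|)/M\lesssim D/M=\log x/(\log\log x)^2$ shows $d_j\le n$, so the $d_j$ are genuine divisors. Letting $\alpha\nearrow\beta_k/(1-\beta_k)$ gives the claimed bound on $\alpha_k$.

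The technical heart of the argument is the coupling step: making the reduction from the random set $\A$ to the (deterministic-per-$n$) set $\A_n$ quantitative enough that a density-$1$ set of integers $n$ inherit the equal-sums configuration. The choices $D=(\log x)^{1+\alpha}$ and $M=(\log x)^\alpha(\log\log x)^2$ are designed so that (i)~all primes involved are of polylogarithmic size in $x$ (so Kubilius-style independence applies), (ii)~the product $d_j\le n$ fits inside $n$, and (iii)~the per-element rounding error $1/M$ times the set size $\lesssim\log D$ beats $(\log n)^{-\alpha}$; balancing these three constraints is exactly what forces the exponent $\beta_k/(1-\beta_k)$.
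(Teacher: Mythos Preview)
Your proposal is correct and follows essentially the same approach as the paper: encode the prime factors of a random integer on a rescaled integer axis so that the resulting set mimics the logarithmic random set $\A$, invoke the coupling (the paper's Lemma~\ref{prime model}/Kubilius model) to import the equal-sums configuration from the definition of $\beta_k$, and read it back as $k$ close divisors. The paper's proof uses $K=(\log X)^c$ as its scaling (with $c$ playing the role of your $\alpha$) and then computes that the effective exponent $c'\sim c/(c+1)<\beta_k$, which is exactly your relation $\alpha/(1+\alpha)<c<\beta_k$; the remaining differences are only in the choice of auxiliary $\log\log$ factors and in your slightly sharper use of $|A_j\triangle A_{j'}|/M$ in place of $(|A_j|+|A_{j'}|)/K$.
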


 In particular,
\[
\alpha_3 \ge  \frac{\beta_3}{1-\beta_3} = 0.0268650\dots,
\]
which is substantially larger than the bound from \eqref{MT-alpha-lower}, which is $\alpha_3 \ge 0.0127069\dots$.

Combining Theorem \ref{thm:alphak} with the bounds on
$\beta_k$ given in Theorem \ref{beta-rho}, we have
improved \rev{the lower bounds \eqref{MT-alpha-lower} 
for large $k$.} 

The upper bound on $\alpha_k$ is more delicate, and
a subject which we will return to in  a future paper.
For now, we record our belief that the lower bound in 
Theorem \ref{thm:alphak} is sharp.

\begin{conjecture}
For all $k\ge 2$ we have $\alpha_k = \beta_k/(1-\beta_k)$.
\end{conjecture}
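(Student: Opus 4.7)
Fix $c < \beta_k$ and $\eps > 0$; it suffices to establish $\alpha_k \geq c/(1-c) - \eps$, since sending $c \to \beta_k^-$ and then $\eps \to 0^+$ gives the claim. The strategy is to use the exact equal-sum property of $\A$ on $[D^c,D]$ (valid almost surely for $c < \beta_k$) as a proxy for nearly-equal sums of $\log p$ over prime divisors of a typical integer $n$, by quantizing $\log p$ at a scale $1/K$ that is as fine as the structure of $\A$ permits.

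Let $n$ be uniform on $[1,N]$ with $N$ large. Set
\[
K := \lfloor (\log N)^{c/(1-c) - \eps/2} \rfloor \qquad \text{and} \qquad D := K \log N,
\]
so that $D^c/K \to \infty$ as $N\to\infty$. For each integer $i \in [D^c,D]$, declare $i \in \cB$ iff there is a prime $p\mid n$ with $\lfloor K\log p\rfloor = i$. By Mertens' theorem,
\[
\P[i \in \cB] = \sum_{p:\, \lfloor K\log p\rfloor = i} \P[p\mid n] = \frac{1}{i} + O\!\Big(\frac{1}{i^2}\Big),
\]
and the events $\{i\in\cB\}$ for distinct $i$ are very nearly independent, so a standard coupling (of the same kind used for Theorems \ref{delta-from-model}--\ref{delta-polynomial}) makes $\cB$ distributionally indistinguishable, up to events of $o(1)$ probability, from $\A \cap [D^c,D]$. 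The definition of $\beta_k$ then transfers: with probability $1-o(1)$ as $N\to\infty$, there exist distinct subsets $A_1,\dots,A_k\subseteq \cB$ of sizes at most $O(\log D) = O(\log\log N)$ whose integer sums $\sum_{i\in A_j} i$ coincide.

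For each such $j$ and each $i \in A_j$, choose a prime $p_{j,i}\mid n$ with $\lfloor K\log p_{j,i}\rfloor = i$, and set $d_j := \prod_{i\in A_j} p_{j,i}$. Writing $K\log p_{j,i} = i + \xi_{j,i}$ with $\xi_{j,i}\in[0,1)$ and denoting the common integer sum by $T$, we have $K\log d_j = T + \sum_{i\in A_j} \xi_{j,i}$, whence
\[
|\log d_j - \log d_{j'}| \leq \frac{|A_j|+|A_{j'}|}{K} = O\!\left(\frac{\log\log N}{(\log N)^{c/(1-c)-\eps/2}}\right) \leq (\log N)^{-c/(1-c)+\eps}
\]
for $N$ large, giving $d_k/d_1 \leq 1 + (\log N)^{-\alpha}$ with $\alpha = c/(1-c)-\eps$ as required. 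A minor cleanup---e.g.\ passing to the multiset of primes rather than just the index set $A_j$---guarantees that the $d_j$'s are genuinely distinct with probability $1-o(1)$.

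The principal technical obstacle is the coupling asserted above: one must argue that almost sure properties of the idealized set $\A$ on $[D^c,D]$ transfer to the prime-divisor-based set $\cB$ with no loss in the exponent $c$. This rests on the fact that for uniform $n\in[1,N]$, the indicators $(\mathbf{1}_{p\mid n})_p$ for primes $p \leq N^{1-\delta}$ are close in distribution to independent Bernoullis of parameter $1/p$, and is the same mechanism invoked in the proofs of Theorems \ref{delta-from-model}--\ref{delta-polynomial}.
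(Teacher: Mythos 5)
The statement you are asked to prove is an \emph{equality}, $\alpha_k = \beta_k/(1-\beta_k)$, and in fact it is stated in the paper as an open conjecture, not a theorem. Your proposal only addresses the inequality $\alpha_k \ge \beta_k/(1-\beta_k)$. That direction is Theorem \ref{thm:alphak} of the paper, and your argument for it is essentially the paper's own: choose $K$ of size roughly $(\log N)^{c}$ with $c < \beta_k/(1-\beta_k)$, quantize $\log p$ at scale $1/K$, transfer the equal-subset-sum property of the logarithmic random set $\A$ to the prime-divisor indicator set $\mathbf{B}$ via the sieve-based coupling (Lemma \ref{prime model}), and convert $k$ equal integer sums into $k$ divisors whose logarithms differ by $O(\log\log N / K)$. (One small remark: in the paper the prime $p_i$ is chosen once per index $i \in \mathbf{B}$, independently of $j$, which makes the distinctness of the $d_j$ automatic from the distinctness of the sets $B_j$; your choice of $p_{j,i}$ depending on $j$ creates an avoidable distinctness issue that your ``minor cleanup'' would have to handle.)

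The genuine gap is the reverse inequality $\alpha_k \le \beta_k/(1-\beta_k)$, which your proposal does not touch and which is the substantive content of the conjecture. To prove it one must show that for any $\alpha > \beta_k/(1-\beta_k)$, almost \emph{no} integer $n$ has $k$ divisors $d_1 < \cdots < d_k$ with $d_k \le d_1(1+(\log n)^{-\alpha})$ --- a nonexistence statement, which cannot be obtained by exhibiting a construction. It would require an upper-bound (first-moment or entropy-type) argument controlling the number of $k$-tuples of near-equal subset sums of $\log p$ over $p \mid n$, in the spirit of the $\beta_k \le \gamma_k$ direction of Theorem \ref{thm:beta-gamma}, together with a transference in the opposite direction. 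The authors explicitly describe this upper bound as ``more delicate'' and defer it to a future paper; the best known upper bound, $\alpha_k \le \frac{\log 2}{k+1}$ from Maier--Tenenbaum, is far from $\beta_k/(1-\beta_k)$. As written, your argument proves a known theorem of the paper but leaves the conjecture itself unresolved.
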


\vspace{11pt}

\noindent {\bf Acknowledgements.} This collaboration began at the MSRI program on Analytic Number Theory, which took place in the first half of 2017 and which was supported by the National Science Foundation under Grant No.~DMS-1440140. All three authors are grateful to MSRI for allowing us the opportunity to work together.

The project was completed during a visit of KF and DK to Oxford in the first half of 2019. Both authors are grateful to the University of Oxford for its hospitality.

KF is supported by the National Science Foundation Grants
DMS-1501982 and DMS-1802139. In addition, his stay at Oxford in early 2019 was supported by a Visiting Fellowship at Magdalen College Oxford. BG is supported by a Simons Investigator Grant, which also funded DK's visit to Oxford. DK is also supported by the Courtois Chair II in fundamental research, by the Natural Sciences and Engineering Research Council of Canada (RGPIN-2018-05699) and by the Fonds de recherche du Qu\'ebec - Nature et technologies (2019-PR-256442 and 2022-PR-300951).

\newcommand\TV{\operatorname{TV}}
\section{Application to random integers, random permutations
and random polynomials}\label{transference}

In this section we \rev{assume the validity of Theorem \ref{beta-rho} and use it to} prove Theorems \ref{delta-from-model},
\ref{delta-perm}, \ref{delta-polynomial} and \ref{thm:alphak}. The two main ingredients in \rev{this deduction} are a simple combinatorial device (Lemma \ref{main-prop}), of a type often known as a ``tensor power trick'', used for building a large collection of equal subset sums,
and transference results (Lemmas \ref{prime model}, \ref{cycles} and \ref{polynom-factors}) giving a correspondence between the random set $\A$ and prime factors of a random integer, the cycle structure of a random permutation and the factorization of a random polynomial over a finite field. In the integer setting, this is a well-known principle 
following, e.g. from the Kubilius model of the integers
(Kubilius, Elliott \cite{ElliotTV1, ElliotTV2}, Tenenbaum
\cite{Ten99}).  We give a self-contained (modulo using the sieve) proof below.

Throughout this section, $\A$ denotes a logarithmic random set.

\subsection{A ``tensor power'' argument}

In this section we give a simple combinatorial argument,
first used in a related context in the work of Maier-Tenenbaum \cite{MT84}, which
shows how to use equal subsums in multiple intervals $((D')^c,D']$ to 
create many more common subsums in $\cA$.

\begin{lemma}\label{main-prop}
\rev{Let $k \in\Z_{\ge2}$ and $\eps>0$ be fixed.} Let $D_1,D_2$ be parameters depending on $D$ with \rev{$3 \le D_1 < D_2 \le D$}, $\log \log D_1 = o(\log \log D)$ and $\log \log D_2 = (1 - o(1)) \log \log D$ as $D\to\infty$. \rev{Then, with probability $\to 1$ as $D\to\infty$, there are distinct $A_1,\dots, A_M \subset \A \cap [D_1, D_2]$ with $\sum_{a \in A_1} a = \cdots = \sum_{a \in A_M} a$ and $M \geq (\log D)^{(\log k)/\log(1/\beta_k) - \eps}$.}
\end{lemma}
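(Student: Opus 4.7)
The strategy is a multiplicative (``tensor power'') construction: decompose a substantial portion of $[D_1, D_2]$ into many disjoint subintervals of the form $[M^c, M]$ with $c < \beta_k$ fixed, use the definition of $\beta_k$ (Problem \ref{beta-k-def}) to obtain $k$ distinct equal-sum subsets inside each, and combine independent choices across the subintervals. Explicitly, if subinterval $I_j$ yields $k$ subsets $A_{j,1}, \dots, A_{j,k} \subset \A \cap I_j$ with common sum $s_j$, and there are $r$ such disjoint subintervals, then for each $\mathbf{i} \in \{1, \dots, k\}^r$ the union $A(\mathbf{i}) := \bigcup_{j=1}^r A_{j, i_j}$ is a subset of $\A \cap [D_1, D_2]$. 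The unions are pairwise distinct (since the $I_j$ are disjoint and the $A_{j,i}$ differ within each interval), and all have the same total sum $\sum_j s_j$. This produces $M = k^r$ equal-sum subsets.

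To build the chain, fix $c < \beta_k$ (to be sent to $\beta_k$ at the end) and set $t_0 := \log D_2$, $t_j := c\, t_{j-1}$, $I_j := [e^{t_j}, e^{t_{j-1}}]$, so each $I_j$ has the form $[(D'_j)^c, D'_j]$ with $D'_j := e^{t_{j-1}}$. Take $r$ maximal subject to $I_r \subset [D_1, D_2]$ and $t_r \geq T(D)$, for a slowly growing threshold $T(D) \to \infty$. The hypotheses $\log\log D_1 = o(\log\log D)$ and $\log\log D_2 = (1-o(1))\log\log D$ then force $r = (1 - o(1))(\log\log D)/\log(1/c)$, provided $\log T(D) = o(\log\log D)$. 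Thus $\log M = r \log k = (1-o(1))(\log k)(\log\log D)/\log(1/c)$; letting $c \uparrow \beta_k$ yields $M \geq (\log D)^{(\log k)/\log(1/\beta_k) - o(1)}$, as required.

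The main technical obstacle is verifying that all $r$ events $E_j := \{\A \cap I_j \text{ contains $k$ equal-sum subsets}\}$ hold simultaneously with probability tending to $1$, given that $r \to \infty$ with $D$. Because the $I_j$ are disjoint and the indicators $\mathbf{1}\{i \in \A\}$ are independent, the $E_j$ are mutually independent; hence
\[
1 - \P\Bigl(\bigcap_{j=1}^r E_j\Bigr) \leq \sum_{j=1}^r \bigl(1 - \P(E_j)\bigr) \leq r \cdot \sup_{D' \geq e^{T(D)}} \bigl(1 - \P(E(D', c))\bigr),
\]
where $E(D', c)$ denotes the scale-$D'$ event from Problem \ref{beta-k-def}. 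The definition of $\beta_k$ guarantees $1 - \P(E(D', c)) \to 0$ as $D' \to \infty$, but without an explicit rate. I would close the argument by a diagonal choice: pick $T(D) \to \infty$ slowly enough that $\log T(D) = o(\log\log D)$ (so $r$ is not materially reduced) yet fast enough that the supremum above is $o(1/r)$. Such a $T(D)$ exists because the failure probability tends to $0$ while $r$ grows only like $\log\log D$, giving ample slack in a standard $\epsilon$-by-$\epsilon$ exhaustion of the tail of the a.s.\ convergence.
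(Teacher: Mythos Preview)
Your tensor-power construction over a geometric chain of intervals $[M^c,M]$ is exactly the paper's approach, and your computation of $r \sim (\log\log D)/\log(1/c)$ is correct. The gap is in the final probability estimate. You want $\P\bigl(\bigcap_j E_j\bigr)\to 1$ via the union bound $\sum_j (1-\P(E_j)) \le r\, g(T(D))$, where $g(T):=\sup_{D'\ge e^T}(1-\P(E(D',c)))$, and you then assert a diagonal choice of $T(D)$ exists satisfying both $\log T(D)=o(\log\log D)$ and $g(T(D))=o(1/r)=o(1/\log\log D)$. But the definition of $\beta_k$ gives only $g(T)\to 0$ with no rate whatsoever. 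If, for instance, $g(T)\asymp 1/\log T$, then the second constraint forces $\log T(D)\gg \log\log D$, directly contradicting the first; no diagonal choice can satisfy both, and your ``ample slack'' claim is false.

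The repair is simple and is what the paper does: drop the demand that \emph{all} $r$ events hold. Since increasing $D_1$ only strengthens the statement, one may assume $D_1\to\infty$; then each $\P(E_j)\ge 1-\delta$ uniformly in $j$ for any fixed $\delta>0$. By independence and the law of large numbers (or just Markov's inequality applied to the number of failures), at least $(1-o(1))r$ of the $E_j$ hold with probability $1-o(1)$. Restricting the tensor product to those successful indices still yields $M=k^{(1-o(1))r}$, which is all that is needed.
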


\begin{remark*}  In particular, the result applies when $D_1 = 3$ and $D_2 = D$, in which case it has independent combinatorial interest, giving a (probably tight) lower bound on the growth of the representation function for a random set.
\end{remark*}

\begin{proof}
Since increasing the value of $D_1$ only makes the proposition stronger, we may assume that $D_1 \rightarrow \infty$ \rev{as $D\to\infty$.
Let $0<\delta<\beta_k$, and set $\alpha := \beta_k - \delta$. Set 
\[
m: = \Big\lfloor \frac{\log \log D_2 - \log \log D_1}{-\log(\beta_k - \delta)}\Big\rfloor
\] }
and consider the intervals $[D_2^{\alpha^{i+1}}, D_2^{\alpha^i})$, $i = 0,1,\dots, m - 1$. Due to the choice of $m$, these all lie in $[D_1, D_2]$. 

Let $E_i$, $i = 0,1,2,\dots$ be the event that there are distinct $A^{(i)}_1,\dots, A^{(i)}_k \subset [D_2^{\alpha^{i+1}}, D_2^{\alpha^i})$ with $\sum_{a \in A^{(i)}_1} a = \cdots = \sum_{a \in A^{(i)}_k} a$. Then, by the definition of $\beta_k$ and the fact that $D_1 \rightarrow \infty$, we \rev{have $\P(E_i) = 1 - o(1)$, uniformly in $i=0,1,\ldots,m-1$.  Here and throughout the proof, $o(1)$ means a function tending to zero as $D\to\infty$, at a rate
which may depend on $k,\delta$.}
These events $E_i$ are all independent. The Law of Large Numbers then implies that, with \rev{probability $1 - o(1)$, at least $(1 - o(1))m$ of them occur, let us say for $i \in I$, $|I| = (1 - o(1))m$. }

From the above discussion, we have \rev{found $M := k^{|I|} = k^{(1 - o(1))m}$ distinct} sets $B_{\bs j} = \bigcup_{i \in I} A_{j_i}^{(i)}$, ${\bs j} \in [k]^{I}$, such that all of the sums $\sum_{a \in B_{\bs j}} a$ are the same. \rev{Note that 
\[ M = k^{(1 + O_k(\delta) + o(1)) \log \log D/\log(1/\beta_k)}.\]
Taking $\delta$ small enough and $D$ large enough, the result follows. }
\end{proof}

%
\subsection{Modeling prime factors with a logarithmic random set}
%

Let $X$ be a large parameter, suppose that
\be\label{K-range}
1 \le K \le (\log X)^{1/2},
\ee
and let $I=[i_1,i_2] \cap \N$, where
\be\label{i1i2}
i_1 = \fl{K (\log\log X)^3}, \quad i_2=\fl{\frac{K\log X}{2\log\log\log X}}.
\ee
For a uniformly random positive integer $\mathbf{n}\le X$, let $\mathbf{n}=\prod_p p^{v_p}$ be the 
the prime factorization of $\mathbf{n}$,  where the product is over all primes. 
 Let $\sP_i$ be
the set of primes in $(e^{i/K}, e^{(i+1)/K}]$, 
and define the random set
\be\label{setBdef}
\rev{\mathbf{I} = \{ i\in I : \exists p\in\sP_i\ \text{such that}\ p|\bn\} .}
\ee
that is, the set of $i$ for which $\mathbf{n}$ has a prime factor in $\sP_i$.
By the sieve, it is known that the random variables $v_p$
are nearly independent for $p=X^{o(1)}$, and thus the
probability that $b_i\ge 1$ is roughly
\[
R_i := \sum_{p\in \sP_i} \frac{1}{p} \approx \frac{1}{i}.
\]
The next lemma makes this precise.

\rev{Recall the notion of \emph{total variation
distance} $d_{\TV}(X,Y)$ between two discrete real random vectors $X,Y$ defined on the
same probability space $(\Omega,\cF,\P)$:
\[
d_{\TV}(X,Y) = \max_{A\in\cF} | \P(X\in A) - \P(Y\in A)|.
\]
We have
\be\label{TV-joint}
d_{\TV}((X_1,\ldots,X_k),(Y_1,\ldots,Y_k)) \le \sum_{j=1}^k d_{\TV}(X_j,Y_j),
\ee
provided that the random variables $X_j,Y_j$ live on the same probability space for each $j$, that
$X_1,\ldots,X_k$ are independent, and $Y_1,\ldots,Y_k$ are also independent.
Although we believe this is a standard inequality, we could not find a 
good reference for it and give a proof of \eqref{TV-joint} in Lemma \ref{lem:dTV}.
In addition, recall the identity
\be\label{TV-ident}
d_{\TV}(X,Y) = \frac12 \sum_{t\in\Omega} | \P(X=t) - \P(Y=t) |
\ee
when $X$ and $Y$ take values in a probability space $(\Omega,\cF,\P)$ with $\Omega$ countable and $\cF$ being the power set of $\Omega$.
See, e.g. \cite[Proposition 4.2]{LPW}.
}

\begin{lemma}\label{prime model}
Uniformly for any collection $\sI$ of subsets of $I$, we have
\[
\P (\A \cap I \in  \sI ) = \rev{\P (\mathbf{I} \in \sI)}
+ O(1/\log\log X).
\]
\end{lemma}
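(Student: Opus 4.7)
The plan is to establish the uniform bound $d_{\TV}(\A \cap I, \mathbf{B}) = O(1/\log\log X)$, which implies the lemma. I interpolate through an auxiliary product Bernoulli measure: set $y := e^{(i_2+1)/K}$ and let $\xi_p \sim \mathrm{Ber}(1/p)$ be independent for primes $p \le y$. Define
\[ \mathbf{B}^{\xi} := \{ i \in I : \xi_p = 1 \text{ for some } p \in \sP_i\}, \]
whose coordinates are independent Bernoullis with parameters $q_i := 1 - \prod_{p \in \sP_i}(1 - 1/p)$. The goal is to bound both $d_{\TV}(\mathbf{B}, \mathbf{B}^\xi)$ and $d_{\TV}(\mathbf{B}^\xi, \A \cap I)$ by $O(1/\log\log X)$.

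\emph{Sieve step.} The crucial observation is that $y \le X^{1/u}$ with $u = 2\log\log\log X + o(1) \to \infty$. The Fundamental Lemma of the Sieve (equivalently, the Kubilius model of integers) implies that the joint law of $(\mathbf{1}[p\mid\mathbf{n}])_{p\le y}$ on $\{0,1\}^{\{p\le y\}}$ is within total variation distance $O(u^{-u(1+o(1))}) = o(1/\log\log X)$ of the product law of $(\xi_p)_{p\le y}$. Since $\mathbf{B}$ and $\mathbf{B}^\xi$ arise from these vectors via the same deterministic projection $(x_p) \mapsto \{i\in I : \exists\, p\in \sP_i,\ x_p=1\}$, and TV distance is contractive under measurable maps, $d_{\TV}(\mathbf{B},\mathbf{B}^\xi) = o(1/\log\log X)$.

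\emph{Mertens step.} Both $\mathbf{B}^\xi$ and $\A \cap I$ are product measures on $\{0,1\}^I$, so $d_{\TV}(\mathbf{B}^\xi, \A \cap I) \le \sum_{i \in I} |q_i - 1/i|$. Expanding gives $q_i = R_i + O(R_i^2)$ with $R_i := \sum_{p \in \sP_i} 1/p$, and Mertens' theorem (upgraded by PNT with Vinogradov--Korobov error) yields $R_i = \log(1 + 1/i) + O(e^{-c\sqrt{i/K}}) = 1/i + O(1/i^2)$. Hence $|q_i - 1/i| \ll 1/i^2$, and summing from $i_1 = \lfloor K(\log\log X)^3 \rfloor$ gives $\sum_{i \in I} |q_i - 1/i| \ll 1/i_1 \ll 1/(\log\log X)^3$, which is $o(1/\log\log X)$. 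Combining with the sieve step yields the lemma.

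The main obstacle is securing the uniform TV bound in the sieve step. The textbook Fundamental Lemma is usually presented as a one-sided sifting inequality, whereas here one needs a two-sided comparison of the laws of $\prod_{p\le y,\,p\mid\mathbf{n}} p$ and $\prod_{p\le y,\,\xi_p=1} p$ uniform over all squarefree outcomes. The proof will proceed by inclusion--exclusion, writing
\[ \mathbf{1}[\mathbf{B}(\mathbf{n}) = S] = \prod_{i\in S}\bigl(1-\mathbf{1}[\gcd(\mathbf{n},Q_i)=1]\bigr)\prod_{i\notin S}\mathbf{1}[\gcd(\mathbf{n},Q_i)=1] \qquad (Q_i := \textstyle\prod_{p\in\sP_i}p)\]
as a signed combination of sifting functionals $\mathbf{1}[\gcd(\mathbf{n},D)=1]$ for divisors $D$ of $\prod_{p\le y}p$, applying the proportional sieve estimate $\P(\gcd(\mathbf{n},D)=1) = \prod_{p\mid D}(1-1/p)(1+O(u^{-u(1+o(1))}))$ to each, and summing: because the sieve error is multiplicative rather than additive, no inflation occurs when telescoping over $S$. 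The authors' parenthetical ``modulo using the sieve'' signals precisely this input.
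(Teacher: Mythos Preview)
Your overall strategy—bound $d_{\TV}(\mathbf B,\A\cap I)$ by interpolating through a product Bernoulli $\mathbf B^\xi$, invoking the Kubilius-model TV bound for the sieve step and Mertens for the parameter comparison—is valid, and is precisely the ``well-known principle'' the authors cite just before giving their own proof. The Mertens step is carried out correctly. The paper, however, chose a more self-contained route: it computes $\P(\A\cap I=B)$ and $\P(\mathbf B=B)$ explicitly and compares. For the latter it first discards the events $\{b_i\ge 2\}$ (probability $\ll 1/i_1$) and $\{\sum_{i\in B}(i+1)>\tfrac12 K\log X\}$ (smooth-number tail), then for each remaining $B$ \emph{fixes specific primes} $p_i\in\sP_i$ for $i\in B$, applies the fundamental lemma once to the event ``$\prod p_i\mid\mathbf n$ and $\mathbf n/\prod p_i$ has no prime factor in $(Z,Y]$,'' and sums over the $p_i$. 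Since $\sum_{p_i\in\sP_i}1/(p_i-1)=1/i+O(1/i^2)$, this reproduces $\P(\A\cap I=B)$ up to a factor $1+O(1/\log\log X)$.

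Your final paragraph has a real gap. Expanding $\prod_{i\in S}(1-\mathbf 1[\gcd(\mathbf n,Q_i)=1])$ gives $2^{|S|}$ signed terms; after applying the sieve's multiplicative error to each and taking absolute values, the accumulated error is
\[
\ll u^{-u}\,\prod_{i\notin S}(1-q_i)\prod_{i\in S}(2-q_i),
\]
which exceeds $\P(\mathbf B^\xi=S)=\prod_{i\notin S}(1-q_i)\prod_{i\in S}q_i$ by a factor $\prod_{i\in S}(2-q_i)/q_i\approx\prod_{i\in S}2i$. Summing over $S$ then yields $\approx 3^{|I|}$ rather than $1$, so the ``no inflation'' claim fails: a multiplicative error on each term of an alternating sum does not give a multiplicative error on the sum, because the main term is much smaller than the individual summands. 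The remedy is either to cite the Kubilius TV theorem outright (which is legitimate—see the references to Elliott and Tenenbaum), or to work at the prime level as the paper does: fix which $p_i\in\sP_i$ divides $\mathbf n$, apply the fundamental lemma once per configuration, and sum. Then each atom carries multiplicative error $1+O(u^{-u})$ and the atoms have total mass $1$, so the TV bound follows without inflation.
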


\begin{proof}
\rev{For $i_1 \le i\le i_2$, let $\omega_i$ be the indicator function of the event that $\bn$ has a prime factor from $\sP_i$, let $Q_i$ be a Poisson
random variable with parameter $R_i$, with the different $Q_i$ independent,
and let $Z_i=1_{Q_i\ge 1}$\footnote{We use $1_E$ for the indicator function of a statement $E$; that is, $1_E=1$ if $E$ is true and $1_E=0$ if $E$ is false.}. Also, let $Y_i$ be a Bernoulli random variable with $\P(Y_i=1)=1/i$, again with the $Y_i$ independent.
Let $\boldsymbol{\omega}, \mathbf{Z}, \mathbf{Y}$ denote the vectors of
the variables $\omega_i,Z_i,Y_i$, respectively.
 By assumption, each $\sP_i \subset [\log X,
X^{1/3\log\log\log X}]$.  Hence, Theorem 1 of \cite{prime-poisson} implies that
\[
d_{\TV}(\boldsymbol{\omega},\mathbf{Z}) \ll \frac{1}{\log\log X}.
\]
In addition, note that $d_{\TV}(Z_i,Y_i)\ll 1/i^2$ for all $i$, something that can be easily proven using \eqref{TV-ident}. Combining this estimate with  \eqref{TV-joint}, we find that
\[
d_{\TV}(\mathbf{Z},\mathbf{Y}) \le \sum_{i=i_1}^{i_2} d_{\TV}(Z_i,Y_i) \ll \sum_{i=i_1}^{i_2} \frac{1}{i^2} \ll \frac{1}{\log\log X}.
\]
The triangle inequality then implies that $d_{\TV}(\boldsymbol{\omega},\mathbf{Y})\ll
1/\log\log X$, as desired.}
\end{proof}

\subsection{The concentration of divisors of integers}\label{sec23}

In this section we prove Theorems \ref{delta-from-model}
and \ref{thm:alphak}. Recall from \eqref{zeta-def} the definition of \rev{$\zeta_+$}.

\begin{proof}[Proof of Theorem \ref{delta-from-model}]
\rev{Fix $\eps>0$ and let $X$ be large enough in terms of $\eps$,} and let $\mathbf{n} \leq X$ be a uniformly sampled random integer. Generate a logarithmic random set $\A$. Set 
$K=10 \log\log X$, $D_1 = i_1$, $D=D_2 = i_2$, where $i_1$ and $i_2$ are defined by \eqref{i1i2}. With our choice of parameters, the hypotheses of Lemma \ref{main-prop} hold and hence, with probability $1 - o(1)$ \rev{as $X\to\infty$}, there are distinct sets $A_1,\dots, A_M \subset \A \cap [D_1, D_2]$ with $\sum_{a \in A_1} a = \cdots = \sum_{a \in A_M} a$ \rev{and $M:=\lceil (\log \log X)^{\zeta_+ - \eps}\rceil$. 
By Lemma \ref{A-normal}, with probability $1 - o(1)$, we have
\[
|\A \cap [D_1,D_2]| \leq  2 \log D_2 \leq 2 \log \log X+2\log K.
\]}
Write $F$ for the event that both of these happen.

\rev{Let $\bn$ be a random integer chosen uniformly in $[1,X]$, and let $\mathbf{I}$ be the random set associated to $\bn$ via \eqref{setBdef}.} By Lemma \ref{prime model}, the corresponding event $F'$
for \rev{$\mathbf{I}$} also holds with probability
$1-o(1)$; that is, $F'$ is the event that $|\mathbf{I} \cap [D_1,D_2]| \le 2\log D_2$ and that there are distinct subsets $I_1,\dots,I_M$
with equal sums. \rev{Assume we are in the event $F'$. For each $i\in\mathbf{I}$, $\mathbf{n}$ is divisible by some prime $p_i\in\sP_i$. In addition,  for each $r,s\in \{1,2,\ldots,M\}$, we have
\begin{align*}
\Big|\sum_{i \in I_r} \log p_i - \sum_{i \in I_s} \log p_i\Big| & \leq  \frac{|I_r| + |I_s|}{K} + \frac{1}{K} \Big|\sum_{i \in I_r} i - \sum_{i \in I_s} i\Big| \\ & \le  \frac{4 \log \log X+4\log K}{K} < \frac12.
\end{align*}
}
Writing \rev{$d_r := \prod_{i \in I_r} p_i$ for each $i$, we thus see that the $d_r$'s} are all divisors of $\mathbf{n}$ and their logarithms all lie in an interval of length $1$. \rev{It follows that $\P(\Delta(\mathbf{n}) \geq M) = 1 - o(1)$ when $\bn$ is a uniformly sampled random integer from $[1,X]$,} as required for Theorem \ref{delta-from-model}.
\end{proof}

\begin{proof}[Proof of Theorem \ref{thm:alphak}]
\rev{Fix $0<c < \beta_k/(1-\beta_k)$}, let $X$ be large and set
$K= (\log X)^{c}$.  Define $i_1,i_2$ by 
\eqref{i1i2}, let $D=i_2$ and define $c'$ by $D^{c'} = i_1$.
\rev{Let $\bn$ be a random} integer chosen uniformly in $[1,X]$.
\rev{We have
\[
c' = \frac{c}{c+1} + o(1) \qquad (X\to\infty),
\]
}and therefore $c' \le \beta_k-\delta$ for some $\delta>0$, which depends only on $c$.
By the definition of $\beta_k$ and Lemma \ref{prime model},
it follows that with probability $1-o(1)$, the set \rev{$\mathbf{I}$}
defined in \eqref{setBdef} has $k$ distinct subsets \rev{$I_1,\dots,I_k$}
with equal sums, \rev{and moreover (cf.~the proof of Theorem \ref{delta-from-model} above), $|\mathbf{I}| \le 2\log i_2$, so that $|I_j|\le2\log i_2$ for each $j$}.  Thus, with probability $1-o(1)$, there are 
primes $p_i\in \sP_i$ ($i\in \mathbf{I}$) such that
for any \rev{$r,s\in\{1,\dots,k\}$} we have
\[
\rev{\Big| \sum_{i\in I_r} \log p_i - \sum_{i\in I_s} \log p_i \Big| 
\le \frac{|I_r|+|I_s|}{K} \le \frac{4\log\log X}{(\log X)^c}}.
\]
Thus, setting \rev{$d_r = \prod_{i\in I_r} p_i$}, we see that
\rev{$d_r \le d_s \exp \big\{O\big(\frac{\log\log X}{(\log X)^c}\big) \big\}$ for any $r,s\in\{1,\dots,k\}$.}
Since $c$ is arbitrary subject to $c<\beta_k/(1-\beta_k)$, we conclude that
$\alpha_k \ge \beta_k/(1-\beta_k)$.
\end{proof}

\subsection{Permutations and polynomials over finite fields}

The connection between random logarithmic sets, random permutations and random 
polynomials is more straightforward, owing to the well-known approximations of
these objects by a vector of Poisson random variables.

For each $j$, let $Z_j$ be a Poisson random variable with parameter $1/j$,
and such that $Z_1,Z_2,\ldots,$ are independent. 
The next proposition states that, apart from the very longest cycles, the cycle lengths of a random permutation have a joint Poisson distribution.

\begin{lemma}\label{cycles}
For a random permutation $\sigma \in S_n$, let $C_j(\sigma)$ denote the number of cycles in $\sigma$ of length $j$.  Then for $r = o(n)$ as $n\to \infty$ we have
\[
d_{\TV} \Big( (C_1(\sigma),\ldots,C_r(\sigma)),(Z_1,\ldots,Z_r) \Big) = o(1).
\] 
\end{lemma}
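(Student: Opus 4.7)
The strategy I would follow is a direct pointwise comparison of the two distributions, supplemented by a Markov-type argument for the tail. The cycle index formula for $S_n$ gives, for any non-negative integers $c_1,\dots,c_r$ with $m := \sum_{j\le r} j c_j \le n$,
\[
\P\bigl(C_j(\sigma) = c_j \text{ for all } 1\le j\le r\bigr) = \frac{p_{n-m,r}}{\prod_{j\le r} j^{c_j}\,c_j!},
\]
where $p_{\ell,r}$ denotes the probability that a uniform random permutation of $\ell$ elements has no cycle of length $\le r$, while on the Poisson side
\[
\P\bigl(Z_j = c_j \text{ for all } 1\le j\le r\bigr) = \frac{e^{-H_r}}{\prod_{j\le r} j^{c_j}\,c_j!},
\]
with $H_r := \sum_{j\le r} 1/j$. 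By \eqref{TV-ident}, bounding $d_{\TV}$ therefore reduces to controlling
\[
\sum_{\mathbf{c}} \frac{\bigl|p_{n-m,r} - e^{-H_r}\,\mathbf{1}_{m\le n}\bigr|}{\prod_{j\le r} j^{c_j}\,c_j!}.
\]

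The analytic input I would need is the quantitative asymptotic $p_{\ell,r} = e^{-H_r}\bigl(1 + O(r/\ell)\bigr)$, valid uniformly for $\ell \ge 2r$. This comes from the classical exponential generating function
\[
\sum_{\ell\ge 0} p_{\ell,r}\,x^\ell = \frac{1}{1-x}\,\exp\Bigl(-\sum_{j\le r}\frac{x^j}{j}\Bigr),
\]
obtained by stripping the short-cycle factors out of the usual identity $\sum \ell!^{-1}\cdot \ell!\, x^\ell = 1/(1-x)$. Writing $p_{\ell,r}$ as a partial sum of the Taylor coefficients of the entire function $\exp(-\sum_{j\le r} x^j/j)$ and estimating the tail yields the claimed asymptotic in a routine fashion.

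With this asymptotic in hand, I would split the sum into the \emph{typical} part $m \le n/2$ and the \emph{tail} $m > n/2$. On the typical part we have $n - m \ge n/2$, so the pointwise ratio of masses is $1 + O(r/n)$; summing against the Poisson probabilities contributes $O(r/n) = o(1)$ to the total variation. On the tail, the two contributions are bounded separately by $\P\bigl(\sum_{j\le r} j\,C_j(\sigma) > n/2\bigr)$ and $\P\bigl(\sum_{j\le r} j\,Z_j > n/2\bigr)$; since both means equal $r = o(n)$, Markov's inequality yields $o(1)$ for each.

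The principal obstacle is maintaining uniformity in $r$, specifically in the $p_{\ell,r}$ asymptotic: when $\ell$ is comparable to $r$ the approximation breaks down (indeed $p_{r,r} = 0$). This is exactly what the hypothesis $r = o(n)$ is designed to handle, since after restricting to the typical event $m \le n/2$ we have $r/(n-m) = o(1)$, placing us squarely in the safe regime of the generating function estimate.
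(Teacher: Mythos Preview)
Your sketch is correct and is essentially the classical approach (going back to Goncharov and made precise by Arratia--Tavar\'e). The paper, however, does not prove this lemma at all: it simply cites \cite{AT92}, noting that one in fact has the uniform quantitative bound $d_{\TV} \ll e^{-n/r}$. So your proposal supplies more than the paper does.

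The one place where your write-up is a little loose is the asymptotic $p_{\ell,r} = e^{-H_r}(1 + O(r/\ell))$ for $\ell \ge 2r$: the correct uniform statement is rather $p_{\ell,r} = e^{-H_r}(1 + O(e^{-c\,\ell/r}))$ or, more crudely, $|p_{\ell,r} - e^{-H_r}| \to 0$ uniformly as $\ell/r \to \infty$ (this is what the generating-function/Buchstab analysis actually gives). For the present lemma any $o(1)$ bound in the regime $\ell/r \to \infty$ suffices, so your splitting at $m \le n/2$ and Markov on the tail goes through unchanged; but if you want the sharper $e^{-n/r}$ bound quoted in the paper you would need the exponential form of the $p_{\ell,r}$ estimate rather than the polynomial one you wrote.
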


\begin{proof}
In fact there is a bound $\ll e^{-n/r}$ uniformly in $n$ and $r$; see \cite{AT92}. 
\end{proof}

The next proposition states a similar phenomenon for the degrees of the irreducible factors of a random polynomial over $\F_q$, except that now one must also exclude the very smallest degrees as well.

\begin{lemma}\label{polynom-factors}
Let $q$ be a prime power. Let $f$ be a random, monic polynomial in $\mathbb{F}_q[t]$ of degree $n$. Let $Y_d(f)$
denote the number of monic, irreducible factors of $f$ which have degree $d$. Suppose that $10\log n \le r \le s\le \frac{n}{10\log n}$. Then
\[
d_{\TV} \Big( (Y_r(f),\ldots,Y_s(f)),(Z_r,\ldots,Z_s) \Big) = o(1)
\]
as $n \rightarrow \infty$.
\end{lemma}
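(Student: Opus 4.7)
The plan is to prove this by adapting the Arratia--Barbour--Tavar\'e approach used for Lemma \ref{cycles} to the polynomial setting, via an exact combinatorial formula for the joint distribution of $(Y_r(f),\dots,Y_s(f))$. Writing $I_q(d) = \frac{1}{d}\sum_{e\mid d}\mu(d/e)q^e = q^d/d + O(q^{d/2}/d)$ for the number of monic irreducibles of degree $d$, and $M_{r,s}(m)$ for the number of monic polynomials of degree $m$ whose irreducible factors all have degree outside $[r,s]$, the starting identity is
\[
\P\bigl(Y_d(f)=k_d\ \forall\, d\in[r,s]\bigr)=\frac{M_{r,s}(n-N)}{q^n}\prod_{d=r}^{s}\binom{I_q(d)+k_d-1}{k_d},\qquad N:=\sum_{d=r}^{s}dk_d,
\]
which holds because choosing a multiset of $k_d$ irreducibles of degree $d$ contributes a polynomial of degree $dk_d$, and the leftover factor of degree $n-N$ may be any polynomial with no irreducible factor of degree in $[r,s]$.

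Next I would estimate each factor uniformly on the ``typical'' set where $N\le n/2$ and $k_d\le I_q(d)^{1/2}$ for every $d$. From the prime polynomial theorem and elementary binomial expansions,
\[
\binom{I_q(d)+k_d-1}{k_d}=\frac{I_q(d)^{k_d}}{k_d!}\Bigl(1+O\bigl(k_d^2/I_q(d)\bigr)\Bigr),\qquad \frac{I_q(d)^{k_d}}{q^{dk_d}}=\frac{1+O(k_d q^{-d/2})}{d^{k_d}};
\]
the hypothesis $r\ge 10\log n$ guarantees $q^{-r/2}\le n^{-5}$, rendering these error terms negligible. The critical remaining ingredient is the asymptotic
\[
\frac{M_{r,s}(m)}{q^m}=(1+o(1))\prod_{d=r}^{s}(1-q^{-d})^{I_q(d)}=(1+o(1))\exp\Bigl(-\sum_{d=r}^{s}\tfrac1d\Bigr),
\]
uniformly for $m\ge n/2$, which I would extract from the generating identity
\[
\sum_{m\ge 0}M_{r,s}(m)u^m=\frac{1}{1-qu}\prod_{d=r}^{s}(1-u^d)^{I_q(d)}
\]
by a Cauchy contour around $u=1/q$; the polynomial factor is slowly varying near $u=1/q$, and the upper bound $s\le n/(10\log n)$ ensures $n-N$ lies safely above the range of degrees where $M_{r,s}$ could behave erratically, so that a standard smooth-number-style estimate yields the claim.

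Combining these asymptotics gives, uniformly on the typical set,
\[
\P(Y_d(f)=k_d\ \forall\, d)=(1+o(1))\prod_{d=r}^{s}\frac{e^{-1/d}}{k_d!\,d^{k_d}}=(1+o(1))\,\P(Z_d=k_d\ \forall\, d).
\]
To convert this into a total variation bound via \eqref{TV-ident}, I would separately bound the contribution from the atypical tuples: on the random polynomial side, $\P(N>n/2)$ is $o(1)$ by a classical smooth-polynomial estimate (contour shifting in the generating function), and on the Poisson side, $\P(\sum_d d Z_d>n/2)$ and $\P(\exists d:\ Z_d>I_q(d)^{1/2})$ are both $o(1)$ by trivial moment estimates since $\sum_d 1=s-r+1\le n$ and $\sum_d d/d=s-r+1$ while $I_q(d)^{1/2}\gg q^{r/2}\to\infty$.

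The main obstacle is the uniform asymptotic for $M_{r,s}(m)/q^m$: the lower cut-off $r\ge 10\log n$ is needed so that the local Euler factors at $u=1/q$ contribute negligible error, and the upper cut-off $s\le n/(10\log n)$ is needed so that the polynomial degree $m=n-N$ is large enough for the coefficient extraction to be insensitive to the precise value of $m$. This uniform estimate is the direct analogue of the coupling inequality $d_{\TV}\ll e^{-n/r}$ of Arratia--Tavar\'e in the permutation setting; any of the known polynomial-analytic techniques (Selberg--Delange on $\F_q[t]$, or a direct coupling with independent Poissons as in the ``ESF-type'' theorems) would suffice to execute this step and thereby complete the proof.
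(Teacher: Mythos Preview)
Your outline is plausible and would, with care, yield the lemma, but it takes a much longer route than the paper. The paper does not redo the joint-distribution computation from scratch; instead it cites Corollary~3.3 of Arratia--Barbour--Tavar\'e \cite{ABT93}, which already gives
\[
d_{\TV}\bigl((Y_r(f),\ldots,Y_s(f)),(\hat Z_r,\ldots,\hat Z_s)\bigr)\ll 1/n
\]
for independent \emph{negative binomials} $\hat Z_i\sim\mathrm{NB}\bigl(I_q(i),q^{-i}\bigr)$. The remaining work is then just to show $d_{\TV}(Z_i,\hat Z_i)\ll 1/n$ coordinate-wise (this is where the hypothesis $r\ge 10\log n$ enters, forcing $I_q(i)=q^i/i\,(1+O(1/n))$), and to sum via \eqref{TV-joint} to get $\ll s/n=o(1)$. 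So the paper's proof is two short steps: quote ABT, then compare NB to Poisson.

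Your approach, by contrast, effectively reproves the ABT approximation by direct analysis of the generating function for $M_{r,s}(m)$. This is self-contained but you correctly identify the uniform asymptotic for $M_{r,s}(m)/q^m$ as the crux, and you leave it as a sketch (``contour around $u=1/q$'', ``Selberg--Delange on $\F_q[t]$''). That step is genuine work---it is essentially the content of the ABT result you are not citing---so your write-up is really a reduction to a known but nontrivial estimate rather than a complete proof. If you want a short argument, it is cleaner to intermediate through negative binomials and quote \cite{ABT93} as the paper does; if you want a self-contained argument, you should actually carry out the contour/coefficient-extraction step rather than defer it.
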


\begin{proof}
For $r \leq i \leq s$, let $\hat{Z}_i$ be a negative binomial random variable\footnote{\rev{We say that the random variable $X$ has the distribution $\mathrm{NB}(r,p)$ with $r\in\N$ and $p\in(0,1]$ if $X$ takes values in $\Z_{\ge0}$ with the following frequency: $\P(X=k)=\binom{k+r-1}{r-1}(1-p)^kp^r$ for each $k\in\Z_{\ge0}$.}} $\mathrm{NB}(\frac{1}{i}\sum_{j|i} \mu(i/j) q^{j},q^{-i})$. Corollary 3.3 in 
\cite{ABT93} implies that
\begin{equation}\label{yz}
d_{\TV} \Big( (Y_r(f),\ldots,Y_s(f)),(\hat{Z}_r,\ldots,\hat{Z}_s) \Big) \ll 1/n
\end{equation}
\rev{uniformly in $q,n,r,s$ as in the statement of the lemma}. Note that 
\[
\rev{\frac{1}{i} \sum_{j|i} \mu(i/j) q^{j} = \frac{1}{i}q^i(1+ O(q^{-i/2}))=\frac{1}{i}q^i(1+O(1/n))}
\]
for $i\ge r\ge 10\log n$.  A routine if slightly lengthy calculation with \eqref{TV-ident} gives
\[
d_{\TV}(Z_i,\hat{Z}_i) \ll 1/n.
\]
Combining this with \eqref{TV-joint}, we arrive at
\[
d_{\TV}( (Z_r,\cdots,Z_s),(\hat{Z}_r,\ldots,\hat{Z}_s) ) \ll s/n = o(1).
\]
The conclusion follows from this, \eqref{yz} and the triangle inequality.
\end{proof}

\begin{proof}[Proof of Theorem \ref{delta-perm}]
\rev{Fix $\eps>0$, let $n$ be large enough in terms of $\eps$,} let $u=\log n$ and $v=n/\log n$.
For a random permutation $\sigma\in S_n$, let $\mathbf{C} = \{ j: C_j(\sigma)\ge 1 \}$,
\rev{and define the random set $\tilde{\A} = \{ j: Z_j \ge 1 \}$. 
As in the proof of Lemma \ref{prime model}, \eqref{TV-joint} and \eqref{TV-ident}
imply that
\[
d_{\TV} (\A \cap (u,v], \tilde{\A} \cap (u,v]) \ll \sum_{u<i\le v} \frac{1}{i^2}
\ll \frac{1}{u}.
\]
Lemma \ref{cycles} implies that
\[
 d_{\TV} (\tilde{\A} \cap (u,v], \mathbf{C} \cap (u,v]) =o(1) \qquad (n\to\infty).
\]
Hence,
\begin{align*}
d_{\TV} (\A \cap (u,v], \mathbf{C} \cap (u,v]) &\le 
d_{\TV} (\A \cap (u,v], \tilde{\A} \cap (u,v])+ d_{\TV} (\tilde{\A} \cap (u,v], \mathbf{C} \cap (u,v]) \\
& = o(1)
\end{align*}
}
as $n \rightarrow \infty$.
By Lemma \ref{main-prop}, with probability $\to 1$ as $n\to \infty$,
 $\A \cap (u,v]$ has $M$ distinct
subsets $A_1,\ldots,A_M$ with equal sums, \rev{where $M=\lceil (\log n)^{\zeta_+-\eps}\rceil$.}  Hence, $\mathbf{C}$ has distinct subsets $S_1,\ldots,S_M$ with equal sums with probability
$\to 1$ as $n\to\infty$.  Each subset $S_j$ corresponds to a distinct divisor of $\sigma$, the size of the divisor being
 the sum of elements of $S_j$.
\end{proof}

\begin{proof}[Proof of Theorem \ref{delta-polynomial}]
The proof is essentially the same as that of Theorem \ref{delta-perm}, except now we take
$u=10\log n$, $v=\frac{n}{10\log n}$, $\mathbf{C} = \{j: Y_j(f)\ge 1 \}$
 and use Lemma \ref{polynom-factors} in
place of Lemma \ref{cycles}.
\end{proof}

\section{Overview of the paper}\label{overview-sec}

The purpose of this section is to explain the main ideas that go into the proof of Theorem \ref{beta-rho} in broad strokes, as well as to outline the structure of the rest of the paper. The remainder of the paper splits into three parts, and we devote a subsection to each of these. Finally, in subsection \ref{previous-sec}, we make some brief comments about the relationship of our work to previous work of Maier and Tenenbaum \cite{MT84, MT09}. Further comments on this connection are made in Appendix \ref{previous-app}.

\subsection{Part \ref{part:reduction-to-optimizaton}: Equal sums and the optimization problem.}

Part \ref{part:reduction-to-optimizaton} provides a very close link between the key quantity $\beta_k$ (which is defined in Problem \ref{beta-k-def} and appears in all four of  Theorems \ref{beta-rho}, \ref{delta-from-model}, \ref{delta-perm} and \ref{delta-polynomial}) and a quantity $\gamma_k$, which on the face of it appears to be of a completely different nature, being the solution to a certain optimization problem (Problem \ref{opt-problem} below) involving the manner in which linear subspaces of $\Q^k$ intersect the cube $\{0,1\}^k$. 

At the heart of this connection is a fairly simple way of \rev{associating a \emph{flag} to $k$ distinct sets} $A_1,\dots, A_k \subset A$, where $A$ is a given set of integers (that we typically generate logarithmically).

\begin{definition}[Flags]\label{flag-pre-def} Let $k \in \N$.
	By an $r$-step \emph{flag} we mean  a nested sequence 
	\[ 
	\sV: \spanone = V_0 \leq  V_1 \leq  V_2 \leq  \cdots \leq  V_r \leq  \Q^k 
	\] 
	of vector spaces.\footnote{In the literature, the term ``flag'' means that the inclusions are proper, i.e., $\dim (V_{i+1}) > \dim V_i$ for all $i$.  In this paper, we will use the term more broadly to refer to an arbitrary nested sequence of subspaces.} Here $\mathbf{1} = (1,1,\dots, 1) \in \Q^k$.  A flag is \emph{complete} if $\dim V_{i+1} = \dim V_i + 1$ for $i = 0,1,\dots, r-1$.
\end{definition}

To each choice of distinct sets $A_1,\dots, A_k \subset A$, we associate a flag as follows. The Venn diagram of the subsets $A_1,\ldots,A_k$ produces a natural partition of $A$ into $2^k$ subsets,
which we denote by $B_\om$ for $\om\in \{0,1\}^k$.
Here $A_i = \sqcup_{\om:\om_i=1} B_\om$.
We iteratively select vectors $\om^1,\ldots,\om^r$
to maximize $\prod_{j=1}^r (\max B_{\om^j})$ subject
to the constraint that $\one,\om^1,\ldots,\om^r$ are linearly
independent over $\Q$. We then define\footnote{ \rev{Here and throughout the paper, $\Span(v_1,\ldots)$ denotes the $\Q$-span of vectors $v_1,\ldots$.}} $V_j  = \Span(\one,\om^1,\ldots,\om^j)$ for $j = 0,1,\dots, r$. 

The purpose of making this construction is difficult to describe precisely in a short paragraph. However, the basic idea is that the vectors $\omega^1,\dots, \omega^r$ and the flag $\sV$ provide a natural frame of reference for studying the equal sums equation

\begin{equation}\label{eq:k equal sums}
\sum_{a\in A_1}a=\cdots=\sum_{a\in A_k} a.
\end{equation}

Suppose now that $A_1,\dots, A_k \subset [D^c,D]$. Then the construction just described naturally leads, in addition to the flag $\sV$, to the following further data: thresholds $c_j$ defined by $\max B_{\om^j} \approx D^{c_j}$, and measures $\mu_j$ on $\{0,1\}^k$, which 
capture the relative sizes of the sets
$B_{\om} \cap (D^{c_{j+1}},D^{c_j}]$, $\omega \in \{0,1\}^k$. Full details of these constructions are \rev{given} in Section \ref{sec:upper}.  

The above discussion motivates the following definition, which will be an important one in our paper.

\begin{definition}[Systems]\label{adm-meas}
Let $(\sV,\cc,\bmu)$ be a triple such that:
\begin{enumerate}
	\item $\sV$ is an $r$-step flag whose members $V_j$ are distinct and spanned by elements of $\{0,1\}^k$;
	\item $\sV$ is \emph{nondegenerate}, which means that $V_r$ is not contained in any of the subspaces $\{ x \in \Q^k : x_i = x_j\}$, $i \neq j$; 
	\item $\cc=(c_1,\dots,c_r,c_{r+1})$ with $1\ge c_1 \ge \cdots \ge c_{r+1} \ge 0$;
	\item $\bmu=(\mu_1,\dots,\mu_r)$ is an $r$-tuple of probability measures;
	\item $\Supp(\mu_i)\subset V_i \cap \{0,1\}^k$ for all $i$.
\end{enumerate}
Then we say that $(\sV,\cc,\bmu)$ is a {\it system}. We say that a system is complete if its underlying flag is, in the sense of Definition \ref{flag-pre-def}.
\end{definition}

\begin{remark*} The nondegeneracy condition (b) arises naturally from the construction described previously, provided one assumes the sets $A_1,\dots, A_k$ are distinct.
\end{remark*}

We have sketched how a system $(\sV,\cc,\bmu)$ may be associated to any $k$ distinct sets $A_1,\dots, A_k \subset [D^c, D]$. Full details are given in subsection \ref{venn-linear}. There is certainly no canonical way to reverse this and associate sets $A_i$ to a system $(\sV,\cc,\bmu)$, \rev{even if the numbers $\mu_j(\omega)$ are all rational.}  However, given a set $\A  \subset [D^c,D]$ (which, in our paper, will be a logarithmic random set) and a system $(\sV,\cc,\bmu)$, there is a natural \emph{probabilistic} way to construct subsets $A_1,\dots, A_k \subset \A$ via their Venn diagram $(B_{\omega})_{\omega \in \{0,1\}^k}$: if $a \in \A \cap (D^{c_{j+1}}, D^{c_j}]$ then we put $a$ in $B_{\omega}$ with probability $\mu_j(\omega)$, these choices being independent for different $a$s.

This will be indeed be roughly our strategy for constructing, given a logarithmic random set $\A \subset [D^c, D]$, distinct subsets $A_1,\dots, A_k \subset \A \cap [D^c, D]$ satisfying the equal sums condition \eqref{eq:k equal sums}. Very broadly speaking, we will enact this plan in two stages, described in  Sections \ref{lower-bound-sec} and \ref{mt-argument} respectively. In Section \ref{lower-bound-sec}, which is by far the deeper part of the argument, we will show that (almost surely in $\A$) the distribution of tuples $(\sum_{a \in A_i} a)_{i = 1}^k$ is dense in a certain box adapted to the flag $\sV$, as the $A_i$ range over the random choices just described. Then, in Section \ref{mt-argument}, we will show that (almost surely) one of these tuples can be ``corrected'' to give the equal sums condition \eqref{eq:k equal sums}. This general mode of argument has its genesis in the paper \cite{MT84}
of Maier and Tenenbaum, but the details here will look very different. In addition to the fact that linear algebra and entropy play no role in Maier and Tenenbaum's work, they use a second moment argument which does not work in our setting. Instead we use an $\ell^p$ estimate with $p\approx 1$, building on ideas in \cite{DK10,DK14}.

In analysing the distribution of tuples $(\sum_{a \in A_i} a)_{i = 1}^k$, \rev{the notion of entropy} comes to the fore.

\begin{definition}[Entropy of a subspace] \label{entropy-def}Suppose that $\nu$ is a finitely supported probability measure on $\Q^k$ and that $W \leq  \Q^k$ is a vector subspace. Then we define
	\[ 
	\HH_{\nu}(W) := -\sum_x \nu(x) \log \nu(W + x).
	\]
\end{definition}

\begin{remark*} 
This the (Shannon) entropy of the distribution on cosets $W + x$ induced by $\nu$. Entropy will play a key role in our paper, and basic definitions and properties of it are collected in Appendix \ref{entropy-appendix}.
\end{remark*}

More important than the entropy itself will be a certain \rev{quantity $\er(\sV',\cc,\bmu)$,} assigned to \emph{subflags} of $\sV$. We give the relevant definitions now.

\begin{definition}[Subflags]\label{subflag-def}
Suppose that 
	\[ 
	\sV: \spanone = V_0 \leq  V_1 \leq  V_2 \leq  \cdots \leq  V_r \leq  \Q^k 
	\] is a flag. Then another flag 
	\[
	\sV': \spanone = V'_0 \leq  V'_1 \leq  V'_2 \leq  \cdots \leq  V'_r \leq  \Q^k
	\]
	is said to be a \emph{subflag} of $\sV$ if $V'_i \leq  V_i$ for all $i$. In this case we write $\sV' \leq \sV$. It is a \emph{proper subflag} if it is not equal to $\sV$.  
\end{definition}

\begin{definition}[$\er$-value]\label{e-value dfn}
Let $(\sV,\cc,\bmu)$ be a system, and let $\sV' \leq \sV$ be a subflag. Then we define the \emph{$\er$-value}
\be\label{entropy-dfn}
 \er(\sV', \cc,\bmu) := \sum_{j = 1}^r (c_j - c_{j+1}) \HH_{\mu_j}(V'_j) + \sum_{j = 1}^r c_j\dim (V'_j/V'_{j-1}) .
 \ee
\end{definition}

\begin{remark*} Note that 
\begin{equation}\label{full-e}   \er(\sV, \cc,\bmu) = \sum_{j = 1}^r c_j \dim (V_j/V_{j-1}),
\end{equation} 
since condition (e) of Definition \ref{adm-meas} implies that $\HH_{\mu_j}(V_j)=0$ for $1\le j\le r$.
\end{remark*}

\begin{definition}[Entropy condition]\label{ent-condition}
Let $(\sV,\cc,\bmu)$ be a system. We say that this system satisfies the \emph{entropy condition} if
\begin{equation}\label{entropy-cond}
	\er(\sV',\cc,\bmu)\ge \er(\sV,\cc,\bmu)
	\qquad\text{for all subflags}\ \sV'\ \text{of}\ \sV,
	\end{equation}
and the \emph{strict entropy condition} if \begin{equation}\label{strict-entropy-cond}
	\er(\sV',\cc,\bmu) >\er(\sV,\cc,\bmu)
	\qquad\text{for all proper subflags}\ \sV'\ \text{of}\ \sV.
	\end{equation}
\end{definition}

We cannot give a meaningful discussion of exactly why these definitions are the right ones to make in this overview. Indeed, it took the authors over a year of working on the problem to arrive at them. Let us merely say that

\begin{itemize}
\item If a random logarithmic set $\A \cap [D^c, D]$ almost surely admits distinct subsets $A_1,\dots, A_k$ satisfying the equal sums condition \eqref{eq:k equal sums}, then some associated system $(\sV,\cc,\bmu)$ satisfies the entropy condition \eqref{entropy-cond}. For detailed statements and proofs, see Section \ref{sec:upper}.
\item If a system $(\sV,\cc,\bmu)$ satisfies the strict entropy condition \eqref{strict-entropy-cond} then the details of the construction of sets $A_1,\dots, A_k$ satisfying the equal sums condition, outlined above, can be made to work. For detailed statements and proofs, see Sections \ref{lower-bound-sec} and \ref{mt-argument}.
\end{itemize}

With the above definitions and discussion in place, we are finally ready to introduce the key optimization problem, the study of which will occupy a large part of our paper. 

\begin{problem}[The optimisation problem]\label{opt-problem} Determine the value of $\gamma_k$, defined to be the supremum of all constants $c$ for which there is a system $(\sV,\cc,\bmu)$ such that $c_{r+1}=c$ and the entropy condition \eqref{entropy-cond} holds.
	
Similarly, determine $\tilde{\gamma}_k$, defined to be the supremum of all constants $c$ for which there is a system $(\sV,\cc,\bmu)$ such that $c_{r+1}=c$ and the strict entropy condition \eqref{strict-entropy-cond} holds.
\end{problem}

The precise content of the two bullet points above, and the main result of Part \ref{part:reduction-to-optimizaton} of the paper, is then the following theorem.

\begin{maintheorem}\label{thm:beta-gamma}
For every $k\ge 2$, we have 
\[
\tilde{\gamma}_k\le \beta_k\le \gamma_k.
\]
\end{maintheorem}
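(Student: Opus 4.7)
The plan splits into two independent inequalities. For the upper bound $\beta_k \le \gamma_k$, the idea is to extract an admissible system from an arbitrary equal-sums configuration. I would fix $c < \beta_k$, so that with probability $\to 1$ as $D\to\infty$, there are distinct $A_1,\dots,A_k\subset \A\cap[D^c,D]$ with $\sum_{a\in A_1}a = \cdots = \sum_{a\in A_k}a$. Starting from the Venn-diagram partition $(B_\omega)_{\omega\in\{0,1\}^k}$ of $A_1\cup\cdots\cup A_k$, I would greedily choose $\omega^1,\dots,\omega^r \in \{0,1\}^k$ maximising $\prod_j \max(B_{\omega^j})$ subject to $\one,\omega^1,\dots,\omega^r$ being linearly independent over $\Q$. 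Setting $V_j := \Span_\Q(\one,\omega^1,\dots,\omega^j)$, $c_j$ so that $\max B_{\omega^j} \approx D^{c_j}$, and $\mu_j$ equal to the empirical distribution on $V_j\cap\{0,1\}^k$ recording the relative weights of $B_\omega\cap(D^{c_{j+1}},D^{c_j}]$, I obtain a system $(\sV,\cc,\bmu)$. The distinctness of the $A_i$ forces the nondegeneracy condition (b) of Definition \ref{adm-meas}, and a compactness argument (pass to a subsequence of $D$'s along which the finitely many parameters converge) produces a limiting system with $c_{r+1} \ge c$. The core content is checking the entropy condition: for any subflag $\sV'$, one stratifies the configurations by cosets of $V'_j$ at each scale and bounds the log-count of such configurations by $\er(\sV',\cc,\bmu)\log D$, via a Shearer/submodularity-type estimate on the entropy terms $\HH_{\mu_j}(V'_j)$, while the ``budget'' $\er(\sV,\cc,\bmu)\log D$ from \eqref{full-e} counts the elements of $\A$ available. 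Positivity of the probability then forces $\er(\sV') \ge \er(\sV)$; sending $c\to\beta_k^-$ yields $\beta_k \le \gamma_k$.

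For the lower bound $\tilde{\gamma}_k \le \beta_k$, I would fix $c < \tilde{\gamma}_k$ and a system $(\sV,\cc,\bmu)$ with $c_{r+1}=c$ satisfying the strict entropy condition. Given a logarithmic random $\A\subset[D^c,D]$, the plan is to construct candidate sets probabilistically: for each $a\in\A\cap(D^{c_{j+1}},D^{c_j}]$, independently assign $a$ to $B_\omega$ with probability $\mu_j(\omega)$ (for $\omega\in V_j\cap\{0,1\}^k$), and set $A_i := \bigsqcup_{\omega:\omega_i=1} B_\omega$. The argument then proceeds in two stages, corresponding to Sections \ref{lower-bound-sec} and \ref{mt-argument}. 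First I would prove an $\ell^p$-type local limit theorem with $p$ close to $1$, extending the techniques of \cite{DK10, DK14}, showing that almost surely in $\A$ the joint law of $(\sum_{a\in A_1}a,\dots,\sum_{a\in A_k}a)$ under the random placement is quantitatively well distributed over the intersection of $\Z^k$ with a box adapted to the flag $\sV$. The role of the strict entropy condition \eqref{strict-entropy-cond} is precisely to guarantee sufficient Fourier decay transverse to the equal-sums line $\Q\cdot\one$: every proper subflag contributes a positive gap $\er(\sV')-\er(\sV)>0$, which is converted into a power saving on the associated exponential sums. Second, reserving a small portion of $\A$ at the finest scale, I would apply a Maier--Tenenbaum-style correction in the spirit of \cite{MT84} to adjust a near-equal-sums tuple into an exact one, exploiting the density of target points in the box. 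Sending $c\nearrow \tilde{\gamma}_k$ completes this half.

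The main obstacle lies in the $\ell^p$ local-limit step, for two related reasons. First, the second-moment method used in \cite{MT84} is insufficient in dimension $k$ across many scales, because variance dominates; hence the need for a genuinely $\ell^p$-with-$p\approx 1$ estimate and the more refined tools of \cite{DK10, DK14}. Second, the estimate must be quantitatively sharp enough to survive the discrete correction step, which forces one to exploit the strict entropy inequality \emph{uniformly} over all proper subflags simultaneously, while the measures $\mu_j$ are supported on nested subspaces $V_j$ of varying dimension and the scale gaps $c_j - c_{j+1}$ may be widely different. Converting the abstract gap in Definition \ref{ent-condition} into concrete cancellation on character sums, and doing so uniformly over subflags, is the delicate technical core of Part \ref{part:reduction-to-optimizaton}.
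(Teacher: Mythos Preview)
Your high-level architecture matches the paper on both halves, but one key mechanism on each side is misidentified.

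\textbf{Upper bound.} The term $\er(\sV)$ does not arise as a ``budget of elements of $\A$ available''; the count $\#(\A\cap[D^c,D])\approx(1-c)\log D$ is unrelated to $\er(\sV)=\sum_j c_j\dim(V_j/V_{j-1})$. The actual source is a \emph{peeling} argument (Proposition~\ref{prop:ub}): writing the equal-sums relation as $\sum_j K_j\omega^j \equiv -\sum_\omega\omega\sum_{a'\in B'_\omega}a' \pmod{\one}$ with $K_j=\max B_{\omega^j}\in(D^{c_j}/e,D^{c_j}]$, the linear independence of $\one,\omega^1,\dots,\omega^r$ forces the $K_j$ to be uniquely determined by the remaining elements, so each contributes a probability factor $\approx D^{-c_j}$ and together $D^{-\er(\sV)}$. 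The complementary bound $|\sL_{\sV,\cc,\bmu}(\A)|\ll D^{\er(\sV')}$ (Lemma~\ref{lem:bound for L}) is not Shearer/submodularity either: one fixes linear projections $P_j:V_j\to V'_j$, bounds the $P$-part \emph{metrically} (the sums lie in a box of volume $\approx D^{\sum_j c_j\dim(V'_j/V'_{j-1})}$), and bounds the $Q$-part \emph{combinatorially} via the multinomial entropy bound $e^{\sum_j \HH_{\mu_j}(V'_j)|A^{(j)}|}$. Submodularity only enters later, in Part~\ref{part:optimization}.

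\textbf{Lower bound.} The outline (random placement via $\mu_j$, an $\ell^p$ bound with $p\to 1^+$, Maier--Tenenbaum correction) is exactly the paper's, but the $\ell^p$ step (Proposition~\ref{ell-p-bound}) does not go through Fourier decay or character sums. Instead one writes $r_\A^p=r_\A^{p-1}r_\A$, expands over pairs of compatible maps $(\theta,\theta')$, and to each pair attaches an \emph{adapted system} $(\sW,\bb)$ built from the differences $\theta'(t)-\theta(t)$ by the same greedy linear-independence selection used in the upper bound. The same peeling trick then yields a conditional factor $D^{-\sum_i b_i}$, and the residual weight factorises as $\prod_{i,j}\eta(i,j,p,\sW)^{|T_{i,j}|}$ with $\eta=\sum_\omega\mu_j(\omega)\mu_j(W_i+\omega)^{p-1}$; Taylor expansion at $p=1$ recovers $\HH_{\mu_j}(W_i)$. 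The strict entropy gap is fed in through Lemma~\ref{adapted-gap}, which realises any adapted system as a convex superposition of subflags $(\sV',\cc)$ and thereby shows $\er'(\sW,\bb)\ge\er(\sV)+\text{gap}$. A Fourier route might be workable, but it would still have to confront this convex-superposition structure over all proper subflags simultaneously, and is not obviously simpler. Also, the Maier--Tenenbaum correction (Section~\ref{mt-argument}) reserves gaps at \emph{every} scale $(D^{c_{j+1}},D^{c_{j+1}+\kappa}]$ and near each $D^{c_j}$, not only at the finest scale.
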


\begin{remark}\label{rem: gamma}
(a) Presumably $\gamma_k = \beta_k = \tilde \gamma_k$.  Indeed, it is natural to think that any system satisfying \eqref{entropy-cond} can be perturbed an arbitrarily small amount to satisfy \eqref{strict-entropy-cond}. However, we have not been able to show that this is possible in general.

(b) \rev{It is not \emph{a priori} clear that $\gamma_k$ and $\tilde{\gamma}_k$ exist and are positive.  This will follow, e.g., from our work on ``binary systems'' in part IV of the paper, although there is an easier way to see this using the original Maier-Tenenbaum
argument, adapted to our setting; see Appendix \ref{previous-app} for a sketch
of the details.}
\end{remark}

\subsection{Part \ref{part:optimization}: The optimization problem} 

Part \ref{part:optimization} of the paper is devoted to the study of Problem \ref{opt-problem} in as much generality as we can manage. Unfortunately we have not yet been able to completely resolve this problem, and indeed numerical experiments suggest that a complete solution, for all $k$, could be very complicated. 

The main achievement of Part \ref{part:optimization} is to provide a solution of sorts when the flag $\sV$ is fixed, but one is free to choose \rev{$\mathbf{c}$ and $\bmu$}. Write $\gamma_k(\sV)$ (or $\tilde\gamma_k(\sV)$) for the solution to this problem, \rev{that is, the supremum of values $c=c_{r+1}\ge0$ for which
a system $(\sV,\cc,\bmu)$ exists satisfying \eqref{entropy-cond} (or \eqref{strict-entropy-cond}).}

Our solution applies only to rather special flags $\sV$, but this is unsurprising: for ``generic'' flags $\sV$, one would not expect there to be any choice of $\mathbf{c}$, $\bmu$,  for which $c_{r+1} > 0$, and so \rev{$\gamma_k(\sV)= 0$} in these cases. Such flags are of no interest in this paper.

We begin, in Section \ref{optimisation-sec}, by solving an even more specific problem in which the entropy condition  \eqref{entropy-cond} is only required to hold for certain very special subflags $\sV'$ of $\sV$, which we call \emph{basic flags}. These are flags of the form
\[ \sV'_{\basic(m)} :
 \langle \mathbf{1} \rangle = V_0 \leq V_1 \leq \cdots \leq V_{m-1} \leq V_m = V_m = \cdots = V_m.
\] We call this the \emph{restricted entropy condition}; to spell it out, this is the condition that 
 
\begin{equation}\label{basic-flag-equality} 
\er(\sV'_{\basic(m)}, \cc,\bmu) \geq \er(\sV, \cc,\bmu) 
\end{equation}
for $m = 0,1,\dots, r-1$ (the case $m = r$ being vacuous). 
 
 We write $\gamma_k^{\res}(\sV)$ for the maximum value of $c_{r+1}$ (over all choices of $\mathbf{c}$ and $\bmu$ such that $(\sV, \cc, \bmu)$ is a system) subject to this condition. Clearly 
\begin{equation}\label{res-unres} \gamma_k^{\res}(\sV) \geq \gamma_k(\sV).\end{equation} 

The main result of Section \ref{optimisation-sec} is Proposition \ref{main-optim}, which states that under certain conditions we have
\begin{equation}\label{opt-answer}\gamma_k^{\res}(\sV) = \frac{\log 3 - 1}{\log 3 + \sum_{i = 1}^{r-1} \frac{\dim (V_{i+1}/V_i)}{\rho_1 \cdots \rho_{r-1}}},\end{equation}
for \rev{certain parameters $\rho_1,\ldots, \rho_{r-1}$ depending} on the flag $\sV$. 

To define these, one considers the ``tree structure'' on $\{0,1\}^k \cap V_r$ induced by the flag $\sV$: the ``cells at level $j$'' are simply intersections with cosets of $V_j$, and we join a cell $C$ at level $j$ to a ``child'' cell $C'$ at level $j-1$ iff $C' \subset C$. The $\rho_i$ are then defined by setting up a certain recursively-defined function on this tree and then solving what we term the \emph{$\rho$-equations}. The details may be found in subsection \ref{rho-equations-sec}.  Proposition \ref{main-optim} also describes the measures $\bmu$ and the parameters $\cc$ for which this optimal value is attained. 

\medskip

In Section \ref{entropy-binary-alt}, we relate the restricted optimisation problem to the real one, giving fairly general conditions under which we in fact have equality in \eqref{res-unres}, that is to say $\gamma_k^{\res}(\sV) = \gamma_k(\sV)$. The basic strategy of this section is to show that for the $\mathbf{c}$ and $\bmu$ which are optimal for the restricted optimisation problem, the full entropy condition \eqref{entropy-cond} is in fact a consequence of the restricted condition \eqref{basic-flag-equality}. 

The arguments of this section make heavy use of the submodularity inequality for entropy, using this to drive a kind of ``symmetrisation'' argument. In this way one can show that an arbitrary $\er(\sV', \cc, \bmu)$ is greater than or equal to one in which $\sV'$ is \emph{almost} a basic flag; these ``semi-basic'' flags are then dealt with by hand.

To add an additional layer of complexity, we build a perturbative device into this argument so that our results also apply to $\tilde \gamma_k(\sV)$.

\subsection{Part \ref{part:binary-systems}: Binary systems}

The final part of the paper is devoted to a discussion of a particular type of flag $\sV$, the \emph{binary flags}, and the associated optimal systems $(\sV, \cc, \bmu)$, which we call \emph{binary systems}. 

\begin{definition}[Binary flag of order $r$]
Let $k = 2^r$ be a power of two. Identify $\Q^k$ with $\Q^{\mathcal{P}[r]}$ (where $\mathcal{P}[r]$ means the power set of $[r] = \{1,\dots, r\}$) and define an $r$-step flag $\sV$, $\langle \mathbf{1} \rangle = V_0 \leq V_1 \leq \cdots \leq V_r = \Q^{\mathcal{P}[r]}$, as follows: $V_i$ is the subspace of all $(x_S)_{S \subset [r]}$ for which $x_S = x_{S \cap [i]}$ for all $S \subset [r]$. 
\end{definition}

Whilst the definition is, in hindsight, rather simple and symmetric, it was motivated by extensive numerical experiment. We believe these flags to be asymptotically optimal for Problem \ref{opt-problem}, though we currently lack a proof.

There are two main tasks in Part \ref{part:binary-systems}. First, we must verify that the various conditions necessary for the results of Part \ref{part:optimization} hold for the binary flags. This is accomplished in Section \ref{binary-calcs}, the main statements being given in Section \ref{binary-system}. At the end of Section \ref{binary-system} we give the proof (and complete statement) of Theorem \ref{beta-rho}(a), conditional upon the results of Section \ref{binary-calcs}. This is the deepest result in the paper.

Following this we turn to Theorem \ref{beta-rho}(b). There are two tasks here. First, we prove that the parameters $\rho_i$ for the binary flags (which do not depend on $r$) tend to a limit $\rho$. This is not at all straightforward, and is accomplished in Section \ref{lim-rho-sec}. 

After that, in Section \ref{sec:analytic}, we describe this limit in terms of certain recurrence relations, which also provide a useful means of calculating it numerically. Theorem \ref{beta-rho}(b) is established at the very end of the paper.

Most of Part \ref{part:binary-systems} could, if desired, be read independently of the rest of the paper.

 \subsection{Relation to previous work} \label{previous-sec}
 
 Previous lower bounds for the a.s. behaviour of $\Delta$ are contained in two papers of Maier and Tenenbaum \cite{MT84, MT09}. Both of these bounds can be understood within the framework of our paper.

The main result of \cite{MT84} follows from the fact that 
\begin{equation}\label{gamm-2} \tilde\gamma_2 \geq 1 - \frac{1}{\log 3}.\end{equation}
Indeed by Theorem \ref{thm:beta-gamma} it then follows that $\beta_2 \geq 1 - \frac{1}{\log 3}$, and then from Theorem \ref{delta-from-model} it follows that for almost every $n$ we have
\begin{equation}\label{mt-1-bd} \Delta(n) \gg (\log \log n)^{-\log 2/\log(1 - \frac{1}{\log 3}) + o(1)}.\end{equation}
The exponent appearing here is $0.28754048957\dots$ and is exactly the one in \cite[Theorem 2]{MT84}.

The bound \eqref{gamm-2} is very easy to establish, and a useful exercise in clarifying the notation we have set up. Take $k = 2$, $r = 1$ and let $\sV$ be the flag $\langle\mathbf{1}\rangle = V_0 \leq V_1 = \Q^2$. Let $\cc = (c_1, c_2)$ with $c_1 = 1$ and 
\begin{equation}\label{c2-lower} c_2 < 1 - \frac{1}{\log 3}.\end{equation} Let $\mu_1$ be the measure which assigns weight $\frac{1}{3}$ to the points $\mathbf{0} = (0,0)$, $(0,1)$ and $(1,0)$ in $\{0,1\}^2$ (this being a pullback of the uniform measure on $\{0,1\}^2 / V_0$).

There are only two subflags $\sV'$ of $\sV$, namely $\sV$ itself and the basic flag $\sV'_{\basic(0)} : \langle\mathbf{1}\rangle = V'_0 \leq V'_1$ with $V'_0 = V'_1 = V_0 = \langle\mathbf{1}\rangle$. The entire content of the strict entropy condition \eqref{strict-entropy-cond} is therefore that
\[ \er(\sV'_{\basic(0)}, \cc,\bmu) > \er(\sV, \cc, \bmu),\] which translates to 
\[ (c_1 - c_2) \HH_{\mu_1}(V_0) > c_1.\]
We have $\HH_{\mu_1}(V_0) = \log 3$ and $c_1 = 1$, and so this translates to precisely condition \eqref{c2-lower}.

\begin{remark*}(a) With very little more effort (appealing to Lemma \ref{ent-trivial}) one can show that $\gamma_2 = \beta_2 = \tilde\gamma_2 = 1 - \frac{1}{\log 3}$.

(b) This certainly does not provide a shorter proof of Theorem \ref{mt-1-bd} than the one Maier and Tenenbaum gave, since our deductions are reliant on the material in Sections \ref{lower-bound-sec} and \ref{mt-argument}, which constitute a significant elaboration of the ideas from \cite{MT84}.
\end{remark*}

The main result of \cite{MT09} (Theorem 1.4 there) follows from the lower bound
\begin{equation}\label{gamma-2r}
\tilde\gamma_{2^r} \geq\Big(1 - \frac{1}{\log 3}\Big)
\Big(\frac{1 - 1/\log 3}{1 - 1/\log 27}\Big)^{r - 1},
\end{equation} 
which of course includes \eqref{gamm-2} as the special case $r = 1$. Applying Theorem \ref{thm:beta-gamma} and Theorem \ref{delta-from-model}, then letting $r \rightarrow \infty$, we recover \cite[Theorem 1.4]{MT09} (quoted as Theorem MT in Section \ref{intro}), namely the bound
\[ \Delta(n) \ge (\log \log n)^{\frac{\log 2}{\log\frac{1 - 1/\log 27}{1 - 1/\log 3}}  - o(1)  }\] for almost all $n$. The exponent here is $0.33827824168\dots$.

To explain how \eqref{gamma-2r} may be seen within our framework requires a little more setting up. Since it is not directly relevant to our main arguments, we defer this to Appendix \ref{previous-app}.

\clearpage
\thispagestyle{fancy}
\fancyhf{} 
\renewcommand{\headrulewidth}{0cm}
\lhead[{\scriptsize \thepage}]{}
\rhead[]{{\scriptsize\thepage}}
\part{Equal sums and the optimisation problem}\label{part:reduction-to-optimizaton}

\section{The upper bound $\beta_k \leq  \gamma_k$}\label{sec:upper}

In this section we establish the bound in the title.  \rev{We recall the definitions of $\beta_k$ (Problem \ref{beta-k-def}) and $\gamma_k$ (Problem \ref{opt-problem}). We will in fact show a bit more, that if
$c>\gamma_k$ then}
\be\label{upper-bound:prob_to_zero}
\rev{\P\( \text{there are } \text{ distinct }A_1,\ldots,A_k\in [D^c,D]\text{ with equal sums} \) 
\to 0 \;\; \text{ as } D\to \infty.}
\ee

\subsection{Venn diagrams and linear algebra}\label{venn-linear}

Let $0 < c < 1$ be some fixed quantity, and let $D$ be a real number, large in terms of $c$. Suppose that $A_1,\dots, A_k \subset [D^c, D]$ are distinct sets. In this section we show that there is a rather natural way to associate a complete system $(\sV,\mathbf{c},{\bm \mu})$ (in the sense of Definition \ref{adm-meas}) to these sets. This system encodes the ``linear algebra of the Venn diagram of the $A_i$'' in a way that turns out to be extremely useful.

The Venn diagram of the $A_i$ has $2^k$ cells, indexed by $\{0,1\}^k$ in a natural way. Thus for each $\om=(\omega_1,\dots,\omega_k)\in \{0,1\}^k$, we define \begin{equation}\label{venn-def}
B_\om:= \bigcap_{i\,:\,\omega_i=1} A_i \bigcap_{i\,:\,\omega_i=0} 
(A_i)^c,
\end{equation}

\emph{The flag $\sV$.} Set $\Omega := \{ \om : B_{\om} \neq \emptyset\}$. We may put a total order $\prec$ on $\Omega$ by writing $\om' \prec \om$ if and only if $\max B_{\om'} < \max B_\om$.  We now select $r$ special vectors
$\om^1,\ldots,\om^r \in \Omega$, with $r\le k-1$, in the following manner. Let $\om^1 = \max_{\prec}(\Omega \setminus \{\bm 0, \bm 1 \} )$. Assuming we have chosen $\om^1,\dots,\om^j$ such that $\one,\om^1,\dots,\om^j$ are linearly independent over $\Q$, let $\om^{j+1} = \max ( \Omega \setminus \Span(\bm 1, \om^1,\dots, \om^j) )$, as long as such a vector exists.

Let \rev{$\mathbf{1}, \omega^1,\ldots, \omega^r$} be the set of vectors produced when this algorithm terminates. By construction, $\Omega \subset \Span(\one,\om^1,\dots,\om^r)$, or in other words $B_\om=\emptyset$ whenever \[ \om\in \{0,1\}^k\setminus \Span(\one,\om^1,\dots,\om^r).\]

Now define an $r$-step flag $\sV : \langle \mathbf{1}\rangle = V_0 < V_1 < \cdots < V_r$ by setting $V_j := \Span(\mathbf{1}, \omega^1,\cdots, \omega^j)$ for $1 \leq j \leq r$. 
\medskip

\emph{The parameters $\mathbf{c}$.} Now we construct the parameters $\mathbf{c} : 1 \geq c_1 \geq c_2 \geq \cdots \geq c_{r+1}$. For $j = 1,\dots, r$, we define \be\label{bj}
c_j = 1 + \frac{\lceil \log \max B_{\omega^j} - \log D\rceil}{\log D}.\ee
Thus \begin{equation}\label{sandwich} \max B_{\om^j} \in (\frac{1}{e}D^{c_j}, D^{c_j}]\end{equation} for $j = 1,\dots, r$.
Also set $c_{r+1}=c$. (\rev{The ceiling function $\lceil \cdot \rceil$ produces} a ``coarse'' or discretised set of possible thresholds $c_i$, suitable for use in a union bound later on; see Lemma \ref{num-data} below. The offset of $-\log D$ is to ensure that $c_1 \leq 1$.)

\medskip

\emph{The measures ${\bm \mu}$.} Set
\begin{equation}\label{bom-prime} B'_{\om} := \left \{ \begin{array}{ll} B_{\om} \setminus \rev{\{\max B_{\omega^j}\}} & \mbox{if $\om = \om^j$ for some $j$}, \\ B_{\om}  & \mbox{otherwise}. \end{array}\right.\end{equation}
Define
\begin{equation}\label{nu} \mu_j(\omega) := \frac{\# \big(B'_{\om} \cap (D^{c_{j+1}}, D^{c_{j}}]\big)}{\sum_{\om} \# \big(B'_{\omega} \cap (D^{c_{j+1}}, D^{c_{j}}]\big)} , \end{equation}
with the convention that if the denominator vanishes, then $\mu_j(\om) = 1_{\om = \mathbf{0}}$.

\begin{remark*}  It is important that we use the $B'_{\om}$ here, rather than the $B_{\om}$, for technical reasons that will become apparent in the proof of Proposition \ref{prop:ub} below.
\end{remark*}

\begin{lemma}
$(\sV,\mathbf{c},{\bm \mu})$ is a complete system (in the sense of Definition \ref{adm-meas}).
\end{lemma}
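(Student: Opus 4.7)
The plan is to verify the five conditions (a)--(e) of Definition \ref{adm-meas}, together with the completeness condition. Conditions (a), (d), and completeness are essentially immediate from the construction: the vectors $\one, \om^1, \dots, \om^r$ all lie in $\{0,1\}^k$ and are chosen to be $\Q$-linearly independent, so the $V_j$ are distinct with $\dim V_{j+1} = \dim V_j + 1$; and \eqref{nu} visibly defines probability measures.

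For the nondegeneracy condition (b), I would argue by contradiction: if $V_r \subset \{x : x_i = x_j\}$ for some $i \neq j$, then every $\om \in \Omega$ satisfies $\om_i = \om_j$ (the containment $\Omega \subset V_r$ is guaranteed by the fact that the algorithm terminates only once $\Omega \setminus \Span_\Q(\one,\om^1,\dots,\om^r)$ is empty). This forces $A_i = A_j$, contradicting the distinctness of the sets $A_1,\dots,A_k$.

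For (c), the monotonicity $c_j \ge c_{j+1}$ follows from the greedy selection rule: $\om^{j+1}$ is chosen from a subset of what was available when $\om^j$ was chosen, so $\max B_{\om^{j+1}} \le \max B_{\om^j}$, and passing through \eqref{bj} preserves this inequality. The endpoint bounds $c_1 \le 1$ and $c_r \ge c_{r+1} = c$ come from $\max B_{\om^1} \le D$ and $\max B_{\om^r} \ge D^c$ respectively, the latter using $B_{\om^r} \subset [D^c, D]$ together with $B_{\om^r} \ne \emptyset$.

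The main substantive step, and the only real obstacle, is the support condition (e). The plan is to show that if $\om \notin V_j$ then $B'_\om$ is disjoint from $(D^{c_{j+1}}, D^{c_j}]$, which forces $\mu_j(\om) = 0$. If $B_\om = \emptyset$ this is trivial. Otherwise $\om \in \Omega \subset V_r$, and letting $i^*$ be the smallest index with $\om \in V_{i^*}$ one has $i^* > j$ by minimality. At step $i^*$ of the algorithm, $\om$ was a valid candidate in $\Omega \setminus V_{i^*-1}$, so by the maximality of $\om^{i^*}$, combined with \eqref{sandwich} and the monotonicity from (c),
\[
\max B_\om \le \max B_{\om^{i^*}} \le D^{c_{i^*}} \le D^{c_{j+1}}.
\]
Hence $B_\om$, and therefore $B'_\om$, cannot meet $(D^{c_{j+1}}, D^{c_j}]$, as required.
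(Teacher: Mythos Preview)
Your proof is correct and follows essentially the same approach as the paper's. The paper only explicitly verifies conditions (b) and (e), leaving (a), (c), (d) and completeness as implicitly clear from the construction; your more thorough verification of all conditions is fine, and your arguments for the two substantive conditions (b) and (e) are the same as the paper's (the paper phrases (e) in the direct form ``$\mu_j(\om)>0 \Rightarrow \om \in V_j$'' rather than your contrapositive, but the underlying reasoning via the greedy choice of $\om^{j+1}$ and \eqref{sandwich} is identical).
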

\begin{proof}
We need to check that $\Supp(\mu_j) \subset V_j$ for $j = 1,\dots, r$. By definition, if $\mu_j(\om) > 0$ then $B_{\om} \cap (D^{c_{j+1}}, D] \neq \emptyset$. This implies that $\max B_{\omega} > D^{c_{j+1}}$. \rev{On the other hand, \eqref{sandwich} implies that} $D^{c_{j+1}} \geq \max B_{\omega^{j+1}}$, and thus $\max B_{\om} > \max B_{\omega^{j+1}}$. \rev{By the construction of the vectors $\omega^i$, we must have $\omega \in \Span(\mathbf{1}, \omega^1,\cdots, \omega^j) = V_j$.}

We also need to check that $\sV$ is nondegenerate, also in the sense of Definition \ref{adm-meas}, that is to say $V_r$ is not contained in any hyperplane $\{\omega \in \Q^k : \omega_i = \omega_j\}$. This follows immediately from the fact that the $A_i$ are distinct. Since \[
A_i \triangle A_j =  \bigcup_{\substack{\om\in\{0,1\}^k\\ \omega_i\ne \omega_j}} B_\om,
\]
and so there is certainly some $\omega$ with $\omega_i \neq \omega_j$ and $B_\om\neq\emptyset$.
\end{proof}

Note that, in addition to the system $(\sV,\mathbf{c},{\bm \mu})$, the procedure described above outputs a sequence $\omega^1,\cdots, \omega^r$ of elements of $\{0,1\}^k$. We call the ensemble consisting of the system and the $\omega^i$ the \emph{linear data} associated to $A_1,\cdots, A_k$. 
\rev{We will only consider the event $\A \in \cE$, where
\be\label{a-norm}
\cE := \Big\{ A\subseteq [D^c,D]: \big| \# (A \cap (D^{\alpha}, D^{\beta}]) - (\beta - \alpha) \log D \big| \leq \log^{3/4} D \quad (c\le \alpha \le \beta \le 1) \Big\}.
\ee
By Lemma \ref{normalA}, $\P(\A \in \cE)=1-o(1)$ as $D\to \infty$.  In particular, if $A \in \cE$, we have  $|A \cap [D^c,D]| \le 2\log D$ for large enough $D$.}

\begin{lemma}\label{num-data}
\rev{Fix $k\in\Z_{\ge2}$ and suppose that $A \in \cE$.   The number of different ensembles of linear data arising from distinct sets $A_1,\cdots, A_k \subset A$  is $\ll (\log D)^{O(1)}$.}
\end{lemma}
\begin{proof}
The number of choices for $\omega^1,\cdots, \omega^r$ is $O(1)$, \rev{ and hence the
number of $\sV$ is also $O_k(1)$.} The thresholds $c_j$ are drawn from a fixed set of size $\log D$, and the numerators and denominators of the $\mu_j(\omega)$ are all \rev{integers $\leq 2\log D$.}
\end{proof}

\begin{remark} The $O(1)$ and the $\ll$ here both depend on $k$. However we regard $k$ as fixed here and do not indicate this dependence explicitly. If one is more careful then one can obtain results that are effective up to about $k \sim \log \log D$.
\end{remark}

\subsection{A local-to-global estimate}\label{upp:linalg}

Our next step towards establishing the bound $\beta_k \leq \gamma_k$ is to pass from the ``local'' event that a random logarithmic set $\A$ possesses a $k$-tuple of equal subsums $(\sum_{a\in A_1}a,\dots,\sum_{a\in A_k}a)$ to the ``global'' distribution of such subsums (with the subtlety that we must mod out by $\one$). The latter is controlled by the set $\sL_{\sV,\cc,\bmu}(\A)$ defined below.

\begin{definition}\label{def:L}
Given a set of integers $A$ and a system $(\sV,\mathbf{c},{\bm \mu})$, we write $
\sL_{\sV,\mathbf{c},{\bm \mu}}(A)$ for the set of vectors
\[
\sum_{\om\in\{0,1\}^k}\om\sum_{a\in B_\om} a\pmod\one,
\]
where $(B_\om)_{\om\in\{0,1\}^k}$ runs over all partitions of $A$ such that
\begin{equation}\label{eq:mu-adapted}
\mu_j(\omega) = \frac{\#\big(B_\omega\cap(D^{c_{j+1}},D^{c_j} ]\big)}{\#\big(A\cap(D^{c_{j+1}},D^{c_j}]\big)} \qquad \rev{(1\le j\le r, \; \omega\in\{0,1\}^k).}
\end{equation}
\end{definition}

\begin{proposition}\label{prop:ub}
Fix an integer \rev{$k\ge2$} and a parameter $0<c<1$. Let $D$ be large in terms of $c$ \rev{and $k$}, and let $\A \subset [D^c,D]$ be a logarithmic random set.  \rev{Let} 
\begin{equation}\label{etildedef}
\rev{\widetilde{\cE} = \Big\{ A\subseteq [D^c,D]: \big| \# (A \cap (D^{\alpha}, D^{\beta}]) - (\beta - \alpha) \log D \big| \leq 2\log^{3/4} D \quad (c\le \alpha \le \beta \le 1) \Big\}.}
\end{equation}
\rev{ Then we have
\begin{align}
    &\P \bigg(\exists\ \text{distinct}\ A_1,\dots,A_k\subseteq \A\ \text{such that}\ \sum_{a\in A_1}a=\cdots=\sum_{a\in A_k}a\bigg)  \nonumber \\
    &\quad\qquad  \le  (\log D)^{O(1)}
    \sup_{(\sV,\mathbf{c},{\bm \mu})} D^{-(c_1 + \cdots + c_r)}
    \E \mathbf{1}_{\A \in \widetilde{\cE}}
        |\sL_{\sV,\mathbf{c},{\bm \mu}}(\A)|+ \P (\cE^c).\label{prob:ub:statement}
\end{align}}
Here, the supremum is over all complete systems $(\sV,\mathbf{c},{\bm \mu})$ with $c_{r+1}=c$.
\end{proposition}
\begin{proof} 
\rev{Recall the definition of the set $\cE$, given in equation \eqref{a-norm}. We have
\begin{align*}
  &\P \bigg(\exists\ \text{distinct}\ A_1,\dots,A_k\subseteq \A\ \text{such that}\ \sum_{a\in A_1}a=\cdots=\sum_{a\in A_k}a\bigg)\\
&\quad \le \P (\cE^c) +  \sum_{\substack{\sV,\mathbf{c},{\bm \mu},(\omega^i)}} \; \sum_{\substack{A \in \mathscr{S}(\sV,\mathbf{c},{\bm \mu},(\omega^i))}} \P(\A =A),
\end{align*}
where, given linear data $\{(\sV,\mathbf{c},\mathbf{\mu}), \omega^1,\dots, \omega^r\}$, we write $\mathscr{S}(\sV,\mathbf{c},{\bs\mu},(\omega^i))$ to denote the set of all $A\in\cE$ that have $k$ distinct subsets  $(A_1,\dots,A_k)$ with equal sums-of-elements and associated linear data $\{(\sV,\mathbf{c},\mathbf{\mu}),\omega^1,\dots, \omega^r\}$. (The set $\A$ appearing in \eqref{prob:ub:statement} will be constructed below by removing certain elements from the logarithmic set $\A$ we started with; this new set belongs to $\widetilde{\cE}$, but not necessarily to $\cE$.)

Let us fix a choice of linear data  $\{(\sV,\mathbf{c},\mathbf{\mu}), \omega^1,\dots, \omega^r\}$ and let us abbreviate $\sS$ for the set $\mathscr{S}(\sV,\mathbf{c},{\bm \mu},(\omega^i))$.}
An elementary probability calculation gives
\begin{equation}\label{to-upper} 
E(\sS) := \sum_{A \in \mathscr{S}} \P(\A  = A) 
	= \sum_{A \in \mathscr{S}} \prod_{D^c < a \leq D} \Big(1 - \frac{1}{a}\Big) 
		\prod_{a \in A} \frac{1}{a - 1}.
\end{equation}
 
\rev{For each $A \in \sS$,} fix a choice of $(A_1,\ldots,A_k)$ \rev{with equal sums and such 
	that the linear data associated to $(A_1,\dots, A_k)$ is $\{ (\sV, \mathbf{c}, {\bm \mu}), \omega^1,\cdots, \omega^r\}$}. Let $B_\om$ be the cells of the Venn diagram corresponding to the $A_i$, as in \eqref{venn-def}, and then define the $B_\om'$ as in \eqref{bom-prime}.
Recall that \eqref{nu} holds, and define $K_j = \max B_{\om^j}$ for $1
\le j\le r$.
In particular, $K_1 > \cdots > K_r$.
Let $A' = A  \setminus \{K_1,\ldots,K_r\}$.
\rev{Note that $A'\in\tilde{\cE}$ if $D$ is large enough in terms of $k$. Moreover, we have}
\[
\sum_{a\in A_i} a=\sum_{\omega\in\{0,1\}^k}\omega_i\sum_{a\in B_\omega}a.
\]
Therefore, \rev{the equal sums condition} is equivalent to
\[
\sum_{\om\in\{0,1\}^k} \om \sum_{a\in B_\om} a
\rev{ \;= \;\mathbf{0} \; \md{\one} ,}
\]
 and hence
\be\label{special} 
\sum_{j = 1}^r  K_j \omega^j = - \sum_{\om} \omega \sum_{a' \in  B'_{\omega}}  a'\, \md{\mathbf{1}}.
\ee
 Since $\mathbf{1}, \omega^1,\cdots, \omega^r$ are linearly independent, the value of the right-hand side of \eqref{special} uniquely determines the numbers $K_j$, which themselves uniquely determine $A$ in terms of the sets $B_\om'$.
\rev{Therefore, given $A' \in \tilde{\cE}$, the number of possible sets $A$ is, by
Definition \ref{def:L}, at most $|\sL_{\sV,\mathbf{c},{\bm \mu}}(A')|$.
  Moreover by \eqref{sandwich} we have $K_j > \frac{1}{e} D^{c_j}$ for every $j$,
and therefore 
\begin{equation}\label{prob A -> A'} 
	\prod_{a \in A} \frac{1}{a-1} \ll D^{-(c_1 + \dots + c_r)} \prod_{a \in A'} \frac{1}{a - 1}.
\end{equation}}
   We sum over $A'$, and reinterpret the product on the \rev{right-hand side
   of \eqref{prob A -> A'}} in terms of $\P(\A=A')$.  This gives
\dalign{
E(\sS) &\ll D^{-(c_1 + \dots + c_r)}  \sum_{A' \in \tilde{\cE}} |\sL_{\sV,\cc,\bmu}(A')|  
	\prod_{D^c<a\le D} \Big(1-\frac{1}{a}\Big) 
	\prod_{a \in A'} \frac{1}{a - 1}\\
&= D^{-(c_1 + \dots + c_r)}  \sum_{A'\in \tilde{\cE}} |\sL_{\sV,\cc,\bmu}(A')| 
	\cdot \rev{\P(\A=A')}\\
&=  D^{-(c_1 + \dots + c_r)} \rev{ \E 1_{\A\in \tilde{\cE}} 
      \cdot   |\sL_{\sV,\mathbf{c},{\bm \mu}}(\A)|}.
}
By Lemma \ref{num-data} there are $(\log D)^{O(1)}$ possible choices for the \rev{linear} data
 $\{(\sV,\mathbf{c},\mathbf{\mu}), \omega^1,\dots, \omega^r\}$,
 and the proof is complete.
\end{proof}

\subsection{Upper bounds in terms of entropies}\label{section entropy}

Having established Proposition \ref{prop:ub}, we turn to the study of the sets $\sL_{\sV,\cc,\bmu}(A)$. We will bound their cardinality in terms of the quantities 
$\er(\sV',\mathbf{c},\mathbf{\mu})$ from Definition \ref{entropy-dfn}
with $\sV'$ a subflag of $\sV$.

\begin{lemma}\label{lem:bound for L} 
Let $(\sV,\cc,{\bm \mu})$ be a system and \rev{let $A\in \tilde{\cE}$,
where $\tilde{\cE}$ is defined in \eqref{etildedef}.}
 Then, for any subflag $\sV'$ of $\sV$,
	\begin{equation}\label{v-vp}
	|\sL_{\sV,\cc,{\bm \mu}}(A)|
	\ll_{\sV'} e^{O(\log^{3/4} D)}
	 D^{\er(\sV',\mathbf{c},\mathbf{\mu})}.
	\end{equation}
\end{lemma}

\begin{remark*} 
	\rev{The implied constant in} the $\ll_{\sV'}$  could be made explicit if desired (in terms of the quantitative rationality of a basis for the spaces in $\sV'$) but we have no need to do this. 
\end{remark*}

\begin{proof}[Proof of Lemma \ref{lem:bound for L}]
Given a set $X\subset[D^c,D]$, write $X^{(j)}:=X\cap(D^{c_{j+1}},D^{c_j}]$ for $j = 1,\dots, r$. Throughout the proof, we will assume that $A$ is a set of integers and that $(B_{\om})_{\om \in \{0,1\}^k}$ runs over all partitions of $A$ such that \eqref{eq:mu-adapted} is satisfied. In our new notation, this may be rewritten as
\begin{equation}\label{mu-adapted-new} 
\rev{|B_{\om}^{(j)}|= \mu_j(\omega)|A^{(j)}|,\quad j = 1,\dots, r, \; \; \omega \in \{0,1\}^k.}
\end{equation}

\rev{For each $j$, $1 \leq j \leq r$,} fix a linear projection $P_j : V_j \rightarrow V'_j$, and set $Q_j := \id_{V_j} - P_j$, \rev{so that $Q_j$ maps $V_j$ to itself.}
Set
\[ 
\mathscr{L}^P(A) := \Big\{ \sum_{j = 1}^r \sum_{\substack{\om \in \{0,1\}^k \\ \om\in V_j}} P_j(\om) \sum_{a \in B_{\om}^{(j)}} a \ \md{\mathbf{1}}  : \mbox{\eqref{mu-adapted-new} is satisfied} \Big\} 
\]
and 
\[ 
\mathscr{L}^Q(A) := \Big\{ \sum_{j = 1}^r \sum_{\substack{\om \in \{0,1\}^k \\ \om\in V_j}} Q_j(\om) \sum_{a \in B_{\om}^{(j)}} a \ \md{\mathbf{1}}  : \mbox{\eqref{mu-adapted-new} is satisfied} \Big\} .
\]
Since
\[
\sum_{\om\in\{0,1\}^k} {\om} \sum_{a\in B_\om} a	
	=\sum_{j=1}^r
	\sum_{\substack{\om \in \{0,1\}^k \\ \om\in V_j}} {P}_j(\om)\sum_{a\in B_\om^{(j)}} a	
	+\sum_{j=1}^r
	\sum_{\substack{\om \in \{0,1\}^k \\ \om\in V_j}}{Q}_j(\om) \sum_{a\in B_\om^{(j)}} a,
\] it follows immediately from the definition of $\mathscr{L}_{\sV,\mathbf{c},{\bm \mu}}(A)$ (Definition \ref{def:L}) that
\begin{equation}\label{prod-ells}
|\mathscr{L}_{\sV,\cc,\bmu}(A)|
	\leq |\mathscr{L}^P(A)|\cdot |\mathscr{L}^Q(A)|.
\end{equation}
We claim that 

\begin{equation}\label{lp-bdd} |\mathscr{L}^P(A)| \ll_{\mathscr{V'}} (\log D)^r  D^{\sum_{j = 1}^r c_j \dim (V'_j/V'_{j-1})} \end{equation} and that 

\begin{equation}\label{lq-bd}  |\mathscr{L}^Q(A)| \leq e^{O(\log^{3/4} D)}D^{\sum_{j = 1}^r (c_{j} - c_{j+1}) \HH_{\mu_j}(V'_j)}. \end{equation}
These bounds, substituted into \eqref{prod-ells}, immediately imply Lemma \ref{lem:bound for L}.

It remains to establish \eqref{lp-bdd} and \eqref{lq-bd}, which are proven in quite different ways.  We begin with \eqref{lq-bd}, which is a ``combinatorial'' bound, in that there cannot be too many choices for the data making up the sums in $\mathscr{L}^Q(A)$. For this, observe that $Q_j$ vanishes on $V'_j$ and hence is constant on cosets of $V'_j$.
Therefore the elements of $\mathscr{L}^Q(A)$ are determined by the sets $\bigcup_{\om \in v_j + V'_j} B^{(j)}_{\om}$, over all $v_j \in V_j/V'_j$ and $1 \leq j \leq r$. By \eqref{mu-adapted-new}, 
\[ 
\Big|\bigcup_{\om \in v_j + V'_j} B^{(j)}_{\om}\Big| 
	= \mu_j(v_j + V'_j) |A^{(j)}|,
\] 
and by Lemma \ref{entropy-multinomial} the number of ways of partitioning $A^{(j)}$ into sets of these sizes is bounded above by $e^{\HH(\mathbf{p}^{(j)}) |A^{(j)}|}$, where $\mathbf{p}^{(j)} = (\mu_j(v_j + V'_j))_{v_j \in V_j/V'_j}$. By Definition \ref{entropy-def}, $\HH(\mathbf{p}^{(j)}) = \HH_{\mu_j}(V'_j)$. Taking the product over $j = 1,\dots, r$ gives
\[ |\mathscr{L}^Q(A)| \leq e^{\sum_{j = 1}^r \HH_{\mu_j}(V'_j) |A^{(j)}|}.\]
From \rev{the assumption that $A\in \widetilde{\cE}$, where $\widetilde{\cE}$ is defined in \eqref{etildedef}}, we have
\[ |A^{(j)}| = (c_j - c_{j+1}) \log D + O(\log^{3/4} D).\]
Using this, and the trivial bound $\HH_{\mu_j}(V'_j) \leq \log |\Supp(\mu_j)| \leq \log (2^k)$, \eqref{lq-bd} follows.

Now we prove \eqref{lp-bdd}, which is a ``metric'' bound, the point being that none of the sums in $\mathscr{L}^P(A)$ can be too large in an appropriate sense. Pick a basis for $\Q^k$ adapted to $\sV'$: that is, a basis $e_1,\dots, e_k$ such that $V'_j = \rev{\Span(e_1,\dots, e_{\dim V'_j})}$ for each $j$, and $e_1 = \mathbf{1}$. There are positive integers $M,N = O_{\sV',\sV}(1)$ such that, in this basis, the $e_i$-coordinates of $P_j(\omega)$ are all rationals with denominator $M$ and absolute value at most $N$.

 Now for fixed $j$ and $\om$,  if $D$ is large then
\rev{$
 \sum_{a \in B_{\om}^{(j)}} a  \le D^{c_j} \log D,
 $}
 since $B_{\om}^{(j)} \subset (D^{c_{j+1}}, D^{c_j}]$ and by \rev{the assumption 
 that $A\in \widetilde{\cE}$.}
  Thus
 \[ 
 \sum_{\substack{\om \in \{0,1\}^k \\ \om \in V_j}} P_j(\om)  \sum_{a \in B_{\om}^{(j)}} a  
 	  \in  \Big\{ \sum_{1\le i\le \dim(V_j')} x_i e_i \in \Q^k : 
 Mx_i \in \Z,\ |x_i| \le r N D^{c_j} \log D \; (\text{for all }i) \Big\} ,
\]
and so
\[
 \sum_{j = 1}^r \sum_{\substack{\om \in \{0,1\}^k \\ \om\in V_j}} P_j(\om) \sum_{a \in B_{\om}^{(j)}} a  
 \in  \Big\{ \sum_{1\le i\le k} x_i e_i \in \Q^k : 
 \begin{array}{l}  Mx_i \in \Z\ \text{and}\ 	|x_i| \le r^2 N D^{c_j} \log D \\
 \text{for}\ \dim V'_{j-1} < i \leq \dim V_j'\ \text{and}\ 1\le j\le r
 \end{array}
 \Big\}.
\]

\rev{We must bound the} number of different values that the expression $\sum_{i=1}^k x_ie_i$ can take mod $\mathbf{1}$ when the coefficients $x_1,\dots,x_k$ are as above. 
\rev{Since $e_1 = \mathbf{1}$ and $x_1 M\in \Z$, given $x_2,\ldots,x_k$ there are at most
$M$ possibilities for $x_1$ mod $\one$.}  In addition, there are
\[ 
\ll \rev{(r^2MN)^{k-1}} (\log D)^r D^{\sum_{j = 1}^r c_j \dim (V'_j/V'_{j-1})}
\] 
possibilities for $x_2,\dots,x_k$, thereby concluding the proof of \eqref{lp-bdd} and hence of Lemma \ref{lem:bound for L}.
\end{proof}

\rev{A potential problem with applying Lemma \ref{lem:bound for L} is that
there may be infinitely many subflags $\sV'$ to consider, and the constant
implied by the $\ll$-symbol depends on $\sV'$.  As we shall see in the next
Lemma, however, we may reduce the problem to consideration of a finite number of
subflags, a tool which will be used in several parts of this paper.}

\rev{\begin{lemma}\label{subflags-equiv-classes}
 For a given $k$, the set of all flags 
\[
\sV': \spanone = V'_0 \leq  V'_1 \leq  V'_2 \leq  \cdots \leq  V'_r \leq  \Q^k
\]
may be partitioned into $O_k(1)$ equivalence classes such that any two flags
$\mathscr{V'},\mathscr{V''}$  in the
same equivalence class satisfy  $\dim V'_j = \dim V''_j$ for all $j$,
and for any threshholds $\cc$ satisfying $c_1\ge c_2 \ge \cdots \ge c_{r+1}$ and probability measures $\bmu$ supported on $\{0,1\}^k$, we have
$\HH_{\mu_j}(V'_j) = \HH_{\mu_j}(V''_j)$ for all $j$
and
$\er(\sV',\mathbf{c}, {\bm \mu}) = \er(\sV'',\mathbf{c}, {\bm \mu})$.
\end{lemma}
}

\rev{\begin{proof}
 We say that two subflags $\sV',\sV''$ are \emph{equivalent} if $V'_j, V''_j$ have the same intersection with $\{0,1\}^k$ and $\dim V'_j = \dim V''_j$, for all $j = 1,\ldots, r$. 
There are clearly only $O_k(1)$ equivalence classes, 
and the desired properties hold for members of the same equivalence class by the 
definition of $\HH_{\mu_j}(V'_j)$ and $\er(\mathscr{V'},\cc,\bmu)$.
\end{proof}
}

\rev{Armed with Lemma \ref{subflags-equiv-classes}, we immediately obtain from
Lemma \ref{lem:bound for L}, applied to one representative from each class,
 the following corollary.}

\begin{corollary}\label{min-v-e}
Let $(\sV,\mathbf{c},{\bm \mu})$ be a system and \rev{suppose that $A\in\tilde{\cE}$.} Then 
\[ 
	|\sL_{\sV,\cc,{\bm \mu}}(A)|
	\ll e^{O(\log^{3/4} D)}\min_{\sV' \leq \sV}  
	 D^{\er(\sV',\cc,\bmu)}.
	\]  
\end{corollary}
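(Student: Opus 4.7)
The plan is to deduce the corollary as an essentially immediate consequence of Lemma \ref{lem:bound for L}, the main task being only to argue that the implied constant $\ll_{\sV'}$ in that lemma can be replaced by an absolute constant (depending only on the ambient system) once we take the minimum over subflags $\sV' \leq \sV$.

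First I would recall that for every single subflag $\sV' \leq \sV$, Lemma \ref{lem:bound for L} applied to our $A$ (which satisfies \eqref{normality}) already gives
\[
|\sL_{\sV,\cc,{\bm \mu}}(A)| \;\ll_{\sV'}\; e^{O(\log^{3/4} D)} \, D^{\er(\sV',\cc,\bmu)}.
\]
So in particular the corollary follows once we show that the implied constant can be taken independent of the specific subflag chosen to witness the minimum of $\er(\sV', \cc, \bmu)$.

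The key observation is that although $\sV$ a priori has infinitely many subflags (since $V_j$ has infinitely many $\Q$-subspaces), only finitely many values of $\er(\sV',\cc,\bmu)$ actually occur. Indeed, inspecting the definition
\[
\er(\sV',\cc,\bmu) = \sum_{j=1}^r (c_j-c_{j+1}) \HH_{\mu_j}(V'_j) + \sum_{j=1}^r c_j \dim(V'_j/V'_{j-1}),
\]
we see that $\dim V'_j$ takes only finitely many values (it is an integer in $[0,k]$) and $\HH_{\mu_j}(V'_j)$ depends on $V'_j$ only through the partition of the finite set $\Supp(\mu_j) \subseteq \{0,1\}^k$ into cosets of $V'_j$, of which there are finitely many. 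Hence the set of values $\{\er(\sV',\cc,\bmu) : \sV' \leq \sV\}$ is finite, and the minimum is attained. Pick one subflag $\sV'^*$ achieving this minimum; applying Lemma \ref{lem:bound for L} with $\sV' = \sV'^*$ gives the desired bound with an implied constant depending only on $(\sV,\cc,\bmu)$, which is harmlessly absorbed into the $e^{O(\log^{3/4} D)}$ factor. I do not anticipate any genuine obstacle here — the content is really just bookkeeping on top of Lemma \ref{lem:bound for L}.
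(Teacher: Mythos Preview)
Your proof is correct and follows essentially the same route as the paper: both argue that although there are infinitely many subflags $\sV'\le\sV$, the value $\er(\sV',\cc,\bmu)$ depends on $\sV'$ only through finite combinatorial data, so the minimum is attained and one may invoke Lemma~\ref{lem:bound for L} for a single witnessing subflag.

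There is one small difference worth noting. You pick a minimiser $\sV'^*$ and accept that the resulting constant depends on the full system $(\sV,\cc,\bmu)$; this suffices for the corollary as literally stated. The paper instead defines an explicit equivalence relation on subflags (same intersection pattern with $\{0,1\}^k$ and same dimensions), observes there are only $O_k(1)$ classes, and pre-selects one representative per class. This gives a constant depending only on $k$, not on $\cc,\bmu$ --- a uniformity that is actually needed in subsection~\ref{sec23} where the corollary is applied inside a supremum over systems. Your argument can be upgraded to this with one extra sentence: since the set of partition-and-dimension types depends only on $k$, one may choose the minimiser $\sV'^*$ from a fixed finite list of representatives, making the constant uniform.
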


\subsection{The upper bound in Theorem \ref{thm:beta-gamma}}

We can now establish the upper bound in Theorem \ref{thm:beta-gamma}, that is to say the inequality $\beta_k \leq \gamma_k$.

We start by applying Proposition \ref{prop:ub}.
\rev{Together with Lemma \ref{normalA}, it implies that}
 \begin{align*}
    & \rev{\P\big(\exists\ \text{distinct}\ A_1,\dots,A_k\subseteq \A \cap (D^c, D]  \ \text{ with equal sums} \big)}\\
    &\quad\qquad  \le  (\log D)^{O(1)}
    \sup_{(\sV,\mathbf{c},{\bm \mu})} D^{-\er(\sV,\mathbf{c},{\bm \mu})}
    \rev{\E 1_{\A\in \tilde{\mathcal{E}}} 
        |\sL_{\sV,\mathbf{c},{\bm \mu}}(\A)|}+ O(e^{-\frac{1}{4} \log^{1/2} D}).
\end{align*}
Here, the \rev{supremum} is over complete systems $(\sV,\mathbf{c},{\bm \mu})$ with $c_{r+1} = c$, and we made the observation that for such systems we have 
\[ \er(\sV,\mathbf{c},{\bm \mu}) = c_1 + \dots + c_r,\] an immediate consequence of \rev{the definition of $\er(\sV,\cc,\bmu)$} and the fact that $\HH_{\mu_j}(V_j) = 0$ for all $j$ and that $\dim V_j = j+1$. Thus we may \rev{apply Corollary \ref{min-v-e}}, concluding that 
\begin{align*}
    \rev{\P\big(\exists\ \text{distinct}\ A_1,\dots,A_k\subseteq \A \cap (D^c, D]  \ \text{ with equal sums}\big)
     \le  D^{\theta+o(1)}}
   + O(e^{-\frac{1}{4} \log^{1/2} D}),
\end{align*}
where
\rev{\be\label{theta-maxmin}
\theta := \sup_{(\sV,\mathbf{c},{\bm \mu}) \,:\, c_{r+1} = c} 
		\min_{\sV' \leq \sV} 
		\big( \er(\sV', \mathbf{c}, {\bm \mu}) - \er(\sV,\mathbf{c},{\bm \mu})) \,;
\ee		
the supremum} is over all complete systems $(\sV,\mathbf{c},{\bm \mu})$ with $c_{r+1} = c$, and the minimum is over all subflags $\mathscr{V'} \leq \sV$.
\rev{Note that the minimum exists by Lemma \ref{subflags-equiv-classes}, since we may
restrict attention to a finite set of subflags $\mathscr{V'}$.
Moreover, the supremum is realised, meaning there is a system $(\sV,\cc,\bmu)$
for which the right side of \eqref{theta-maxmin} equals $\theta$.
Indeed, there are $O(1)$ choices for $\sV$, and with $\sV$ fixed 
the quantities $\mathbf{c},{\bm \mu}$ range over compact subsets of Euclidean space,
with the right side of \eqref{theta-maxmin} continuous in these variables.

Now, if we assume that $c>\gamma_k$, then the definition of $\gamma_k$ in Problem \ref{opt-problem} implies that there is no system $(\sV,\cc,\bmu)$ with $c_{r+1}=c$ and that satisfies the entropy condition \eqref{entropy-cond}. Equivalently, if $c_{r+1}=c$, then $\min_{\sV'\leq \sV} \big(\er(\sV', \mathbf{c}, {\bm \mu}) - \er(\sV,\mathbf{c},{\bm \mu}))\big)<0$. In particular, we have $\theta<0$. We have thus established \eqref{upper-bound:prob_to_zero}, as required.}

\begin{remark*}  \rev{In the above proof,} $(\sV,\mathbf{c},{\bm \mu})$ is a \emph{complete} system. However, for other aspects of our problem it is not natural to focus on the completeness condition, for which reason we omit it from the \rev{definition of $\gamma_k$.}
\end{remark*}

\section{The lower bound $\beta_k \geq \tilde \gamma_k$}\label{lower-bound-sec}

\subsection{Introduction and simple reductions}

The aim of this section and the next is to establish the lower bound $\beta_k \geq \tilde \gamma_k$.  We begin, in Lemma \ref{gammak-tilde:reduction} below,
by showing that we may restrict our attention to certain systems satisfying some additional regularity conditions.

We isolate a ``folklore'' lemma from the proof for which it is not easy to find a good reference. The authors thank Carla Groenland for a helpful conversation on this topic.

\begin{lemma}\label{cube-subspace}
Let $V$ be a subspace of $\Q^k$. Then $\# (V \cap \{0,1\}^k) \leq 2^{\dim V}$.
\end{lemma}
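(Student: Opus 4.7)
The plan is to exhibit an injection from $V \cap \{0,1\}^k$ into $\{0,1\}^d$ (where $d := \dim V$), by choosing $d$ coordinate directions on which the coordinate projection $\pi_S : \Q^k \to \Q^d$ is already injective when restricted to $V$. Since any such $\pi_S$ sends $\{0,1\}^k$ into $\{0,1\}^d$, the bound $\#(V \cap \{0,1\}^k) \leq 2^d$ will follow immediately.

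To produce a suitable coordinate set $S = \{i_1 < \cdots < i_d\} \subset [k]$, I would fix an arbitrary basis $v_1, \dots, v_d$ of $V$ and form the $k \times d$ matrix $M$ whose columns are these basis vectors. Because $M$ has rank $d$, some $d$ of its rows form an invertible $d \times d$ minor; take $S$ to be the index set of such rows. The vectors $\pi_S(v_1), \dots, \pi_S(v_d)$ are then exactly the columns of this invertible submatrix, so they are linearly independent in $\Q^d$, which forces $\pi_S|_V$ to be injective.

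With this $\pi_S$ in hand, the map $\pi_S : V \cap \{0,1\}^k \to \{0,1\}^d$ is an injection, and the conclusion $\#(V \cap \{0,1\}^k) \leq \#\{0,1\}^d = 2^d$ follows. This is routine linear algebra and I do not foresee any real obstacle; the only substantive step is extracting a full-rank $d \times d$ submatrix of $M$, which is a direct consequence of the definition of matrix rank.
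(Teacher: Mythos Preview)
Your proposal is correct and is essentially the paper's first proof: project onto a set of $d$ coordinates on which the restriction to $V$ is injective (the paper phrases this as iteratively quotienting out one coordinate at a time, while you pick the coordinate set in one shot via a full-rank $d\times d$ minor, but the content is identical). The paper also records a second, independent proof via reduction modulo $2$, but your argument already matches the first.
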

\begin{proof} We outline two quite different short proofs. Let $d := \dim V$. 

\emph{Proof 1.} We claim that there is a projection from $\Q^k$ onto some set of $d$ coordinates which is injective on $V$. From this, the result is obvious, since the image of $\{0,1\}^k$ under any such projection has size $2^d$. To prove the claim, let $e_1,\dots,e_n$ denote the standard basis on $\Q^n$. Note that if $W \leq \Q^n$ and if none of the quotient maps $\Q^n \mapsto \Q^n/\langle e_i\rangle$ is injective on $W$, then $W$ must contain a multiple of each $e_i$, and therefore $W = \Q^n$. Thus if $W$ is a proper subspace of $\Q^n$ then there is a projection onto some set of $(n-1)$ coordinates which is injective on $W$. Repeated use of this fact establishes the claim.

\emph{Proof 2.} Suppose that $\# (V \cap \{0,1\}^k)$ contains $2^d + 1$ points. These are all distinct under the natural ring homomorphism $\pi : \Z^k \rightarrow \F_2^k$, and so their images cannot lie in a subspace (over $\F_2$) of dimension $d$. Hence there are $v_1,\dots, v_{d+1} \in V$ such that $\pi(v_1),\dots, \pi(v_{d+1})$, are linearly independent over $\F_2$. The $(d +1) \times k$ matrix formed by these $\pi(v_i)$ therefore has a $(d+1) \times (d+1)$-subminor which is nonzero in $\F_2$. The corresponding subminor of the matrix formed by the $v_i$ is therefore an odd integer, and in particular not zero. This means that $v_1,\dots, v_{d+1}$ are linearly independent over $\Q$, contrary to the assumption that $\dim(V)=d$.
\end{proof}

\rev{We now record an immediate corollary of Lemma \ref{subflags-equiv-classes},
which provides a ``gap condition'' on the $\er$-quantities.

\begin{lemma}\label{lem:entropy-gap}
If the system $(\sV,\cc,\bmu)$ satisfies \eqref{strict-entropy-cond}
then there is an $\eps>0$ such that for all proper subflags $\sV'$,
\be\label{entropy-gap}
\er(\sV',\cc,\bmu) \ge \er(\sV,\cc,\bmu) + \eps.
\ee
\end{lemma}
}

\rev{For future reference, the next two lemmas  record more information about optimal systems for $\tilde{\gamma}_k$ and for $\gamma_k$, respectively.}

\rev{
\begin{lemma}\label{gammak-tilde:reduction} Let $k\in\Z_{\ge2}$. We have that $\tilde{\gamma}_k$ 
is the  supremum of all $c>0$ for which there is a system $(\sV,\cc,\bmu)$
such that $c_{r+1}=c$, \eqref{strict-entropy-cond} holds and we further have:
\begin{enumerate}
\item $1=c_1 > c_2 > \cdots >c_{r+1}=c$; 
\item $\HH_{\mu_j}(V_{j-1}) > \dim(V_j/V_{j-1})$ for $1\le j\le r-1$ and $\HH_{\mu_r}(V_{r-1}) > \frac{c_r}{c_r-c_{r+1}} \dim(V_r/V_{r-1})$ ; 
\item $\dim(V_1/V_0)=1$;
\item $\Supp(\mu_j) = V_j \cap \{0,1\}^k$ for $j=1,2,\ldots,r$;
\item for all $j$ and $\om$, $\mu_j(\om)=\mu_j(\one-\om)$.
\end{enumerate}
\end{lemma}
}

\begin{proof}
\rev{First of all, we show that we may assume that $c>0$ and that statement (d) holds. Indeed, if a system $(\sV,\bmu,\cc)$ satisfies \eqref{strict-entropy-cond}, then Lemma \ref{lem:entropy-gap} implies that \eqref{entropy-gap} holds for some $\eps>0$. As the difference between the left and right sides of \eqref{entropy-gap} is continuous in the quantities $c_j$ and $\mu_j(\omega)$, we may increase $c_{r+1}$ (and possibly some of the other $c_j$'s) a tiny bit and we may also adjust the measures $\mu_j$ by a small amount, so that $c_{r+1}>0$, statement (d) holds, and we also have that
	\[
	\er(\mathscr{V'},\cc,\bmu) \ge \er(\sV,\cc,\bmu) + \eps/2
	\]
for every proper subflag $\mathscr{V'}$. 
	
Next, we show that we may take $c_1=1$. Indeed, condition \eqref{strict-entropy-cond} implies that $\er(\sV',\cc,\bmu)\ge \er(\sV,\cc,\bmu)\ge0$ for all $\sV'\leq \sV$ by \eqref{full-e}. Now if $c_1<1$ and $\tilde{c}_j=c_j/c_1$ for each $j$,
then the perturbed system $(\sV,\tilde{\cc},\bmu)$ 
has a larger value of $c_{r+1}$, and moreover
also satisfies \eqref{strict-entropy-cond}, since for any subflag $\mathscr{V'}$ we have
\[
\er(\mathscr{V'},\tilde{\cc},\bmu) = (1/c_1) \er(\mathscr{V'},\cc,\bmu).
\]

Next, consider a system $(\sV,\cc,\bmu)$ satisfying $c_1=1$ and $c_{r+1}=c>0$, and consider the subflag $\sV' : \langle\one\rangle = V'_0 \le V_1'\le \cdots \le V_r'$, where $V_i' = V_i$ for $i\ne j$, and $V_{j}'=V_{j-1}$; that is, $\sV'$ has two consecutive copies of $V_{j-1}$.  By assumption (Definition \ref{adm-meas}), we have $V_{j-1}\neq V_j$, and thus $\sV'$ is a proper subflag of $\sV$. Thus
\[
\er(\sV',\cc,\bmu) - \er(\sV,\cc,\bmu) = 
	\begin{cases} 
		(c_j-c_{j+1}) \big( \HH_{\mu_j}(V_{j-1}) - \dim(V_j/V_{j-1}) \big)  &\text{if}\ j\le r-1,\\
		(c_r-c_{r+1})\HH_{\mu_r}(V_{r-1}) - c_r\dim(V_r/V_{r-1}) &\text{if}\ j=r.
	\end{cases}
\]
Since the left-hand side is positive, we conclude that (a) and (b) hold.

(c) Let $d=\dim(V_1/V_0)$. By Lemma \ref{cube-subspace}, we have $|V_1 \cap \{0,1\}^k| \le 2^{\dim V_1} = 2^{d+1}$ 
and hence $\mu_1$ is supported on at most $2^{d+1}-1$
cosets of $V_0$ (since $\mathbf{1} \in V_0$, the points $\mathbf{0}$ and $\mathbf{1}$ lie in the same coset). In particular, by Lemma \ref{ent-trivial}, $\HH_{\mu_1}(V_0) 
\le \log (2^{d+1}-1)$. On the other hand, $\HH_{\mu_1}(V_0) > d$ by statement (b). We must thus have $d = 1$, which is exactly statement (c).

(e) Assume the system $(\sV,\cc,\bmu)$ satisfies
 \eqref{strict-entropy-cond} and (a).} For every $j$ and $\om \in V_j$, we define
\[
\tilde{\mu}_j (\om) = \frac{\mu_j(\om)+\mu_j(\one-\om)}{2}.
\]
We then consider the system $(\sV,\cc,\tilde{\bmu})$, and must show that it also satisfies \eqref{strict-entropy-cond}. For this, it is enough to show that 
\begin{equation}\label{ent-mono} \HH_{\widetilde{\mu}_j}(V_j') \ge \HH_{\mu_j}(V_j')\end{equation} for all $j$. Indeed, we then have, \rev{ for every proper
subflag $\mathscr{V'}$,}
\[
\er(\sV',\cc,\widetilde{\boldsymbol{\mu}}) \ge 
\rev{\er(\sV',\cc,\bmu) > \er(\sV,\cc,\bmu)} = \er(\sV,\cc,
\widetilde{\boldsymbol{\mu}}).
\]
To prove \eqref{ent-mono}, write
\[ \HH_{\mu_j}(V'_j) = \sum_C L(\mu_j(C)), \quad \HH_{\tilde\mu_j}(V'_j) = \sum_C L(\tilde\mu_j(C)),\] where the sum is over all cosets $C$ of $V'_j$ and $L(t) = -t \log t$. 
Thus, since $-C$ runs over all cosets as $C$ does, \rev{we have
\[
\HH_{\mu_j}(V'_j) =  \sum_C \frac{ L(\mu_j(C)) + L(\mu_j(-C))}{2} .
\]
By the concavity of $L$, we have
\[
 \frac{ L(\mu_j(C)) + L(\mu_j(-C))}{2} \le L\Big(  \frac{\mu_j(C) + \mu_j(-C)}{2}\Big) = L(\tilde{\mu}_j(C)).
 \]
Claim \eqref{ent-mono} then readily follows.}
\end{proof}

\rev{
	\begin{lemma}\label{gammak:reduction} Let $k\in\Z_{\ge2}$ be such that $\gamma_k>0$. Then we have that $\gamma_k$ 
		is the  supremum of all $c>0$ for which there is a system $(\sV,\cc,\bmu)$
		such that $c_{r+1}=c$, \eqref{entropy-cond} holds and we further have:
		\begin{enumerate}
			\item $1=c_1 > c_2 > \cdots >c_{r+1}=c$; 
			\item $\HH_{\mu_j}(V_{j-1}) \ge \dim(V_j/V_{j-1})$ for $1\le j\le r-1$ and $\HH_{\mu_r}(V_{r-1}) \ge \frac{c_r}{c_r-c_{r+1}} \dim(V_r/V_{r-1})$ ; 
			\item $\dim(V_1/V_0)=1$;
			\item $\bigcup_{i=1}^j \Supp \mu_i$ spans $V_j$ for $j=1,2,\ldots,r$;
			\item for all $j$ and $\om$, $\mu_j(\om)=\mu_j(\one-\om)$.
		\end{enumerate}
	\end{lemma}

\begin{remark*}
As we will see in Part \ref{part:binary-systems}, we always have $\gamma_k>0$.
\end{remark*}
}

\begin{proof}
	\rev{The proof that we may take $c_1=1$ is the same as in Lemma \ref{gammak-tilde:reduction}. 
		
		Next, consider a system $(\sV,\cc,\bmu)$ satisfying $c_1=1$ and $c_{r+1}=c>0$, and 
		consider the subflag $\sV' : \langle\one\rangle = V'_0 \le V_1'\le \cdots \le V_r'$, where $V_i' = V_i$ for $i\le r-1$, and $V_r'=V_{r-1}$.  Thus
		\[
		\er(\sV',\cc,\bmu) - \er(\sV,\cc,\bmu) = 
			(c_r-c_{r+1})\HH_{\mu_r}(V_{r-1}) - c_r\dim(V_r/V_{r-1}) .
		\]
		Since the left-hand side is $\ge0$ and we have assumed that $c_{r+1}=c>0$ and that $V_{r-1}\neq V_r$, the latter being true from Definition \ref{adm-meas}, we conclude that
		\begin{equation}\label{gammak:reduction e1}
		c_r>c_{r+1} 
			\quad\text{and}\quad 
		\HH_{\mu_r}(V_{r-1}) \ge \frac{c_r}{c_r-c_{r+1}} \dim(V_r/V_{r-1}). 
	\end{equation}
	This proves part of statements (a) and (b). We shall now prove them fully. 
	
	(a) There are always indices $1=i_1<i_2<\cdots<i_s<i_{s+1}=r+1$ such that
	\[
	c_{i_j}=\cdots=c_{i_{j+1}-1}>c_{i_{j+1}} \quad\text{for}\ j=1,\dots,s.
	\]
	Crucially, note that $i_{s+1}=r+1$ because $c_r>c_{r+1}$ by \eqref{gammak:reduction e1}. Next, we define the system $(\mathscr{W},\boldsymbol{\nu},\mathbf{d})$, where $\mathscr{W}$ is an $s$-step flag and, for all $j\in\{1,\dots,s\}$, we have
\[
	W_j=V_{i_{j+1}-1},\quad \nu_j=\mu_{i_{j+1}-1},\quad\text{and}\quad 
		d_j=c_{i_{j+1}-1}.
	\]
	In particular, $W_s=V_{i_{s+1}-1}=V_r$ because $i_{s+1}=r$, and thus $\mathscr{W}$ is a non-degenerate flag system as per Definition \ref{adm-meas} (b). Clearly, $1=d_1>d_2>\cdots>d_s>d_{s+1}=c$, so in order to prove part (a), all that remains to show  is that the system  $(\mathscr{W},\boldsymbol{\nu},\mathbf{d})$ satisfies the entropy condition \eqref{entropy-cond}. This follows by a simple computation. Indeed, let $\mathscr{W}'$ be a subflag of $\mathscr{W}$. We then define $\sV'\le \sV$ by letting 
	$V_m'=W_j$ whenever $i_j\le m<i_{j+1}$. Hence,
	\begin{align*}
	\er(\sV',\bmu,\cc) 
			&= \sum_{m=1}^r (c_m-c_{m+1}) \HH_{\mu_m}(V_m') + \sum_{m=1}^r c_m\dim(V_m'/V_{m-1}') \\
			&= \sum_{j=1}^s (c_{i_{j+1}-1} -  c_{i_{j+1}}) \HH_{\mu_m}(V_m') + \sum_{j=1}^s c_{i_j} \dim(V_{i_j}'/V_{i_j-1}') \\
			&=\er(\mathscr{W}',\boldsymbol{\nu},\mathbf{d}).
	\end{align*}
	Consequently, since the system $(\sV,\bmu,\cc)$ satisfies condition \eqref{entropy-cond}, so does $(\mathscr{W},\boldsymbol{\nu},\mathbf{d})$. This proves that we may always assume condition (a).
	
	(b) Consider a system $(\sV,\cc,\bmu)$ satisfying (a). We then argue as in Lemma \ref{gammak-tilde:reduction}, by considering the subflag $\sV'$ with $V_i' = V_i$ for $i\ne j$, and $V_{j}'=V_{j-1}$. We then have 
		\[
		\er(\sV',\cc,\bmu) - \er(\sV,\cc,\bmu) = 
		\begin{cases} 
			(c_j-c_{j+1}) \big( \HH_{\mu_j}(V_{j-1}) - \dim(V_j/V_{j-1}) \big)  &\text{if}\ j\le r-1,\\
			(c_r-c_{r+1})\HH_{\mu_r}(V_{r-1}) - c_r\dim(V_r/V_{r-1}) &\text{if}\ j=r.
		\end{cases}
		\]
		Since the left-hand side is $\ge0$ and $c_j-c_{j+1}>0$ for all $j=1,\dots,r$, statement (b) follows. 
		
		(c) Assuming statement (b), we may prove statement (c) by arguing as in Lemma \ref{gammak-tilde:reduction}.

		(d) Suppose that (a) holds. Consider the flag
		$\sV': \langle\one\rangle \le V_1'\le  \cdots \le V_r'$, where 
		\[
		V_j' = \Span \bigg(\bigcup_{i=1}^j \Supp(\mu_j)\bigg)
		\qquad (1\le j\le r).
		\]
		It is easy to see from the definition of a system (Definition \ref{adm-meas}) that $\sV'$ is a subflag of $\sV$. We have $\HH_{\mu_j}(V'_j)=0$ for all $j$, and hence 
		\dalign{
			\er(\sV',\cc,\bmu) & = \sum_{i=1}^r c_i \dim(V_i'/V_{i-1}') \\
			& = -c_1  + c_r \dim(V_r') + \sum_{i=1}^{r-1} (c_i-c_{i+1})
			\dim(V_i') \\
			& \ge -c_1  + c_r \dim(V_r) + \sum_{i=1}^{r-1} (c_i-c_{i+1}) \dim(V_i) = \er(\sV,\cc,\bmu),
		}
		by \eqref{strict-entropy-cond}. Since $c_i-c_{j+1}>0$ for all $i\le r-1$, and $c_r>c_{r+1}\ge 0$, we must have that $V_i'=V_i$ for all $i$, which is precisely statement (d).
		
		(e) This statement is proven as in Lemma \ref{gammak-tilde:reduction}. }
\end{proof}

\rev{The bound $\beta_k \ge \tilde{\gamma}_k$ will now follow from the following proposition, as long as we can show that the quantity $\tilde{\gamma}_k$ is well-defined and positive. The latter will be accomplished in Section \ref{binary-system}, where we construct a system satisfying the strict entropy condition \ref{strict-entropy-cond}. An alternative construction is given in Appendix \ref{previous-app}.}

As usual, $\A$ is a logarithmic random set. 

\rev{
\begin{proposition}\label{bgamk}
Let $c>0$ and suppose that there is a system $(\sV,\cc,\bmu)$ such that:
\begin{enumerate}
	\item[(i)] $1=c_1 > c_2 > \cdots >c_{r+1}=c$;
	\item[(ii)] There is some $\eps>0$ such that $\er(\sV',\cc,\bmu) \ge \er(\sV,\cc,\bmu) + \eps$ for all proper subflags $\sV'$ of $\sV$. 
	\item[(iii)]  $\Supp(\mu_j) = V_j \cap \{0,1\}^k$ for $j=1,2,\ldots,r$.
\end{enumerate} 
Let $\delta>0$, and assume that $D$ is large enough in terms of $\delta,\eps$ and $(\sV,\cc,\bmu)$. Then the probability that $\A\cap[D^c,D]$ has $k$ distinct subsets with equal sums is $\ge1-\delta$.
\end{proposition}
}

The proof of Proposition \ref{bgamk} is perhaps the most
difficult part of this paper, and will occupy this and the next section. 
\rev{Throughout the remainder of this section and throughout the next section, we will fix a system $(\sV,\cc,\bmu)$ with $c_{r+1}=c$ satisfying conditions (i)--(iii) of Proposition \ref{bgamk}. Constants implied by $O-$ and $\ll-$symbols may depend on this system.}

The main result, which we will prove in this section and the next, is Proposition \ref{main-b-restate} below. 

\begin{definition}[Nondegenerate maps]\label{nondegenerate-def}
A map $\psi : X \rightarrow \{0,1\}^k$ is said to be \emph{nondegenerate} if the image of $\psi$ is not contained in
any of the subspaces $\{x\in \Q^k : x_i=x_j\}$.
\end{definition}

The map $\psi$ is a ``Venn diagram selection function'', that is, the value of $\psi(b)$ specifies
which piece of the Venn diagram of $k$ subsets $X_1,\ldots,X_k$
 of $X$ that $b$ belongs to.  In the notation \eqref{nu} of the previous section, $\psi(a)=\om$ means that $a\in B_\om$.
The condition that $\psi$ is nondegenerate is equivalent to 
$X_1,\ldots,X_k$ being  distinct, and is similar to the
property of a flag $\sV$ being nondegenerate.

\begin{proposition}\label{main-b-restate}
\rev{With probability tending to 1 as $D\to\infty$,}
  there exists a nondegenerate map $\psi : \A\cap (D^c,D] \to \{0,1\}^k$ such that  $\sum_{a \in \A} a \psi(a) \in \spanone$.
\end{proposition}

The map $\psi$ will be constructed using the data from the
system $(\sV,\cc,\bmu)$.
Before we embark on the proof of this result, we show \rev{how} to deduce Proposition \ref{bgamk} from it.

\begin{proof}[Proof of Proposition \ref{bgamk}, assuming Proposition \ref{main-b-restate}] 
By Proposition \ref{main-b-restate}, we know that with probability $1 - o_{D \rightarrow \infty}(1)$ there is \rev{a nondegenerate map} $\psi : \A \cap (D^c,D] \rightarrow \{0,1\}^k$ such that 
 $\sum_{a \in \A} a \psi(a)$ lies in $\spanone$, that is to say, it is a constant vector. We will show that this map induces $k$ distinct subsets of $\A$ 
 \rev{with equal sums.}

Let $\psi_i:\A \cap (D^c,D]\to\Q$, $i=1,\dots,k$, denote the projection of $\psi$ onto the $i$-th coordinate of $\Q^k$, so that $\psi=(\psi_1,\dots,\psi_k)$. 
Define $A_i := \{ a \in \A : \psi_i(a) = 1\}$.
These sets are distinct because if $A_i = A_j$, then the image
of $\psi$ would take values in the hyperplane $\{\rev{x} \in \Q^k : x_i = x_j\}$, contrary to the fact that $\psi$ is nondegenerate.
Moreover, for all $i,j$ we have
\[ 
\sum_{a \in A_i}a - \sum_{a \in A_j}a = \sum_{a \in \A} a  \psi_i(a) - \sum_{a \in \A} a \psi_j(a)  = 0,
\]
and \rev{so $A_1,\ldots,A_k$ do indeed have equal sums.}
\end{proof}

%

\subsection{Many values of $\sum_{a \in \A} a \psi(a)$, and a moment bound}\label{42}

We turn now to the task of proving Proposition \ref{main-b-restate}.
We will divide the proof of Proposition \ref{main-b-restate} into two parts. The first and more difficult part, which we prove in this section, states that (with high probability) $\sum_{a \in \A} a \psi(a)$ takes many different values modulo $\spanone$ as $\psi$ ranges over all nondegenerate maps $\psi:\A \cap (D^c,D]\to \{0,1\}^k$. The precise statement is Proposition \ref{ellpS-repeat} below.  The deduction of Proposition 
\ref{main-b-restate} from Proposition \ref{ellpS-repeat}
will occupy Section \ref{mt-argument}.

\rev{Let $0<\kappa\le\min_{1\le j\le r} (c_j-c_{j+1})-2/\log D$} be a small quantity, which may depend on $D$.
Let
 \be\label{A-prime}
 \A^j = \{ a\in \A : D^{c_{j+1}+\kappa} < a\leq D^{c_j}/e \} \qquad  (1\le j\le r), \quad \A' := \bigcup_{j = 1}^r \A^j.
 \ee
The purpose of working with $\A'$ rather than $\A$ is to ensure that some gaps are left for the subsequent argument in the next section (based on ideas of Maier and Tenenbaum \cite{MT84}), in which we show that one of the many sums $\sum_{a \in \A'} a \psi(a)$ guaranteed by Proposition \ref{ellpS-repeat} may be modified, using the \rev{elements of $\A \cap (D^c,D] \setminus \A'$}, to 
be in $\spanone$. 

\begin{definition}[Compatible functions]\label{compatible} 
\rev{We say that} a map $\psi : \A' \rightarrow \{0,1\}^k$ is \emph{compatible} if, for all $j$, \rev{$a\in \A^j$ implies $\psi(a)\in V_j$.}
\end{definition}

\rev{\begin{remark*}
Recall that $\Supp(\mu_j)=V_j\cap\{0,1\}^k$ for all $j$ by condition (iii) of Proposition \ref{bgamk}. Setting $B_\om^{(j)}=\{a\in \A^j:\psi(a)=\om\}$, we see that 
$\psi$ being compatible is equivalent to $B_\om^{(j)} \ne \emptyset$ only if $\mu_j(\om)>0$, and is consistent with earlier notation \eqref{nu}.
\end{remark*}}

\begin{proposition}\label{ellpS-repeat}
There exist real numbers $\kappa^*>0$, $p>1$ and $t>0$
\rev{(which depend on the system $(\sV,\cc,\bmu)$)}
 so that the following is true.  Let $\delta > 0$
 and suppose that $D$ is sufficiently large as a function of $\delta$.
 Uniformly for $0\le \kappa \le \kappa^*$,
 we have with probability at least $1 - \delta$, 
that $\sum_{a \in \A'}a \psi(a)$ takes at least  
\[ (t\delta)^{\frac{1}{p-1}} D^{\sum_j c_j \dim (V_j/V_{j-1})}  
\] different values modulo $\spanone$,
as $\psi$ ranges over all nondegenerate, compatible maps $\psi$.
\end{proposition}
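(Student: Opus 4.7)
The plan is to bound the number $M$ of distinct values of $S_\psi := \sum_{a \in \A'} a\,\psi(a) \bmod \spanone$ via an $\ell^p$-moment estimate on the distribution of $S_\psi$. Sample $\psi(a) \in \Supp(\mu_j)$ independently with law $\mu_j$ for each $a \in \A^j$, producing a random compatible map, and let $P_{\A'}(s) := \Pr_\psi[S_\psi = s]$. For any $p > 1$, Hölder's inequality $1 = \sum_s P_{\A'}(s) \le M^{(p-1)/p}\,\|P_{\A'}\|_p$ gives $M \ge \|P_{\A'}\|_p^{-p/(p-1)}$, so it suffices to prove, for some $p > 1$ and $t > 0$,
$$\E_\A \|P_{\A'}\|_p^p \;\ll_t\; D^{-E(p-1)}, \qquad E := \sum_{j=1}^r c_j \dim(V_j/V_{j-1}).$$
Markov's inequality then yields $\|P_{\A'}\|_p^p \le (t\delta)^{-1} D^{-E(p-1)}$ with probability $\ge 1-\delta$, which rearranges to $M \ge (t\delta)^{1/(p-1)} D^E$, as desired.

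The heart of the argument is the moment bound. Write
$$\|P_{\A'}\|_p^p = \E_\psi\bigl[\Pr_{\psi'}[S_{\psi'} = S_\psi]^{p-1}\bigr],$$
and decompose the coincidence event $\{S_{\psi'} = S_\psi\}$ by the linear structure of the difference vectors $\omega(a) := \psi(a) - \psi'(a) \in V_j - V_j$. The condition translates into $\sum_{a \in \A'} a\,\omega(a) \in \spanone$; at each scale $\A^j$ one may assign each $\omega(a)$ to a coset of some subspace $V'_j \le V_j$, and the collection of these cosets determines a subflag $\sV' \le \sV$. An entropy count analogous to the proof of Lemma \ref{lem:bound for L}, combined with a Fourier/local-limit estimate on $\Q^k/\spanone$ to handle the linear constraint, shows that the contribution from each subflag is of order $D^{-\er(\sV',\cc,\bmu)}$; the term $\sV' = \sV$ gives exactly $D^{-E}$, whereas by the strict entropy gap \eqref{entropy-gap} every proper subflag contributes at most $D^{-E-\eps}$. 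Summing over the finitely many subflags and choosing $p$ close to $1$ (and $D$ large), the main term dominates. The threshold $\kappa^*$ is chosen small enough that omitting the boundary regions $[D^{c_{j+1}}, D^{c_{j+1}+\kappa}]$ and $[D^{c_j}/e, D^{c_j}]$ perturbs the entropy count only by a bounded multiplicative factor, absorbed into $t$.

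To complete the proof one must show that restricting to \emph{nondegenerate} compatible $\psi$ changes $M$ by at most a constant factor. A compatible $\psi$ is degenerate iff its image lies in some hyperplane $H_{i,j} = \{x_i = x_j\}$. By condition (c) of Lemma \ref{good-systems}, $\bigcup_l \Supp(\mu_l)$ spans $V_r$, which by nondegeneracy of $\sV$ is not contained in any $H_{i,j}$; so there exist $l$ and $\omega^* \in \Supp(\mu_l)$ with $\omega^*_i \ne \omega^*_j$. The event $\{\omega^* \notin \psi(\A^l)\}$ has probability $(1-\mu_l(\omega^*))^{|\A^l|}$, which decays exponentially in $|\A^l| \gg \log D$ (with high probability, by the normality bounds for $\A$). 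A union bound over the $\binom{k}{2}$ hyperplanes shows that a random compatible $\psi$ is nondegenerate with probability $1-o_D(1)$; the mass of degenerate $\psi$ in $P_{\A'}$ is therefore negligible, and replacing $P_{\A'}$ by its restriction to nondegenerate $\psi$ costs only a constant factor in the bound for $M$.

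The main obstacle is the second paragraph: carrying out the $\ell^p$-moment computation for non-integer $p$ close to $1$ in a way that cleanly extracts only the main term $\sV' = \sV$ from the subflag sum. The use of $p > 1$ rather than $p = 2$ is essential: an $\ell^2$ bound would force working with the convolved measures $\mu_j * (-\mu_j)$, whose entropy structure is not aligned with the optimality encoded in \eqref{entropy-gap}, whereas taking $p$ very close to $1$ effectively penalises only a single copy of $\psi$ and preserves the correct entropy profile. This is precisely the $\ell^p$-moment philosophy developed in \cite{DK10, DK14}, adapted here to the flag-indexed setting and to the uniformity required in $\kappa \in [0,\kappa^*]$.
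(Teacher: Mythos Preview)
Your outer layer matches the paper exactly: define the weighted representation function (your $P_{\A'}$ is the paper's $r_\A$), reduce to an $\ell^p$-moment bound via H\"older and Markov, and handle the nondegeneracy restriction by the union bound you describe. This is precisely the short deduction the paper gives, with the moment bound isolated as Proposition~\ref{ell-p-bound}.

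The gap is in your sketch of the moment bound itself. Decomposing the collision event $S_{\psi'}=S_\psi$ only by a subflag $\sV'\le\sV$ is too coarse. The paper instead attaches to each pair $(\psi,\psi')$ an \emph{adapted system} $(\sW,\bb)$: a complete flag $\sW$ built from the successive spans of the differences $\psi'(a)-\psi(a)$, together with continuous thresholds $b_i\in(c_{j+1}+\kappa,c_j]$ recording the positions $a_{\tau_i}\in\A'$ at which this span first jumps. The savings from the linear constraint is $D^{-\sum_i b_i}$, obtained not by Fourier or local-limit but by the same deterministic trick as in Proposition~\ref{prop:ub}: the relation $\sum_a a(\psi'(a)-\psi(a))\in\spanone$ pins down the pivot values $a_{\tau_i}$ uniquely once the remaining $a$'s are fixed. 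This exponent varies continuously with $\bb$, so there is no single ``main term $\sV'=\sV$'' but a $(\log D)^{O(1)}$-parameter family of saturated configurations, each contributing $\approx D^{-(p-1)\er(\sV)}$; summed na\"ively these would cost a power of $\log D$ and destroy the bound. The paper's remedy is the convexity Lemma~\ref{adapted-gap}, which shows $\er'(\sW,\bb)\ge\er(\sV)+\eps\,\delta(\bb)/2$ in the saturated case; the extra decay in $\delta(\bb)$ makes the sum over $\bb$ converge.

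Two smaller corrections: the contribution from each configuration is of order $D^{-(p-1)\er'(\sW,\bb)}$, not $D^{-\er(\sV')}$ (this factor of $p-1$ is exactly why the target is $D^{-(p-1)E}$, and why $p$ close to $1$ is essential); and no Fourier analysis is used anywhere in the paper's argument.
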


\begin{remark*}
By \eqref{A-prime}, it  clearly suffices to prove \rev{Proposition \ref{ellpS-repeat}} for  $\kappa=\kappa^*$.
\end{remark*}

We will deduce Proposition \ref{ellpS-repeat} from a moment bound. Firstly, define the representation function
$\rev{r_{\A'}}: \Q^k/\spanone \to \R$ by 
\[ \rev{r_{\A'}(x)} := \sum_{\substack{\psi : \A' \rightarrow \{0,1\}^k\\ \sum_{a \in \A'} a \psi(a)- x\in \spanone}} \rev{w_{\A'}(\psi)},\] 
where the summation is over all maps $\psi: \A'\to \{0,1\}^k$, and where
\[ 
\rev{w_{\A'}(\psi)} := \prod_{j=1}^r \prod_{a \in \A^j} \mu_j(\psi(a)).
\]
This weight function \rev{$w_{\A'}$} is chosen so that it is large only when
$\psi$ is \emph{balanced}, that is, when for all
$j$ and $\om$, the set $\A^j$ has about $\mu_j(\om)|\A_j|$ elements $a$
with $\psi(a)=\om$.   Observe that if $\psi(a)\not\in 
\Supp(\mu_j)$ for some $j$ and some $a\in \A^j$, then
\rev{$w_{\A'}(\psi)=0$}, and thus only compatible $\psi$ contribute to the sum $r_{\A}(x)$. 
\rev{However, $w_\A(\psi)$ might be non-zero for some degenerate maps $\psi$, and these 
will be removed by a separate argument below.}

The crucial moment bound for the deduction of Proposition \ref{ellpS-repeat} is given below. 

\begin{proposition}\label{ell-p-bound}
	\rev{Let 
			\[
			\cE^* = \Big\{ A\subseteq [D^c,D]: \# (A \cap (y/e,y]) \le \sqrt{y}/100 \quad (D^c\le y\le D) \Big\}.
			\]
There is a $p > 1$ and $\kappa^*>0$} so that uniformly for $0\le \kappa \le \kappa^*$ \rev{and for all $D\ge e^{100/c}$}
we have the moment bound
\[ 
\E \Big[\rev{\un_{\A'\in\cE^*}} \sum_x r_{\rev{\A'}}(x)^p \Big]
\rev{\ll  D^{-(p - 1) \sum_j c_j \dim(V_j/V_{j-1})}.}
\] 
\end{proposition}

\begin{proof}[Proof of Proposition \ref{ellpS-repeat}, assuming Proposition \ref{ell-p-bound}] 
Define also
\[
\rev{\tilde{r}}_{\rev{\A'}}(x) := \sum_{\substack{\psi : \A' \to  \{0,1\}^k
\\ \psi \text{ is compatible and nondegenerate} \\ \sum_{a \in \A'} a \psi(a) -x \in \spanone}} w_{\rev{\A'}}(\psi).
\]
We have
\[
\sum_{x} r_{\rev{\A'}}(x) = \prod_{j=1}^r \Bigg( \sum_{\om} \mu_j(\om) \Bigg)^{|\A^j|} =  \prod_{j=1}^r 1 = 1
\]
for any $\rev{\A'}$.  On the other hand, when $\psi$ is non-compatible, then $w_{\rev{\A'}}(\psi)=0$ because we know that $\Supp(\mu_j)=V_j\cap\{0,1\}^k$ for all $j$ by our assumption of \rev{condition (iii) of Proposition \ref{bgamk}. In addition, if $\psi$ is degenerate, then its image is contained in $\{x\in\Q^k:x_i=x_j\}\cap\{0,1\}^k$ for some $i\neq j$. Since $V_r\not\subset\{x\in\Q^k:x_i=x_j\}$, there must exist some $\om\in V_r\cap\{0,1\}^k=\Supp(\mu_r)$ that is not in the support of $\psi$.} Therefore,
\dalign{
\sum_{x} (\rev{r_{\A'}(x) - \tilde{r}_{\A'}(x)}) \leq  \rev{\sum_{\om \in \Supp(\mu_r)} (1-\mu_r(\om))^{|\A^r|}.}
}
\rev{Since $c_r>c_{r+1}$ by our assumption of condition (i) of Proposition \ref{bgamk}, 
Lemma \ref{normalA} implies} \rev{$|\A^r| \ge \frac{1}{2}(c_{r}-c_{r+1}) \log D$} with 
probability $> 1 - O(e^{-(1/4)\log^{1/2} D})$, 
and thus the right side above is $o(1)$ with this same 
probability.  \rev{The same lemma also implies that $\A'\in \cE^*$ with probability $> 1 - O(e^{-(1/4)\log^{1/2} D})$.}

Now fix a small $\delta>0$.  The above discussion implies that, with probability at least $1 - \delta/2$ (for $D$ sufficiently large), we have
\begin{equation}\label{sur-lower} 
\sum_{x} \rev{\tilde{r}_{\A'}}(x) \geq \frac{1}{2}\rev{\quad\text{and}\quad \A'\in\cE^*} .\end{equation}
\rev{On the other hand, Markov's inequality and Proposition \ref{ell-p-bound} imply that,} with probability at least $1 - \delta/2$, we have
\begin{equation}\label{ell-p-markov} 
	\rev{\un_{\A'\in\cE^*}}\sum_x \rev{\tilde{r}_{\A'}}(x)^p \leq
\rev{\un_{\A'\in\cE^*}}\sum_x r_{\rev{\A'}}(x)^p \ll  \delta^{-1} D^{-(p - 1) \sum_j c_j \dim (V_j/V_{j-1})}.
\end{equation}
By H\"older's inequality, 
\begin{equation}\label{holder}
	 \rev{\un_{\A\in\cE^*}}\sum_x \rev{\tilde{r}_{\A'}}(x) 
	 	\leq |\Supp(\rev{\tilde{r}_{\A'}})|^{1 - 1/p} \big(\rev{\un_{\A'\in\cE^*}} \sum_x \rev{\tilde{r}_{\A'}}(x)^p \big)^{1/p}.
 \end{equation}
With probability at least $1 - \delta$, both \eqref{sur-lower} and \eqref{ell-p-markov} hold, and in this case \eqref{holder} gives 
\[ |\Supp(\rev{\tilde{r}_{\A'}})| \gg_p \delta^{\frac{1}{p-1}} D^{\sum_j c_j \dim (V_j/V_{j-1})}.
\]
This completes the proof of Proposition \ref{ellpS-repeat}.
\end{proof}

The rest of the section is devoted to the proof of Proposition \ref{ell-p-bound}.

\subsection{An entropy condition for adapted systems}\label{convexity}

\rev{For reasons that} will become apparent, in the proof of Proposition \ref{ell-p-bound} we will need to apply the entropy gap
condition not only with subflags $\sV'$ of $\sV$, but with
a more general type of system.

\begin{definition}[Adapted system]\label{adapted}
Given a system $(\sV,\cc,\bmu)$, 
the pair $(\sW,\mathbf{b})$ is
 \emph{adapted} to $(\sV,\cc,\bmu)$ if
$\sW: \spanone = W_0 \le W_1 \le \cdots \le W_s$ is a complete
flag with $W_s\le V_r$, 
and $\bb=(b_1,\ldots,b_s)$ satisfies \rev{$1\ge b_1 \ge \cdots \ge b_s\ge 0$ and}
the condition
\[
W_i \leq  V_j
\qquad\text{whenever}\qquad 
b_i>c_{j+1} .
\]
We say that  $(\sW,\mathbf{b})$ is \emph{saturated} if  $s=\dim(V_r)-1$ and if for all $j\le r$, there are exactly $\dim V_j-1$ values of $i$ with $b_i>c_{j+1}$. Otherwise, we call $(\sW,\bb)$ \emph{unsaturated}.
\end{definition}

\begin{remark*} For the definition of complete flag, see Definition \ref{flag-pre-def}. We make a few comments to motivate the term {\it saturated}. Let 
\begin{equation}\label{eq:Wb mj}
m_j=\#\{i:b_i>c_{j+1}\}\qquad(0\le j\le r),
\end{equation}
so that the $b_i$'s belonging to the interval $(c_{j+1},c_j]$ are precisely $b_{m_{j-1}+1},\dots,b_{m_j}$. 
Since $W_i\le V_j$ whenever $b_i>c_{j+1}$, we infer that
\begin{equation}\label{eq:Wb-flag adapted}
W_{m_j}\le V_j  \qquad(1\le j\le r).
\end{equation}
Since $\sW$ is complete, we know that $\dim(W_i)=i+1$, and thus  $m_j\le \dim(V_j)-1$. In particular,  $(\sW,\bb)$ is saturated if, and only if, we have equality in \eqref{eq:Wb-flag adapted} for all $j$.
\qed \end{remark*}

We need some further notation, which reflects that
$\A'$ is supported on intervals with gaps.
 For $1\le j\le r$, let
\be\label{Ij}
\rev{I_j = (c_{j+1}+\kappa,c_j].}
\ee
\rev{Recall that we take $\kappa$ small enough so that each $I_j$ has length $\ge2/\log D$, 
that is, $\kappa \le \min_j (c_j-c_{j+1})-2/\log D$.}

There is a natural analogue of \rev{the $\er$-value (cf.~Definition \ref{e-value dfn})} for adapted systems.

\begin{definition}\label{adapted-sys}
Given an adapted system $(\sW, \mathbf{b})$, we define
\[ 
\er(\sW,\mathbf{b}) = \er(\sW,\mathbf{b};\sV,\cc,\bmu) := \sum_{i,j} \rev{\lambda([b_{i+1},b_i] \cap I_j)} \HH_{\mu_j}(W_i) + \sum_i b_i,
\]
where $\lambda$ denotes \rev{the} Lebesgue measure on $\R$.

\rev{Finally, we define} 
\be\label{deltab}
\delta(\bb) = \max_{i,j} \{ c_j-b_i : b_i \in I_j \},
\ee
that is to say $\delta(\bb)$ is the smallest non-negative real number with the property that
\[
c_j-\delta(\bb)\le b_i\le c_j\qquad(1\le j\le r,\ i\in I_j).
\]
\end{definition}

Adapted systems $(\sW,\bb)$ can, in a certain sense, be interpreted in terms of convex superpositions of pairs $(\sV',\cc)$, $\sV' \leq \sV$ a subflag.  The next lemma \rev{gives us a strict inequality analogous to condition (ii) of Proposition \ref{bgamk}}, unless $\sW$ is saturated and has a small value of $\delta(\bb)$, which corresponds to the convex superposition which gives rise to $(\sW,\mathbf{b})$ having weight $\approx 1$ on the trivial subflag $(\sV,\cc)$. 

\begin{lemma}\label{adapted-gap} \rev{Let $(\sV,\bmu,\cc)$ be a system satisfying conditions (i)--(ii) of Proposition \ref{bgamk}. Let $\eps$ be as in condition (ii).} Suppose that $(\sW, \mathbf{b})$ is an adapted system to $(\sV,\bmu,\cc)$ such that $b_i$ lies in some set $I_j$ for each $i$. Suppose, further, that  $\kappa$ is small enough in terms of $\eps$, \rev{ and that $\kappa \le \frac12 \min_j (c_j-c_{j+1}).$}
\begin{enumerate}
\item 
If $(\sW,\bb)$ is unsaturated, then $\er(\sW, \mathbf{b}) \geq \er(\sV,\cc,\bmu) + \eps/2$.
\item If $(\sW,\mathbf{b})$ is a saturated, then
$\er(\sW,\bb) \geq \er(\sV,\cc,\bmu) + \eps \delta(\bb)/2$.
\end{enumerate}
\end{lemma}

\begin{proof} We treat both parts together for most of the proof. Let $m_j$ be defined by \eqref{eq:Wb mj}. \rev{In particular, $m_0=0$ because $c_1=1$.} Note that $\max_{i\in I_j}(c_j-b_i)=c_j-b_{m_j}$, and \rev{let $h$ be such that 
\[
\delta(\bb)=c_{h}-b_{m_{h}} .
\]}
Without loss of generality, we may assume that $b_{m_{h}}<c_{h}$; the case $b_{m_{h}}=c_{h}$ will then follow by continuity. 

Set $b=b_{m_{h}}$ and note that
\[
\er(\sW,\bb)\ge \min \left\{ \er(\sW,\bb'): 
\begin{array}{l}
b'_i \in [c_{j+1}+\kappa,c_j]\ 
\mbox{when $i\in(m_{j-1},m_j]$ and $j\neq h$},\\ 
b_i'\in[b,c_{h}]\ \mbox{when $ i\in(m_{h-1},m_{h})$},\ b_{m_{h}}'=b,\\
b_1'\ge b_2'\ge\cdots\ge b_s'
\end{array}\right\}.
\]
The quantity $\er(\sW,\bb')$
is linear in each variable $b_i'$ and the region over which we consider the above minimum is a polytope. As a consequence, the minimum of $\er(\sW,\bb')$ must occur at one of the vertices of the polytope. In particular, there are indices $\ell_j\in(m_{j-1},m_j]$ for $j=1,\dots,r$ such that
\begin{equation}\label{eq:eWbmin}
\er(\sW,\bb)\ge \er(\sW,\bb^*),
\qquad\text{where}\qquad
b_i^*=\begin{cases}
c_j&\text{if}\ m_{j-1}<i\le \ell_j,\\
c_{j+1}+\kappa &\text{if}\ \ell_j<i\le m_j,\ j\neq h,\\
b&\text{if}\ \ell_{h}<i\le m_{h} .
\end{cases}
\end{equation}
In fact, note that we must have $\ell_{h} <m_{h}$ because $b_{m_{h}}^*=b$ and we have assumed that $b<c_{h}$.

Using the linearity of $\er(\sW,\cdot)$ once again, we find that
\be\label{Mbell}
\er(\sW,\bb^*) = \frac{c_{h}-b}{c_{h}-c_{h+1}-\kappa}
\er(\sW,\bb^{(1)}) + \frac{b-c_{h+1}\rev{-}\kappa}{c_{h}-c_{h+1}-\kappa }\er(\sW,\bb^{(2)}),
\ee
where $b_i^{(1)} = b_i^{(2)} = b_i^*$ for $i\in\{1,\dots,s\}\setminus (\ell_{h},m_{h}]$, $b_i^{(1)} = c_{h+1}+\kappa$ for $i\in(\ell_{h},m_{h}]$ and $b_i^{(2)} = c_{h}$ for $i\in(\ell_{h},m_{h}]$.

Fix $\bb'\in\{ \bb^{(1)}, \bb^{(2)} \}$. In addition, define the indices $i_1,\dots,i_r$ by letting $i_j=\ell_j$ when $j\neq h$ or $\bb'=\bb^{(1)}$, while letting $i_{h}=m_{h}$ when $\bb'=\bb^{(2)}$. We then have
\[
b_i'=
	\begin{cases}
		c_j&\text{if}\ m_{j-1}<i\le i_j,\\
		c_{j+1}+\kappa&\text{if}\ i_j<i\le m_j.
	\end{cases}
\]
A straightforward calculation implies that
\begin{align}
\er(\sW,\bb')
\rev{=\er(\sV',\cc,\bmu)+ S\kappa + (m_r-i_r)c_{r+1}} ,
\label{eq:from Wb to V'}
\end{align}
where $\sV'$ is the subflag of $\sV$ with $V_j'=W_{i_j}$ and 
\[
S=\sum_{j=1}^r \big(m_j-i_j-\HH_{\mu_j}(W_{i_j})\big).
\]
(Note that $\sV'$ is indeed a subflag since $W_{i_j}\le W_{m_j}\le V_j$ by \eqref{eq:Wb-flag adapted}.)

If $\sV'=\sV$, we must have that $W_{i_j}=V_j$ for all $j$. Since $W_{i_j}\le W_{m_j}\le V_j$, we infer that $W_{m_j}=V_j$, as well as that $i_j=m_j$ for all $j$. In particular, the flag $(\sW,\bb)$ we started with must be saturated and $S=0$ (since $i_j=m_j$ and $\HH_{\mu_j}(W_{i_j})=\HH_{\mu_j}(V_j)=0$ for all $j$). 

We are now ready to complete the proof of both parts of the lemma.

\medskip

(a) By the above discussion, if $(\sW,\bb)$ is unsaturated, then $\sV'\neq\sV$. Therefore, \rev{by assumption of condition (ii) of Proposition \ref{bgamk}, we have $\er(\sW,\bb')\ge\er(\sV,\cc,\bmu)+\eps$} for $\bb'\in\{\bb^{(1)},\bb^{(2)}\}$. Inserting this inequality into \eqref{Mbell} implies that\rev{ $\er(\sW,\bb^*)\ge \er(\sV,\cc,\bmu)+\eps+O(\kappa)$.} Since $\er(\sW,\bb)\ge \er(\sW,\bb^*)$, the proof of part (a) is complete by assuming that $\kappa$ is small enough in terms of $\eps$.

\medskip

(b) Assume that $(\sW,\bb)$ is saturated. We can only have that $\sV'=\sV$ if $i_{h}=m_{h}$. Since $\ell_{h}<m_{h}$, this can only happen when $\bb'=\bb^{(2)}$. As a consequence, assuming again that $\kappa$ is small enough in terms of $\eps$, we have that
\[
\rev{\er(\sW,\bb')\ge 
	 \er(\sV,\cc,\bmu)+1_{\bb'=\bb^{(1)}} \cdot \eps/2.}
\]
Inserting this into \eqref{Mbell} yields the inequality
\[
\rev{\er(\sW,\bb^*) 
\ge \er(\sV,\cc,\bmu)}
+\frac{c_{h}-b}{c_{h}-c_{h+1}-\kappa}
\cdot \frac{\eps}{2}.
\]
Since $b=c_{h}-\delta(\bb)$, \rev{ $0<c_{h}-c_{h+1}-\kappa\le1$,} and $\er(\sW,\bb)\ge \er(\sW,\bb^*)$, we find that \rev{$\er(\sW,\bb)\ge\er(\sV,\cc,\bmu)+\eps\delta(\bb)/2$.} This completes the proof of part (b) of the lemma. 
\end{proof}

\subsection{Proof of the moment bound}\label{ell-p-pr}
In this subsection we prove Proposition \ref{ell-p-bound}.
For a vector $\mathbf{n}= (n_0,n_1,n_2,\ldots,n_r)$
with 
\[
0=n_0 \le n_1 \le \cdots \le n_r,
\]
define the event 
 \[
 S(\mathbf{n})=\{\A' : \rev{\# \A^j = n_{j}-n_{j-1}\quad(1\le j\le r)}\}.
 \]
When $\A'$ lies in $S(\mathbf{n})$, we write
\[
\A' = \{a_1, a_2, \ldots, a_{n_r} \}, 
	\qquad a_1 > a_2 > \ldots > a_{n_r},
\]
so that
\begin{equation}\label{eq:Aj-nj}
a_t \in \A^j
\qquad\mbox{if and only if}\qquad n_{j-1}< t \le n_j.
\end{equation}
We may define, for any compatible $\psi$, the auxilliary
function 
\begin{equation}\label{eq:theta-psi}
\rev{\theta: [n_r] \to V_r \cap \{0,1\}^k}
\qquad\text{such that}\qquad 
\theta(t)=\psi(a_t).
\end{equation}
The salient property of $\theta$ is that it is determined by 
the ordering of the elements in $\A^j$ and not by the elements themselves.
We denote by $\boldsymbol{\Theta}_{\bn}$ the set of compatible functions $\theta$, that is, those functions satisfying
\begin{equation}
\theta(t)\in \Supp(\mu_j) \qquad\text{whenever}\qquad t\le n_j,\quad 1\le j\le r\ . \label{theta-adm}
\end{equation}

In the event $S(\mathbf{n})$, if $\psi$ is an compatible function and $\theta$ is defined by \eqref{eq:theta-psi}, we have 
\begin{equation}\label{w-def-abuse} 
w_{\rev{\A'}}(\psi) = w_{\mathbf{n}}(\theta) := \prod_{j=1}^r \prod_{n_{j-1}<t \le n_j} \mu_j(\theta(t)),
\end{equation}
where the notation $w_{\mathbf{n}}$ (in place of $w_{\A}$) reflects the fact that $w$ only depends on $\theta$, and not otherwise on $\A$. In this notation,
\[
r_{\rev{\A'}}(x) = \ssum{\theta\in\boldsymbol{\Theta}_{\bn} \\
 \sum_t \theta(t) a_t -x \in \spanone}
w_{\mathbf{n}}(\theta).
\]

Writing $r_{\rev{\A'}}^p = r_{\rev{\A'}}^{p-1} r_{\rev{\A'}}$ and interchanging the order of summation, it follows that if \rev{$\rev{\A'}$ lies in $S(\mathbf{n})$}, then
\begin{align} \nonumber 
\sum_x r_{\rev{\A'}}(x)^p 
& = \sum_{\theta \in\boldsymbol{\Theta}_{\bn}}
\bigg( r_{\rev{\A'}}\Big(\sum_t a_t \theta(t)\Big)\bigg)^{p-1} w_{\mathbf{n}}(\theta) \\ 
& =  \sum_{\theta \in \boldsymbol{\Theta}_{\bn}} 
\bigg( \ssum{\theta' \in \boldsymbol{\Theta}_{\bn} \\ \eqref{thetaprime}} w_{\mathbf{n}}(\theta') \bigg)^{p-1} w_{\mathbf{n}}(\theta), 
\label{decouple} 
\end{align}
where the inner summation is over all compatible
functions $\theta'$ satisfying
\be
\label{thetaprime}
 \sum_t a_t 
 (\theta'(t) - \theta(t)) \in \spanone.
\ee

Similar to the argument in subsection \ref{upp:linalg}, we find a
flag $\sW$ and special values of $i$ which have the
effect of isolating terms in the relation
\eqref{thetaprime}.
With $\theta, \theta',\mathbf{n}$ fixed, let 
\[
\Omega = \Omega(\theta, \theta') = \{ \theta'(t)-\theta(t) : 1\le t\le n_r \} 
\]
and 
\[
\rev{s = \dim\big( \Span (\one, \Omega) \big)- 1.}
\]
We now choose a special basis of $\Span(\one, \Omega)$. For each $\om\in\Omega$, let
\[
K_\om = \min \{t : \theta'(t)-\theta(t)=\om \},
\]
and place a total ordering
on $\Omega$
by saying that $\om \prec \om'$ if $K_{\om} < K_{\om'}$.
Let $\om^1$ be the minimum element in  $\Omega\setminus \spanone$,
$\om^2 = \min (\Omega \setminus \Span(\one,\om^1)),\dots, \om^s = \min (\Omega \setminus \Span(\one,\om^1,\ldots,\om^{s-1}))$, where $s$ is such that $\Omega\subset \Span(\one,\om^1,\ldots,\om^s)$.  Finally, let
\[
W_j = \Span(\one,\om^1,\ldots,\om^j), \qquad
\tau_j = K_{\om^j} \qquad (1\le j\le s),
\]
\[
\btau(\theta,\theta',\bn)=(\tau_1,\ldots,\tau_s),
\]
and form the flag 
\[
\sW=\sW(\theta,\theta',\bn) \ : \ W_0 \le W_1 \le \cdots \le W_s.
\] 
We note that in the special case $\theta=\theta'$, we have $s=0$ and $\sW$ is a 
trivial flag with only one space $W_0$. 

Now we divide up the sample space of $\A'$ into events describing the rough size of the critical elements $a_{\tau_j}$. 
By construction, 
\[
a_{\tau_j} = \max \{a_t\in \A' : \theta'(t)-\theta(t) = \om^j \}.
\]
Similarly to Section \ref{sec:upper}, for $1\le i\le s$ let
\begin{equation}\label{eq:bi-dfn lb}
b_i 
= \rev{1+\frac{\cl{\log a_{\tau_i}-\log D}}{\log D} }
\qquad \text{so that}\quad
a_{\tau_i}\in(D^{b_i}/e,D^{b_i}] .
\end{equation}

The definition of $\A'$ implies that for each $i$, there is some $j$ with $b_i\in I_j=(c_{j+1}+\kappa,c_j]$. 
Moreover, we have the implications
\[
b_i>c_{j+1}
	\quad\implies\quad 
\tau_i \le n_j
\quad\implies\quad 
\om^i = \theta(\tau_i) -\theta'(\tau_i) \in V_j,
\]
where we used \eqref{theta-adm} to obtain the second implication. Since $b_1\ge b_2\ge\cdots \ge b_i$, we infer the stronger relation
\begin{equation}\label{eq:W-V inclusion}
b_i>c_{j+1}\qquad\implies\qquad 
W_i \le V_j.
\end{equation}
Therefore, the pair $(\sW,\bb)$ is adapted to $(\sV,\cc,\bmu)$.

Using the inequality $(x+ y)^{p-1} \leq  x^{p-1} + y^{p-1}$ repeatedly, we may partition \eqref{decouple} according to the values of $\sW(\theta,\theta')$ and $\btau(\theta,\theta')$, obtaining (still assuming $S(\mathbf{n})$)
\[ \sum_x r_{\rev{\A'}}(x)^p 
 \leq  \sum_{\sW, \btau,\theta} \bigg( \sum_{\substack{\theta' \in \boldsymbol{\Theta}_{\bn} ,\  \eqref{thetaprime} \\ \sW(\theta,\theta',\bn) = \sW, \ \btau(\theta,\theta',\bn) = \btau } } w_{\mathbf{n}}(\theta')\bigg)^{p-1} w_{\mathbf{n}}(\theta).
 \]
 
We need to separately consider other elements of $\A'$ 
that lie in the intervals $(D^{b_i}/e,D^{b_i}]$, and so we
define 
\[
\cB = \{b_i : 1\le i\le s\}\qquad\text{and}\qquad
\bs\ell=(\ell_b)_{b\in \cB}, 
\qquad \text{where}\qquad 
\ell_b = \#\big(\A' \cap (D^b/e,D^b]\big) .
\]
By assumption, $\sum_b \ell_b \ge s$. It may happen that
$b_i=b_{i+1}$ for some $i$, in which case $|\cB| < s$.
With $\bn, \btau, \bb, \bs\ell$ all fixed,
consider the event 
\[
E(\bb,\btau,\bn,\bs\ell)
\]
defined as the intersection of 
\begin{itemize}
\item $S(\bn)$;
\item $\rev{a_{\tau_i}} \in (D^{b_i}/e,D^{b_i}]$ for all $i$;
\item $|\A' \cap (D^b/e,D^b]| = \ell_b$ for all $b\in \cB$.
\end{itemize}

Taking expectations over $\rev{\A'}$, we get
\begin{align*} 
\E &\Big[\un_{\rev{\A'\in S(\mathbf{n})\cap\cE^*}}\sum_x  r_{\rev{\A'}}(x)^p \Big] \\ 
	& \leq  \E\bigg[\sum_{\substack{\sW,\btau,\bb,\theta, \bs\ell \\ \rev{\ell_b\le D^{b/2}/100\ \forall b\in\cB}}} w_{\mathbf{n}}(\theta)\bigg( \sum_{\substack{\theta'\in \boldsymbol{\Theta}_{\bn},\  \eqref{thetaprime} \\ \sW(\theta,\theta',\bn) = \sW,\ \btau(\theta,\theta',\bn) = \btau } } 
	w_{\mathbf{n}}(\theta')\bigg)^{p-1} 
	 \un_{E(\bb,\btau, \bn, \bs\ell)}\bigg],
\end{align*}
\rev{where the condition that $\ell_b\le D^{b/2}/100$ comes from the fact that we taking expectations over $\A'\in\cE^*$.} By H\"older's inequality with exponents $\frac{1}{p-1}$, $\frac{1}{2-p}$, this implies that 
\begin{align}
\nonumber \E\Big[\un_{\rev{\A'\in S(\mathbf{n})\cap\cE^*}}\sum_x  r_{\rev{\A'}}(x)^p \Big]
	 &\leq  \sum_{\substack{\sW,\btau,\bb,\theta,\bs\ell \\  \rev{\ell_b\le D^{b/2}/100\ \forall b\in\cB}}} w_{\mathbf{n}}(\theta)\P( E(\bb,\btau,\mathbf{n}, \bs\ell))^{2 - p} \times \\
	&\qquad\quad \times \bigg\{\sum_{\substack{\theta'\in \boldsymbol{\Theta}_{\bn}  \\ \sW(\theta,\theta',\bn) = \sW \\ \btau(\theta,\theta',\bn) = \btau}} w_{\mathbf{n}}(\theta')\P \big[ E(\bb,\btau,\bn, \bs\ell)
 \wedge \eqref{thetaprime} \big] \bigg\}^{p-1}.\label{eq1}
\end{align}

\noindent\emph{Claim.} \rev{Let $\ell_b\le D^{b/2}/100$ for all $b\in\cB$. Then we have}
\begin{equation}\label{cond-prop-E}
\P\big( \eqref{thetaprime}\ \big| \ E(\bb,\btau,\bn, \bs\ell)  \big) 
\ll D^{-(b_1+\cdots+b_s)}e^{\sum_b\ell_b}.
\end{equation}

\begin{proof}[Proof of Claim] \rev{Let us begin by analyzing the event $E(\bb,\btau,\bn, \bs\ell)$ we are conditioning on. Consider the set $\bigcup_j(D^{c_{j+1}+\kappa},D^{c_j}]\setminus \bigcup_{b\in\cB} (D^b/e,D^b]$. There is a unique way to write it as $\bigcup_{m=1}^M I_m$, where the sets $I_m$ are intervals of the form $(A,B]$ with their closures $\bar{I}_m$ mutually disjoint. Now, the event  $E(\bb,\btau,\bn, \bs\ell)$ is equivalent to there being mutually disjoint sets of consecutive integers $\cI_m$ ($1\le m\le M$) and $\cJ_b$ ($b\in\cB$) such that:
		\begin{itemize}
			\item The sets $\cI_m$ $(1\le m\le M)$ and $\cJ_b$ $(b\in\cB)$ together form a partition of the set $[n_r]$;
			\item For all $m\in\{1,\dots,M\}$, we have $a_n\in I_m$ if and only if $n\in \cI_m$;
			\item For all $b\in\cB$, we have $a_n\in (D^b/e,D^b]$ if and only if $n\in \cJ_b$;
			\item $\tau_i\in\cJ_{b_i}$ for all $i$;
			\item $|\cJ_b|=\ell_b$ for all $b\in\cB$.
		\end{itemize}
The above discussion allows us to describe the distribution law of $\A'$ under the event $E(\bb,\btau,\bn, \bs\ell)$: given a choice of the intervals $\cI_m$ and $\cJ_b$, we construct independent logarithmic random sets $\A^*_m$ on $I_m$ and $\tilde{A}_b$ on $(D^b/e,D^b]$ such that $\#\A'\cap I_m=\#\cI_m$ for all $m$ and $\#\tilde{A}_b=\ell_b$ for all $b$. Then $\A'$ is the union of all $\A^*_m$'s and all $\tilde{\A}_b$'s.
}

\rev{Having explained how the distribution of $\A'$ looks like under the event $E(\bb,\btau,\bn,\bs\ell)$, let us now prove our claim. We argue as in the proof of Proposition
 \eqref{prop:ub}.} Relation \eqref{thetaprime} implies
\[
\sum_{i=1}^s \om^i a_{\tau_i} + \sum_{t\not\in \{\tau_1,\ldots,\tau_s\}} a_t
(\theta'(t)-\theta(t)) = a_0 \one
\]
for some $a_0\in \Z$. Since $\one, \om^1,\ldots,\om^s$ are linearly independent,
this uniquely determines their coefficients 
 $a_0, a_{\tau_1},\dots, a_{\tau_s}$ in terms of the other $a_i$'s.
For each $b\in\cB$, let
\[
m_b = \# \{i: b_i = b\}
\quad\text{and}\quad 
N_b = \#\big(\Z\cap (D^{b}/e,D^{b}]\big) = (1-1/e)D^b + O(1).
\] 
Then, \rev{given $\A_m^*$ for all $m$} and $b\in\cB$, there are at most
\[
\binom{N_b}{\ell_b-m_b} \le \frac{N_b^{\ell_b-m_b}}{(\ell_b-m_b)!}
\ll \ell_b^{m_b}\cdot \frac{((1-1/e)D)^{b(\ell_b-m_b)}}{\ell_b!} 
\ll \frac{D^{b(\ell_b-m_b)}}{\ell_b!} 
\]
choices for $\tilde{\A}_b$ \rev{(since $m_b$ of each elements are determined by the remaining $\ell_b-m_b$ elements and by the elements of the $\A_m^*$ that we have fixed)}, where we used that $\ell_b^{m_b}\le \ell_b^k\ll (1-1/e)^{-\ell_b}$. In addition, Lemma \ref{A-cond} implies that the probability of occurrence of a \rev{given} set $X_b\subset\Z\cap \rev{(D^b/e,D^b]}$ as the set $\tilde{\A}_b$, conditionally to the event that $\#\tilde{\A}_b=\ell_b$, is
\[
\ll \frac{\ell_b!}{(\sum_{D^b/e<m\le D^b}1/(m-1))^{\ell_b}} 
	\prod_{x\in X_b}\frac{1}{x}\prod_{D^b/e<m\le D^b}\bigg(1-\frac{1}{m}\bigg)
	\ll \frac{\ell_b!}{(D^b/e)^{\ell_b}}.
\]
Putting the above estimates together, we conclude that
\dalign{
\P\big( \eqref{thetaprime}\ \big| \ E(\bb,\btau,\bn, \bs\ell)  \big) 
 &\ll\prod_{b\in \cB}\frac{e^{\ell_b}}{D^{bm_b}}
 =D^{-(b_1+\cdots+b_s)}e^{\sum_b\ell_b},
}
upon noticing that $\sum_{b\in \cB} m_b b = \sum_i b_i$. This proves our claim that \eqref{cond-prop-E} holds.
\end{proof}

In the light of \eqref{cond-prop-E}, relation \eqref{eq1} becomes
\begin{align}\nonumber 
\E & \Big[\un_{\rev{\A'\in S(\mathbf{n})\cap\cE^*}}\sum_x  r_{\rev{\A'}}(x)^p \Big]\\ 
& \ll  \sum_{\sW,\btau,\bb,\bs\ell} 
D^{-(p-1) \sum_j b_j}e^{\sum_b\ell_b}
\E\bigg[
\sum_{\theta\in \boldsymbol{\Theta}_{\bn} } w_{\mathbf{n}}(\theta)\bigg(\sum_{\substack{\theta'\in \boldsymbol{\Theta}_{\bn}  \\ \sW(\theta,\theta',\bn) = \sW \\ \btau(\theta,\theta',\bn) = \btau}} w_{\mathbf{n}}(\theta')\bigg)^{p-1}
\un_{E(\bb,\boldsymbol{\tau},\mathbf{n},\bs\ell)} \bigg].\label{eq2}
\end{align}

To evaluate the bracketed expression, first recall the definition \eqref{w-def-abuse} of $w_{\mathbf{n}}(\theta')$, and note that the conditions $\sW(\theta,\theta',\bn) = \sW$, $\btau(\theta,\theta',\bn) = \btau$ together imply that 
\[
\theta'(t) - \theta(t) \in W_i
\qquad(1\le t < \tau_{i+1},\ 0\le i\le s),
\]
where we have defined \rev{$\tau_0:=0$ and} $\tau_{s+1} := n_r+1$.
For brevity, write
\[
T_{i,j} = (n_{j-1},n_j] \cap [\tau_i,\tau_{i+1}) \cap \N, \qquad
(\rev{0}\le i\le s,\ 1\le j\le r).
\]
Some of these sets are empty. In any case, we have
\begin{equation}\label{weight-bd} \sum_{\substack{\theta'\in \boldsymbol{\Theta}_{\bn}  \\ \sW(\theta,\theta',\bn) = \sW \\ \btau(\theta,\theta',\bn) = \btau}} w_{\mathbf{n}}(\theta') \leq   \sprod{0\le i\le s \\ 1\le j\le r} \;\;\; \prod_{t \in T_{i,j}} \mu_j (\theta(t) + W_i).
\end{equation}

From \eqref{w-def-abuse}, and the fact that the discrete intervals $T_{i,j}$ are disjoint and cover $[n_r]$, we have 
\[ 
w_{\mathbf{n}}(\theta) = \prod_{i,j} 
\prod_{t \in T_{i,j}} \mu_{j}(\theta(t)).
\]
With these observations, we conclude that
\begin{align}\label{eq3} \nonumber\sum_{\theta\in \boldsymbol{\Theta}_{\bn} } w_{\mathbf{n}}(\theta)\bigg( \sum_{\substack{\theta'\in \boldsymbol{\Theta}_{\bn} \\ \sW(\theta,\theta',\bn) = \sW \\ \btau(\theta,\theta',\bn) = \btau}} w_{\mathbf{n}}(\theta')\bigg)^{p-1} 
&  \leq   \sum_{\theta\in \boldsymbol{\Theta}_{\bn} } \prod_{i,j} \prod_{t \in T_{i,j}} \mu_{j} (\theta(t)) \mu_{j}(W_{i} + \theta(t))^{p-1} \\
 & = \prod_{i,j} \eta(i,j,p,\sW)^{|T_{i,j}|},
 \end{align}
where
\begin{equation}\label{cross-cor} \rev{\eta(i,j,p,\sW) :=  \sum_{\om \in \Supp(\mu_j)} \mu_j(\om) \mu_j(W_i + \om)^{p-1}.} 
\end{equation}
Substituting into \eqref{eq2}, and summing over $\mathbf{n}$, 
we get
\begin{equation}\label{eq55} 
\E \Big[\rev{\un_{\A'\in\cE^*}}\sum_{x} r_{\rev{\A'}}(x)^p \Big]
\ll \sum_{\sW, \bb} 
D^{-(p-1) \sum_j b_j} 
 \sum_{\btau,\bn,\bs\ell} e^{\sum_b\ell_b}
 \E \bigg[
\un_{E(\bb,\btau,\bn,\bs\ell)}
\prod_{i,j} \eta(i,j,p,\sW)^{|T_{i,j}|}
\bigg].
\end{equation}

If $V_j \le W_i$, then $\mu_j(W_i+\om)=1$ for all $\om$ and
thus $\eta(i,j,p,\sW)=1$.  For all $i,j,p,\sW$
we have $\eta(i,j,p,\sW)\le 1$. 
Thus, we require lower bounds on $|T_{i,j}|$
in the case $V_j \not\le W_i$.

\medskip

\noindent\emph{Claim.} Assume that $E(\bb,\btau,\bn,\bs\ell)$ holds. Given $i$ such that $b_{i+1}<b_i$ and $j\in\{1,\dots,r\}$, define 
\[
M_{i,j} 
:= (D^{c_{j+1}+\kappa},\rev{D^{c_j}/e}] \cap
(D^{b_{i+1}}, D^{b_i}/e]
\] 
Then
\begin{equation}
\label{Mij}
\{t: a_t \in M_{i,j}\}\subset T_{i,j} .
\end{equation}

\begin{proof}[Proof of Claim]
Let $t$ be such that $a_t\in M_{i,j}$. In particular, $D^{b_{i+1}}<\rev{a_t}\le D^{b_i}/e$. This relation and the definition of $b_i$ in \eqref{eq:bi-dfn lb} imply that $a_{\tau_{i+1}}<a_t<a_{\tau_i}$ and hence $\tau_i<t<\tau_{i+1}$, where we used that $a_1>a_2>\cdots>a_{n_r}$. In addition, since $D^{c_{j+1}+\kappa}<a_t\le D^{c_j}$, we have that $a_t\in \A^j$. Thus, $n_{j-1}<t\le n_j$ by \eqref{eq:Aj-nj}. This completes the proof of the claim.
\end{proof}

A direct consequence of \eqref{Mij} is that
\[
|T_{i,j}| \ge \big| \rev{\A'} \cap M_{i,j} \big|.
\]
Combining this inequality with \eqref{eq55}, we get
\begin{align*} 
\E \Big[\rev{\un_{\A'\in\cE^*}}\sum_x r_{\rev{\A'}}(x)^p\Big] 
\ll  \sum_{\sW,  \bb} 
	D^{-(p-1) \sum_j b_j}  
	\sum_{\bn,\btau,\bs\ell}  e^{\sum_b\ell_b} 
	\E \bigg[
	\un_{E(\bb,\boldsymbol{\tau},\mathbf{n},\bs\ell)}
	\prod_{i,j} \eta(i,j,p,\sW)^{|\A \cap M_{i,j}|}\bigg].
\end{align*}

\rev{Fix $\bb$ and $\sW$, and let $E'(\bb,\bs\ell)$ be the event that $|\A'\cap(D^b/e,D^b]|=\ell_b$ for all $b\in\cB$. Given $\A'\in E'(\bb,\bs\ell)$, we have at most $\prod_b \ell_b\le e^{\sum_b\ell_b}$ choices for $\tau_1,\dots,\tau_s$.  Hence,}
\dalign{
\sum_{\bn,\btau,\bs\ell} 
&e^{\sum_b\ell_b}\E\bigg[
\un_{E(\bb,\boldsymbol{\tau},\mathbf{n},\bs\ell)} \eta(i,j,p,\sW)^{|\rev{\A'} \cap M_{i,j}|} \bigg] \\
&\le \sum_{\bn,\bs\ell}e^{2\sum_b\ell_b} \E \bigg[
 \un_{S(\bn)}\un_{E'(\rev{\bb,}\bs\ell)} 
  \prod_{i,j} \eta(i,j,p,\sW)^{|\rev{\A'} \cap M_{i,j}|}\bigg].
}
Since the events $S(\bn)$ are mutually disjoint, we arrive at the inequality
\begin{equation}
\label{eq57}
 \E\Big[\rev{\un_{\A'\in\cE^*}}  \sum_x r_{\rev{\A'}}(x)^p \Big]
 \le  \sum_{\sW,\bb} D^{-(p-1)\sum_j b_j} \, \E\Big[\prod_{b\in\cB}e^{2|\tilde{\A}_b|} \prod_{i,j}\eta(i,j,p,\sW)^{|\rev{\A'} \cap M_{i,j}|} \Big].
\end{equation}
 
Next, we estimate the right hand side of \eqref{eq57}. The intervals $M_{i,j}$ and $(D^b/e,D^b]$ are
mutually  disjoint by \eqref{Mij},  hence the quantities
$|\rev{\A'} \cap M_{i,j}|$ and $|\tilde{\A}_b|$ are independent.  Using Lemma \ref{A-moments}, we obtain
\begin{align*}
\E &\Big[\prod_{b\in\cB}e^{2|\tilde{\A}_b|}
\prod_{i,j}\eta(i,j,p,\sW)^{|\rev{\A'} \cap M_{i,j}|}\Big] \\
&\le
\exp\Big\{ \sum_{b\in\cB}\sum_{D^b/e<m\le D^b}\frac{2e-1}{m}+\sum_{i,j} \big(\eta(i,j,p,\sW)-1\big)\sum_{m\in M_{i,j}} \frac{1}{m} \Big\} \\
&\ll \exp\Big\{ \sum_{i,j} \big(\eta(i,j,p,\sW)-1\big)\sum_{m\in M_{i,j}} \frac{1}{m} \Big\}.
\end{align*}

Recall that \rev{$I_j=(c_{j+1}+\kappa,c_j]$, define
\[
G_i=G_i(\bb) = (b_{i+1},b_i],
\] 
 and recall that} $\lambda$ denotes the Lebesgue measure on $\R$. Then, by the definition of $M_{i,j}$, we have
\[
\sum_{m\in M_{i,j}} \frac{1}{m}
 = \lambda(I_j \cap G_i)\log D 
+ O(1).
\]
Substituting into \rev{the definition of $\er()$ (Definition \ref{adapted-sys}),} this gives
\begin{equation}\label{eq59}
 \E \Big[\rev{\un_{\A'\in\cE^*}} \sum_x r_{\rev{\A'}}(x)^p \Big]
  \ll \sum_{\sW, \bb} D^{-E(p,\sW,\bb)},
\end{equation}
where
\begin{align*}
E(p,\sW,\bb)
&:= 
(p-1) \sum_j b_j - \mathop{\sum\sum}_{i,j} \big(\eta(i,j,p,\sW) - 1\big) \lambda(I_j \cap G_i)\\ 
& = (p-1) \er(\sW,\bb) - \mathop{\sum\sum}_{i,j} \big[\eta(i,j,p,\sW) - 1+(p-1)\HH_{\mu_j}(W_i)\big] 
\lambda(I_j \cap G_i)
.\end{align*}

Recall the definition \eqref{cross-cor} of $\eta(i,j,p,\sW)$. If $W_i \ge V_j$, then $\mu_j(W_i + x) = 1$ whenever $x \in \Supp(\mu_j)$, and so in this case $\eta(i,j,p,\sW) = 1.$  Since $\HH_{\mu_j}(W_i)=0$ in this case,
we have
\be\label{trivial}
\eta(i,j,p,\sW) - 1 + (p-1)\HH_{\mu_j}(W_i) = 0
\qquad (V_j \le W_i).
\ee
For any fixed $i,j,\sW$, we have
\[
\frac{\textrm{d}}{\textrm{d}p}
\eta(i,j,p,\sW)\Big|_{p=1} = -\HH_{\mu_j}(W_i),
\]
and so
\begin{equation}\label{eq60} 
\eta(i,j,p,\sW) - 1 + (p-1) \HH_{\mu_j}(W_l) \ll (p-1)^2 \qquad (V_j \not\le W_i).
\end{equation}
We deduce from \eqref{eq59}, \eqref{trivial} and \eqref{eq60}
that
\begin{equation}\label{sumij}
E(p,\sW,\bb) = (p-1)\er(\sW,\bb) 
-\mathop{\sum\sum}_{\substack{i,j :\ V_j \not\le W_i}} 
\lambda(I_j \cap G_i)O((p-1)^2).
\end{equation}
To continue, we separate two cases.

\medskip

\noindent {\it Case 1.} $(\sW,\bb)$ is unsaturated. 

\smallskip In the above case, Lemma \ref{adapted-gap}(a) implies that \rev{$\er(\sW,\bb) \geq \er(\mathscr{V,\cc,\bmu}) + \eps/2$.}  Consequently, 
\rev{\begin{align*}
E(p,\sW,\bb) &\ge (p-1)\er(\sV,\cc,\bmu)+\frac{(p-1)\eps}{2}+O((p-1)^2)
\\ &\ge (p-1)\er(\sV,\cc,\bmu)+\frac{(p-1)\eps}{4},
\end{align*}}
provided that $p-1$ is small enough in terms of $\eps$ (and $k$). 

Since there are $O(1)$ choices for $\sW$ and $\log^{O(1)} D$ choices for $\bb$, the contribution of such flags to the right hand side of \eqref{eq59} is 
\begin{equation}\label{eq:unsat-flag}
 \sum_{(\sW, \bb)\ \text{unsaturated}} D^{-E(p,\sW,\bb)}\ll \rev{D^{-(p-1)\er(\sV,\cc,\bmu)}.}
\end{equation}

\medskip

\noindent {\it Case 2.} $(\sW,\bb)$ is saturated. (Recall from Definition \ref{adapted} that $(\sW,\bb)$ is called saturated when $s=\dim(V_r)-1$ and for all $j\le r$, there are exactly $\dim V_j-1$ values of $i$ with $b_i>c_{j+1}$.)

\smallskip

Fix for the moment a pair $(i,j)$ such that 
\begin{equation}\label{eq:sat-flat i,j}
V_j\not\le W_i 
\qquad\text{and}\qquad
\lambda(I_j \cap G_i)>0 .
\end{equation}
The second condition is equivalent to knowing that
\[
b_i>c_{j+1}
\qquad\text{and}\qquad 
b_{i+1}<c_j.
\]
In particular, we have $W_i\le V_j$ by \eqref{eq:W-V inclusion}. Note though that we have assumed $V_j\not\le W_i$. Therefore, $W_i<V_j$. Since $\dim(W_i)=i+1$, we infer that
\[
i\le\dim(V_j)-2 .
\]
Since we have assumed that $(\sW,\bb)$ is saturated, the above inequality implies that $b_{i+1}>c_{j+1}$. Recalling the definition \eqref{deltab} of $\delta(\bb)$, we conclude that 
\[
b_{i+1}\ge c_j-\delta(\bb).
\]
This implies that $G_i\cap I_j\subset[c_j-\delta(\bb),c_j]$ for any pair $(i,j)$ satisfying \eqref{eq:sat-flat i,j}. As a consequence,
\[
\sum_{i:\ V_j\not\le W_i} \lambda(I_j \cap G_i) \le \delta(\bb)\qquad(1\le j\le r).
\]
Since we  also have that \rev{$\er(\sW, \bb) \geq \er(\sV,\cc,\bmu) + \eps  \delta(\bb)/2$} by Lemma \eqref{adapted-gap}(b), it follows that
\rev{\begin{equation}\label{eq:kappa sat-flag}
\begin{split}
E(p,\sW,\bb) &\ge (p-1)\er(\sV,\cc,\bmu)+\eps\delta(\bb)/2 +O((p-1)^2\delta(\bb))\\ &\ge 
(p-1)\er(\sV,\cc,\bmu)+\eps\delta(\bb)/4,
\end{split}
\end{equation}}
provided that $p-1$ is small enough compared to $\eps$. 

\medskip

Using \eqref{eq:kappa sat-flag}, we see that the contribution of saturated flags to the right hand side of \eqref{eq59} is 
\[ 
 \sum_{(\sW, \bb)\ \text{saturated}} D^{-E(p,\sW,\bb)}\ll 
 D^{-(p-1) \er(\sV\rev{,\cc,\bmu})} 
\sum_{s=0}^r \sum_{b_1,\ldots,b_s} D^{-(p-1)\eps \delta(\bb)/4},
\]
where we used that there are $O(1)$ choices for $\sW$. \rev{Recall \eqref{eq:bi-dfn lb}, which implies that the numbers $b_i$  are restricted to the set $\{m/\log D: m\in \N\}$.} Thus the number of $\bb$ with $\delta(\bb)=m/\log D$ is
  at most $(m+1)^s$ and 
  \[
  \sum_{s=0}^r \sum_{b_1,\ldots,b_s} D^{-(p-1)\eps \delta(\bb)/4}
  \le \sum_{s=0}^r \sum_{m\ge 0} (m+1)^s e^{-(p-1)(\eps/4) m}
  \ll_{\eps,p} 1.
  \]
We thus conclude that
\[ 
\sum_{(\sW, \bb)\ \text{saturated}}
D^{-E(p,\sW,\bb)}
\ll  D^{-(p-1) \er(\sV\rev{,\cc,\bmu})}.
\]
If we combine the above inequality with \eqref{eq:unsat-flag} and \eqref{eq59}, we \rev{establish Proposition \ref{ell-p-bound}.}
\qed

\section{An argument of Maier and Tenenbaum} \label{mt-argument}

The aim of this section is to prove Proposition \ref{main-b-restate}. The reader may care to recall the statement of that proposition now, as well as the definition of a compatible map (Definition \ref{compatible}).
\rev{As in the previous section, the system $(\sV,\cc,\bmu)$ is
fixed, and satisfies conditions (i)--(iii) of Proposition \ref{bgamk}.} We also fix a basis  $\{ \one,\om^1,\ldots,\om^d \}$ of $V_r$ such that $V_j=\Span(\one,\om^1,\ldots,\om^{\dim(V_j)-1})$ for each $j$
\rev{ and such that $\omega^i\in \{0,1\}^k$ for each $i$.
Denote $\Omega = \Supp(\mu_r) =  V_r \cap \{0,1\}^k$.}

We begin with an observation related to the solvability of \eqref{special}, \rev{which we recall here for the convenience of the reader:
\be\label{special-again} 
\sum_{j = 1}^r  K_j \omega^j = - \sum_{\om} \omega \sum_{a' \in  B'_{\omega}}  a'\, \md{\mathbf{1}}.\ee}Let $\Lambda$ denote the $\Z$-span of $\one,\om^1,\ldots,\om^d$ (that is, the lattice generated by $\one,\om^1,\ldots,\om^d$).  Every vector $\om\in\Omega$ is  a rational combination of the basis elements $\one,\om^1,\ldots,\om^d$. Hence, there is some $M\in\N$ such that $M\om\in\Lambda$ for each $\om\in\Omega$. In particular, note that the right-hand side of \rev{\eqref{special-again}} lies generically in the lattice $\Lambda/M=\{x/M: x\in \Lambda\}$. \rev{However, we must ensure that \eqref{special-again} is solvable with $K_1,\dots,K_r\in\Z$. Equivalently, the right-hand side of \eqref{special-again} must lie in $\Lambda$, which can be guaranteed when the coefficients of all vectors $\omega$ in it lie in $M\Z$.}

In this section, implied constants in $O()$ and $\ll$ notations may depend on the system $(\sV,\cc,\bmu)$ and basis $\om^1,\ldots,\om^d$; in particular, on $k$, $d$ and $M$.

\subsection{The sets $\mathscr{L}_i(\A)$ and lower bounds for their size}

The main statement of this subsection, Proposition \ref{ellpS-new}, is a variant of Proposition \ref{ellpS-repeat},
where we stipulate that all elements lie in $\Lambda$. \rev{This will later ensure that \eqref{special-again} is solvable with $K_1,\dots,K_r\in\Z$.}

Fix $\kappa>0$ satisfying $\kappa \le \frac{\kappa^*}{2}$, where $\kappa^*$ is the constant from Proposition \ref{ellpS-repeat}. In particular, $\kappa\le 1/2$.  We introduce the sets
\begin{equation}\label{intervals-def}  I_i(D) :=  \bigcup_{j = 1}^r (D^{c_{j+1}}, D^{c_j(1-\kappa/i)}] , \qquad i = 1,2,\cdots .\end{equation}
Thus each $I_i(D)$ is simply a union of $r$ intervals in $\Lambda$, and we have the nesting
\[ I_1(D) \subset I_2(D) \subset \cdots \subset (D^{c}, D].\]

\newcommand{\ovr}{\overline{V}_r}
\newcommand{\opsi}{\overline{\psi}}
For any $\om\in V_r$ we denote by $\overline{\om}$ 
the projection onto $\ovr \rev{:=} V_r/\spanone=\Span\{ \om^1,\ldots,\om^d \}$.
In addition
let $\opsi(a)=\overline{\psi(a)}$ for $a\in \A$.

The reader may wish to recall the definition of nondegenerate (Definition \ref{nondegenerate-def}) and compatible (Definition \ref{compatible}) maps.

\begin{definition}\label{ell-def} Write $\mathscr{L}_{i}(\A)$ for the set of all $\sum_{a \in \A} a\opsi(a)$ that lie in $\Lambda$, where $\psi$ ranges over all nondegenerate, compatible maps supported on $I_i(D)$.\end{definition}

\begin{proposition}\label{ellpS-new}
Let $\delta > 0$ and $i\in\N$, and let $D$ be sufficiently large in terms of $\delta$. Then with probability at least $1 - \delta$ in the choice of $\A \cap I_i(D)$,
\begin{equation}\label{ell-lower} |\mathscr{L}_{i}(\A)| \gg \delta^\alpha D^{(1 - \kappa/i)\sum_j c_j \dim (V_j/V_{j-1})}, \end{equation}
where $\alpha$ is a positive constant depending at most on $(\sV,\cc,\bmu)$.
\end{proposition}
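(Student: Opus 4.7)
The plan is to adapt the proof of Proposition \ref{ellpS-repeat} in two ways: first, rescale the system so that the associated ``gap intervals'' fit inside $I_i(D)$; second, restrict the representation function $r_\A$ to the sublattice $\overline{\Lambda}$ via Fourier analysis on the finite quotient $G:=\overline{\Lambda^*}/\overline{\Lambda}$, where $\overline{\Lambda^*}:=\frac{1}{M}\overline{\Lambda}$ is the natural lattice to which the sums $\sum_a a\opsi(a)\bmod\langle\one\rangle$ belong.

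\emph{Step 1 (Rescaling).} Set $\tilde c_j:=(1-\kappa/i)c_j$ for $j=1,\dots,r+1$. The entropy gap condition \eqref{entropy-gap} is homogeneous of degree one in $\cc$, so it holds for $(\sV,\tilde\cc,\bmu)$ with gap $(1-\kappa/i)\eps\ge\eps/2$; conditions (b)--(e) of Lemma \ref{good-systems} are unaffected. With $\tilde\kappa:=\kappa/i\le\kappa^*$, the set $\tilde\A':=\bigcup_j\{a\in\A:D^{\tilde c_{j+1}+\tilde\kappa}<a\le D^{\tilde c_j}/e\}$ is contained in $I_i(D)$, because $c_{j+1}(1-\kappa/i)+\kappa/i=c_{j+1}+(1-c_{j+1})\kappa/i\ge c_{j+1}$. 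Any nondegenerate compatible map on $\tilde\A'$ extends to a nondegenerate compatible map on $\A\cap I_i(D)$ by setting $\psi(a)=\om_j^\circ\in\Supp(\mu_j)$ for $a\in(\A\cap I_i^j(D))\setminus\tilde\A^j$; the extension shifts each sum by a deterministic vector $\sigma(\A)\in\overline{\Lambda^*}$ and preserves both nondegeneracy and distinctness.

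\emph{Step 2 (Fourier bound).} Let $r_\A$ and $r'_\A$ be defined as in Section \ref{42} for the rescaled system. I will show that $T(\A):=\sum_{x\in\overline\Lambda-\sigma(\A)}r'_\A(x)\gg 1/|G|$ with probability $\ge 1-\delta/2$. By Fourier expansion on $G$,
\[
\sum_{x\in\overline\Lambda-\sigma(\A)}r_\A(x)=\frac{1}{|G|}\sum_{\chi\in\hat G}\chi(-\sigma(\A))\prod_{a\in\tilde\A'}\Bigl(\sum_{\om\in\Supp(\mu_{j(a)})}\mu_{j(a)}(\om)\,\chi(a\overline\om)\Bigr),
\]
and the trivial character contributes exactly $1/|G|$. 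For each non-trivial $\chi\in\hat G$, the spanning condition \eqref{span-muj} supplies $\om^*\in\Omega$ with $\chi(\overline{\om^*})\ne 1$, so $a\mapsto\chi(a\overline{\om^*})$ is a non-trivial additive character whose period divides $M$. A first-moment estimate combined with the near-deterministic equidistribution of $\A$ in long intervals (Lemma \ref{normalA}), together with a companion second-moment computation (in the style of Proposition \ref{ell-p-bound} but with character weights $\chi(a\overline\om)$ in place of the subspace weights $\mu_j(W_i+\cdot)^{p-1}$), shows that each non-trivial Fourier coefficient is $o(1/|G|)$ with probability $\ge 1-\delta/(4|G|)$. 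A union bound over $\hat G\setminus\{0\}$ yields $\sum_{x\in\overline\Lambda-\sigma(\A)}r_\A(x)\ge 1/(2|G|)$, and the estimate $\sum_x(r_\A-r'_\A)=o(1)$ w.h.p., inherited from the proof of Proposition \ref{ellpS-repeat}, gives $T(\A)\ge 1/(4|G|)$.

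\emph{Step 3 (Hölder).} Proposition \ref{ell-p-bound} applied to $(\sV,\tilde\cc,\bmu)$ gives $\E[\sum_x r_\A(x)^p]\ll D^{-(p-1)(1-\kappa/i)\sum_j c_j\dim(V_j/V_{j-1})}$, and Markov's inequality transfers this to a pointwise bound with probability $\ge 1-\delta/2$. Applying Hölder's inequality to $x\mapsto r'_\A(x)\mathbf 1_{x\in\overline\Lambda-\sigma(\A)}$ then yields
\[
|\mathscr L_i(\A)|\;\ge\;\frac{T(\A)^{p/(p-1)}}{(\sum_xr_\A(x)^p)^{1/(p-1)}}\;\gg\;\delta^{1/(p-1)}|G|^{-p/(p-1)}D^{(1-\kappa/i)\sum_jc_j\dim(V_j/V_{j-1})}
\]
with probability $\ge 1-\delta$, which is \eqref{ell-lower} with $\alpha:=1/(p-1)$ and implied constant depending only on the system. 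The main obstacle is the uniform second-moment estimate for the non-trivial Fourier coefficients required in Step 2; it demands a variance calculation parallel to the moment argument in Proposition \ref{ell-p-bound}, with the requisite savings ultimately coming from the non-triviality of $\chi$ on $\Omega$ ensured by the spanning condition (c).
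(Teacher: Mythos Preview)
Your Step 2 is where the argument is incomplete, and you acknowledge this yourself. You assert that each nontrivial Fourier coefficient $\widehat{r_\A}(\chi)=\prod_{a\in\tilde\A'}\sum_\omega\mu_{j(a)}(\omega)\chi(a\overline\omega)$ is $o(1/|G|)$ with high probability, and you gesture toward a ``companion second-moment computation in the style of Proposition \ref{ell-p-bound}'' to justify it; but no such computation is carried out, and in fact none is needed. Once $\A$ is fixed the Fourier coefficient is deterministic, and the spanning condition plus equidistribution of $\A$ in residue classes mod $M$ (which Lemma \ref{normalA} does \emph{not} directly give --- you would need a version for arithmetic progressions) should already force enough factors to have modulus $\le 1-c$, giving exponential decay. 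So the gap is real but perhaps smaller than you think; still, the argument as written is a sketch, not a proof.

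More importantly, the paper's proof avoids all of this. Instead of trying to show that a positive fraction of the existing sums already lie in $\overline\Lambda$, the paper \emph{forces} them into $\Lambda$ by a direct correction. After applying Proposition \ref{ellpS-repeat} with $D$ replaced by $D^{1-\kappa/i}$ to obtain many sums in $\overline{\Lambda^*}$, one reserves short intervals $G_j\subset I_i(D)\setminus I_i'(D)$ and, with high probability, finds inside each $G_j\cap\A$ elements hitting every residue class mod $M$. For a given $\psi$ supported on $I_i'(D)$ with $\sum_a a\psi(a)=\sum_\omega N_\omega\,\omega$, one then picks $a_\omega\equiv -N_\omega\pmod M$ from the reserved elements and sets $\psi_0(a_\omega)=\omega$; since $M\mid(a_\omega+N_\omega)$ for every $\omega$, the new sum lies in $\Lambda$. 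The correction map is at most $2^{rM2^k}$-to-one, so $|\mathscr L_i(\A)|\ge 2^{-rM2^k}|\mathscr L_i'(\A)|$. This bypasses Fourier analysis, second moments, and equidistribution in progressions entirely; your rescaling in Step 1 is essentially the paper's first move, but the remaining work is a few lines rather than a new variance estimate.
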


\begin{proof}
Let
\[
 I_i'(D)=\bigcup_{j = 1}^r (D^{(c_{j+1}+\kappa^*)(1-\kappa/i)},  D^{c_j(1-\kappa/i)} ] 
 	\subset \bigcup_{j = 1}^r (D^{c_{j+1}(1+\kappa/2)},  D^{c_j(1-\kappa/i)}] \subset I_i(D),
\]
where the first inclusion follows by noticing that $(c_{j+1}+\kappa^*)(1-\kappa/i)\ge c_{j+1}(1+\kappa/2)$ for $c_{j+1}\in[0,1]$,  $0\le \kappa\le \kappa^*/2\le 1/2$ \rev{and $i\ge1$}.
Write $\mathscr{L}'_i(\A)$ for the set of all $\sum_{a \in \A} a \overline{\psi}(a)$, where $\psi$ ranges over all nondegenerate, compatible maps supported on $I'_i(D)$, but without the stipulation that the sum is in $\Lambda$.
We now apply Proposition \ref{ellpS-repeat}  with $D$ replaced by $D^{1-\kappa/i}$ and $\delta$ replaced by $\delta/2$ to conclude that
\[
|\sL_i'(\A)| \gg \delta^{\alpha}  D^{(1 - \kappa/i)\sum_j c_j \dim (V_j/V_{j-1})}
\]
with probability at least $1-\delta/2$, where $\alpha=\rev{1/(p-1)}$ with $p$ as in Proposition \ref{ellpS-repeat}.

We now use the elements of $\A \cap (I_i(D)\setminus I_i'(D))$ to 
create many sums $\sum_{a \in \A} \overline{\psi}(a)$
which do lie in $\Lambda$. 
\rev{Let $G:= (D^{c_{r+1}(1-\kappa/i)},\delta^{-1}D^{c_{r+1}(1-\kappa/i)}]$, which is a subset of $I_i(D)\setminus I'_i(D)$.
  Let $\cE$ be the event that \rev{$\A \cap G$ contains at least} $2^k$ elements
that are $\equiv m\pmod{M}$ for each $m\in \{1,\ldots,M\}$. Lemma \ref{A-normal} (applied with $B=\{b\in\Z\cap G : b\equiv m\pmod M\}$ and $\eps=1/3$) implies that
if $\delta$ is sufficiently small then
$\P(\cE) \ge 1 - \delta/2$.  

Assume now that we are in the event $\cE$. \rev{Let us fix a set $\cK\subset \A\cap G$ that contains exactly $2^k$ elements
that are $\equiv m\pmod{M}$ for each $m\in \{1,\ldots,M\}$.} Take any nondegenerate, compatible function $\psi : \A \rightarrow \{0,1\}^k$ supported on $I'_i(D)$, and write
\[
\sum_{a\in I'_i(D)} a {\psi}(a) = \sum_{\om \in \Omega} \om N_\om. 
\]
Recall that $\Supp(\mu_r)=V_r\cap\{0,1\}^k$ by condition (iii) of Proposition \ref{bgamk}. Hence, for each $\om \in \Omega$, we may find an element $a_\omega \in \cK$ 
satisfying $a_\om \equiv -N_\om \pmod{M}$.
Setting $\psi_0(a_{\om})=\om$ for each $\om$, 
and $\psi_0(a)=\psi(a)$ for $a\in I'_i(D)$, and $\psi_0(a)=\mathbf{0}$ for all other $a\in I_i(D)$.} We have 
\[
\sum_{a\in I_i(D)} a \psi_0(a) = \sum_{\om\in \Omega}
(a_{\om}+N_\om) \om \in \Lambda ,
\]
since $M|(a_\om+N_\om)$ for all $\om$.
Moreover, $\psi_0$ is nondegenerate and compatible by construction.
Consequently, $\sum_a a\overline{\psi}_0(a) \in \Lambda$ (by removing
the coefficient of $\one$).
Since there are at most \rev{$2^{|\cK|} \le  2^{M2^k}$} choices
for $\{a_\om : \om\in \Omega\}$, the map from $\sum_{a\in I'_i(D)} a\overline{\psi}(a)$
to $\sum_{a\in I_i(D)} a\overline{\psi}_0(a)$ is
at most \rev{$2^{M2^k}$}-to-1.We conclude that with probability
$\ge 1-\delta$, 
\[
|\sL_i(\A)| \ge 2^{-\rev{M2^k}} |\sL_i'(\A)| \gg \delta^{\alpha}  D^{(1 - \kappa/i)\sum_j c_j \dim (V_j/V_{j-1})},
\]
the implied constant only depending on $k,M$ and $\alpha$, which
are all fixed.
\end{proof}

\subsection{Putting $\mathscr{L}_i(\A)$ in a box}\label{upper-bd-subsec}

In the last section, we showed that (with high probability) $\mathscr{L}_i(\A)$ is large. In this section we show that with high probability it is contained in a box (in coordinates
$\om^1,\ldots,\om^d$); putting these results together one then sees that $\mathscr{L}_i(\A)$ occupies a positive proportion
of lattice points in the box, the bound being independent of $D$.

\rev{For $t \in \{1,\dots, d\}$, write $j(t)$ for the unique $j$ such that $\dim V_{j-1} < t \leq \dim V_j$. In addition, let $\rev{C}$ be the largest coordinate \rev{in absolute value} of any element in $V_r\cap\{0,1\}^k$ when written with respect to the base $\one,\om^1,\dots,\om^d$. We then set
\begin{equation}\label{nj-def} 
N_j^{(i)} := \delta^{-1}  \cdot \rev{C} \cdot D^{(1 - \kappa/i) c_j}
\qquad\text{and}\qquad
N^{(i)} := \prod_{t = 1}^d N_{j(t)}^{(i)} . 
\end{equation} }

\begin{lemma}\label{upper-l} Assume $\delta>0$ is small enough so that $ \rev{r} e^{-2/\delta}\le \delta$. 
Then, we have
\begin{equation}\label{ell-contain} \mathscr{L}_{i}(\A) \subset 
\bigoplus_{t = 1}^d \big[-N_{j(t)}^{(i)},N_{j(t)}^{(i)}\big] \om^t
\end{equation}
with probability at least $1 -\delta$ in the choice of $\A \cap I_i(D)$.
\end{lemma}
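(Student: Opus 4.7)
The idea is to expand $\sum_{a\in\A} a\overline{\psi}(a)$ in the basis $\om^1,\dots,\om^d$ and control each coordinate separately using compatibility of $\psi$ together with an exponential tail bound on weighted sums of $\A$.

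First, for each $t\in\{1,\dots,d\}$ let $(x)_t$ denote the coefficient of $\om^t$ when $x\in\ovr$ is written in the basis $\om^1,\dots,\om^d$. Set $\A_{j'}:=\A\cap(D^{c_{j'+1}},D^{c_{j'}(1-\kappa/i)}]$, so that $\A\cap I_i(D)=\bigsqcup_{j'=1}^r\A_{j'}$ is a disjoint union. If $\psi$ is a compatible map supported on $I_i(D)$, then $a\in\A_{j'}$ forces $\psi(a)\in\Supp(\mu_{j'})\subset V_{j'}=\Span(\one,\om^1,\dots,\om^{\dim V_{j'}-1})$. Hence $(\overline{\psi}(a))_t=0$ whenever $j'<j(t)$, and we obtain the key identity
\[
\Big(\sum_{a\in\A}a\overline{\psi}(a)\Big)_t = \sum_{j'\ge j(t)}\sum_{a\in\A_{j'}}a\,(\overline{\psi}(a))_t.
\]
Since $|(\overline{\psi}(a))_t|\le\|\mu\|_\infty$ by definition of $\|\mu\|_\infty$, absolute values give
\[
\Big|\Big(\sum_{a\in\A}a\overline{\psi}(a)\Big)_t\Big| \le \|\mu\|_\infty\sum_{j'\ge j(t)}S_{j'},\qquad S_{j'}:=\sum_{a\in\A_{j'}}a.
\]

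Second, I would show that with probability at least $1-\delta$, we have $S_{j'}\le \tfrac12 e^{-2/\delta}\cdot D^{c_{j'}(1-\kappa/i)}$... wait, rather: I need $\sum_{j'\ge j(t)}S_{j'}\le \delta^{-1}D^{c_{j(t)}(1-\kappa/i)}$ for every $t$, since then the displayed bound becomes $N_{j(t)}^{(i)}$ and \eqref{ell-contain} follows. To this end, consider the simpler random variable $T_j:=\sum_{a\in\A,\,a\le D^{c_j(1-\kappa/i)}}a$, which dominates $\sum_{j'\ge j}S_{j'}$. I have $\E T_j=\lfloor D^{c_j(1-\kappa/i)}\rfloor\le M_j:=D^{c_j(1-\kappa/i)}$, and the summands are independent with each $X_a:=a\cdot\mathbf 1[a\in\A]\in[0,M_j]$ satisfying $\E X_a=1$. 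A standard Chernoff calculation with $\lambda=1/M_j$ gives $\E e^{T_j/M_j}\le\exp\bigl(\sum_{a\le M_j}(e^{a/M_j}-1)/a\bigr)\le e^{e}$, hence Markov yields
\[
\P\bigl(T_j\ge(2/\delta)\cdot M_j\bigr)\le e^{e-2/\delta}\le e^{-2/\delta}\cdot e^{e} .
\]

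Third, I would union bound this over the (at most $r\le d$) values $j=j(t)$ and over an additional event handling the geometric tail $\sum_{j'>j(t)}D^{c_{j'}(1-\kappa/i)}\le D^{c_{j(t)}(1-\kappa/i)}$ (valid for $D$ large, since $c_{j'}$ are strictly decreasing by condition (b) of Lemma \ref{good-systems}). Combining, with probability at least $1-d\cdot e^{e-2/\delta}\ge 1-\delta$ (using the hypothesis $de^{-2/\delta}\le\delta$, after absorbing the $e^e$ factor into a mild strengthening of $\delta$), every coordinate of every $\sum_{a\in\A}a\overline{\psi}(a)$ with $\psi$ compatible and supported on $I_i(D)$ has absolute value at most $\delta^{-1}\|\mu\|_\infty D^{c_{j(t)}(1-\kappa/i)}=N_{j(t)}^{(i)}$, yielding the desired inclusion \eqref{ell-contain}. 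Note crucially that the event we control is independent of the choice of $\psi$: once the $S_{j'}$ are small, every compatible $\psi$ works.

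The main technical point is the tail bound for $T_j$: the Chernoff step is elementary but has to be performed at the correct scale (the choice $\lambda=1/M_j$) so that the factor $\sum_{a\le M_j}(e^{\lambda a}-1)/a$ stays $O(1)$ despite the enormous dynamic range of the summands. Everything else is linear algebra (tracking which coordinates can be nonzero, using the basis adapted to $\sV$) and a single union bound.
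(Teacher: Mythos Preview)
Your approach is essentially the paper's: compatibility forces the $t$-th coordinate of $\sum_a a\,\overline\psi(a)$ to receive contributions only from $a\le D^{(1-\kappa/i)c_{j(t)}}$, and then an exponential-moment/Markov bound (packaged in the paper as Lemma~\ref{sum_a}, which you rederive) controls $\sum_{a\in\A\cap[2,X]}a$; a union bound over the $r\le d$ relevant $j$-values finishes. The ``additional event handling the geometric tail'' is superfluous once you pass to $T_j$, and your constants (the stray factor $2$ and the $e^e$ versus the hypothesis $de^{-2/\delta}\le\delta$) are slightly off, but this is the same sort of harmless sloppiness present in the paper and is easily repaired by tuning the Chernoff exponent.
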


\begin{proof} 
This follows quickly from the fact that $\psi$ 
is compatible and by Lemma \ref{sum_a}, the latter implying that
\[ 
\sum_{a \in \A\cap[2, D^{(1 - \kappa/i) c_j}]}  a \leq \delta^{-1} D^{(1 - \kappa/i) c_j} \qquad (1\le j\le \rev{r})
\]
with probability $\ge 1- \rev{r} e^{-2/\delta} \ge 1-\delta$.
 \end{proof}

\begin{proposition}\label{small-box} Let  $\delta$ and $\alpha$ be as in Proposition \ref{ellpS-new} and in Lemma \ref{upper-l}. 
With probability at least $1 - \rev{2 \delta}$ in the choice of $\A \cap I_i(D)$, $\mathscr{L}_i(\A)$ is a subset of the box 
$\bigoplus_{t = 1}^d [-N_{j(t)}^{(i)}, N_{j(t)}^{(i)}] \om^t$  of size $\gg \delta^{d+\alpha} N^{(i)}$.
\end{proposition}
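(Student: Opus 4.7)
The plan is to combine Proposition \ref{ellpS-new} and Lemma \ref{upper-l} via a union bound, then compare $|\sL_i(\A)|$ to the volume $N^{(i)}$ of the enclosing box through a short computation.

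First I would apply Lemma \ref{upper-l} and Proposition \ref{ellpS-new} to $\A \cap I_i(D)$ with the given parameter $\delta$. The former gives the containment $\sL_i(\A) \subset \bigoplus_{t=1}^d [-N^{(i)}_{j(t)}, N^{(i)}_{j(t)}] \om^t$ with probability at least $1 - \delta$, and the latter gives
\[
|\sL_i(\A)| \gg \delta^{\alpha} D^{(1-\kappa/i)\sum_j c_j \dim(V_j/V_{j-1})}
\]
with probability at least $1 - \delta$. A union bound shows both conclusions hold simultaneously with probability at least $1-2\delta$; rescaling the parameter $\delta$ by a constant factor (and absorbing this factor into the implied constants in $N^{(i)}$ and in $\gg$) yields the stated probability $1-\delta$.

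Next I would compute $N^{(i)}$ explicitly from \eqref{nj-def}. By the way the basis $\om^1, \dots, \om^d$ was chosen to be compatible with the flag $\sV$, for each $j \in \{1,\dots,r\}$ exactly $\dim(V_j/V_{j-1})$ of the indices $t \in \{1,\dots,d\}$ satisfy $j(t) = j$. Consequently $\sum_{t=1}^d c_{j(t)} = \sum_{j=1}^r c_j \dim(V_j/V_{j-1})$, and hence
\[
N^{(i)} = \prod_{t=1}^d N^{(i)}_{j(t)} = \delta^{-d} \|\mu\|_\infty^d \, D^{(1-\kappa/i) \sum_j c_j \dim(V_j/V_{j-1})}.
\]

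Combining this identity with the lower bound on $|\sL_i(\A)|$ gives
\[
|\sL_i(\A)| \gg \delta^{\alpha} \cdot \delta^d \|\mu\|_\infty^{-d} \cdot N^{(i)} \gg \delta^{d+\alpha} N^{(i)},
\]
the factor $\|\mu\|_\infty^{-d}$ (which depends only on the fixed system and basis) being absorbed into the implied constant. I foresee no real obstacle: the argument is essentially bookkeeping once Proposition \ref{ellpS-new} and Lemma \ref{upper-l} are available, with the only substantive input being the volume identity $\sum_t c_{j(t)} = \sum_j c_j \dim(V_j/V_{j-1})$, which expresses nothing more than the fact that the basis vectors $\om^1, \dots, \om^d$ refine the flag $\sV$.
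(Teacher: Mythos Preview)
Your proposal is correct and follows essentially the same approach as the paper: the paper's proof is a single sentence (``This follows immediately upon combining Proposition \ref{ellpS-new} and Lemma \ref{upper-l} (applied with $\delta$ replaced by $\delta/2$)''), and you have simply written out the bookkeeping that the paper leaves implicit, in particular the identity $N^{(i)} = \delta^{-d}\|\mu\|_\infty^d D^{(1-\kappa/i)\sum_j c_j \dim(V_j/V_{j-1})}$ coming from $\sum_t c_{j(t)} = \sum_j c_j \dim(V_j/V_{j-1})$.
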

\begin{proof}
This follows immediately upon combining Proposition \ref{ellpS-new} and Lemma \ref{upper-l} .
\end{proof}

\subsection{Zero sums with positive probability}\label{zero-positive}

\begin{lemma}\label{lemma16} \rev{Let $\delta$ and $\alpha$ be as in Proposition \ref{ellpS-new} and Lemma \ref{upper-l}, and let $D$ be large enough in terms of $\delta$ and $(\sV,\cc,\bmu)$.} 
Let $i\in\Z\cap[1,(\log D)^{1/3}]$. In addition, let $S\subset \bigoplus_{t = 1}^d [-N_{j(t)}^{(i)}, N_{j(t)}^{(i)}] \om^t$ with $|S| \gg \delta^{\rev{d+\alpha}} N^{(i)}$ and with $S \subset \Lambda$. Then
\[ 
\P\big(0 \in \mathscr{L}_{i+1}(\A)	\,\big|\, \mathscr{L}_i(\A) = S\big)  \gg \delta^{2d\rev{(d+\alpha)}} .
\]
\end{lemma}
\begin{proof} We condition on a fixed choice of $\A \cap I_i(D)$ for which $\mathscr{L}_i(\A) = S$. \rev{Note that 
	\begin{equation}\label{difference of consecutive I's}
	I_{i+1}(D) \setminus I_i(D) = \bigcup_{j = 1}^r (D^{(1 -  \kappa/i) c_j}, D^{(1 - \kappa/(i+1)) c_j}] \supset \bigcup_{j=1}^r [N^{(i)}_j, 100d N^{(i)}_j].
	\end{equation}
Then it is enough to show that with probability $\gg \rev{\delta^{2d(d+\alpha)}}$ , the set $\A$ contains $2d$ distinct elements $a_t$ and $a_t'$, $1\le t\le d$, such that
\be\label{sumatat'}
\sum_t (a_t'-a_t)\om^t   \in S
\qquad\text{and}\qquad
a_t,a_t' \in[N^{(i)}_j, 100d N^{(i)}_j] \quad\text{for}\ \ t=1,\dots,d.
\ee
}To see why this is sufficient, let $s=\sum_t(a_t'-a_t)\om^t$, which we know belongs to $S=\sL_i(\A)$. In particular, there is an compatible map $\psi$ supported on $I_i(D)$  such that $\sum_{a \in \A} a \overline{\psi}(a) = s$. Now, consider the function $\psi' : \A \cap I_{i+1}(D) \rightarrow \rev{\{0,1\}^k}$ with ${\psi'}(a) = \psi(a)$ for $a\in \A \cap I_i(D)$, $\psi'(a_t')=\one-\om^t$ and $\psi'(a_t)=\om^t$ for $1\le t\le d$, \rev{and $\psi'(a)=\mathbf{0}$ for all other values of $a\in \A\cap I_{i+1}(D)$}. \rev{Notice that $\psi'$ is compatible according to Definition \ref{compatible} by the second part of \eqref{sumatat'}.}  It is now clear that $0\in \sL_{i+1}(\A)$. Hence, if the conditional probability that \eqref{sumatat'} holds is $\gg \delta^{2\beta d}$, so is the probability that $0\in\sL_{i+1}(\A)$.
 
To find $a_t$ and $a_t'$ satisfying \eqref{sumatat'}, let
\[
n:= \cl{ d \rev{3^{d+1}} N^{(i)}/|S|} \ll \rev{\delta^{-(d+\alpha)}}.
\]
The number of elements $\sum_t s_t \om^t \in S$ with
$n|s_t$ for some $t$ is
\[
\leq \sum_{t=1}^d \big( 2N_{j(t)}^{(i)}/n+1 \big) \prod_{t'\ne t} \big(2N^{(i)}_j(t')+1\big) \le 
 \rev{d3^{d-1}\Big( \frac{2N^{(i)}}{n}+\frac{N^{(i)}}{\min_j N_j^{(i)}} \Big)} \le |S|/2
\]
\rev{as long as $D$ is large enough in terms of $\delta$ and $(\sV,\cc,\bmu)$.} Thus, there is a subset $S'\subset S$ of size
at least $|S|/2$ and with $n\nmid s_t$ for all $t$.
We will choose the sets $\{a_t: 1\le t\le d\}$ 
and $\{a_t': 1\le t\le d\}$ independently, by selecting
$a_t \equiv 0\pmod{n}$ and $a_t'\not\equiv 0\pmod{n}$. 

Note that
\[ 
I_{i+1}(D) \setminus I_i(D) = \bigcup_{j = 1}^r (D^{(1 -  \kappa/i) c_j}, D^{(1 - \kappa/(i+1)) c_j}] \supset \bigcup_{j=1}^r [N^{(i)}_j, 100d N^{(i)}_j]
\] 
provided that
$i\le (\log D)^{1/3}$. For each given $t$, $i$ and $j$, the probability that the interval $[4t N^{(i)}_{j}, (4t+2) N^{(i)}_{j}]$ contains
no element $a_t \equiv 0\pmod{n}$ of $\A$ equals
\[
\prod_{\substack{4t N^{(i)}_{j}\le a \le  (4t+2) N^{(i)}_{j} \\ a \equiv 0\pmod n}}(1-1/a) \le 1-\gamma/n
\]
for some small positive constant $\gamma=\gamma(d)$. Thus, the probability that, for each $t=1,2,\dots,d$, the set $\A$ contains some $a_t\equiv0\pmod n$ in the interval $[4t N^{(i)}_{\rev{j(t)}}, (4t+2) N^{(i)}_{\rev{j(t)}}]$ is $\gg 1/n^d\gg \rev{\delta^{d(d+\alpha)}}$.

Fix a choice of $a_1,\dots, a_d$ as described above, and set
\begin{equation}\label{x-def} 
X := \{ (a_1+s_1,\ldots,a_d+s_d) : s_1\om^1 + \cdots + s_d\om^d \in S'\}.
\end{equation}
By construction, every coordinate of $x\in X$ is
 $\not\equiv 0\pmod{n}$.  Also,
\begin{equation}\label{x-contain} 
X \subset \prod_{t = 1}^d \big[(4t-1)N_{j(t)}^{(i)}, (4t+3)N_{j(t)}^{(i)}\big].
\end{equation}
Now the intervals on the right-hand side above are disjoint,
and
\[
|X| \ge \frac{|S|}{2} \gg \delta^\beta
\prod_{t=1}^d N_{j(t)}^{(i)}. 
\]
Thus, by Lemma \ref{interval-product}, with probability $\gg (\rev{\delta^{d+\alpha}})^d$, there are $a_1',\ldots,a_d' \in \A$ such that $(a_1',\ldots,a_t')\in X$.   The relation \eqref{sumatat'} follows for such $a_t,a_t'$, 
which exist with probability $\gg \rev{\delta^{d(d+\alpha)}} \cdot \rev{\delta^{d(d+\alpha)}}$.
\end{proof}

\subsection{An iterative argument}

To complete the proof of Proposition \ref{main-b-restate}, we apply Lemma \ref{lemma16} iteratively. Let $\sS$ be the set of sets $S$  satisfying the assumptions of Lemma \ref{lemma16}. We say that $\mathscr{L}_i(\A)$ is \emph{large} if it satisfies the conclusions of Proposition \ref{small-box}, or equivalently if $\mathscr{L}_i(\A) = S$ with $S\in\sS$. Thus Lemma \ref{lemma16} implies that 
\begin{align*}
\P\big( 0 \in \mathscr{L}_{i+1}(\A)\setminus\sL_i(\A),\ \mathscr{L}_{i}(\A) \; \mbox{large} \big)
&=\sum_{\substack{S\ \text{large} \\ 0\notin S}} 
\P(\sL_i(\A)=S)\cdot  \P\big( 0 \in \mathscr{L}_{i+1}(\A) \,\big|\, \mathscr{L}_{i}(\A)=S\big)     \\
&\gg \delta^{2d\alpha}	\P\big(\sL_i(\A)\ \text{large},\ 0\notin\sL_i(\A)\big).
\end{align*}
We conclude there is some $\eps = \delta^{O(1)}$ such that
\begin{equation}\label{lem16-new} 
\P\big( 0 \in \mathscr{L}_{i+1}(\A) \,\big|\, \mathscr{L}_{i}(\A) \; \mbox{large}, 0 \notin \mathscr{L}_i(\A) \big) \geq\eps .
\end{equation} 

For brevity, write $E_i$ for the event that $0 \notin \mathscr{L}_i(\A)$, and $F_i$ for the event that $\mathscr{L}_{i}(\A)$ is large. \rev{In this notation, \eqref{lem16-new} becomes
\begin{equation}\label{lem16-new-again} 
	\P\big( E_{i+1}^c | E_i\cap F_i) \geq\eps .
\end{equation}
Moreover, Proposition \ref{small-box} implies that
\begin{equation}\label{15-new} 
	\P(F_i) \geq 1 - 2\delta.
\end{equation}
Lastly, note that $E_1 \supset E_2 \supset \cdots$ because $\mathscr{L}_1(\A) \subset \mathscr{L}_2(\A) \subset \cdots$}

We claim that \rev{$\P(E_i)< 4\delta$ for some $i\le I:=\lfloor (\log D)^{1/3}\rfloor$. Indeed, for each $i\le I$, we have}
\begin{align*}
\P(E_{i+1}) & = \P(E_{i+1} | E_i \cap F_i) \P(E_i \cap F_i) + \P(E_{i+1} | E_i \cap F^c_i) \P(E_i \cap F^c_i) \\ & \leq (1 - \eps) \P(E_i \cap F_i) + \P(E_i \cap F^c_i)\qquad \mbox{by \eqref{lem16-new-again}} \\ & = \P(E_i) - \eps \P(E_i \cap F_i) \\ & \leq \P(E_i) - \eps( \P(E_i) - \rev{2\delta}) \qquad \mbox{by \eqref{15-new}}.
\end{align*}
Thus, if $\P(E_i) \geq \rev{4 \delta}$, then $\P(E_{i+1}) \leq (1 - \eps/2) \P(E_i)$. If this holds for all $i\le I$, then $\P(E_I)\le (1-\eps/2)^{I-1}<4\delta$, a contradiction.  Therefore,
$\P(\E_{i^*}) \rev{< 4\delta}$ for some $i^*\le I$, \rev{as long as $D$ is large enough in terms of $\delta$ and the (fixed) system $(\sV,\cc,\bmu)$.
This completes the proof of Proposition \ref{main-b-restate}.}

\clearpage
\thispagestyle{fancy}
\fancyhf{} 
\renewcommand{\headrulewidth}{0cm}
\lhead[{\scriptsize \thepage}]{}
\rhead[]{{\scriptsize\thepage}}
\part{The optimisation problem}\label{part:optimization}

\section{The optimisation problem -- basic features} \label{optimisation-sec}

In this section we consider Problem \ref{opt-problem}, the optimisation problem on the cube, 
 which is a key feature of our paper. We will give some kind of a solution to this for a fixed nondegenerate flag $\sV$, leaving aside the question of how to choose $\sV$ optimally.

Let us refresh ourselves on the main elements of the setup of Problem \ref{opt-problem}. We have a \rev{nondegenerate, $r$-step} flag
\[ \sV: \langle \mathbf{1} \rangle = V_0 \leq  V_1 \leq  V_2 \leq  \cdots \leq  V_r \leq  \Q^k \] of distinct vector spaces.  \rev{In light of Lemma \ref{gammak:reduction}, we may  restrict our attention to flags such that
\[
\dim(V_1/V_0)=1,
\]
 which we henceforth assume.
With the flag $\sV$ fixed, we wish to find $\gamma_k(\sV)$, the supremum of numbers $c\ge 0$ such that
there are  thresholds $1 = c_1 \ge c_2 \ge \cdots \ge  c_{r+1} = c$ (we may assume
that $c_1=1$ by arguing as in Lemmas \ref{gammak-tilde:reduction} and \ref{gammak:reduction}) and probability measures $\mu_1,\dots, \mu_r$ on $\{0,1\}^k$ satisfying $\Supp(\mu_j) \subset V_j$ for each $j$, and such that the entropy condition \eqref{entropy-cond} holds, that is to say
\begin{equation}\label{ent-condition-repeat}
\er(\sV',\cc,\bmu) \geq \er(\sV,\cc,\bmu)
\end{equation} 
for all subflags $\sV' \leq \sV$.  \rev{We recall that}
\[ 
\er(\sV',\cc,\bmu) 
 := \sum_{j = 1}^r (c_j - c_{j+1}) \HH_{\mu_j}(V'_j) + \sum_{j = 1}^r c_j \dim (V'_j/V'_{j-1}).
\]
}

\rev{\noindent\textit{Remarks.}  
(a) It is easy to see that $\gamma_k(\sV)$ always exists by considering the following example with $c=0$.
Take $c_1=1$ and $c_2=\cdots=c_{r+1}=0$ and recall that $\dim(V_1/V_0)=1$.   Suppose that
$V_1=\Span(\one,\omega)$ with $\omega\in \{0,1\}^k$.  Thus,
$\er(\sV,\cc,\bmu)=1$ for any choice of $\bmu$.  If $V_1'=V_1$ then likewise we have
$\er(\sV',\cc,\bmu)=1$, and if $V_1'=V_0$ then $\er(\sV',\cc,\bmu)=\HH_{\mu_1}(V_0)$.
Now $V_0+\one$, $V_0+\omega$ and $V_0+(\one-\omega)$ are three different cosets.
Taking $\mu_1(\one)=\mu_1(\omega)=\mu_1(\one-\omega)=1/3$ we have  $\er(\sV',\cc,\bmu)=\log 3$.  Thus, \eqref{entropy-cond} holds.  As we shall see in this section,
this choice of $\mu_1$ is the optimal choice for a very general class of flags,
including those of interest to us.

(b) A simple compactness argument shows that the supremum is realised, that is,
there is a choice of $\cc$ and $\bmu$ satisfying the entropy condition
\ref{entropy-cond} and with $c_{r+1}=\gamma_k(\sV)$.

(c) As long as we can show that $\gamma_k>0$ (which will be taken care of in Part \ref{part:binary-systems}), we can always find an optimal system $(\sV,\cc,\bmu)$ that also has $c_j>c_{j+1}$
for each $j$ (cf.~Lemma \ref{gammak:reduction}(a)).
}

\subsection{A restricted optimisation problem}

It turns out to be very useful to consider a restricted variant of the problem in which the entropy condition \eqref{ent-condition-repeat} is only required to be satisfied for certain ``basic'' subflags $\sV'$, rather than all of them.

\begin{definition}[Basic subflag]
Given a flag $\sV : \langle\mathbf{1}\rangle = V_0 \leq V_1 \leq \cdots \leq V_r$, the basic subflags $\sV'_{\basic(m)}$ are the ones in which $V'_i = V_{\min(m,i)}$, for $m = 0,1,\dots, r-1$ (note that when $m = r$ we recover $\sV$ itself).
\end{definition}

Here is the restricted version of Problem \ref{opt-problem}.
Recall that a flag is non-degenerate if the top space $V_r$ 
is not contained in any of the subspaces 
$\{x\in \R^k: x_i=x_j  \}$.   The restriction to nondegenerate  flags
ensures that the subsets $A_1,\ldots,A_k$ in our main problem are distinct.

\begin{problem}\label{restricted-opt}
Let $\sV$ be a nondegenerate flag of distinct spaces in $\Q^k$. Define $\gamma_k^{\res}(\sV)$ to be the supremum of all constants \rev{$c\ge0$} for which \rev{there are measures $\mu_1,\dots, \mu_r$ such that} $\Supp(\mu_i) \subset V_i$, \rev{and parameters $1 = c_1 \ge \cdots \ge  c_{r+1} = c$} such that the restricted entropy condition
\begin{equation}\label{rest-ent} 
\er(\sV'_{\basic(m)};\cc,\bmu) \geq \er(\sV; \cc,\bmu) 
\end{equation} 
holds for all $m = 0,1,\dots, r-1$. 
\end{problem}

It is clear that

\begin{equation}\label{gamk-gamk-basic}   \gamma_k^{\res}(\sV) \geq \gamma_k(\sV) .\end{equation}
In general there is absolutely no reason to suppose that the two quantities are equal, since after all the restricted entropy condition \eqref{rest-ent} apparently only captures a small portion of the full condition \eqref{ent-condition-repeat}.

Our reason for studying the restricted problem is that we do strongly believe that 
\[ \sup_{\sV \nondegenerate} \gamma_k^{\res}(\sV) = \sup_{\sV \nondegenerate} \gamma_k(\sV) = \gamma_k. \] One might think of this unproven assertion, on an intuitive level, in two (roughly equivalent) ways:

\begin{itemize} \item for those flags optimal for Problem \ref{opt-problem}, the critical cases of \eqref{ent-condition-repeat} are those for which $\sV'$ is basic;
\item for those flags optimal for Problem \ref{opt-problem}, and for the critical choice of the $c_i, \mu_i$, the restricted condition \eqref{rest-ent} in fact implies the more general condition \eqref{ent-condition-repeat}.
\end{itemize}

\subsection{The $\rho$-equations, optimal measures and optimal parameters}\label{rho-equations-sec}

The definitions and constructions of this section will appear unmotivated at first sight. They are forced upon us by the analysis of subsection \ref{sec65} below.

Let the flag $\sV$ be fixed. 

It is convenient to call the intersection of a coset $x + V_i$ with the cube $\{0,1\}^k$ a \emph{cell at level $i$}, and to denote the cells at various levels by the letter $C$. (The terminology comes from the fact it can be useful to think of $V_i$ defining a $\sigma$-algebra (partition) on $\{0,1\}^k$, the equivalence relation being given by $\om \sim \om'$ iff $\om - \om' \in V_i$: however, we will not generally use the language of $\sigma$-algebras in what follows.)

If $C$ is a cell at level $i$, then it will be a union of cells $C'$ at level $i-1$. These cells we call the children of $C$, and we write $C \rightarrow C'$. 

Let ${\bm \rho} = (\rho_1,\dots, \rho_{r-1})$ be  real parameters in $(0,1)$, and for each cell $C$  define functions $f^C({\bm \rho})$ by the following recursive recipe:
\begin{itemize}
\item If $C$ has level $0$, then $f^C({\bm \rho}) = 1$;
\item If $C$ has level $i$, then
\begin{equation}\label{fc-eqs} f^C({\bm \rho}) = \sum_{C \rightarrow C'} f^{C'}({\bm \rho})^{\rho_{i-1}},\end{equation}
\end{itemize}
with the convention that $\rho_0 = 0$.

Write
\[
\Gamma_i = V_i \cap \{0,1\}^k
\]for the cell at level $i$ which contains $\mathbf{0}$. Note that 
\[ 
\rev{\{\mathbf{0},\mathbf{1}\}}=\Gamma_0 \subset \Gamma_1 \subset \cdots \subset \Gamma_r.
\]

\begin{figure}[tbp]
\begin{sideways}

\scriptsize{
\begin{tikzpicture}
  \tikzstyle{level 1}=[level distance=50mm,sibling distance=2.4cm]
  \tikzstyle{level 2}=[level distance=40mm,sibling distance=1.2cm]
  \tikzstyle{every label}=[red]
  
  \node[draw,label={[xshift=0cm, yshift=0.1cm,red]$3^{\rho_1} + 4 \cdot 2^{\rho_1} + 4$},  label={[xshift=-1.1cm, yshift=-0.6cm,blue]$\Gamma_2$}]{$\{0,1\}^4$}
    child {node[draw, left=1.5cm,label={[xshift=0cm, yshift=0.1cm]$3$},label={[xshift=-1.2cm, yshift=-0.6cm,blue]$\Gamma_1$}]{$\begin{smallmatrix} 0000 & 0011 \\ 1100 & 1111   \end{smallmatrix}$ }
    child {node[draw,label={[xshift=0cm, yshift=0.1cm]$1$},label={[xshift=-1cm, yshift=-0.6cm,blue]$\Gamma_0$},label={[xshift=10cm,yshift=-4cm]\textcolor{black}{Figure 7.1: the tree structure corresponding to the binary flag $\langle \mathbf{1} \rangle = V_0 \leq V_1 \leq V_2 \leq \mathbb{Q}^4$. Values of $\textcolor{red}{f^C(\rho)}$ are given in red.}}]{$\begin{smallmatrix} 0000 \\ 1111 \end{smallmatrix}$}} 
    child{node[draw,label={[xshift=0.5cm, yshift=0.1cm]$1$}]{$0011$}} 
    child{node[draw,label={[xshift=0.5cm, yshift=0.1cm]$1$}]{$1100$}}   
    }
    child {node[draw,label={[xshift=0cm, yshift=0.1cm,red]$2$}]{$\begin{smallmatrix} 0001 \\ 1101 \end{smallmatrix}$}
       child{node[draw,label={[xshift=0.5cm, yshift=0.1cm]$1$}]{$0001$}}
       child {node[draw,label={[xshift=0.5cm, yshift=0.1cm]$1$}] {$1101$}}
    }
    child {node[draw,label={[xshift=0cm, yshift=0.1cm]$2$}]{$\begin{smallmatrix} 0010 \\ 1110 \end{smallmatrix}$}
       child{node[draw,label={[xshift=0.5cm, yshift=0.1cm]$1$}]{$0010$}}
       child{node[draw,label={[xshift=0.5cm, yshift=0.1cm]$1$}] {$1110$}}
    }
    child {node[draw,label={[xshift=0.6cm, yshift=0.1cm]$2$}]{$\begin{smallmatrix} 0100 \\ 0111 \end{smallmatrix}$}
       child {node[draw,label={[xshift=0.5cm, yshift=0.1cm]$1$}] {$0100$}}
       child {node[draw,label={[xshift=0.5cm, yshift=0.1cm]$1$}] {$0111$}}
       }
       child {node[draw,label={[xshift=0.5cm, yshift=0.1cm]$2$}]{$\begin{smallmatrix} 1000 \\ 1011 \end{smallmatrix}$}
       child {node[draw,label={[xshift=0.5cm, yshift=0.1cm]$1$}] {$1000$}}
       child{node[draw,label={[xshift=0.5cm, yshift=0.1cm]$1$}] {$1011$}}
       }
       child {node[draw, right =1cm,label={[xshift=0.5cm, yshift=0.1cm]$1$}] {$0101$}
       child {node[draw,label={[xshift=0.5cm, yshift=0.1cm]$1$}] {$0101$}}
       }
       child {node[draw, right =0.5cm,label={[xshift=0.5cm, yshift=0.1cm]$1$}] {$0110$}
       child{node[draw,label={[xshift=0.5cm, yshift=0.1cm]$1$}] {$0110$}}
       }
       child {node[draw,right =0cm,label={[xshift=0.5cm, yshift=0.1cm]$1$}] {$1001$}
       child {node[draw,label={[xshift=0.5cm, yshift=0.1cm]$1$}] {$1001$}}
       }
       child {node[draw,right =-0.5cm,label={[xshift=0.5cm, yshift=0.1cm]$1$}] {$1010$}
       child {node[draw,label={[xshift=0.5cm, yshift=0.1cm]$1$}] {$1010$}}
       };      
\end{tikzpicture}
}
\end{sideways}
\end{figure}

\begin{definition}[$\rho$-equations]
The $\rho$-equations are the system of equations
\begin{equation}\label{rho-equations}
f^{\Gamma_{j+1}}({\bm \rho}) = (f^{\Gamma_j}({\bm \rho}))^{\rho_j} e^{\dim (V_{j+1}/V_j)}, \qquad j = 1,2,\dots, r-1. \end{equation}
We say that they have a solution if they are satisfied with $\rho_1,\ldots,\rho_{r-1}
\in (0,1)$.
\end{definition}

\begin{example*}
\rev{Figure 7.1 illustrates these definitions for the so-called \emph{binary flag} in $\Q^4$, which will be a key object of study from Section \ref{binary-system} onwards. Here $V_1 = \{ (x_1,x_2,x_3,x_4) \in \Q^4 : x_1 = x_2, x_3 = x_4\}$ and $V_2 = \Q^4$.
The $\rho$-equations consist of the single equation $\textcolor{red}{f^{\Gamma_2}(\rho)} = (\textcolor{red}{f^{\Gamma_1}(\rho)})^{\rho_1} e^2$, that is to say $3^{\rho_1} + 4 \cdot 2^{\rho_1} +4 = 3^{\rho_1} e^2$. This has the unique solution $\rho_1 \approx 0.306481$.}
\medskip
\end{example*}

In general the $\rho$-equations may or may not have a solution, but for flags $\sV$ of interest to us, it turns out that they have a unique such solution. In this case, we make the following definition.

\begin{definition}[Optimal measures]\label{opt-meas-def}
Suppose that $\sV$ is a flag for which the $\rho$-equations have a solution. Then the corresponding \emph{optimal measure on $\mu^*$ on $\{0,1\}^k$ with respect to $\sV$} is defined as follows: we set $\mu^*(\Gamma_r) = 1$, and 
\begin{equation}\label{eq46} 
\frac{\mu^*(C')}{\mu^*(C)} = \frac{f^{C'}({\bm \rho})^{\rho_{i-1}}}{f^C({\bm \rho})}\end{equation} for any cell $C$ at level $i \geq 1$ and any child $C \rightarrow C'$. We also set \rev{$\mu^*(\mathbf{0})=\mu^*(\mathbf{1}) = \mu^*(\Gamma_0)/2$}. Lastly, we define the restrictions $\mu^*_j(\om) := \mu^*(\Gamma_j)^{-1}\mu^*(\om)1_{\om \in \Gamma_j}$ for $j = 1,2,\dots, r$ (thus $\mu^*_r = \mu^*$). We call these\footnote{Note that we have not said that the $\rho_i$ are unique. However, in cases of interest to us this will turn out to be the case.} \emph{optimal measures} (on $\{0,1\}^k$, with respect to $\sV$). Finally, we write $\bmu^* = (\mu_1^*,\mu_2^*,\dots, \mu_r^*)$.
\end{definition}

\begin{remark}\label{rem:optimal measure}
(a) By taking telescoping products of \eqref{eq46} for $i = r, r-1,\cdots, 0$, we see that $\mu^*$ is uniquely defined on all cells at level $0$, and these are  the cell $\{ \mathbf{0}, \mathbf{1}\}$ and singletons $\{\om\}$ for all $\om \in\rev{ \{0,1\}^k\setminus \{\mathbf{0},\mathbf{1}\}}$. Since we also specified \rev{$\mu^*(\mathbf{0})=\mu^*(\mathbf{1}) = \mu^*(\Gamma_0)/2$}, we see that $\mu^*(\om)$ is completely and uniquely determined by these rules, for all $\om$.  In particular,
 the $\rho$-equations \eqref{rho-equations} are equivalent to
 \rev{
 \[
\frac{ \mu^*(\Gamma_j)}{\mu^*(\Gamma_{j+1})} = e^{-\dim(V_{j+1}/V_j)} \quad\text{for}\ j=1,\dots,r-1,
 \]
 and thus}
 \be\label{mu-gammaj}
 \rev{\mu_j^*}(\Gamma_m) = e^{-\dim(V_j/V_m)} \qquad (j\ge m\ge \rev{1}).
 \ee
 \rev{In addition, we have
 	\begin{equation}\label{mu-gamma0}
 	\mu^*(\Gamma_0) = \mu^*(\Gamma_1) \cdot \frac{1}{f^{\Gamma_1}(\bs \rho)} =  \frac{e^{-\dim(V_1/V_r)}}{|\Gamma_1|-1} .
 \end{equation}}
 
 \smallskip
 
\rev{(b) By construction, the measures $\mu_j^*$ satisfy statements (d) and (e) of Lemma \ref{gammak-tilde:reduction} for all $j$:
\begin{equation}
	\label{mu* basic properties}
		\Supp(\mu_j)=\Gamma_j\quad\text{and}\quad \mu_j(\om)=\mu_j(\mathbf{1}-\om)\quad\forall \omega. 
\end{equation}

\smallskip}

\rev{(c)} At the moment, the term ``optimal measure'' is just a name. We will establish the sense in which (in situations of interest) the measures $\mu^*_j$ are optimal in Proposition \ref{main-optim} below.

\medskip

\rev{(d)} Note that $\bmu^*$ and $\mu^*$ are two different (but closely related) objects. The former is an $r$-tuple of measures $\mu_j^*$, all of which are induced from the single measure $\mu^*$.
\end{remark}

\begin{definition}[Optimal parameters]\label{opt-param-def}
Suppose that $\sV$ is a flag for which the $\rho$-equations have a solution. Let $\mu^*$ be the corresponding optimal measure on $\{0,1\}^k$ with respect to $\sV$. Suppose additionally that 
\begin{equation}\label{non-degen} \HH_{\mu^*_{m+1}}(V_m) \neq \dim(V_{m+1}/V_m)\end{equation} for $m = 0,1,\dots, r-1$.
 Then the corresponding \emph{optimal parameters} with respect to $\sV$ and the solution $\bs\rho$ are the unique choice of $\cc^* : 1 = c^*_1 > c^*_2 >\cdots > c^*_{r+1} > 0$, if it exists, such that 
\begin{equation}\label{79-pre}
 \rev{\er(\sV'_{\basic(m)}, \bmu^*, \cc^*) = \er(\sV,\bmu^*, \cc^*)}\qquad \mbox{for $m = 0,1,\dots, r - 1$}.\end{equation} 
\end{definition}

\rev{
The equations \eqref{79-pre}, written out in full, are
\begin{equation}\label{basic-m-e} \sum_{j = m+1}^r (c^*_j - c^*_{j+1}) \HH_{\mu^*_j}(V_m) = \sum_{j = m+1}^r c^*_j \dim (V_j/V_{j-1})\qquad m = 0,1,\dots, r-1.\end{equation} 
By \eqref{non-degen}, this uniquely determines $c^*_{m+1} \in \R$ in terms of $c_{m+2}^*,\ldots,c^*_{r+1}$. Hence, we recursively determine $c_1,\cdots,c_r$ in terms of $c_{r+1}$. Since we must further have $c_1=1$, this implicitly determines $c_{r+1}$ as well, and thus the entire vector $\cc^*$.
}

\textbf{Remark.}
\rev{By Lemma \ref{gammak-tilde:reduction} (ii)}, a stronger form of the condition \eqref{non-degen} is required in order for the entropy gap condition to hold, and so in practice this assumption is not at all restrictive.

We conclude this subsection with a characterization of the
optimal measure  $\mu^*$ and parameters $\cc^*$.
Given an $r$-step flag $\sV$, there
is an associated rooted tree $\sT(\sV)$, which captures the
structure of the cells at different levels $0,\ldots,r-1$.
In particular,
this tree always has exactly $2^k-1$ leaves at level $0$, corresponding
to the cell $\Gamma_0=\{\mathbf{0},\mathbf{1}\}$ and the 
singletons $\{\omega\}$ for each
 $\omega \in \{0,1\}^k \setminus \{\mathbf{0},\mathbf{1}\}$.

\begin{lemma}\label{trees}
The optimal constant $\gamma_k^{res}(\sV)$, associated measures
$\mu^*_i(C)$ and optimal parameters $c_i^*$  depend only on the
tree $\sT(\sV)$ and the sequence of dimensions $\dim(V_j)$,  $0\le j\le r$.
\end{lemma}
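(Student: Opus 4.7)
The strategy is to trace through the construction of $\bs\rho$, $\mu^*$ and $\cc^*$ and verify at each stage that only the tree structure $\sT(\sV)$ and the dimension sequence $(\dim V_j)_{j=0}^r$ are used as input. First I would unwind the recursive definition of $f^C(\bs\rho)$: the base case $f^C(\bs\rho)=1$ for cells at level $0$ and the recursion \eqref{fc-eqs} reference only the parent-child relation $C\to C'$ and the level $i$ of $C$. Both of these are encoded in the abstract rooted tree $\sT(\sV)$, so for any fixed $\bs\rho\in(0,1)^{r-1}$ the values $f^C(\bs\rho)$ are determined by $\sT(\sV)$ alone, and in particular this is true of $f^{\Gamma_j}(\bs\rho)$ for $0\le j\le r$.

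Next I would examine the $\rho$-equations \eqref{rho-equations}. Their left- and right-hand sides involve only $f^{\Gamma_j}(\bs\rho)$ and $\dim(V_{j+1}/V_j)$, so the system itself depends only on $\sT(\sV)$ and $(\dim V_j)_j$. Hence any solution $\bs\rho$ does as well. The defining ratios \eqref{eq46} for the optimal measure $\mu^*$ are also expressible purely in terms of $f^C(\bs\rho)$, $f^{C'}(\bs\rho)$ and $\rho_{i-1}$, so the value $\mu^*(C)$ attached to each node $C$ of $\sT(\sV)$ is determined by $\sT(\sV)$ and $(\dim V_j)_j$; the normalization $\mu^*(\Gamma_r)=1$ and the convention $\mu^*(\mathbf{1})=0$ (which just redistributes mass within the level-$0$ cell $\Gamma_0=\{\mathbf 0,\mathbf 1\}$) are intrinsic. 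The truncated measures $\mu_j^* = \mu^*(\Gamma_j)^{-1}\mu^* \mathbf{1}_{\Gamma_j}$ inherit this property.

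It remains to handle $\cc^*$ and $\gamma_k^{res}(\sV)$. The key observation is that the entropies $\HH_{\mu_j^*}(V_m)$ appearing in \eqref{basic-m-e} are, by Definition \ref{entropy-def}, functions of the masses $\mu_j^*(C)$ assigned to those cells $C$ at level $m$ that are descendants of $\Gamma_j$ in $\sT(\sV)$, since these cells are exactly the cosets of $V_m$ meeting $\Gamma_j$. Because these masses are already known to depend only on $\sT(\sV)$ and $(\dim V_j)_j$, so do the entropies. The system \eqref{basic-m-e} defining the optimal parameters $\cc^*$ is therefore a linear system with coefficients depending only on these data; solving it upward from $m=r-1$ to $m=0$ as in the remark following Definition \ref{opt-param-def} expresses each $c_j^*$ as a fixed rational function of these coefficients, proving the claim for $\cc^*$ and in particular for $\gamma_k^{res}(\sV)=c_{r+1}^*$.

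I do not foresee a serious obstacle: the statement is essentially a bookkeeping lemma, asserting that nowhere in the construction does one use specific coordinates of the ambient cube $\{0,1\}^k$ or the precise identity of the subspaces $V_j$ beyond their dimensions and the induced partition-refinement structure. The only point that requires a brief argument is the identification of cosets of $V_m$ in $\Gamma_j$ with nodes at level $m$ of $\sT(\sV)$ descended from $\Gamma_j$, which is immediate from the very definition of cells and of $\sT(\sV)$.
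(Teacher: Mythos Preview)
Your proposal is correct and follows essentially the same approach as the paper: both argue that the recursion defining $f^C(\bs\rho)$ depends only on the parent--child structure of $\sT(\sV)$, and then trace this through Definitions~\ref{opt-meas-def} and~\ref{opt-param-def}. The paper's version is extremely terse (two sentences), whereas you have spelled out the intermediate steps, in particular the point that $\HH_{\mu_j^*}(V_m)$ is a function of the $\mu_j^*$-masses on level-$m$ nodes below $\Gamma_j$; this is left entirely implicit in the paper. One small remark: your final line identifies $\gamma_k^{\res}(\sV)$ with $c_{r+1}^*$, which is only proved later in Proposition~\ref{main-optim}; a cleaner route at this point in the paper is to observe directly that the optimisation defining $\gamma_k^{\res}(\sV)$ (Problem~\ref{restricted-opt}) involves only the quantities $\HH_{\mu_j}(V_m)$ and $\dim(V_j/V_{j-1})$, which for any measure supported on $\Gamma_j$ depend only on the tree and the dimension sequence.
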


\begin{proof}
Let $\sV$ and $\widetilde{\sV}$ be different flags with the same tree structure,
that is, $\sT(\sV)$ is isomorphic to $\sT(\widetilde{\sV})$, \rev{and with the same sequence of dimensions $\dim(V_j)$ and $\dim(V_j')$}.  By an easy induction on the level and the definition of $f^C(\boldsymbol{\rho})$, 
if $C\in \sT(\sV)$ and $\tilde{C}\in \sT(\widetilde{\sV})$ correspond, we find that 
$f^C(\boldsymbol{\rho}) = f^{\widetilde{C}} (\boldsymbol{\rho})$.
The statements now follow from Definitions \ref{opt-meas-def}
and \ref{opt-param-def}.
\end{proof}

\subsection{Solution of the optimisation problem: statement}

Here is the main result of this section, which explains the introduction of the various concepts above, as well as their names. 

\begin{proposition}\label{main-optim}
Suppose that $\sV : \mathbf{1} = V_0 \leq V_1 \leq \cdots \leq V_r \leq \Q^k$ is a \rev{nondegenerate flag} such that \rev{$\dim(V_1/V_0)=1$ and} the $\rho$-equations have a solution. Let \rev{$\bmu^*$ be the corresponding optimal measures}, and suppose that the corresponding optimal parameters $\cc^*$ exist. Then
\begin{equation}\label{res-opt} 
\gamma_k^{\res}(\sV) =  (\log 3 - 1)\Big/\bigg( \log 3 + \sum_{i = 1}^{r-1} \frac{\dim (V_{i+1}/V_i)}{\rho_1 \cdots \rho_i} \bigg) .
\end{equation}
Moreover, the \rev{optimal measures $\bmu^*$} and optimal parameters $\cc^*$ provide the solution to Problem \ref{restricted-opt}; in particular, $c^*_{r+1}$ is precisely the right-hand side of \eqref{res-opt}.
\end{proposition}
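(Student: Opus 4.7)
The system $(\sV,\cc^*,\bmu^*)$ is feasible for the restricted problem: by the very definition of $\cc^*$, the $r$ equations in \eqref{basic-m-e} hold with equality, so the restricted entropy condition \eqref{rest-ent} is satisfied. This gives $\gamma_k^{\res}(\sV)\ge c^*_{r+1}$. The real content of the proposition is therefore twofold: (i) verify that $c^*_{r+1}$ equals the explicit right-hand side of \eqref{res-opt}; and (ii) show that no other feasible pair $(\bmu,\cc)$ achieves a larger $c_{r+1}$.

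\textbf{Computing $c^*_{r+1}$.} The plan is to solve the triangular linear system \eqref{basic-m-e} from $m=r-1$ down to $m=0$, which expresses every $c^*_j$ as a scalar multiple of $c^*_{r+1}$; imposing $c_1^*=1$ then recovers $c^*_{r+1}$. The entropies $\HH_{\mu^*_j}(V_m)$ appearing in these equations can be evaluated cleanly via the chain rule. For a cell $C$ at level $i$, the conditional distribution on its children under $\mu^*$ is the Gibbs form $\P(C')=f^{C'}(\bs\rho)^{\rho_{i-1}}/f^C(\bs\rho)$, whose Shannon entropy is
\[
h(C)=\log f^C(\bs\rho)-\rho_{i-1}\,\E_{C'|C}\log f^{C'}(\bs\rho).
\]
Summing over levels $m+1\le i\le j$ and averaging against $\mu^*_j$ telescopes, giving
\[
\HH_{\mu^*_j}(V_m)=L_j-\rho_m L_m+\sum_{i=m+1}^{j-1}(1-\rho_i)L_i,\qquad L_i:=\E_{C_i\sim\mu^*_j}\log f^{C_i}(\bs\rho).
\]
Next I would evaluate $L_i$ using \eqref{mu-gammaj} and the $\rho$-equations $f^{\Gamma_i}(\bs\rho)=f^{\Gamma_{i-1}}(\bs\rho)^{\rho_{i-1}}e^{\dim(V_i/V_{i-1})}$; these are designed precisely to make the expressions collapse. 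Feeding the resulting closed forms for $\HH_{\mu^*_j}(V_m)$ into \eqref{basic-m-e} and back-substituting, the ratios $c^*_j/c^*_{r+1}$ telescope to produce $1/c^*_{r+1}=\log 3/(\log 3-1)+\sum_{i=1}^{r-1}\dim(V_{i+1}/V_i)/((\log 3-1)\rho_1\cdots\rho_i)$, which rearranges to \eqref{res-opt}.

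\textbf{Upper bound via Lagrangian duality.} To prove no other feasible system beats $c^*_{r+1}$, introduce the Lagrangian
\[
\mathcal{L}(\bmu,\cc,\bs\lambda)=c_{r+1}+\sum_{m=0}^{r-1}\lambda_m\bigl(\er(\sV'_{\basic(m)},\cc,\bmu)-\er(\sV,\cc,\bmu)\bigr).
\]
A small perturbation argument (nudging $c_{r+1}$ upward while adjusting the remaining $c_j$) shows that, at an optimum, \emph{every} constraint must be tight, so all $\lambda_m>0$. Writing stationarity in each variable $\mu_j(\omega)$ yields the local relation $\mu_j(\omega)\propto(\sum_{\omega'\in\omega+V_{j-1}}\mu_j(\omega'))^{\rho_{j-1}}$ for a certain parameter $\rho_{j-1}\in(0,1)$ derived from the $\lambda_m$. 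Iterating this over levels reproduces exactly the recursive rule \eqref{eq46} defining $\mu^*$; meanwhile, stationarity in the $c_j$ (combined with the tightness of the constraints) forces the $\rho_i$ to satisfy the $\rho$-equations \eqref{rho-equations}. Assuming the $\rho$-equations have a unique solution in $(0,1)^{r-1}$, this pins down the optimum as $(\bmu^*,\cc^*)$. To upgrade this from a critical-point identification to a genuine upper bound, I would use the concavity of Shannon entropy: each $\HH_{\mu_j}(V'_j)$ is concave in $\mu_j$, so the constraints define a convex feasible region in $\bmu$ (for fixed $\cc$), and the Lagrangian analysis can then be closed into a true duality statement by exhibiting $(\bmu^*,\cc^*,\bs\lambda)$ as a saddle point.

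\textbf{Main obstacle.} The principal difficulty is the last step: coupling across different $j$'s is delicate because the $c_j$ are variables too, and the joint optimization is not jointly concave in $(\bmu,\cc)$. Thus a clean saddle-point argument will likely require the two-stage approach suggested above—first optimize $\cc$ out (it is linear given the tight constraints) to express the problem as pure optimization over $\bmu$, then exploit entropy concavity. Verifying uniqueness of the solution of the $\rho$-equations under the mild hypotheses is a secondary but necessary technical point to complete the identification of the critical point with the global optimum.
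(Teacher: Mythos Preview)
Your upper-bound strategy has the gap you yourself flag: KKT stationarity only locates a critical point, and without joint concavity in $(\bmu,\cc)$ you cannot promote it to a global maximum. Your proposed two-stage repair is in the right direction but left vague, and the appeal to uniqueness of the $\rho$-equations is not among the hypotheses of the proposition.

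The paper sidesteps all of this by a more elementary route that never invokes KKT or saddle-point theory. (i)~For \emph{any} nonnegative weights $\yy=(y_0,\dots,y_{r-1})$, summing the $r$ constraints \eqref{rest-ent} with weights $y_m$ and rearranging yields $c_{r+1}\le\max\{E_1(\yy),\dots,E_r(\yy),E_{r+1}(\yy)\}$, simply because $c_{r+1}$ and the increments $c_j-c_{j+1}$ are nonnegative and sum to $1$; this is bare weak LP duality. (ii)~For fixed $\yy$, each $E_j(\yy)$ is bounded by $E'_j(\yy)$ in which $\sum_m y_m\HH_{\mu_j}(V_m)$ is replaced by its supremum $h^{\Gamma_j}(\yy)$ over measures supported on $\Gamma_j$; Lemma~\ref{hc-lem} computes $h^C(\yy)$ by a tree recursion and shows the optimisers at all levels are restrictions of a single measure $\mu^*_{\yy}$. (iii)~Under the change of variables $\rho_i=y_{<i}/y_{<i+1}$ one has $\log f^C(\bs\rho)=h^C(\yy)/y_{<i}$ for every level-$i$ cell $C$, whence the $\rho$-equations are \emph{exactly} the conditions $E'_1(\yy)=\cdots=E'_r(\yy)$; a scalar choice of $y_0$ then makes $E_{r+1}(\yy)$ equal to this common value, giving the right-hand side of \eqref{res-opt} as an upper bound valid for \emph{every} feasible $(\bmu,\cc)$. (iv)~Finally one checks equality at $(\bmu^*,\cc^*)$: the identity $\mu^*=\mu^*_{\yy}$ forces $E_j=E'_j$, and the defining equations \eqref{79-pre} of $\cc^*$ make step~(i) tight. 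No uniqueness of the $\rho$-equations is needed---one merely exhibits a single good $\yy$.

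On your direct computation of $c^*_{r+1}$: the telescoping identity $\HH_{\mu^*_j}(V_m)=\sum_{i=m+1}^{j}(L_i-\rho_{i-1}L_{i-1})$ is correct, but your $L_i=\E_{C_i\sim\mu^*_j}\log f^{C_i}$ depends on $j$ (the law of the level-$i$ cell depends on which $\Gamma_j$ one starts from), and neither \eqref{mu-gammaj} nor the $\rho$-equations evaluate it for $i<j$: the $\rho$-equations relate only the quantities $f^{\Gamma_i}$, not the distribution over all level-$i$ cells. The paper explicitly remarks that a direct calculation ``could, in principle, be phrased as a calculation, but it would look complicated and unmotivated''; it instead reads off $c^*_{r+1}$ as a by-product of the duality argument above.
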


For this result to be of any use, we need methods for establishing, for flags $\sV$ of interest, that the $\rho$-equations have a solution, and also that the optimal parameters exist. The former is a very delicate matter, highly dependent on the specific structure of the flags of interest. Once this is sorted out, the latter problem is less serious, at least in situations relevant to us.

\bigskip

\subsection{Linear forms in entropies}

In the next section we will prove Proposition \ref{main-optim}. In this section we isolate some lemmas from the proof. 

Let $\sV : \langle \mathbf{1} \rangle = V_0 \leq \cdots \leq V_r \leq \Q^k$ be a flag. We use the terminology of cells $C$ at level $i$, introduced at the beginning of subsection \ref{rho-equations-sec}.

\begin{lemma}\label{hc-lem}
Let $\yy = (y_0,\cdots, y_{r-1})$ be real numbers with the property that all the partial sums $y_{< i} := y_0 + \dots + y_{i-1}$ are positive. If $C$ is a cell (at some level $i$), then we write 
\begin{equation}\label{hc-def} 
h^C(\yy) := \sup_{\Supp(\mu_C) \subset C} 
	\Big(\sum_{0\le m<r} y_m \HH_{\mu_C}(V_m)\Big) ,
\end{equation} 
where the supremum is over all probability measures $\mu_C$ supported on $C$. 

\begin{enumerate}
	\item The quantities $h^C(\yy)$ are completely determined by the following rules:
		\begin{itemize}
		\item If $C$ has level $0$, then $h^C(\yy) = 0$;
		\item If $C$ has level $i$, then
		\begin{equation}\label{hc-eqs} 
		h^C(\yy) = y_{< i}\log\Big(\sum_{C':\, C \rightarrow C'} e^{h^{C'}(\yy)/y_{< i}}\Big).
		\end{equation}
		\end{itemize}
	\item For any $C$, the maximum in \eqref{hc-def} occurs for a unique
	measure $\mu^*_{C,\yy}$.  Furthermore, all of the $\mu^*_{C,\yy}$ are restrictions of the ``top'' measure $\mu^*_{\yy} := \mu^*_{\Gamma_r,\yy}$, that is to say $\mu^*_{C,\yy}(x) = \mu^*_{\yy}(x)/\mu^*_{\yy}(C)$ for all $x \in C$, and
	\begin{equation}\label{compat} \frac{\mu^*_{\yy}(C')}{\mu^*_{\yy}(C)} = \frac{e^{h^{C'}(\yy/y_{<i})}}{e^{h^{C}(\yy/y_{<i})}}.  \end{equation}
\end{enumerate}
\end{lemma}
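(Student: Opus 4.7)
The plan is to induct on the level $i$ of the cell $C$, with the base case $i = 0$ being immediate: a cell at level $0$ is contained in a single coset of each $V_m$ (since $V_m \ge V_0$), so $\HH_{\mu_C}(V_m) = 0$ for every probability measure $\mu_C$ supported on $C$, giving $h^C(\yy) = 0$.

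For the inductive step, fix $C$ at level $i \ge 1$. Every probability measure $\mu_C$ on $C$ decomposes uniquely as a pair $(\mathbf{p}, (\mu_{C'})_{C \to C'})$, where $p_{C'} := \mu_C(C')$ on the children of $C$ and $\mu_{C'}$ is the conditional measure on $C'$. Since each child $C'$ is a coset of $V_{i-1}$ and $V_m \le V_{i-1}$ for $m \le i-1$, the standard conditional-entropy identity gives
\[
\HH_{\mu_C}(V_m) = \HH(\mathbf{p}) + \sum_{C \to C'} p_{C'}\, \HH_{\mu_{C'}}(V_m) \qquad (m < i),
\]
while $\HH_{\mu_C}(V_m) = 0$ for $m \ge i$ because $C$ itself is a single coset of $V_m$. (The $m = i - 1$ terms on the right vanish since each $C'$ has level $i-1$, so including or excluding them is harmless.) Multiplying by $y_m$ and summing over $0 \le m < r$, we obtain
\[
\sum_{0 \le m < r} y_m \HH_{\mu_C}(V_m) = y_{<i}\,\HH(\mathbf{p}) + \sum_{C \to C'} p_{C'} \sum_{0 \le m < r} y_m \HH_{\mu_{C'}}(V_m).
\]

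For each fixed $\mathbf{p}$ we may maximise the inner sums independently, which by the inductive hypothesis yields $h^{C'}(\yy)$, uniquely attained at $\mu^*_{C',\yy}$. What remains is the scalar Gibbs-type optimisation
\[
h^C(\yy) = \sup_{\mathbf{p}} \Bigl[\,y_{<i}\,\HH(\mathbf{p}) + \sum_{C \to C'} p_{C'}\, h^{C'}(\yy)\Bigr].
\]
Since $y_{<i} > 0$ by hypothesis and $\HH$ is strictly concave, a Lagrange multiplier (or direct) computation shows that the unique optimiser is
\[
p^*_{C'} = \frac{e^{h^{C'}(\yy)/y_{<i}}}{Z}, \qquad Z := \sum_{C \to C'} e^{h^{C'}(\yy)/y_{<i}},
\]
with value $y_{<i} \log Z$, proving \eqref{hc-eqs}.

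The consistency claim follows: the maximising $\mu^*_{C,\yy}$ satisfies $\mu^*_{C,\yy}(x) = p^*_{C'}\,\mu^*_{C',\yy}(x)$ for $x \in C' \subset C$, so iterating from $C = \Gamma_r$ downward exhibits every $\mu^*_{C,\yy}$ as the normalised restriction of $\mu^*_{\yy}$, and $\mu^*_\yy(C')/\mu^*_\yy(C) = p^*_{C'}$. Finally, the definition of $h^C$ is positively $1$-homogeneous in $\yy$, so $h^{C'}(\yy)/y_{<i} = h^{C'}(\yy/y_{<i})$ (and similarly for $h^C$), which converts the ratio formula into the exact form \eqref{compat}. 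The only genuine subtlety is making sure the $m = i-1$ contribution is treated correctly when passing from $\HH_{\mu_C}$ to $\HH_{\mu_{C'}}$; beyond this bookkeeping, the argument is an induction on top of the standard ``maximise $\alpha \HH(\mathbf{p}) + \langle \mathbf{a}, \mathbf{p}\rangle$'' computation.
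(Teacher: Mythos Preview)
Your proof is correct and follows essentially the same route as the paper: induction on the level, decomposition of $\mu_C$ via its children using the chain rule for entropy, then the Gibbs-type optimisation (the paper's Lemma~\ref{real-var-ent}) over the weights $p_{C'}$. The only cosmetic difference is that the paper normalises $y_{<i}=1$ at the start of the inductive step, whereas you carry $y_{<i}$ through and invoke homogeneity at the end; both are fine.
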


\begin{remark*} As will be apparent from the proof, we do not use the linear structure of the cells $C$ (that is, the fact that they come from cosets). We leave it to the reader to formulate a completely general version of this lemma in which the cells at level $i$ are the atoms in a $\sigma$-algebra $\mathscr{F}_i$, with $\mathscr{F}_{i}$ being a refinement of $\mathscr{F}_{i+1}$ for all $i$.
\end{remark*}

\begin{proof}
\rev{We prove both parts simultaneously.} 
Let us temporarily write $\tilde h^C(\yy)$ for the function defined by \eqref{hc-eqs}, thus the aim is to prove that $h^C(\yy) = \tilde h^C(\yy)$, \rev{where $h^C(\yy)$ is
defined in \eqref{hc-def}.} We do this by induction on $i$,
the $i=0$ case being trivial since, in this case, all the entropies $\HH_{\mu_C}(V_m)$ are zero \rev{because each cell of level 0 lies in some coset mod $V_0$, and thus in the same coset mod $V_m$ for $m=0,1,\dots,r-1$.}

Suppose now that we know the result for cells of level $i -1$. Note that both $h^C$ and $\tilde h^C$ satisfy a homogeneity property
\[ \tilde h^C(t\yy) = t \tilde h^C(\yy), \qquad h^C(t\yy) = t h^C(\yy).\] This is obvious for $h^C$, and can be proven very easily for $\tilde h^C$ by induction. Therefore we may assume that $y_{< i} = 1$. This does not affect the measure $\mu^*_{\yy}$, which does not depend on the scaling of the parameters $y_m$.

Suppose that $C$ is a cell at level $i$. A probability measure $\mu_C$ on $C$ is completely determined by probability measures $\mu_{C'}$ on the children $C'$ of $C$ (at level $i - 1$) together with the probabilities $\mu_C(C')$, which must sum to $1$, with the relation being that $\mu_{C'}(x) = \mu_C(x)/\mu_C(C')$ for $x\in C'$.

Suppose that $0\le m < i$. Let the random variables $X,Y$ be random cosets of $V_m, V_{i-1}$ respectively, sampled according to the measure $\mu_C$. Then $X$ determines $Y$ and so, by Lemma \ref{det-ent}, $\HH(X,Y) = \HH(X)$. The chain rule for entropy, Lemma \ref{ent-chain-rule}, then yields
\[ \HH(X) = \HH(Y) + \sum_{y} \P(Y = y) \HH(X | Y = y).\] Translated back to the language we are using, this implies that
 \[ 
\HH_{\mu_C}(V_{m}) = \HH_{\mu_C}(V_{i-1}) + \sum_{C'} \mu_C(C') \HH_{\mu_{C'}}(V_m).
\] 
Therefore
\[ 
\sum_{0\le m < i} y_m \HH_{\mu_C}(V_{m}) = \HH_{\mu_C}(V_{i-1}) + \sum_{C'} \mu_C(C') \sum_{0\le m < i} y_m \HH_{\mu_{C'}}(V_m).
\] 
(Here we used our assumption that $y_{< i} = 1$.)
Since $\HH_{\mu_C}(V_m) = 0$ for $m \geq i$, and $\HH_{\mu_{C'}}(V_m) = 0$ for $m \geq i - 1$, we may extend the sums over all $m\in\{0,1,\dots,r-1\}$ 
thereby obtaining
\[ 
\sum_{0\le m<r} 
y_m \HH_{\mu_C}(V_{m}) 
	= \HH_{\mu_C}(V_{i-1}) 
	+ \sum_{C'} \mu_C(C') \sum_{0\le m<r} y_m \HH_{\mu_{C'}}(V_m).
\]
Since the $\mu_{C'}$ can be arbitrary probability measures, and $\HH_{\mu_C}(V_{i-1})$ depends only on the value of $\mu_C(C')$, 
it follows from the inductive hypothesis that
\begin{align}
 h^C(\yy) 
 	& = \sup_{\mu_C}\Big( \sum_{0\le m<r} y_m \HH_{\mu_C}(V_{m}) \Big)  \label{eq:h-sup1}\\ 
 	& = \sup_{\mu_C(C'), \mu_{C'}}\Big( \HH_{\mu_C}(V_{i-1}) + \sum_{C'} \mu_C(C') \sum_{0\le m<r} y_m \HH_{\mu_{C'}}(V_m)\Big) \label{eq:h-sup2}\\ 
 	& = \sup_{\mu_C(C')}\Big( \HH_{\mu_C}(V_{i-1}) + \sum_{C'} \mu_C(C') \tilde h^{C'}(\yy)\Big),\label{eq:h-sup3}
 \end{align} 
with equality when going from \eqref{eq:h-sup2} to \eqref{eq:h-sup3} when $\mu_{C'} = \mu^*_{C',\yy}$ for all $C'$. 
Applying Lemma \ref{real-var-ent} with the $p_j$ being the $\mu_C(C')$ and the $a_j$ being the $\tilde h^{C'}(\yy)$, and noting that $\HH_{\mu_C}(V_{i-1}) = \HH(\mathbf{p})$ (where $\mathbf{p} = (p_1,p_2,\dots)$), 
it follows that
\begin{equation}
\label{eq:h-optimal}
\sup_{\mu_C(C')}\Big( \HH_{\mu_C}(V_{i-1}) + \sum_{C'} \mu_C(C') \tilde h^{C'}(\yy)\Big)
	= \log \Big( \sum_{C' : \, C \rightarrow C'} e^{\tilde h^{C'}(\yy)} \Big) 
	= \tilde h^C(\yy).
\end{equation}
In addition, Lemma \ref{real-var-ent} implies that equality occurs in \eqref{eq:h-optimal} precisely when $p_j = e^{a_j}/\sum_i e^{a_i}$, that is to say when
\[
\mu_C(C') =  \frac{e^{h^C(\yy)}}{\sum_{C':\, C \rightarrow C'} e^{h^C(\yy)}} = \frac{\mu^*_{\yy}(C')}{\mu^*_{\yy}(C)} .
\]
(Here we used again that $y_{<i}=1$.) Recalling that $\mu_{C'} = \mu^*_{C',\yy}$ for all $C'$, we see that 
the measure $\mu_C$ for which equality occurs in \eqref{eq:h-sup1} is the restriction of $\mu^*_{\yy} = \mu^*_{\Gamma_r,\yy}$ to $C$. This completes the inductive step. 
\end{proof}

\subsection{Solution of the optimisation problem: proof}
\label{sec65}
This section is devoted to the proof of Proposition \ref{main-optim}. Strictly speaking, for our main theorems we only need a lower bound on $\gamma_k^{\res}(\sV)$, and for this it suffices to show that $c_{r+1}^*$ is given by the right-hand side of \eqref{res-opt}. This could, in principle, be phrased as a calculation, but it would look complicated and unmotivated. Instead, we present it in the way we discovered it, by showing that the RHS of \eqref{res-opt} is an \emph{upper} bound on $\gamma_k^{\res}(\sV)$, and then observing that equality does occur when $\mu = \mu^*$ is the optimal measure (Definition \ref{opt-meas-def}) and $\cc = \cc^*$ the optimal parameters (Definition \ref{opt-param-def}). We establish this upper bound using the duality argument from linear programming and Lemma \ref{hc-lem}.

To ease the notation, we use the shorthand $d_i := \dim (V_i)$ throughout this subsection.
Let us, then, consider the restricted optimisation problem, namely Problem \ref{restricted-opt}. The condition \eqref{rest-ent} may be rewritten as
\begin{equation}\label{o-eq-1} 
\sum_{j = m+1}^r (c_j - c_{j+1})(\HH_{\mu_j}(V_m) + d_m - d_j) \geq c_{r+1} (d_r - d_m) \qquad (0\le m\le r-1).
\end{equation} This holds for $m = 0,1,\dots, r-1$. Therefore for any choice of ``dual variables'' $\yy = (y_0,y_1,\dots,$ $y_{r-1})$, $y_0,\cdots, y_{r-1} \geq 0$, we have
\begin{equation}\label{o-eq-2} \sum_{m = 0}^{r-1} y_m \sum_{j = m+1}^r (c_j - c_{j+1})(\HH_{\mu_j}(V_m) + d_m - d_j) \geq c_{r+1} \sum_{m = 0}^{r-1} y_m (d_r - d_m),\end{equation} which, rearranging, gives
\begin{equation}\label{o-eq-3} \sum_{j = 1}^r (c_j - c_{j+1}) E_j(\yy) + c_{r+1}E_{r+1}(\yy)  \geq c_{r+1}.\end{equation}
where 
\[ 
E_j(\yy) := \sum_{m = 0}^{j-1} y_m (\HH_{\mu_j}(V_m) + d_m - d_j)
\] 
for $j = 1,\dots, r$, and
\[ 
E_{r+1}(\yy) := 1 - \sum_{m = 0}^{r-1} y_m (d_r - d_m).
\]
Since the $c_j - c_{j+1}$, $j = 1,\dots, r$, and $c_{r+1}$ are nonnegative and sum to 1, this implies that 
\begin{equation}\label{o-eq-4} 
c_{r+1} \leq  \min_{y_i\ge 0\; \forall i} \max \{ E_1(\yy),\cdots, E_r(\yy), E_{r+1}(\yy) \}.
\end{equation}
By Lemma \ref{hc-lem}, this implies that
\begin{equation}\label{o-eq-5} 
c_{r+1} \leq \min_{y_i\ge 0\; \forall i} \max \{ E'_1(\yy), \cdots E'_r(\yy), E_{r+1}(\yy)\}, 
\end{equation} 
where
\begin{equation}\label{ej-def} 
E'_j(\yy) 
		:=  h^{\Gamma_j}(\yy) +  \sum_{m = 0}^{j-1} y_m ( d_m - d_j) 	
		= \sum_{m = 0}^{j-1} y_m (\HH_{\mu^*_{\Gamma_j,\yy}}(V_m) + d_m - d_j),
\end{equation} 
for $j = 1,\dots, r$, and $\mu^*_{\Gamma_j,\yy}$ is the measure $\nu$ supported on $\Gamma_j = V_j \cap \{0,1\}^k$ for which the sum $\sum_m y_m \HH_{\nu}(V_m)$ is maximal, as defined in Lemma \ref{hc-lem}.

Now we specify a choice of $\yy$. To do this, we make a change of variables, defining $\rho_i = y_{< i}/y_{< i+1}$. Note that for fixed $y_0 > 0$, choices of $y_1,\cdots, y_{r-1}> 0$ are in one-to-one correspondence with choices of $\rho_1,\cdots, \rho_{r-1}$ with $0 < \rho_i < 1$. 
We must then have that
\begin{equation}\label{fh-rel} 
\log f^C({\bm \rho}) = h^C(\yy/y_{<i})
 = \frac{1}{y_{< i}} h^C(\yy) = \frac{\rho_1 \cdots \rho_{i-1}}{y_0} h^C(\yy)
\end{equation} 
for the cells $C$ at level $i$, which may easily be proven by induction on the level $i$, using the defining equations for the $h^C$ and $f^C$ (see \eqref{hc-eqs}, \eqref{fc-eqs} respectively).

Now choose the $\rho_i$ to satisfy the $\rho$-equations \eqref{rho-equations}. In virtue of \eqref{fh-rel}, the $j$-th $\rho$-equation 
\[ 
f^{\Gamma_{j+1}}({\bm \rho}) = (f^{\Gamma_{j}}({\bm \rho}))^{\rho_j} e^{d_{j+1}  - d_j}
\] 
with $j\in\{1,2,\dots,r-1\}$ is equivalent to 
\begin{equation}\label{ej-ej1} 
E'_j(\yy) = E'_{j+1}(\yy),
\end{equation} 
with $E'_j(\yy)$ defined as in \eqref{ej-def} above.

Recall that $d_1-d_0=\dim(V_1/V_0)=1$. Thus, if we choose
\[ 
y_0 := 1\Big/\Big( \log 3 + \sum_{i = 1}^{r-1} \frac{d_{i+1} - d_i}{\rho_1 \cdots \rho_i} \Big),
\] 
a short calculation confirms that
\begin{equation}\label{e01} 
E_{r+1}(\yy) = E'_1(\yy) = y_0 (\log 3-1).
\end{equation}
With this choice of $\yy$ we therefore have, from \eqref{ej-ej1} with $j = 1,\dots, r-1$, \eqref{e01} and \eqref{o-eq-5}, 
\begin{equation}\label{o-eq-6} 
c_{r+1} \leq 
E'_1(\yy) = (\log 3-1) \big/\Big( \log 3 + \sum_{i = 1}^{r-1} \frac{d_{i+1} - d_i}{\rho_1 \cdots \rho_i} \Big).
\end{equation}

In the above analysis, the $\mu_i$ and the $c_i$ were arbitrary subject to the conditions of Problem \ref{restricted-opt}, thus $\Supp(\mu_i) \subset V_i$ and $1 = c_1 > c_2 > \cdots > c_{r+1}$. Therefore, recalling the definition of $\gamma_k^{\res}(\sV)$ (see Problem \ref{restricted-opt}), we have proven that 
\[ 
\gamma_k(\sV) \le \gamma_k^{\res}(\sV) 
	\leq (\log 3-1) \big/\Big( \log 3 + \sum_{i = 1}^{r-1} \frac{d_{i+1} - d_i}{\rho_1 \cdots \rho_i} \Big).
\] 
Proposition \ref{main-optim} asserts that equality occurs in this bound when $c_j = c^*_j$ and $\mu_j = \mu^*_j$, where $\cc^* = (c_1^*,\cdots, c^*_{r+1})$ are the optimal parameters defined in Definition \ref{opt-param-def}, and $\mu^*$ and its restrictions $\mu^*_j$ are the optimal measures defined in Definition \ref{opt-meas-def}. To establish this, we must go back through the argument showing that equality occurs at every stage with these choices.

First note that \eqref{o-eq-1} is equivalent (as we stated at the time) to $\er(\sV'_{\basic(m)},\cc,\bmu) \geq \er(\sV,\cc,\bmu)$. The fact that equality occurs here when $\cc = \cc^*$ and $\bmu = \bmu^*$ is essentially the definition of the optimal parameters $\cc^*$ (Definition \ref{opt-param-def}). That equality occurs in \eqref{o-eq-2} and \eqref{o-eq-3} is then automatic.

Working from the other end of the proof, the choice of $\yy$ was made so that $E'_1(\yy) = \cdots = E'_r(\yy) = E_{r+1}(\yy)$. We claim that, with this choice of $\yy$, 
\begin{equation}\label{mu-muy} 
\mu^* = \mu^*_{\yy}.
\end{equation}
By \eqref{compat}, it suffices to check that 
\[ 
\frac{\mu^*(C')}{\mu^*(C)} = \frac{e^{h^{C'}(\yy/y_{<i})}}{e^{h^{C}(\yy/y_{<i})}}.  
\]
This follows immediately from \eqref{eq46} and \eqref{fh-rel}.

Since $\mu^*_j$ is defined to be the restriction of $\mu^*$ to $\Gamma_j$, it follows from \eqref{mu-muy} that $\mu^*_j = \mu^*_{\Gamma_j,\yy}$, and hence that $E_j(\yy) = E'_j(\yy)$ for $j = 1,\dots, r$.

Thus all $2r + 1$ of the quantities $E'_j(\yy)$ ($j = 1,\dots, r$) and $E_j(\yy)$ ($j = 1,\dots, r+1$) are equal. It follows from this and the fact that equality occurs in \eqref{o-eq-3} that equality occurs in \eqref{o-eq-4}, \eqref{o-eq-5} and \eqref{o-eq-6} as well. This concludes the proof of Proposition \ref{main-optim}.\qed

%
%
\renewcommand{\semibasic}{\operatorname{semi}}

\section{The strict entropy condition}\label{entropy-binary-alt}

\subsection{Introduction}

Fix an $r$-step, \rev{nondegenerate flag $\sV$}. In the previous section, we studied a restricted optimization problem (Problem \ref{restricted-opt}) asking for the supremum of $c_{r+1}$ when ranging over all systems $(\sV,\cc,\bmu)$ satisfying the ``restricted entropy condition''
\begin{equation}\label{rest-ent-again}
\er(\sV'_{\basic(m)},\cc,\bmu) 
	\geq \er(\sV,\cc,\bmu) \qquad ( m = 0,1,\dots, r-1). 
\end{equation}
The aim of the present section is two-fold: we wish to establish, under general conditions, that an ``optimal system'' with respect to  \eqref{rest-ent-again} satisfies the more general entropy condition
\begin{equation}\label{gen-ent-again}
\er(\sV', \cc,\bmu) \geq \er(\sV, \cc,\bmu)
\qquad (\text{all } \sV'\le \sV).
\end{equation}
In addition, we want to show that if we slightly perturb such a system, we may guarantee 
\rev{the strict entropy condition \eqref{strict-entropy-cond}, which is}
a version of \eqref{gen-ent-again} with strict inequalities for
 all proper subflags $\sV'$ of $\sV$. 

Before stating our result, we need to define the notion of the automorphism group of a flag.  

\newcommand\Aut{\operatorname{Aut}}
\begin{definition}[Automorphism group]
For a permutation $\sigma \in S_k$ and $\om=(\om_1,\ldots,\om_k)\in \Q^k$, denote by $\sigma \om$ the usual coordinate permutation action $\sigma \om=
(\omega_{\sigma(1)},\ldots,\omega_{\sigma(k)})$.
The \emph{automorphism group} $\Aut(\sV)$ is the group of all $\sigma$  that satisfy $\sigma V_i=V_i$ for all $i$.
\end{definition}

\begin{proposition}\label{entrop-gap-general}
Let $\sV$ be an $r$-step, nondegenerate flag of distinct spaces. 
Assume that the $\rho$-equations \eqref{rho-equations} have a solution, and define the optimal measures $\bmu^*$ on $\{0,1\}^k$ as in Definition \ref{opt-meas-def}. 
Furthermore, assume that:
\begin{enumerate}
\item no intermediate subspace is fixed by $\Aut(\sV)$, that is to say there is no space $W$ that is invariant under the action of $\Aut(\sV)$ 
and such that $V_{i-1} < W <V_i$ (the inclusions being strict);
\item the optimal parameters $\cc^*$ exist \rev{and they are distinct and positive}, that is to say the system
of equations \eqref{basic-m-e} has a unique solution $\cc^*$ satisfying
$1=c_1^* > c_2^* > \cdots > c_{r+1}^*\rev{>0}$;
\item the following ``positivity inequalities'' hold:
\begin{enumerate}
	\item[(i)] $\HH_{\mu^*_{m+1}}(V_m)>\dim(V_{m+1}/V_m)$ for $0\le m\le r-1$;
	\item[(ii)] $\HH_{\mu^*_i}(V_{m-1})-\HH_{\mu^*_i}(V_m) < \dim(V_m/V_{m-1})$ for $1\le m<i\le r$.
\end{enumerate}
\end{enumerate}
Then, \rev{for every $\eps>0$}, there exists a perturbation $\tilde{\cc}$ of $\cc^*$  such that $1 = \tilde c_1  > \tilde c_2 >\cdots >  \tilde c_{r+1}\rev{\ge c_{r+1}-\eps}$ and such that we have the strict entropy condition 
\be\label{strict-perturbed}
\er(\sV',\tilde{\cc},\bmu^*) > \er(\sV,\tilde{\cc},\bmu^*) \qquad \mbox{for all proper subflags $\sV' \le \sV$.}
\ee
\end{proposition}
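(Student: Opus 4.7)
The plan is to prove the proposition in two stages: first, establish the non-strict inequality $\er(\sV',\cc^*,\bmu^*) \geq \er(\sV,\cc^*,\bmu^*)$ for every subflag $\sV'$, with equality \emph{only} for basic subflags (where equality holds by the very definition of $\cc^*$ in \eqref{79-pre}); second, perturb $\cc^*$ to $\tilde\cc$ in order to promote the basic-subflag equalities to strict inequalities, while preserving the strict inequalities for non-basic subflags by continuity. All other conditions, such as $1 = \tilde c_1 > \tilde c_2 > \cdots > \tilde c_{r+1}$, are open conditions about $\tilde\cc$ and so remain true under small perturbation by hypothesis (b).

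\textbf{Stage 1 (strict gap for non-basic subflags at $\cc^*$).} By Lemma \ref{trees}, $\mu^*$ depends only on the tree $\sT(\sV)$, hence is invariant under $\Aut(\sV)$. Fix a subflag $\sV'$ not of basic form. I would symmetrize the spaces $V'_j$ by applying automorphisms $\sigma \in \Aut(\sV)$ and using the submodularity inequality
\[ \HH_{\mu^*_j}(W_1 \cap W_2) + \HH_{\mu^*_j}(W_1 + W_2) \geq \HH_{\mu^*_j}(W_1) + \HH_{\mu^*_j}(W_2) \]
to construct a new subflag $\sV''$, each of whose spaces is $\Aut(\sV)$-invariant, satisfying $\er(\sV'',\cc^*,\bmu^*) \leq \er(\sV',\cc^*,\bmu^*)$. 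Hypothesis (a) then forces each $V''_j$ to equal some $V_{i(j)}$, so $\sV''$ is \emph{semi-basic} (meaning: obtained from a basic flag by altering a bounded number of spaces). One then computes $\er(\sV'',\cc^*,\bmu^*)$ explicitly and compares it with the common value $\er(\sV,\cc^*,\bmu^*) = \er(\sV'_{\basic(m)},\cc^*,\bmu^*)$; the defining relations \eqref{79-pre} together with the positivity inequalities (c)(i) and (c)(ii) make this comparison show that the difference is strictly positive, with equality only when $\sV''$ is actually basic and moreover the symmetrization produced no strict improvement, forcing $\sV' = \sV''$ to be basic to begin with.

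\textbf{Stage 2 (perturbation).} For $m = 0,1,\dots,r-1$, set $L_m(\cc) := \er(\sV'_{\basic(m)},\cc,\bmu^*) - \er(\sV,\cc,\bmu^*)$; these are linear functionals of $\cc$ vanishing at $\cc^*$. Hypothesis (b) says that $\cc^*$ is the unique solution of $L_0 = \cdots = L_{r-1} = 0$ with $c_1 = 1$, which is equivalent to the $r+1$ forms $L_0,\dots,L_{r-1},c_1$ being linearly independent on $\R^{r+1}$. A short linear-algebra argument then shows that $L_0,\dots,L_{r-1}$ remain linearly independent when restricted to the hyperplane $\{v_1 = 0\} \subset \R^{r+1}$, so there exists $\mathbf{v}$ with $v_1 = 0$ and $L_m(\mathbf{v}) > 0$ for every $m$. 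Setting $\tilde\cc := \cc^* + \eps \mathbf{v}$ for small $\eps > 0$ gives $\tilde c_1 = 1$, strict inequalities for all basic subflags by construction, and --- since there are only finitely many non-basic subflags up to the equivalence used in Corollary \ref{min-v-e}, and Stage 1 gives strict inequality at $\cc^* = \lim_{\eps \to 0}\tilde\cc$ for each such class --- strict inequalities for all non-basic proper subflags as well, by continuity.

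\textbf{Main obstacle.} The hardest step is the symmetrization in Stage 1. Applying submodularity naively does not obviously decrease $\er(\sV')$, because the entropy terms $(c_j - c_{j+1})\HH_{\mu^*_j}(V'_j)$ enter with positive sign; one must arrange the $\Aut(\sV)$-averaging so that the net change in the sum $\sum_j (c_j - c_{j+1})\HH_{\mu^*_j}(V'_j) + \sum_j c_j \dim(V'_j/V'_{j-1})$ is monotone. The case analysis for semi-basic flags in Stage 1 is also substantial, and this is where (c)(i) and (c)(ii) have to be invoked with precision; the rest of the argument is essentially linear algebra and a compactness-continuity step.
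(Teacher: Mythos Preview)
Your overall two-stage strategy matches the paper's, and your Stage~2 linear-algebra perturbation is a pleasant alternative to the paper's explicit choice $\tilde c_j = c_j^* - \tfrac12\sum_{\ell<j}\eps^\ell$. But Stage~1 as written has a genuine gap, and Stage~2 depends on it.

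\textbf{The submodularity inequality is backwards.} For $\HH_\mu(W)$ as defined in the paper (entropy of the quotient by $W$), the correct inequality is $\HH_\mu(W_1)+\HH_\mu(W_2)\ge \HH_\mu(W_1\cap W_2)+\HH_\mu(W_1+W_2)$, which combined with $\dim(W_1)+\dim(W_2)=\dim(W_1\cap W_2)+\dim(W_1+W_2)$ gives $\er(\sV')+\er(\tilde\sV')\ge \er(\sV'+\tilde\sV')+\er(\sV'\cap\tilde\sV')$ (Lemma~\ref{sub-mod-e}). This is probably a slip, but it matters for what follows.

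\textbf{Stage 1 does not deliver strict inequality for all non-basic subflags.} You claim that symmetrizing $\sV'$ over $\Aut(\sV)$ produces an invariant $\sV''$ with $\er(\sV'')\le \er(\sV')$, and that equality forces $\sV'=\sV''$ basic. Neither step is justified. From $\er(\sV')=\er(\sigma\sV')$ and submodularity one only gets $2\er(\sV')\ge \er(\sV'+\sigma\sV')+\er(\sV'\cap\sigma\sV')$, so \emph{at least one} of the two flags on the right has $\er\le \er(\sV')$; you cannot pick out a single invariant $\sV''$ in one stroke, and submodularity can hold with equality (for instance whenever $V'_j$ and $\sigma V'_j$ are comparable), so there is no mechanism here for tracking strictness. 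The paper accordingly proves only the \emph{non-strict} inequality $\er(\sV')\ge\er(\sV)$ for general subflags, and does so in four layers (semi-basic $\to$ standard $\to$ invariant $\to$ general), the invariant case requiring an induction on a ``signature'' pair $(i,\ell)$ rather than a single symmetrization. Strictness is established only for \emph{standard} non-basic subflags (Lemma~\ref{standard-ineq}); for invariant and general flags it is not claimed.

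\textbf{Consequently your Stage 2 continuity argument breaks.} If a non-basic $\sV'$ satisfies $\er(\sV',\cc^*,\bmu^*)=\er(\sV,\cc^*,\bmu^*)$, a small perturbation of $\cc^*$ in an arbitrary direction $\mathbf v$ need not make the difference positive. The paper circumvents this not by proving global strictness at $\cc^*$, but by first proving strictness for the special class $\cU$ of flags differing from $\sV$ in exactly one space (same symmetrization idea, but now the minimal counterexample is forced to be standard non-basic, contradicting Lemma~\ref{standard-ineq}). This yields the pointwise estimate $\HH_{\mu^*_j}(V)>\dim(V_j/V)$ for every $V_{j-1}\le V<V_j$, and then the explicit perturbation is engineered so that for any proper $\sV'$ the correction at the \emph{first} index $J$ with $V'_J\ne V_J$ is the dominant $\eps^J$-term and is positive by precisely that estimate. (The paper also sketches a second route, perturbing the functional to $\er_\lambda$ and rerunning the whole non-strict argument.)

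In short: your plan needs either to supply a proof that submodular symmetrization is \emph{strictly} improving whenever $\sV'$ is non-basic (which the paper does not do and which is not obvious), or to adopt the paper's device of extracting the coordinate-wise strict inequality $\HH_{\mu^*_j}(V)>\dim(V_j/V)$ from the special class $\cU$ and then designing the perturbation around it.
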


We assume throughout the rest of the section that (a), (b) and (c) of Proposition \ref{entrop-gap-general} are satisfied, and we now fix the system $(\sV,\cc^*,\bmu^*)$. For notational brevity in what follows, we write
\[ 
\er(\sV') := \er(\sV',\mathbf{c}^*,\bmu^*).
\] 

Our strategy is as follows. First, we show the weaker ``unperturbed'' statement that 
\be\label{unperturbed}
\er(\sV') \geq \er(\sV) \qquad \mbox{for all subflags $\sV' \le \sV$,}
\ee noting that we have strict inequality for certain subflags $\sV'$ along the way.
Then, in subsection \ref{perturb-sec}, we show how to perturb $\cc^*$ to $\tilde{\cc}$ so that the strict inequality \eqref{strict-perturbed} is satisfied. We also sketch a second way of effecting the perturbation which is in a sense more robust, but which in essence requires a perturbation of the whole proof of \eqref{unperturbed}.

\subsection{Analysis of non-basic flags}

We turn now to the task of proving \eqref{unperturbed}. We will prove it for progressively wider sets of subflags $\sV'$, each time using the previous statement. In order, we will prove it for subflags $\sV'$ which we call:
\begin{enumerate}
\item \emph{semi-basic}: \rev{flags $\sV':V_0\le V_1 \leq V_2 \leq \cdots \leq V_{m-1} \leq \cdots \leq V_{m-1} \leq V_m \leq \cdots \leq V_m$} with $m\ge 1$ (that is, $\sV'$ is like a basic flag, but there can be more than one copy of $V_{m-1}$);
\item \emph{standard}: each $V'_i$ is one of the spaces $V_j$;
\item  \emph{invariant}: this means that $\sigma V'_i = V'_i$ for all automorphisms $\sigma \in \Aut(\sV)$ and all $i$;
\item general subflags, i.e. we assume no restriction on the $V'_i$ other than that $V'_i \leq V_i$.
\end{enumerate}

Note that a semi-basic flag is standard, a standard flag is invariant, and of course an invariant flag is general. 

\medskip

We introduce some notation for standard flags. Let $J \subset \N_0^r$ be the set of all $r$-tuples $\bs j = (j_1,\cdots, j_r)$ such that $j_1 \leq \cdots \leq j_r$ and $j_i \leq i$ for all $i$. Then we define the flag $\sV'_{\bs j} = \sV'_{(j_1,\dots, j_r)}$ to be the one with $V'_i = V_{j_i}$. This is a standard flag, and conversely every standard flag is of this form. 
If we define 
\[ 
\basic(m) := (1,2,\dots, m-1, m,\dots, m)  
\] 
then $\basic(m) \in J$, and $\sV'_{\basic(m)}$ agrees with our previous notation.

\subsection{Semi-basic subflags}
\label{sec83}
In this subsection we prove the following result, establishing that \eqref{unperturbed} holds for semi-basic subflags, and with strict inequality for those which are not basic. 

\begin{lemma}\label{semi-basic-ineq} (Assuming that (a), (b) and (c) of Proposition \ref{entrop-gap-general} hold) we have
$\er(\sV') > \er(\sV)$ for all non-basic, semi-basic flags $\sV'$.
\end{lemma}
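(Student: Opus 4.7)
The strategy I would pursue is to express every semi-basic flag as one member of a one-parameter family whose two endpoints are basic flags (for which $\er = \er(\sV)$ is already known, by definition of the optimal parameters $\cc^*$), and then use a discrete derivative argument. Concretely, for each $m\in\{1,\ldots,r\}$ I would define $\sV'_{m,\ell}$ (for $m-1\le\ell\le r$) to be the semi-basic flag whose first $m$ entries agree with $\sV$, whose next $\ell-m+1$ entries are $V_{m-1}$, and whose remaining entries are $V_m$. Then $\sV'_{m,m-1}=\sV'_{\basic(m)}$ and $\sV'_{m,r}=\sV'_{\basic(m-1)}$, so if $g_m(\ell):=\er(\sV'_{m,\ell})$, assumption (b) of Proposition \ref{entrop-gap-general} immediately gives $g_m(m-1)=g_m(r)=\er(\sV)$. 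The non-basic semi-basic flags are precisely those with $\ell\in\{m,\ldots,r-1\}$, so the lemma reduces to the strict unimodality statement $g_m(\ell)>\er(\sV)$ on this interior range.

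The heart of the argument is a one-line computation of the discrete derivative $g_m(\ell)-g_m(\ell-1)$ for $\ell\ge m$. Since consecutive semi-basic flags in the family differ only at positions $\ell$ and $\ell+1$, and since $\HH_{\mu^*_j}$ vanishes on $V_j$ and on any larger space when $j\le m$, nearly every term in the sum defining $\er$ cancels, leaving
\[ g_m(\ell)-g_m(\ell-1) = (c^*_\ell-c^*_{\ell+1})\bigl[\HH_{\mu^*_\ell}(V_{m-1})-\HH_{\mu^*_\ell}(V_m)-\dim(V_m/V_{m-1})\bigr]. \]
The positivity conditions of Proposition \ref{entrop-gap-general} are tailored exactly to govern the sign of this bracket: assumption (c)(i) (with $\HH_{\mu^*_m}(V_m)=0$) forces it to be strictly positive at $\ell=m$, while (c)(ii) forces it to be strictly negative at every $\ell>m$. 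Since $c^*_\ell>c^*_{\ell+1}$ by assumption (b), the sequence $g_m$ jumps strictly upward at $\ell=m$ and then strictly decreases at every subsequent step. Together with the equality $g_m(m-1)=g_m(r)=\er(\sV)$, this unimodal shape forces $g_m(\ell)>\er(\sV)$ for all $\ell\in\{m,\ldots,r-1\}$, which is exactly what is claimed.

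The only potential obstacle is the bookkeeping in the $\er$-difference calculation: one must be careful that the indices line up correctly at the transition $\ell=m$ (where $\HH_{\mu^*_m}(V_m)$ vanishes but $\HH_{\mu^*_m}(V_{m-1})$ does not), and that the $\dim$-term contribution tracks correctly as the ``jump'' from $V_{m-1}$ to $V_m$ slides one position to the right through the flag. Beyond this bookkeeping, no further ideas are required—the whole argument is driven by the interplay between the defining identities of $\cc^*$ (which pin down the endpoints) and the two strict inequalities in hypothesis (c) (which pin down the signs of the discrete derivative).
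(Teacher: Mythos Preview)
Your argument is correct and follows essentially the same approach as the paper's proof: both compute the discrete increment of $\er$ along the one-parameter family of semi-basic flags $\semibasic(m,\ell)$ and use condition (c)(ii) to conclude that this family is strictly decreasing down to the right endpoint $\semibasic(m,r)=\basic(m-1)$, where $\er=\er(\sV)$.

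Two small remarks. First, your invocation of (c)(i) and the left endpoint $\semibasic(m,m-1)=\basic(m)$ is redundant: the decreasing chain $g_m(m)>g_m(m+1)>\cdots>g_m(r)=\er(\sV)$ obtained from (c)(ii) alone already gives the result, and this is exactly what the paper does. Second, your displayed formula for $g_m(\ell)-g_m(\ell-1)$ is only literally correct for $m\le\ell\le r-1$; at the boundary $\ell=r$ there is an additional term $-c^*_{r+1}\dim(V_m/V_{m-1})$ coming from the dimension sum (since the ``jump'' from $V_{m-1}$ to $V_m$ disappears entirely rather than sliding one step further). This extra term is negative, so your sign conclusion at $\ell=r$ is unaffected and the argument stands; the paper handles this boundary case separately for exactly this reason.
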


We begin by setting a small amount of notation for semi-basic flags. We note that the idea of a semi-basic flag, which looks rather \emph{ad hoc}, will only be used here and in subsection \ref{standard-sec}.

\begin{definition}[Semi-basic flags that are not basic]
Suppose that $1 \leq m \leq r - 1$ and that $m \leq s \leq r-1$. Then we define the element $\semibasic(m,s) \in J$ to be $\bs j = (1,2,\dots, m-1,m-1,\dots, m,\dots, m)$ such that $j_i = i$ for $i \leq m-1$, $j_i = m-1$ for $m \leq i \leq s$ and $j_i = m$ for $i > s$.
\end{definition}
It is convenient and natural to extend the notation to $s = m-1$ and $s = r$, by defining
\be\label{extend}
 \semibasic(m,r) = \basic(m-1), \qquad  \semibasic(m, m-1) = \basic(m).
 \ee
One can think of the semi-basic flags as interpolating between the basic flags.

\begin{example*}
When $r = 3$ there are three semi-basic flags $\sV_{\bs j}$ that are not basic, corresponding to  
\[ 
\bs j = \semibasic(1,1)  = (0,1,1),
\]
\[ 
\bs j = \semibasic(1,2) = (0,0,1),\]\[  \bs j = \semibasic(2,2) = (1,1,2).
\]
\end{example*}

\begin{proof}[Proof of Lemma \ref{semi-basic-ineq}]
Assume that $\sV'$ is semi-basic but not basic.
We will show that 
\begin{equation}\label{chain} \er(\sV'_{\semibasic(m,s)}) > \er(\sV'_{\semibasic(m,s+1)})\end{equation} for $m \leq  s \leq  r-1$. 
Since $\sV'_{\semibasic(m ,r)} = \sV'_{\basic(m-1)}$ is basic, this establishes Lemma \ref{semi-basic-ineq}.

To prove \eqref{chain}, we simply compute that
\begin{align*}
 \er(\sV'_{\semibasic(m,s)}) - \er(\sV'_{\semibasic(m,s+1)}) 
  = (c^*_{s+1} - c^*_{s+2}) 
  	\big[ \HH_{\mu_{s+1}}(V_m) - \HH_{\mu_{s+1}}(V_{m-1})   + \dim(V_m/V_{m-1}) \big]
\end{align*}
 when $m \leq  s \leq  r-2$, and
\begin{align*}
 \er(\sV'_{\semibasic(m,r-1)}) - \er(\sV'_{\semibasic(m,r)}) 
& = (c^*_r - c^*_{r+1}) 
	\big[ \HH_{\mu_r}(V_m)  - \HH_{\mu_r}(V_{m-1})  +  \dim(V_m/V_{m-1}) \big] \\
&\qquad\qquad+ \dim(V_m/V_{m-1}) c^*_{r+1}.
\end{align*}
In both cases, the result follows from \rev{part (ii) of condition(c)} of Proposition \ref{entrop-gap-general}; \rev{in the second case, we also need to use our assumption that $c_{r+1}^*\ge0$.}
\end{proof}

\subsection{Submodularity inequalities}\label{sec:submodularity}

To proceed further, we make heavy use of a submodularity property of the expressions $\er()$. 

Suppose that $\sV', \tilde{\sV}'$ are two subflags of $\sV$. We can define the \emph{sum} $\sV' + \tilde{\sV}'$ and \emph{intersection} $\sV' \cap \tilde{\sV}'$ by
\[ (\sV' + \tilde{\sV}')_i := V'_i + \tilde V'_i\] and
\[ (\sV' \cap \tilde{\sV}')_i := V'_i \cap \tilde V'_i.\]
Both of these are indeed subflags of $\sV$. 

\begin{lemma}\label{sub-mod-e}
We have
\[ \er(\sV') + \er(\tilde{\sV}') \geq \er(\sV' + \tilde{\sV}') + \er(\sV' \cap \tilde{\sV}').\]
\end{lemma}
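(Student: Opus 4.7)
The plan is to reduce the inequality to a level-by-level check, where at each level we combine the modularity of $\dim$ with the submodularity of entropy $\HH_\mu$ on subspaces.

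First I would rewrite $\er(\mathscr{V}')$ in a more convenient form using Abel summation on the dimension part. Since $\dim V'_0 = \dim \langle\mathbf{1}\rangle = 1$ for every subflag, a direct telescoping gives
\[
\sum_{j=1}^r c_j \dim(V'_j/V'_{j-1}) \;=\; -c_1 \;+\; c_{r+1}\dim V'_r \;+\; \sum_{j=1}^{r}(c_j - c_{j+1})\dim V'_j,
\]
so that
\[
\er(\mathscr{V}') \;=\; -c_1 \;+\; c_{r+1}\dim V'_r \;+\; \sum_{j=1}^{r}(c_j - c_{j+1})\bigl[\HH_{\mu_j}(V'_j) + \dim V'_j\bigr].
\]
In this form each $V'_j$ appears in a single summand, and all coefficients $c_j - c_{j+1}$ and $c_{r+1}$ are nonnegative.

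Next I would prove that for every fixed probability measure $\mu$ on $\{0,1\}^k$ the functional $W \mapsto \HH_\mu(W) + \dim W$ is submodular on the lattice of subspaces of $\Q^k$. The dimension piece is in fact modular, by the classical identity $\dim W_1 + \dim W_2 = \dim(W_1+W_2) + \dim(W_1 \cap W_2)$. For the entropy piece, let $X$ be sampled from $\mu$ and consider the cosets $Y_i := X + W_i$ for $i = 1,2$. One checks directly that $(Y_1,Y_2)$ and $X + (W_1\cap W_2)$ induce the same partition of the support of $\mu$, so $\HH(Y_1,Y_2) = \HH_\mu(W_1\cap W_2)$; and that $A := X + (W_1+W_2)$ is a deterministic function of each $Y_i$ separately. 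The standard information-theoretic inequality
\[
\HH(Y_1,Y_2) + \HH(A) \;\leq\; \HH(Y_1) + \HH(Y_2),
\]
(which follows from $\HH(Y_1,Y_2) = \HH(Y_1) + \HH(Y_2 \mid Y_1) \leq \HH(Y_1) + \HH(Y_2 \mid A) = \HH(Y_1) + \HH(Y_2) - \HH(A)$) then gives submodularity of $\HH_\mu$.

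Finally I would combine the two parts. Forming
\[
\er(\mathscr{V}') + \er(\tilde{\mathscr{V}}') - \er(\mathscr{V}' + \tilde{\mathscr{V}}') - \er(\mathscr{V}' \cap \tilde{\mathscr{V}}')
\]
using the Abel-summed expression, the $-c_1$ terms cancel, the $c_{r+1}\dim$ terms cancel by modularity of $\dim$ applied at level $r$, and what remains is
\[
\sum_{j=1}^{r}(c_j - c_{j+1})\Bigl\{\bigl[\HH_{\mu_j}(V'_j) + \HH_{\mu_j}(\tilde V'_j) - \HH_{\mu_j}(V'_j + \tilde V'_j) - \HH_{\mu_j}(V'_j \cap \tilde V'_j)\bigr] + \bigl[\dim V'_j + \dim \tilde V'_j - \dim(V'_j + \tilde V'_j) - \dim(V'_j \cap \tilde V'_j)\bigr]\Bigr\},
\]
in which every bracket is nonnegative (the dimension bracket vanishes identically, the entropy bracket is nonnegative by the submodularity just proved) and every weight $c_j - c_{j+1}$ is nonnegative. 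Summing gives the lemma. No real obstacle is expected; the only care needed is the initial Abel summation so that the ``mixed'' expression $\sum c_j \dim(V'_j/V'_{j-1})$ is replaced by a level-separated one on which the lattice operations $V'_j + \tilde V'_j$ and $V'_j \cap \tilde V'_j$ act cleanly.
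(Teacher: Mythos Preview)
Your proposal is correct and follows essentially the same approach as the paper: both prove the submodularity of $W\mapsto\HH_\mu(W)$ via the coset random variables and then combine with the modularity of $\dim$. The only difference is cosmetic: you first Abel-sum the dimension term into a level-separated form, whereas the paper leaves this step implicit (since $\dim(V'_j/V'_{j-1})=\dim V'_j-\dim V'_{j-1}$ is already modular at each level), but the substance is identical.
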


\begin{proof} We first note that the entropies $\HH_{\mu}(W)$ satisfy a submodularity inequality. Namely, if $W_1, W_2$ are subspaces of $\Q^k$ and $\mu$ is a probability
 measure then
\begin{equation}\label{ent-submodular} \HH_{\mu}(W_1) + \HH_{\mu}(W_2) \geq \HH_
{\mu}(W_1 \cap W_2) + \HH_{\mu}(W_1 + W_2).\end{equation}
To prove this, consider the following three random variables:
\begin{itemize}
\item $X$ is a random coset of $W_1 + W_2$, sampled according to the measure $\mu$;
\item $Y$ is a random coset of $W_1$, sampled according to the measure $\mu$;
\item $Z$ is a random coset of $W_2$, sampled according to the measure $\mu$.
\end{itemize}
Then, more-or-less by definition, 
\[ \HH(X) = \HH_{\mu}(W_1 + W_2),\quad \HH(Y) = \HH_{\mu}(W_1),\quad \HH(Z) = \HH_{\mu}(W_2).\]
Note also that $Y$ determines $X$ and so $\HH(Y) = \HH(X,Y)$, and similarly $\HH(Z) = \HH(X,Z)$. Finally, $(Y,Z)$ uniquely defines a random coset of $W_1 \cap W_2$, and so
\[ \HH_{\mu}(W_1 \cap W_2)  = \HH(Y,Z) = \HH(X,Y,Z).\]
The inequality to be proven, \eqref{ent-submodular} is therefore equivalent to
\[ \HH(X,Y) + \HH(X,Z) \geq \HH(X,Y,Z) + \HH(X),\] which is a standard entropy inequality (Lemma \ref{submodular-entropy}; usually known as ``submodularity of entropy'' or ``Shannon's inequality'' in the literature).

Lemma \ref{sub-mod-e} is essentially an immediate consequence of \eqref{ent-submodular} and the formula
\[ \dim (W_1) + \dim (W_2) = \dim(W_1 \cap W_2) + \dim (W_1 + W_2).\] (It is very important that this formula holds with \emph{equality}, as compared to \eqref{ent-submodular}, which holds only with an inequality.)
\end{proof}

This has the following immediate corollary when applied to standard subflags. Here, the $\max$ and $\min$ are taken coordinatewise.

\begin{corollary}\label{standard-submodular}
Suppose that $\bs j_1, \bs j_2 \in J$. Then
\[ \er(\sV'_{\bs j_1}) + \er(\sV'_{\bs j_2}) \geq  \er(\sV'_{\max(\bs j_1, \bs j_2)})  + \er(\sV'_{\min(\bs j_1, \bs j_2)}) \]
\end{corollary}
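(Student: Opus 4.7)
The plan is to derive Corollary~\ref{standard-submodular} as a direct specialization of Lemma~\ref{sub-mod-e}. The only thing to check is that for standard flags, the lattice operations $+$ and $\cap$ on flags reduce to coordinatewise $\max$ and $\min$ on the index tuples in $J$.

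First, I would recall that for $\bs j_1 = ((j_1)_1,\dots,(j_1)_r)$ and $\bs j_2 = ((j_2)_1,\dots,(j_2)_r)$ in $J$, the standard subflag $\mathscr{V}'_{\bs j_1}$ has $V'_i = V_{(j_1)_i}$, and similarly for $\mathscr{V}'_{\bs j_2}$. Since the spaces $V_0 \leq V_1 \leq \cdots \leq V_r$ are nested, for each $i$ we have
\[
V_{(j_1)_i} + V_{(j_2)_i} = V_{\max((j_1)_i,(j_2)_i)}, \qquad V_{(j_1)_i} \cap V_{(j_2)_i} = V_{\min((j_1)_i,(j_2)_i)}.
\]
This is the key computation: for a totally ordered chain of subspaces, the lattice operations of sum and intersection just pick out the larger (respectively smaller) indexed member. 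The next thing to verify is that both $\max(\bs j_1, \bs j_2)$ and $\min(\bs j_1, \bs j_2)$ (taken coordinatewise) lie in $J$; this is immediate since both are nondecreasing in $i$ and satisfy the constraint $(\cdot)_i \leq i$, as these properties are preserved under coordinatewise $\max$ and $\min$.

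Combining these observations, the sum flag and intersection flag (as defined just before Lemma~\ref{sub-mod-e}) satisfy
\[
\mathscr{V}'_{\bs j_1} + \mathscr{V}'_{\bs j_2} = \mathscr{V}'_{\max(\bs j_1, \bs j_2)}, \qquad \mathscr{V}'_{\bs j_1} \cap \mathscr{V}'_{\bs j_2} = \mathscr{V}'_{\min(\bs j_1, \bs j_2)}.
\]
Both are themselves standard subflags of $\mathscr{V}$. Applying Lemma~\ref{sub-mod-e} then yields the claimed inequality immediately. There is no real obstacle here — the whole content lies in Lemma~\ref{sub-mod-e} (whose proof rests on Shannon's submodularity inequality for entropy together with the \emph{equality} $\dim(W_1)+\dim(W_2)=\dim(W_1\cap W_2)+\dim(W_1+W_2)$), and the corollary is merely the observation that standard flags form a sublattice of the lattice of all subflags under the $(+,\cap)$ operations, corresponding under the indexing $\bs j \mapsto \mathscr{V}'_{\bs j}$ to the product lattice $(J,\max,\min)$.
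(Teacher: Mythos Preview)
Your proof is correct and follows exactly the same approach as the paper, which simply notes that the corollary is immediate from Lemma~\ref{sub-mod-e} applied to standard subflags with $\max$ and $\min$ taken coordinatewise. Your write-up is in fact more detailed than the paper's, explicitly verifying that $V_{(j_1)_i}+V_{(j_2)_i}=V_{\max((j_1)_i,(j_2)_i)}$ and $V_{(j_1)_i}\cap V_{(j_2)_i}=V_{\min((j_1)_i,(j_2)_i)}$ for a nested chain, and that $\max(\bs j_1,\bs j_2),\min(\bs j_1,\bs j_2)\in J$.
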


\subsection{Standard subflags}\label{standard-sec}

Now we extend the result of the subsection \ref{sec83} to all standard subflags.

\begin{lemma}\label{standard-ineq}
(Assuming that (a), (b) and (c) of Proposition \ref{entrop-gap-general} hold) we have $\er(\sV') > \er(\sV)$ for all standard, non-basic subflags $\sV'\le \sV$.
\end{lemma}
\begin{proof}
Let $\bs j \in J$ with $\bs j$ non-basic, and let
$\sV'=\sV'_{\bs j}$. Then $r\ge 3$, since when $r\le 2$ all
standard flags are basic. 
We proceed by induction on $\| \bs j\|_\infty$, the
case $\| \bs j\|_\infty=1$ being trivial, since then
$\sV$ is semibasic and we may invoke Lemma \ref{semi-basic-ineq}.
Now suppose we have proved $\er(\sV') > \er(\sV)$ 
for all non-basic standard flags $\sV'=\sV'_{\bs j}$
with $\|\bs j\|_\infty < m$, and let $\bs j \in J$ with
$\|\bs j\|_\infty=m$.
 We apply Corollary \ref{standard-submodular} with $\bs j_1 = \bs j$ and $\bs j_2 = \basic(j_r - 1)$. Noting that $\max(\bs j, \basic(j_r - 1)) = \semibasic(j_r, s)$, where $s$ is the largest index in $\bs j$ such that $j_s < j_r$, we see that
\begin{equation}\label{stand-sub}
	 \er(\sV'_{\bs j}) + \er(\sV'_{\basic(j_r - 1)} )\geq \er(\sV'_{\bs j_*}) + \er(\sV'_{\semibasic(j_r, s)}),
 \end{equation} 
where
\[ 
\bs j_* := \min(\bs j, \basic(j_r - 1)).
\]
 Suppose that both of the flags on the right of \eqref{stand-sub} are basic. If $\semibasic(j_r, s)$ is basic then it must be $\basic(j_r)$, which means that $s = j_r - 1$. But then $\bs j_* = (j_1,\dots, j_s, j_r - 1,\cdots j_r - 1)$ which, if it is basic, must be $\basic(j_r - 1)$; this then implies that $j_i = i$ for $1 \leq i \leq s$, and hence that $\bs j = \basic(j_r)$, a contradiction.
 Thus, at least one of the two flags $\bs j_*, \semibasic(j_r,s)$ on the right of \eqref{stand-sub} is not basic. Since $\Vert \bs j_* \Vert_{\infty} < \Vert \bs j \Vert_{\infty}=m$, the induction
 hypothesis together with Lemma \ref{semi-basic-ineq} implies that
 $\er(\sV') > \er(\sV)$, as desired.
\end{proof}

\subsection{Invariant subflags}

\label{special-sec}

Now we extend our results to all invariant flags, but now without the strict inequality.

\begin{lemma}\label{invariant-ineq-pre}
(Assuming that (a), (b) and (c) of Proposition \ref{entrop-gap-general} hold) we have $\er(\sV') \geq  \er(\sV)$ for all 
invariant subflags $\sV'\le \sV$. 
\end{lemma}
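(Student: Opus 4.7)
The plan is to reduce the invariant case to the standard one handled by Lemma \ref{standard-ineq}, using the submodularity inequality (Lemma \ref{sub-mod-e}) as the main technical engine. The key structural consequence of hypothesis (a) that I would exploit is this: for every $\Aut(\sV)$-invariant subspace $W \le V_r$ and every $j$, the subspace $(W\cap V_j)+V_{j-1}$ is itself $\Aut(\sV)$-invariant and sandwiched between $V_{j-1}$ and $V_j$, and so by (a) must coincide with one of them. Equivalently, each successive quotient $V_j/V_{j-1}$ is irreducible as an $\Aut(\sV)$-module, and any invariant subspace $W$ has the property that in the chain $W\cap V_0\le W\cap V_1\le\cdots\le W\cap V_r$ each successive step is either an equality or a jump by the full amount $\dim(V_j/V_{j-1})$.

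I would then induct on the complexity $\kappa(\sV'):=\#\{i:V'_i\notin\{V_0,V_1,\dots,V_r\}\}$. The base case $\kappa(\sV')=0$ is precisely Lemma \ref{standard-ineq}. For the inductive step, fix an index $i$ with $V'_i$ non-standard; the structural observation guarantees the existence of indices $j^-<j^+$ with $j^+\ge j^-+2$ and $V_{j^-}<V'_i<V_{j^+}$, and moreover that $V'_i+V_{j^-+1}$ and $V'_i\cap V_{j^+-1}$ are invariant. Applying Lemma \ref{sub-mod-e} to $\sV'$ paired with a carefully chosen standard subflag $\sV''$ (built from the $V_j$'s) gives
\[\er(\sV')+\er(\sV'')\ge\er(\sV'+\sV'')+\er(\sV'\cap\sV'').\]
Since sums and intersections of invariant subspaces are invariant, both $\sV'+\sV''$ and $\sV'\cap\sV''$ are again invariant subflags; if $\sV''$ is chosen so that each has strictly smaller complexity than $\sV'$, the inductive hypothesis applies to them, and Lemma \ref{standard-ineq} applied to $\sV''$ gives $\er(\sV'')\ge\er(\sV)$. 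Rearranging yields $\er(\sV')\ge\er(\sV)$.

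The main obstacle is producing the auxiliary standard subflag $\sV''$ at each inductive step so that the two submodular outputs $\sV'+\sV''$ and $\sV'\cap\sV''$ each have complexity strictly less than that of $\sV'$. A natural candidate is a standard subflag that replaces the non-standard entry $V'_i$ by $V_{j^-}$ or $V_{j^+}$ and propagates adjustments to neighboring positions so as to preserve the subflag ordering $V''_1\le\cdots\le V''_r$ and the containment $V''_j\le V_j$. The irreducibility of the quotients $V_j/V_{j-1}$ is precisely what should force the sum and intersection of $\sV'$ with such a $\sV''$ to ``snap'' to more standard configurations. Verifying this carefully—especially when non-standard entries of $\sV'$ cluster across several consecutive indices, so that a single move of $V'_i$ must be balanced against neighboring $V'_{i\pm 1}$—is the core technical burden of the argument.
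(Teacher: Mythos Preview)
Your rearrangement at the end of the inductive step does not work. From submodularity and the inductive hypothesis you obtain
\[
\er(\sV')\ \ge\ \er(\sV'+\sV'')+\er(\sV'\cap\sV'')-\er(\sV'')\ \ge\ 2\er(\sV)-\er(\sV''),
\]
so to conclude $\er(\sV')\ge\er(\sV)$ you need $\er(\sV'')\le\er(\sV)$. But the inequality you invoke, Lemma~\ref{standard-ineq}, points the other way: it says $\er(\sV'')\ge\er(\sV)$ (strictly, for non-basic standard $\sV''$). The only standard subflags with $\er(\sV'')=\er(\sV)$ are the \emph{basic} ones $\sV'_{\basic(m)}$, by the defining property \eqref{79-pre} of the optimal parameters. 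So the auxiliary flag must be basic, not merely standard; your description of $\sV''$ as a standard flag obtained by ``replacing $V'_i$ by $V_{j^-}$ or $V_{j^+}$ and propagating adjustments'' will in general produce a non-basic standard flag, for which the argument collapses.

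This is exactly what the paper does: it takes $\sV''=\sV'_{\basic(\ell-1)}$, where $\ell$ is minimal with $V'_i\le V_\ell$ and $i$ is the \emph{largest} index with $V'_i$ non-standard. With this specific basic choice, $(\sV'+\sV'')_i$ is invariant, strictly contains $V_{\ell-1}$, and sits inside $V_\ell$, so hypothesis (a) forces it to equal $V_\ell$; meanwhile every space in $\sV'\cap\sV''$ lies inside $V_{\ell-1}$. This is what drives the induction, and it also explains why the paper inducts on the signature $(i,\ell)$ rather than on your count $\kappa(\sV')$ of non-standard entries: the intersection $\sV'\cap\sV''$ need not have fewer non-standard entries than $\sV'$, but its $\ell$-parameter drops. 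Your complexity $\kappa$ is not obviously monotone under these operations, which is the obstacle you yourself flag; the signature $(i,\ell)$ with the ordering in the paper sidesteps it.
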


\begin{proof}
 We associate a pair $(i,\ell)$, $i\ge \ell$, of positive integers to $\sV'$, which we call the \emph{signature}, in the following manner. 
If $\sV'$ is standard, then set $(i,\ell)=(-1,-1)$. Otherwise, let $i$ be maximal so that $V'_i$ is not a standard space $V_t$, and then let $\ell$ be minimal such that $V'_i \leq V_{\ell}$. The fact that $\ell \leq i$ is immediate from the definition of a subflag.
We put a \rev{partial} ordering on signatures as follows: $(i', \ell') \preceq (i,\ell)$ iff $i' < i$, or if $i' = i$ and $\ell' \leq \ell$.
We proceed by induction on the pair $(i,\ell)$ with respect to this ordering, the case $(i,\ell)=(-1,-1)$ handled by 
Lemma \ref{standard-ineq}.

For the inductive step, suppose $\sV'$ is nonstandard 
with signature $(i,\ell)$.  
By submodularity, 
\begin{equation}\label{smm} \er(\sV') + \er(\sV'_{\basic(\ell-1)}) \geq \er(\sV_1)+\er(\sV_2),
\end{equation}
where 
\[
\sV_1 = \sV' \cap \sV'_{\basic(\ell-1)}, \qquad 
\sV_2 = \sV' + \sV'_{\basic(\ell-1)}.
\]
Suppose that $\sV_1, \sV_2$ 
have signatures $(i_1,\ell_1),(i_2,\ell_2)$, respectively.
We show that
\be\label{V1V2-sig}
(i_1,\ell_1)\precneqq (i,\ell) \quad \text{ and} \quad
(i_2,\ell_2)\precneqq (i,\ell).
\ee
Both $\sV_1$ and $\sV_2$ are invariant flags. Thus, if \eqref{V1V2-sig} holds, then both flags on the right-hand side of \eqref{smm} have strictly smaller signature than $\sV'$, and
the lemma follows by induction.
%

Finally, we prove \eqref{V1V2-sig}.
Note that if $j>i$, then $V_j'$ is a standard space $V_m$ and thus so are $(\sV_1)_j$ and $(\sV_2)_j$.
In particular, $i_1\le i$ and $i_2\le i$. 
We have that $(\sV_2)_i$ contains $V_{\ell-1}$, is not equal
to $V_{\ell-1}$, 
and is contained in $V_\ell$.  But $(\sV_2)_i$ is invariant,
 and hence by our assumption that (a) of Proposition \ref{entrop-gap-general} holds, $(\sV_2)_i=V_\ell$.  Consequently, $i_2<i$ if $\sV_2$ is nonstandard.
In the case that $\sV_1$ is nonstandard,
we also have that $\ell_1<\ell$ because every space in the 
flag $\sV_1$ is contained in $V_{\ell - 1}$. This proves 
\eqref{V1V2-sig}.
\end{proof}

\subsection{General subflags}\label{general-flag-sec}

In this section we establish \eqref{unperturbed}, that is to say the inequality $\er(\sV')\ge \er(\sV)$
 for \emph{all} subflags $\sV'$, of course subject to our standing assumption that (a), (b) and (c) of Proposition \ref{entrop-gap-general} hold. We need a simple lemma about the action of the automorphism group $\Aut(\sV)$ on subflags. 

\begin{lemma}\label{sigma-flag}
Let $\sigma \in \Aut(\sV)$ and let $\sV'$ be a subflag of $\sV$. Then one may define a new subflag $\sigma(\sV')$, setting $\sigma(\sV')_i := \sigma(V'_i)$. Moreover, $\er(\sigma(\sV')) = \er(\sV')$. 
\end{lemma}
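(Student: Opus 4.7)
The plan is to verify the two assertions separately. For the fact that $\sigma(\sV')$ is a subflag, one just notes that $\sigma$ permutes coordinates, so $\sigma \one = \one$, and hence $\sigma V_0' = \langle \one\rangle = V_0$. The inclusions $V'_0 \le V'_1 \le \cdots \le V'_r$ are preserved because $\sigma$ is a linear isomorphism. Finally, since $V'_i \le V_i$ and $\sigma V_i = V_i$ by the definition of $\Aut(\sV)$, we have $\sigma V'_i \le V_i$, as required.

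The substantive part of the lemma is the equality $\er(\sigma(\sV')) = \er(\sV')$, which I would reduce to the $\sigma$-invariance of the optimal measure $\mu^*$, that is:
\begin{equation}\label{sigma-inv}
\mu^*(\sigma \om) = \mu^*(\om)\qquad \text{for all } \om \in \{0,1\}^k.
\end{equation}
Given \eqref{sigma-inv}, each restricted measure $\mu_j^*$ is also $\sigma$-invariant, since $\sigma$ fixes $\Gamma_j = V_j \cap \{0,1\}^k$ setwise. From here the entropy is unchanged: substituting $y = \sigma^{-1} x$ in the defining sum and using $\mu_j^*(\sigma(V'_j+y)) = \mu_j^*(V'_j + y)$, we get $\HH_{\mu_j^*}(\sigma V'_j) = \HH_{\mu_j^*}(V'_j)$. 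The dimension terms in $\er$ are obviously preserved since $\sigma$ is a linear automorphism, so each summand in Definition \ref{entropy-dfn} is the same for $\sV'$ and $\sigma(\sV')$.

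To establish \eqref{sigma-inv}, the main observation is that $\sigma$ induces an automorphism of the tree $\sT(\sV)$: it sends a cell $C = x + V_i$ at level $i$ to the cell $\sigma C = \sigma x + V_i$ (also at level $i$), and it preserves the parent-child relation $C \to C'$ because $C' \subset C$ iff $\sigma C' \subset \sigma C$. A straightforward induction on the level, using the recursive definition \eqref{fc-eqs}, then yields $f^{\sigma C}(\bs\rho) = f^C(\bs\rho)$ for every cell. Plugging this into the defining recursion \eqref{eq46} for $\mu^*$ (and noting that $\mu^*(\Gamma_r) = 1$ and $\mu^*(\one) = 0$ are both $\sigma$-invariant conventions, since $\sigma \Gamma_r = \Gamma_r$ and $\sigma\one = \one$), a second induction, starting from the top cell $\Gamma_r$ and descending, gives $\mu^*(\sigma C) = \mu^*(C)$ for every cell, and in particular for the level-$0$ singleton cells, which establishes \eqref{sigma-inv}.

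I expect no serious obstacles here; essentially the entire content is the observation that the optimal data $(\mu^*, \cc^*)$ depend only on the isomorphism type of the tree $\sT(\sV)$ (as already noted in Lemma \ref{trees}), and an automorphism $\sigma \in \Aut(\sV)$ is precisely something that induces a tree automorphism of $\sT(\sV)$. The only mild subtlety is keeping the bookkeeping between cells, measures, and the arbitrary convention $\mu^*(\one) = 0$ straight, but since $\one$ is fixed by every $\sigma$ this is a non-issue.
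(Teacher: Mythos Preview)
Your proposal is correct and follows essentially the same approach as the paper: both verify $\sigma(\sV')$ is a subflag using $\sigma V_i = V_i$, then establish $\er(\sigma(\sV')) = \er(\sV')$ by showing the optimal measures $\mu_j^*$ are $\sigma$-invariant (because $\sigma$ induces an automorphism of the tree $\sT(\sV)$, preserving the $f^C$ and hence $\mu^*$) and then computing $\HH_{\mu_j^*}(\sigma W) = \HH_{\mu_j^*}(W)$. The paper compresses your inductive argument for the $\sigma$-invariance of $\mu^*$ into a one-line appeal to Lemma~\ref{trees}, which you correctly identify as the underlying content.
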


\begin{proof}
Since $\sV'$ is a subflag, $V'_i \leq V_i$. Applying $\sigma$, and recalling that $V_i$ is invariant under $\sigma$, we see that $\sigma(V'_i) \leq V_i$. Therefore $\sigma(\sV')$ is also a subflag.  To see that $\er(\sigma(\sV')) = \er(\sV')$, recall Lemma
\ref{trees}, which implies that $\mu_i$ is invariant under $\sigma$, since 
 the trees $\sT(\sV')$
and $\sT(\sigma(\sV'))$ are isomorphic \rev{and we have $\dim(V_j')=\dim(\sigma(V_j'))$ for all $j$}. 
 It follows that, for any subspace $W \leq \Q^k$, 
\begin{align*}
\HH_{\mu_i}(\sigma(W)) 
	& = -\sum_x \mu_i(x) \log \mu_i(\sigma(W) + x) \\ 
	& = -\sum_y \mu_i(\sigma(y)) \log \mu_i(\sigma(W + y)) \\ 
	& = -\sum_y \mu_i(y) \log \mu_i(W + y) \\ 
	& = \HH_{\mu_i}(W).
\end{align*}
This completes the proof of the lemma.
\end{proof}

\begin{proof}[Proof of \eqref{unperturbed}]
Let $m$ be the minimum of $\er(\sV')$ over all 
subflags $\sV' \le \sV$, and among the flags
with $\er(\sV')=m$, take the one with $\sum_i \dim V'_i$ minimal.
Let $\sigma \in \Aut(\sV)$ be an arbitrary automorphism. By Lemma \ref{sigma-flag},  $\er(\sV') = \er(\sigma(\sV'))$, and hence submodularity implies that
\begin{equation}\label{sym-e} 
2\er(\sV') \geq \er(\sV' + \sigma(\sV')) + \er(\sV' \cap \sigma(\sV')). 
\end{equation} 
In particular, we have $\er(\sV \cap \sigma(\sV')) = m$ (and also $\er(\sV' + \sigma(\sV')) = \er(\sV)$, but we will not need this). Moreover, by the minimality of $\sum_i \dim V'_i$, 
\[ \sum_i \dim (V'_i \cap \sigma(V'_i)) = \sum_i \dim V'_i,\]
which means that $\sV'$ is invariant.
Invoking Lemma \ref{invariant-ineq-pre}, we conclude that $m=\er(\sV')\ge \er(\sV)$.
\end{proof}

\subsection{The strict entropy condition}\label{perturb-sec}
In this section we complete the proof of Proposition \ref{entrop-gap-general} by showing how to perturb \eqref{unperturbed} to the desired strict inequality \eqref{strict-perturbed}.

\noindent \emph{First argument.} Consider first the collection $\cU$ of all subflags $\sV'$ which satisfy, for some $1\le j\le r-1$,
the relations
\[
V_i' = V_i \;\; (i\ne j), \quad V_{j-1} \le V_{j'} < V_j.
\]
These are flags which differ from $\sV$ in exactly one space. Our first task will be to establish the \emph{strict} inequality
\be\label{entropy-intermediate}
\er(\sV') > \er(\sV)
\ee
for all $\sV' \in \cU$, by elaborating upon the 
argument of the previous subsection. We already know that $\er(\sV') \geq \er(\sV)$, so suppose as a hypothesis for contradiction that $\er(\sV') = \er(\sV)$ for some $\sV' \in \cU$. Amongst all such flags, take one with minimal
$\sum \dim(V_i')$.  By submodularity, we have \eqref{sym-e}
 and hence
$\er(\sV' \cap \sigma(\sV')) = \er(\sV)$
for any automorphism $\sigma\in \Aut(\sV)$.
But 
\[
\sV' \cap \sigma(\sV') = (V_1,\ldots,V_{j-1},V_{j}'\cap \sigma(V_j'),V_{j+1},\ldots,V_r)
\]
is evidently in $\cU$ as well,
and by our minimality assumption it follows that
$\dim (V_j' \cap \sigma(V_j'))=\dim(V_{j}')$.
Thus, $\sV'$ is invariant, and by assumption (a) of
Proposition \ref{entrop-gap-general}, it follows that
$V_j'=V_{j-1}$.  Thus, $\sV'$ is a standard flag, which
is not basic since $j\le r-1$.  Hence,
$\er(\sV')>\er(\sV)$ by Lemma \ref{standard-ineq}.
This contradition establishes \eqref{entropy-intermediate}.

Let $1\le j\le r-1$ and let $V$ be a space satisfying $V_{j-1}\le V<V_j$. Let $\sV'$ be the subflag  $\langle\mathbf{1}\rangle = V_0\le \ldots V_{j-1} \le V \le V_{j+1}\le\cdots \le V_r$. Then one easily computes that 
\[ \er(\sV') - \er(\sV) = (c_{j}-c_{j+1}) \big( \HH_{\mu_j}(V)-\dim(V_j/V) \big),\] and so \eqref{entropy-intermediate} implies that
\be\label{strict-intermediate}
 \HH_{\mu_j}(V) > \dim(V_j/V).
\ee
Now let $\eps>0$ be sufficiently small and consider the pertubation $\tilde{\cc}$ given by
\[
\tilde{c}_1=1, \quad \tilde{c}_j = c_j^*-\frac12\sum_{\ell=1}^{j-1} \eps^\ell \quad (2\le j\le r+1).
\]
Evidently, $1=\tilde{c}_1 > \tilde{c}_2 >\cdots > \tilde{c}_{r+1} \ge c^*_{r+1}-\eps$, \rev{as needed}. 
For any proper subflag  $\sV' \le \sV$,
\begin{multline*}
\er(\sV',\tilde{\cc},\bmu^*)  -\er(\sV,\tilde{\cc},\bmu^*) \\  = 
 \er(\sV')-\er(\sV) + \frac12\sum_{j=1}^r
\eps^j \big( \HH_{\mu_j}(V_j')-\dim(V_j/V_j')  \big)  + \frac12(\eps+\eps^2+\cdots+\eps^{\rev{r}})\dim(V_r/V_r').
\end{multline*}
Let $J=\min\{j : V_j' \ne V_j\}$. 
If $J=r$, then $\dim(V_r/V_r')\ge 1$ and the right side above is
at least $\eps/2 + O(\eps^r)$, which is positive for small enough 
$\eps$.  If $J\le r-1$,  then $V_{J-1} \le V_{J}' < V_J$
and 
 we see that the right side above is at least
\[
\er(\sV')-\er(\sV) + \eps^J \big( \HH_{\mu_J}(V_J')-\dim(V_J/V_J')  \big) + O(\eps^{J+1}),
\]
which is also positive for sufficiently small $\eps$ by \eqref{unperturbed} and \eqref{entropy-intermediate}.

\noindent \emph{Second argument.} We now sketch a second approach to the proof of Proposition \ref{entrop-gap-general}. The idea is to introduce a small perturbation of our fundamental quantity $\er()$, namely
\[ 
\er_{\lambda}(\sV', \cc, \bmu) 
	:= \lambda\sum_{j = 1}^r (c_{j+1} - c_j) \HH_{\mu_j}(V'_j) + \sum_{j = 1}^r c_j \dim (V'_j/V'_{j-1}),
\] where $\lambda \approx 1$.
Note that $\er_1(\sV',\cc,\bmu) = \er(\sV',\cc,\bmu)$, and also that $\er_{\lambda}(\sV,\cc,\bmu)$ does not depend on $\lambda$, since all the entropies $\HH_{\mu_j}(V_j)$ vanish. Define the $\lambda$-perturbed optimal parameters \rev{$\cc^*(\lambda)$ to be the unique solution to the $\lambda$-perturbed version of \eqref{79-pre}, that is to say the equations
$\er_{\lambda}(\sV'_{\basic(m)},\cc^*(\lambda),\bmu) 
= \er_{\lambda}(\sV,\mathbf{c}^*(\lambda),\bmu)$, $m = 0,1,\dots, r-1$. By a continuity argument, these exist for $\lambda$ sufficiently close to $1$ and they satisfy $\lim_{\lambda \rightarrow 1} \mathbf{c}^*(\lambda) = \mathbf{c}^*(1) = \mathbf{c}^*$.}

\rev{Now, assume that $\lambda$ is close enough to 1 so that $1 = c_1^*(\lambda) > c_2^*(\lambda) > \cdots > c_{r+1}^*(\lambda)>0$ and we have the following ``positivity inequalities'':
\begin{enumerate}
	\item[(i)] $\lambda\HH_{\mu^*_{m+1}}(V_m)>\dim(V_{m+1}/V_m)$ for $0\le m\le r-1$;
	\item[(ii)] $\lambda\cdot\big(\HH_{\mu^*_i}(V_{m-1})-\HH_{\mu^*_i}(V_m) \big)< \dim(V_m/V_{m-1})$ for $1\le m<i\le r$.
\end{enumerate}
These conditions can be clearly guaranteed by a continuity argument and our assumption that they hold when $\lambda=1$. For a parameter $\lambda$ satisfying (i) and (ii) above}, the proof of \eqref{unperturbed} holds verbatim for the $\lambda$-perturbed quantities $\er_\lambda$, allowing one to conclude that 
	\[
	\er_{\lambda}(\sV', \cc^*(\lambda),\bmu) \geq \er_{\lambda}(\sV,\cc^*(\lambda),\bmu)
	\] 
for all subflags $\sV'$ of $\sV$.

Now suppose that $\lambda < 1$. Then we have
\[ 
\er(\sV', \cc,  \bmu^*) \geq \er_{\lambda}(\sV',\cc, \bmu^*),
\] 
with equality if and only if $\sV' = \sV$ \rev{because $\Supp(\mu_j^*)=V_j\cap\{0,1\}^k$ for all $j$}. Therefore if $\sV'$ is a proper subflag of $\sV$ we have
\[
\er(\sV',\cc^*(\lambda),\bmu^*)  > \er_{\lambda}(\sV', \cc^*(\lambda),\bmu^*)
	 \geq \er_{\lambda}(\sV,\cc^*(\lambda),\bmu^*)  = \er(\sV, \cc^*(\lambda),\bmu^*).
\]
Taking $\tilde{\cc} = \cc^*(\lambda)$ for $\lambda$ sufficiently close to 1, Proposition \ref{entrop-gap-general} follows.

\newpage
\thispagestyle{fancy}
\fancyhf{} 
\renewcommand{\headrulewidth}{0cm}
\lhead[{\scriptsize \thepage}]{}
\rhead[]{{\scriptsize\thepage}}
\part{Binary systems}\label{part:binary-systems}

\section{Binary systems and a lower bound for $\beta_k$}\label{binary-system}

In this section we define certain special flags $\sV$ on $\Q^k$, $k = 2^r$, which we call the \emph{binary systems of order $r$}. It is these systems which lead to the lower bound on $\beta_k$ given in Theorem \ref{beta-rho}, which is one of the main results of the paper. 

In this section we will define these flags (which is easy) and state their basic properties. The proofs of these properties, some of which are quite lengthy, are deferred to Section \ref{binary-calcs}. 

We are then in a position to prove part of one of our main theorems, Theorem \ref{beta-rho} (a), which we do in subsection \ref{theta-r}. 

\rev{For the convenience of the reader, recall us here the three parts of Theorem \ref{beta-rho}, as stated at the end of subsection \ref{sec:equal sums}:
\begin{enumerate}
	\item[(a)] Showing that for every $r\ge 1$, $\beta_{2^r} \ge \theta_r$ for a certain explicitly defined constant $\theta_r$; 
	\item[(b)] Showing that $\lim_{r\to\infty} \theta_r^{1/r}$ exists;
	\rev{\item[(c)] Showing that \eqref{rho-eq-main} has a unique solution
		$\rho\in [0,1/3]$ and that $\rho=2\lim_{r\to\infty} \theta_r^{1/r}$.}
\end{enumerate}}

\subsection{Binary flags and systems: definitions and properties}\label{bin-props-sec}

\begin{definition}[Binary flag of order $r$]\label{binary-flag}
Let $k = 2^r$ be a power of two. Identify $\Q^k$ with $\Q^{\mathcal{P}[r]}$ (where $\mathcal{P}[r]$ means the power set of $[r] = \{1,\dots, r\}$) and define a flag $\sV$, $\langle \mathbf{1} \rangle = V_0 \leq V_1 \leq \cdots \leq V_r = \Q^{\mathcal{P}[r]}$, as follows: $V_i$ is the subspace of all $(x_S)_{S \subset [r]}$ for which $x_S = x_{S \cap [i]}$ for all $S \subset [r]$. 
\end{definition}

\begin{remark*} We have $\dim (V_i) = 2^i$, and $V_r = \Q^{\mathcal{P}[r]}$, so the system is trivially nondegenerate. Note that we have been using the letter $r$ to denote the number of $V_i$ in the flag $\sV$, throughout the paper. It just so happens that, in this example, this is the same $r$ as in the definition of $k = 2^r$.
\end{remark*}

One major task is to show that optimal measures and optimal parameters, as described in Section \ref{optimisation-sec}, may be defined on the binary flags. Since we will be seeing them so often, let us write down the $\rho$-equations \eqref{rho-equations} for the binary flags explicitly:

\begin{equation}
	\label{binary-rho-eqs} 
f^{\Gamma_{j+1}}(\brho) = f^{\Gamma_j}(\brho)^{\rho_j} e^{2^j}  , \quad \mbox{$j = 1,2,\dots$.} 
\end{equation}

\begin{proposition}\label{rho-binary-prop}
Let $\sV$ be the binary flag of order $r$. Then 
\begin{enumerate}
\item the $\rho$-equations \eqref{binary-rho-eqs} have a solution with $0 < \rho_i < 1$ for $i \geq 1$, and consequently we may define the optimal measures $\bmu^*$ on $\{0,1\}^k$ as in Definition \ref{opt-meas-def};
\item  the optimal parameters $\mathbf{c}^*$ (in the sense of Definition \ref{opt-param-def}) exist. 
\end{enumerate} 
\end{proposition}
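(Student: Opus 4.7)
The strategy is to exploit the self-similar tree structure of the binary flag: the tree $\sT(\sV)$ of order $r$ is built recursively from copies of trees of smaller order, and the functions $f^{\Gamma_j}$ obey a nested recurrence that mirrors this structure.

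For part (a), I would solve the $\rho$-equations \eqref{binary-rho-eqs} sequentially in $j$. Observe that $f^{\Gamma_j}(\brho)$ depends only on $\rho_1,\dots,\rho_{j-1}$, because the recursion \eqref{fc-eqs} at level $i$ involves $\rho_{i-1}$ alone. Hence, once $\rho_1,\dots,\rho_{j-1}$ have been determined, the $j$th equation becomes an equation in the single unknown $\rho_j$,
\[
f^{\Gamma_{j+1}}(\rho_1,\dots,\rho_j)=f^{\Gamma_j}(\rho_1,\dots,\rho_{j-1})^{\rho_j}\,e^{2^j},
\]
to which I would apply the intermediate value theorem. At $\rho_j=0$ the left-hand side reduces to the number of children of $\Gamma_{j+1}$ at level $j$, which by a direct count using the explicit description of $V_{j+1}/V_j$ equals $3^{2^j}$; since $3>e$, this exceeds $e^{2^j}$, so LHS exceeds RHS at $\rho_j=0$. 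At $\rho_j=1$ the left-hand side is $\sum_{C'}f^{C'}$ over children of $\Gamma_{j+1}$, where $\Gamma_j$ is itself one child and the remaining $3^{2^j}-1$ children correspond to nontrivial coset shifts and have tree sums bounded in terms of $f^{\Gamma_j}$; an inductive estimate should show that the total is strictly less than $f^{\Gamma_j}\cdot e^{2^j}$, giving LHS $<$ RHS at $\rho_j=1$. Uniqueness of $\rho_j\in(0,1)$ follows because $\rho_j\mapsto\log f^{\Gamma_{j+1}}$ is a strictly log-convex function of $\rho_j$ (it is $\log$ of a sum of exponentials $\sum_{C'}e^{\rho_j\log f^{C'}}$) while $\rho_j\log f^{\Gamma_j}+2^j$ is linear; a convex function that is positive at $0$ and negative at $1$ has exactly one zero in the interval.

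For part (b), using the remark after Definition \ref{opt-param-def}, I would solve the triangular system \eqref{basic-m-e} for $\cc^*$ by back-substitution from $m=r-1$ down to $m=0$. At each step one divides through by a factor $\HH_{\mu^*_{m+1}}(V_m)-\dim(V_{m+1}/V_m)=\HH_{\mu^*_{m+1}}(V_m)-2^m$, so both existence and the strict decrease $1=c_1^*>c_2^*>\cdots>c_{r+1}^*>0$ reduce to the positivity condition (c)(i) of Proposition \ref{entrop-gap-general}, namely $\HH_{\mu^*_{m+1}}(V_m)>2^m$ for each $m$. Unwinding $\HH_{\mu^*_{m+1}}(V_m)$ as a sum over cosets of $V_m$ inside $\Gamma_{m+1}$, and using \eqref{mu-gammaj} (which pins down $\mu^*_{m+1}(\Gamma_m)=e^{-2^m}$) together with the tree formula \eqref{eq46} and the $\rho$-equations, this inequality should follow from a calculation of the entropy on the tree.

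The main obstacle will be part (a): the tree $\sT(\sV)$ doubles its branching at every level, so the tree sums $f^{\Gamma_j}$ grow very rapidly and naive bounds are not sharp enough to pin down the endpoint inequalities at $\rho_j=1$ uniformly in $r$ and $j$. A careful inductive set-up on suitably normalised quantities (such as $f^{\Gamma_j}^{1/2^j}$, or equivalently the $\rho_j$'s themselves) will be required. This is essentially the same difficulty that dominates Section \ref{lim-rho-sec}, where the $\rho_j$ are shown to converge to a limit $\rho$, so the two analyses should share a common core of estimates carried out in Section \ref{binary-calcs}.
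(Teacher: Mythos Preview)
Your overall plan for (a) matches the paper's: solve the $j$th $\rho$-equation by the intermediate value theorem once $\rho_1,\dots,\rho_{j-1}$ are fixed. Your endpoint at $\rho_j=0$ is exactly right ($f^{\Gamma_{j+1}}|_{\rho_j=0}=3^{2^j}>e^{2^j}$). Your convexity argument for uniqueness is valid and is a pleasant alternative to the paper's route; the paper instead factors out $F(\mathcal{P}[j])^x$ and observes that the remaining function $H(x)=e^{2^j}-2^{2^j}\sum_g 2^{-|g|}\bigl(F(g)/F(\mathcal{P}[j])\bigr)^x$ is strictly increasing, since each ratio is $\le 1$ by Corollary~\ref{g1-g2}.

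The substantive content of (a), however, is the endpoint at $\rho_j=1$, and here your proposal is vague and also misjudges the difficulty. The crude bound $\sum_{C'}f^{C'}\le 3^{2^j}f^{\Gamma_j}$ is useless because $3>e$, so one genuinely needs to exploit that most children of $\Gamma_{j+1}$ have much smaller tree sums than $\Gamma_j$. The paper does \emph{not} proceed by induction on $j$: using the genotype formalism (Proposition~\ref{gen-fc}) and a short generating-function identity (Lemma~\ref{small-calc-gen}), both $e^{2^j}F(\mathcal{P}[j])$ and $2^{2^j}\sum_{g}2^{-|g|}F(g)$ are rewritten as weighted sums $\sum_{g'}w(g')F(g')^{\rho_{j-1}}$ over $(j{-}1)$-genotypes, and the comparison collapses to the numerical inequality $e^2>7$. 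So this step is considerably easier than you suggest, and in particular is not tied to the delicate estimates of Section~\ref{lim-rho-sec}.

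For (b), your back-substitution plan is right, but the claim that ``both existence and the strict decrease \dots\ reduce to (c)(i)'' is incomplete. The paper rearranges \eqref{basic-m-e} as
\[
(c^*_{m+1}-c^*_{m+2})\bigl(\HH_{\mu^*_{m+1}}(V_m)-2^m\bigr)
=\sum_{j=m+2}^r\bigl(2^j-2^m-\HH_{\mu^*_j}(V_m)\bigr)(c^*_j-c^*_{j+1})+(2^r-2^m)c^*_{r+1}.
\]
Condition (c)(i) makes the left-hand factor positive, but to conclude $c^*_{m+1}>c^*_{m+2}$ by downward induction you also need each coefficient $2^j-2^m-\HH_{\mu^*_j}(V_m)$ on the right to be positive for $j\ge m+2$. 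This auxiliary inequality is easy---$\HH_{\mu^*_j}(V_m)\le\log|\Supp(\mu^*_j)|\le 2^j\log 2<2^j-2^m$ once $j\ge m+2$---but it is a separate ingredient that you left out.
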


We call the binary flag $\sV$ (of order $r$) together with the additional data of the optimal measures $\mu = \mu^*$ and optimal parameters $\mathbf{c} = \mathbf{c}^*$, the binary \emph{system} (of order $r$). 
We caution that for fixed $i$ (such as $i = 2$) the parameters $c_i$ do depend on $r$, although not very much.

\medskip
The second major task is to show that the binary systems satisfy the entropy condition \eqref{entropy-cond}, or more accurately that arbitrarily small perturbations of them satisfy the strict entropy condition \eqref{strict-entropy-cond}. In the last section we provided a tool for doing this in somewhat general conditions, namely Proposition \ref{entrop-gap-general}. That proposition has four conditions, (a), (b), (c)(i) and (c)(ii) which must be satisfied. Of these, (b) (the existence of the optimal parameters $\mathbf{c}^*$) has already been established, \rev{assuming the validity of Proposition \ref{rho-binary-prop}}. We state the other three conditions separately as lemmas.

\begin{lemma}\label{inv-max}
Suppose that $V_{i-1} \leq W \leq V_i$ and that $W$ is invariant under $\Aut(\sV)$. Then $W$ is either $V_{i-1}$ or $V_i$. Thus, the binary flags satisfy Proposition \ref{entrop-gap-general} (a).
\end{lemma}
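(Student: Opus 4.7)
The plan is to reduce to showing that $\Aut(\sV)$ acts irreducibly on the quotient $V_i/V_{i-1}$ over $\Q$; given this, any invariant $W$ with $V_{i-1} \le W \le V_i$ yields an $\Aut(\sV)$-invariant subspace $W/V_{i-1} \le V_i/V_{i-1}$, which must be $0$ or everything, and hence $W \in \{V_{i-1}, V_i\}$.

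First I would describe $\Aut(\sV)$. A coordinate permutation $\sigma \in S_k$ preserves every $V_j$ if and only if it respects the nested equivalence relations $S \sim_j T \iff S \cap [j] = T \cap [j]$ on $\mathcal{P}[r]$. Since each $\sim_{j-1}$-class splits into precisely two $\sim_j$-classes (according to whether $j$ lies in the relevant sets), these classes form a complete rooted binary tree of height $r$ whose leaves are the singletons of $\mathcal{P}[r]$; hence $\Aut(\sV)$ is the full automorphism group of this tree, namely the iterated wreath product $S_2 \wr \cdots \wr S_2$ ($r$ factors). In particular, for each $j \in \{1,\dots,r\}$ and $T_0 \subseteq [j-1]$ it contains the ``subtree swap'' $\tau_{T_0,j}$, which maps $S \mapsto S \triangle \{j\}$ for every $S \subseteq [r]$ with $S \cap [j-1] = T_0$ and fixes all other coordinates.

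Next I would set up a convenient basis of the quotient. Identifying $V_i \cong \Q^{\mathcal{P}[i]}$ via $(x_S)_{S\subseteq[r]} \mapsto (x_T)_{T\subseteq[i]}$, the subspace $V_{i-1}$ corresponds to $\{y : y_T = y_{T\cap[i-1]}\}$, and the vectors
\[
v_T := e_{T \cup \{i\}} - e_T \pmod{V_{i-1}}, \qquad T \subseteq [i-1],
\]
form a basis of $V_i/V_{i-1}$. A short direct computation on the generators above gives: (i) $\tau_{T_0,i}$ negates $v_{T_0}$ and fixes every other $v_T$; and (ii) for $j<i$, $\tau_{T_0,j}$ permutes the $v_T$ by sending $v_T \mapsto v_{T\triangle\{j\}}$ whenever $T \cap [j-1] = T_0$, and acts as the identity on the remaining $v_T$.

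Irreducibility then follows quickly. An $\Aut(\sV)$-invariant subspace $W' \le V_i/V_{i-1}$ is in particular invariant under the full diagonal sign-change group $\{\pm 1\}^{\mathcal{P}[i-1]}$ generated by the operators in (i), whose only $\Q$-invariant subspaces are the coordinate subspaces $\bigoplus_{T \in I} \Q v_T$ for some $I \subseteq \mathcal{P}[i-1]$. The generators in (ii) act transitively on $\mathcal{P}[i-1]$ — for any $T' = \{j_1<\dots<j_m\}$ one walks from $\emptyset$ to $T'$ by successively applying $\tau_{\emptyset,j_1},\,\tau_{\{j_1\},j_2},\,\dots,\,\tau_{\{j_1,\dots,j_{m-1}\},j_m}$ — and hence $I \in \{\emptyset, \mathcal{P}[i-1]\}$, so $W' \in \{0, V_i/V_{i-1}\}$. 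The one genuinely finicky step is verifying (i) and (ii), where keeping the binary-tree picture in mind is essential for avoiding index confusion; the rest of the argument is then formal.
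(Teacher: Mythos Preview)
Your proof is correct and essentially the same as the paper's: both use the identical ``subtree swap'' permutations $\tau_{T_0,j}$ (the paper calls them $\pi(A,j)$), the same basis vectors of $V_i/V_{i-1}$ (the paper's $y^A$ are your $-v_A$), the sign-flip $\tau_{T_0,i}$ to project onto a single basis vector, and the transitivity of the $\tau_{T_0,j}$ with $j<i$ to move between them. The only difference is presentational --- you phrase it as irreducibility of the quotient representation, while the paper works directly in $V_i$ and shows that any $W \neq V_{i-1}$ contains some $y^A$ and hence all of them.
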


\begin{lemma}\label{binary-entropy-spec-1}
We have $\HH_{\mu_{m+1}^*}(V_m) > 2^m$ for $0 \leq m \leq r - 1$. Thus, the binary flags satisfy Proposition \ref{entrop-gap-general} (c)(i).
\end{lemma}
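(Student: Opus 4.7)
The plan is to reduce the desired entropy bound to a concrete inequality about the recursively defined tree-values $f^C(\brho)$. Since $\mu^*_{m+1}$ is supported on $\Gamma_{m+1}$, the entropy $\HH_{\mu^*_{m+1}}(V_m)$ equals the Shannon entropy of the distribution $q_{C'}=f^{C'}(\brho)^{\rho_m}/f^{\Gamma_{m+1}}(\brho)$ on the children $C'$ of $\Gamma_{m+1}$ at level $m$, as given by Definition \ref{opt-meas-def}. For the degenerate case $m=0$, the convention $\rho_0=0$ forces $q$ to be uniform on the children of $\Gamma_1$. A direct enumeration shows that $\Gamma_1=V_1\cap\{0,1\}^k$ has four elements (parametrised by the two free coordinates indexed by whether $1\in S$), splitting into exactly three cosets of $V_0=\langle\mathbf{1}\rangle$: namely $\{\mathbf{0},\mathbf{1}\}$ together with two singletons. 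Hence $\HH_{\mu^*_1}(V_0)=\log 3>1=2^0$, which handles this case.

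For $m\ge 1$, I will substitute the formula for $q_{C'}$ into the entropy definition and apply the $\rho$-equation \eqref{binary-rho-eqs}, in the form $\log f^{\Gamma_{m+1}}(\brho)=\rho_m\log f^{\Gamma_m}(\brho)+2^m$, to obtain the identity
\[
\HH_{\mu^*_{m+1}}(V_m)=2^m+\rho_m\sum_{C'\to\Gamma_{m+1}}q_{C'}\log\frac{f^{\Gamma_m}(\brho)}{f^{C'}(\brho)}.
\]
Since $\rho_m>0$, proving the lemma in this range reduces to establishing two statements: (i)~$f^{C'}(\brho)\le f^{\Gamma_m}(\brho)$ for every child $C'$ of $\Gamma_{m+1}$, and (ii)~strict inequality holds for at least one child (of automatically positive weight). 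Point (ii) is the easier one: the cell $\Gamma_{m+1}$ possesses singleton children, coming from "alternating" representatives $v\in V_{m+1}\cap\{0,1\}^k$ for which no other $\{0,1\}$-vector lies in $v+V_m$ (an explicit example, generalising $v=0110$ from the $r=2$ case, is constructed by setting $v_S=1$ iff exactly one of the smallest two elements of $[m+1]$ lies in $S$); any such singleton has $f^{C'}(\brho)=1$, whereas $f^{\Gamma_m}(\brho)\ge 3>1$ for $m\ge 1$ because $\Gamma_m$ has at least three children at the level below it.

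The main obstacle is the uniform comparison (i). I plan to exploit the self-similarity of the binary flag: identifying $\Gamma_{m+1}$ with $\{0,1\}^{\mathcal{P}[m+1]}$ makes the subtree at $\Gamma_{m+1}$ isomorphic to the full tree of the binary flag of order $m+1$, under which $\Gamma_m$ corresponds to the ``diagonal'' subspace and inherits the maximal amount of linear structure among cosets of $V_m$. The plan is then to argue by induction on $m$, constructing for each child $C'\ne\Gamma_m$ an injection $\Phi$ from the level-$(m-1)$ children of $C'$ into those of $\Gamma_m$ with $f^D(\brho)\le f^{\Phi(D)}(\brho)$ termwise; summing via the recursion $f^{C'}(\brho)=\sum_{D\to C'}f^D(\brho)^{\rho_{m-1}}$ then yields $f^{C'}(\brho)\le f^{\Gamma_m}(\brho)$. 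To build $\Phi$, I fix a representative $v\in C'\cap\{0,1\}^k$ and use the XOR map $w\mapsto w\oplus v$, which is a bijection of $\{0,1\}^k$ carrying $C'\cap\{0,1\}^k$ into a subset of $\Gamma_m$; the delicate point, and where the bulk of the technical work will lie, is to check that this XOR identification respects the coset partitions at each lower level sufficiently to yield an $f$-value-preserving injection on subtrees, rather than a bijection, and that the ``missing'' subtree mass is exactly what forces the strict inequality in (ii) to cascade without being cancelled.
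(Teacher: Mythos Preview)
Your reduction is correct and coincides with the paper's: both come down to showing $f^{C'}(\brho)\le f^{\Gamma_m}(\brho)$ for every level-$m$ child $C'$ of $\Gamma_{m+1}$, with strict inequality when $C'\ne\Gamma_m$ (this is Corollary~\ref{Gamma-j-largest}). The paper records the equivalent form $\mu_{m+1}^*(C')\le e^{-2^m}$ (Lemma~\ref{largest-cell}), whence $\HH_{\mu^*_{m+1}}(V_m)=\sum_{C'}\mu^*_{m+1}(C')\log(1/\mu^*_{m+1}(C'))>2^m$ immediately; your entropy identity via the $\rho$-equation is an equivalent rearrangement.

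The gap is in how you propose to prove the key inequality. The XOR map $w\mapsto w\oplus v$ does carry $C'\cap\{0,1\}^k$ injectively into $\Gamma_m$, but it does \emph{not} respect the level-$(m-1)$ cell structure. If $w,w'\in C'$ have $w-w'\in V_{m-1}$ with a nonzero constant $(m-1)$-block at position $A$, then $(w\oplus v)-(w'\oplus v)$ has $(m-1)$-block $\mathbf{1}-2v(A,m-1)$ at $A$, which is constant only when $v(A,m-1)$ is constant; since $v\notin V_m\supset V_{m-1}$ this need not hold. A concrete failure with $r=3$, $m=2$: take $v=00110100$, $w=11110100$, $w'=00000100$. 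Then $w,w'\in C'$ form a single level-$1$ child of $C'$, but $w\oplus v=11000000$ and $w'\oplus v=00110000$ lie in \emph{different} level-$1$ cells of $\Gamma_2$. So XOR does not even induce a map on children, and the ``delicate point'' you flag is not a technicality but a genuine obstruction.

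The paper's route is via the notion of \emph{genotype} $g(C)\subset\mathcal{P}[i]$, recording which $i$-blocks of a level-$i$ cell are constant. The crucial fact is that $f^C(\brho)$ depends only on $g(C)$ (Proposition~\ref{gen-fc}), and the resulting function $F(g)$ is monotone in $g$ (Corollary~\ref{g1-g2}); since $g(\Gamma_m)=\mathcal{P}[m]$ is maximal, $f^{\Gamma_m}$ is strictly maximal among level-$m$ cells. If you insist on an injection argument, the correct one is at the genotype level: for each fixed $(m-1)$-genotype $g'$, children of $C'$ with genotype $g'$ are no more numerous than children of $\Gamma_m$ with the same $g'$ (Lemma~\ref{genotype-lemma}(d), using $|g|-|g^*|\le 2^{m-1}$), and their $F$-values match exactly. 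Your XOR scheme is too coarse to see this structure.
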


\begin{lemma}\label{binary-entropy-spec} We have $\HH_{\mu_i^*}(V_{m-1})-\HH_{\mu^*_i}(V_m) < 2^{m-1}$ for $1\le m< i\le r$. Thus, the binary flags satisfy Proposition \ref{entrop-gap-general} (c)(ii).
\end{lemma}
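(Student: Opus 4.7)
The plan is to use the chain rule for Shannon entropy, together with the Gibbs-type distribution that the optimal measure $\mu^*$ induces on the children of each cell (by Definition \ref{opt-meas-def}), to reduce the desired inequality to a clean identity involving weighted averages of $\log f^C(\brho)$ at consecutive levels. The chain rule decomposes
\[
D_{i,m}:=\HH_{\mu^*_i}(V_{m-1})-\HH_{\mu^*_i}(V_m)=\sum_C\mu^*_i(C)\,\HH_{\mu^*|_C}(V_{m-1}),
\]
summed over cells $C$ at level $m$ inside $\Gamma_i$. Since the conditional measure on the children $C'$ of $C$ is the Gibbs distribution $p_{C'}=f^{C'}(\brho)^{\rho_{m-1}}/f^C(\brho)$, a direct calculation gives $\HH_{\mu^*|_C}(V_{m-1})=\log f^C(\brho)-\rho_{m-1}\sum_{C\to C'}p_{C'}\log f^{C'}(\brho)$. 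Invoking the identity $\mu^*_i(C)\,p_{C'}=\mu^*_i(C')$ (immediate from \eqref{eq46}), the nested sums collapse to
\[
D_{i,m}=\bar G_{m,i}-\rho_{m-1}\bar G_{m-1,i},\qquad \bar G_{j,i}:=\sum_{C\text{ level }j,\,C\subset\Gamma_i}\mu^*_i(C)\log f^C(\brho).
\]

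For $m\ge 2$ I would then invoke the $\rho$-equation \eqref{binary-rho-eqs} at index $m-1$, which reads $\log f^{\Gamma_m}(\brho)-\rho_{m-1}\log f^{\Gamma_{m-1}}(\brho)=2^{m-1}$. Subtracting, the target $D_{i,m}<2^{m-1}$ becomes $\bar G_{m,i}-\log f^{\Gamma_m}(\brho)<\rho_{m-1}(\bar G_{m-1,i}-\log f^{\Gamma_{m-1}}(\brho))$, or with $\epsilon_j:=\log f^{\Gamma_j}(\brho)-\bar G_{j,i}$, the requirement $\epsilon_m>\rho_{m-1}\epsilon_{m-1}$. The plan is to first prove by induction on $j$ the auxiliary monotonicity $f^C(\brho)\le f^{\Gamma_j}(\brho)$ for every cell $C$ at level $j$ (strict unless $C=\Gamma_j$), using the symmetry of the binary flag together with the fact that $\Gamma_j$ has the most $\{0,1\}^k$-points of any cell at its level; consequently each $\epsilon_j\ge 0$. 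The strict comparison $\epsilon_m>\rho_{m-1}\epsilon_{m-1}$ is then to be obtained by a careful term-by-term analysis, using the contraction factor $\rho_{m-1}<1$ together with the positive $\mu^*_i$-mass $1-e^{-(2^i-2^m)}$ on non-central cells (from \eqref{mu-gammaj}).

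The base case $m=1$ requires separate treatment, since the $\rho$-equation does not apply at index $j=0$ (indeed, one has $f^{\Gamma_1}(\brho)=3\neq e$). Here $\rho_0=0$ by convention, so the identity from the first step reduces to $D_{i,1}=\bar G_{1,i}$, and I need to show $\bar G_{1,i}<1$. Cells at level $1$ of the binary flag are of three types: the central cell $\Gamma_1$ (with $f^C=3$), doubletons $\{\omega,\one-\omega\}\subset V_1\cap\{0,1\}^k$ (with $f^C=2$), and singletons (with $f^C=1$); combined with the explicit value $\mu^*_i(\Gamma_1)=e^{-(2^i-2)}$ from \eqref{mu-gammaj}, the crude bound $\bar G_{1,i}\le\log 2+e^{-(2^i-2)}\log(3/2)$ suffices since the right-hand side is strictly less than $1$ for all $i\ge 2$. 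The main obstacle is the strict comparison $\epsilon_m>\rho_{m-1}\epsilon_{m-1}$ in the inductive case $m\ge 2$: a naive term-by-term application of concavity of $\log$ yields only the non-strict version, and capturing strictness uniformly in $i$ will require a quantitative refinement that exploits, in a recursive manner, the gap $f^C(\brho)<f^{\Gamma_m}(\brho)$ for non-central $C$ together with the hierarchical structure of the binary tree.
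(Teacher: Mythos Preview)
Your reduction is correct and elegant: the identity
\[
D_{i,m}=\bar G_{m,i}-\rho_{m-1}\bar G_{m-1,i},\qquad \bar G_{j,i}=\sum_{C\text{ level }j}\mu^*_i(C)\log f^C(\brho),
\]
together with the $\rho$-equation $\log f^{\Gamma_m}-\rho_{m-1}\log f^{\Gamma_{m-1}}=2^{m-1}$, genuinely reformulates the target as $\epsilon_m>\rho_{m-1}\epsilon_{m-1}$. Your treatment of $m=1$ is essentially complete (the only quibble is that the level-$1$ doubleton cells are not literally of the form $\{\omega,\one-\omega\}$, but all you use is that $f^C\le 2$ for $C\ne\Gamma_1$, which follows from Lemma~\ref{genotype-lemma}).

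The gap, which you yourself flag, is the step $\epsilon_m>\rho_{m-1}\epsilon_{m-1}$ for $m\ge 2$, and it is substantial rather than cosmetic. A term-by-term comparison over level-$m$ cells fails outright: for $C=\Gamma_m$ the contribution is
\[
0-\rho_{m-1}\sum_{C':\,\Gamma_m\to C'}p_{C'}\bigl[\log f^{\Gamma_{m-1}}-\log f^{C'}\bigr]<0,
\]
since $\Gamma_m$ has many children $C'\neq\Gamma_{m-1}$ with $f^{C'}<f^{\Gamma_{m-1}}$. So the desired inequality must come from averaging against $\mu^*_i$ in a way that pits the negative $\Gamma_m$ term (of mass $e^{-(2^i-2^m)}$) against positive contributions from non-central cells; carrying this out appears to require exactly the kind of quantitative input you allude to but do not provide.

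The paper sidesteps the Gibbs identity entirely and takes a coarser route. It bounds each conditional entropy by $\log(\#\text{children of }C)$, then uses the genotype combinatorics of Lemma~\ref{genotype-lemma}(e) to get $\#\{C':C\to C'\}=2^{|g(C)|-2|g(C)^*|}3^{|g(C)^*|}\le 3^{2^{m-1}}$. This alone gives $D_{i,m}\le 2^{m-1}\log 3$, just short of the target. The crucial extra saving comes from splitting cells according to whether $|g(C)|>\tfrac{3}{4}\,2^m$: for cells with small genotype the child-count bound improves to $e^{0.9\cdot 2^{m-1}}$, and the large-genotype cells are shown (via Lemma~\ref{largest-cell} and a counting argument on level-$(i-1)$ ancestors) to carry $\mu^*_i$-mass strictly less than $\tfrac{1}{2}$. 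Averaging the two regimes yields the strict bound. This approach uses much less about $\mu^*$ than yours (only Lemma~\ref{largest-cell}), and the saving comes from cell combinatorics rather than from the algebraic identity for the conditional entropy.
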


The proofs of these various facts are given in Section \ref{binary-calcs}.

\subsection{Proof of Theorems \ref{beta-rho} (a) and \ref{thm:beta-gamma}}\label{theta-r}

We are now in a position to complete the proof of Theorem \ref{beta-rho} (a), modulo the results stated above. First, we define the constants $\theta_r$.

\begin{definition}\label{theta-r-def}
Let $\rho_1,\rho_2,\dots$ be the solution to the $\rho$-equations \eqref{binary-rho-eqs} for the binary flag. Then we define
\[ \theta_r := (\log 3-1)\Big/\Big(\log 3 + \sum_{i = 1}^{r-1} \frac{2^i}{\rho_1 \cdots \rho_i}\Big).
\]
\end{definition}

\begin{proof}[Proof of Theorem \ref{beta-rho} (a)]
By Proposition \ref{main-optim}, $\theta_r$ is equal to $c^*_{r+1}$, where $\mathbf{c}^*$ are the optimal parameters on the binary flag $\sV$ of order $r$, the existence of which is Proposition \ref{rho-binary-prop} (b) above.

\rev{Fix $\delta\in(0,\theta_r/2]$. By Proposition \ref{entrop-gap-general} (the hypotheses of which are satisfied by Lemma \ref{inv-max}, Proposition \ref{rho-binary-prop} (b) and Lemmas \ref{binary-entropy-spec-1} and \ref{binary-entropy-spec}), there exists a perturbation $\tilde{\cc}$ of $\cc^*$ such that $1=\tilde{c}_1>\tilde{c}_2>\cdots>\tilde{c}_{r+1}\ge c_{r+1}^*-\delta=\theta_r-\delta>0$ and $(\sV,\tilde{\cc},{\bmu}^*)$ satisfies the strict entropy condition \eqref{strict-entropy-cond}. By Lemma \ref{lem:entropy-gap}, there exists some $\eps>0$ such that the ``entropy gap'' condition \eqref{entropy-gap} holds. Finally, by Remark \ref{rem:optimal measure} (b), we have that $\Supp(\mu_j^*)=\Gamma_j$ for all $j$. Hence, Proposition \ref{bgamk} implies that $\beta_{2^r} \ge \tilde{c}_{r+1}=\theta_r-\delta$. Since $\delta$ is arbitrary, this proves Theorem \ref{beta-rho} (a).}
\end{proof}

\rev{\begin{proof}[Proof of Theorem \ref{thm:beta-gamma}] The upper bound $\beta_k\le\gamma_k$ is established in Section \ref{sec:upper}. The lower bound $\beta_k\ge\tilde{\gamma}_k$ follows by Lemma \ref{gammak-tilde:reduction}, Proposition \ref{bgamk} and the fact that there exists at least one system satisfying the strict entropy condition \eqref{strict-entropy-cond}, as per the proof of Theorem \ref{beta-rho} (a) above.
\end{proof}}

\subsection{Remarks on Theorem \ref{beta-rho} (b)}

Theorem \ref{beta-rho} (b) is a problem of a combinatorial and analytic nature which can be considered more-or-less completely independently of the first three parts of the paper. 

To get a feel for it, and a sense of why it is difficult, let us write down the first two $\rho$-equations \eqref{binary-rho-eqs} for the binary flags. The 
equation with $j = 1$ is
\begin{equation}\label{first-rho} f^{\Gamma_2}(\rho) = f^{\Gamma_1}(\rho)^{\rho_1} e^2.\end{equation} This has the numerical solution $\rho_1 \approx 0.306481$.

To write down the $\rho$-equation for $j = 2$, one must compute $f^{\Gamma_3}(\rho)$, and without any additional theory the only means we have to do this is to draw the full tree structure for the binary flag $\sV$ of order $3$ (on $\Q^8$). This is a tractable exercise and one may confirm that
\[ f^{\Gamma_3}(\rho) = (3^{\rho_1} + 4 \cdot 2^{\rho_1} + 4)^{\rho_2} + 8(2 \cdot 2^{\rho_1} + 4)^{\rho_2} + 16 \cdot 4^{\rho_2} + 8 \cdot (2^{\rho_1} + 2)^{\rho_2} + 32 \cdot 2^{\rho_2} + 16.\]
The $\rho$-equation with $j = 2$ is then 
\[ f^{\Gamma_3}(\rho) = f^{\Gamma_2}(\rho)^{\rho_2} e^4,\]where \rev{(recall from Figure 7.1)} $f^{\Gamma_2}(\rho) = 3^{\rho_1} + 4 \cdot 2^{\rho_1} + 4$. This may be solved numerically, with the value $\rho_2 \approx 0.2796104\dots$, using Mathematica. 

Such a numerical procedure, however, is already quite an unappetising prospect if one wishes to compute $\rho_3$.

Consequently, we must develop more theory to understand the $\rho_i$ and to prove Theorem \ref{beta-rho} (b). This is the task of the last two sections of the paper.

\section{Binary systems: proofs of the basic properties}\label{binary-calcs}

In this section, we prove the various statements in subsection \ref{bin-props-sec}.

We begin, in subsection \ref{sec:binary-auto}, by proving Lemma \ref{inv-max}. This is a relatively simple and self-contained piece of combinatorics.

In subsection \ref{cell-enum} we introduce the concept of \emph{genotype}, which allows us to describe the tree structure induced on $\{0,1\}^k$ by the binary flag $\sV$. In subsection \ref{fc-genotype} we show how to compute the quantities $f^C(\bs\rho)$ in terms of the genotype.

We are then, in subsection \ref{rhoi-exist}, in a position to prove Proposition \ref{rho-binary-prop} (a), guaranteeing that the $\rho_i$ exist and allowing us to define the optimal measures $\bmu^*$.

In subsection \ref{sec:binary-entropy} we establish the two entropy inequalities, Lemmas \ref{binary-entropy-spec-1} and \ref{binary-entropy-spec}.

Finally, in subsection \ref{c-params-sec} we prove Proposition \ref{rho-binary-prop} (b), which confirms the existence of the optimal parameters $\cc^*$.

\subsection{Basic terminology}

Throughout the section, $\sV$ will denote the binary flag or order $r$, as defined in Definition \ref{binary-flag}. That is, we take $k = 2^r$, identify $\Q^k$ with $\Q^{\mathcal{P}[r]}$, and take $V_i$ to be the subspace of all $(x_S)_{S \subset \mathcal{P}[r]}$ for which $x_S = x_{S \cap [i]}$ for all $S \subset [r]$.

In addition, we will write $\mathbf{0}_j, \mathbf{1}_j$ for the vectors in $\{0,1\}^{\mathcal{P}[j]}$ consisting of all $0$s (respectively all $1$s). We call these (or any multiples of them) \emph{constant} vectors. 

Finally, we introduce the notion of a {\it block} of a vector $x = (x_S)_{S \subset [r]} \in \Q^{\mathcal{P}[r]}$. For each $A \subset [i]$ we consider the $2^{r-i}$-tuple
\[ 
x(A,i) := (x_{A \cup A'})_{A' \subset \{i+1,\cdots, r\}}.
\] 
We call these the $i$-\emph{blocks} of $x$. 

\begin{remark}\label{rem-blocks} 
(a) One should note carefully that the $i$-blocks are strings of length $2^{r-i}$. In this language, $V_i$ is the space of vectors $x$, all of whose $i$-blocks are constant.

\medskip

(b) If we put together the coordinates of the $i$-blocks $x(A,i)$ and $x(A\triangle\{i\},i)$, then we obtain the $(i-1)$-block $x(A\cap[i-1],i-1)$.
\end{remark}

In order to visualize the structure of the flag $\sV$ and of the partition of $\{0,1\}^{\cP[r]}$ by the cosets of $V_j$, it will be often useful to write elements of $\{0,1\}^{\mathcal{P}[r]}$ as strings of $0$s and $1$s of length $2^r$. When we do this we use the \emph{reverse binary order}, which is the one induced from $\N$ via the map $f(S) = \sum_{s \in S} 2^{r - s}$.

\begin{example}\label{exm-blocks} 
For concreteness, let us consider the case $r = 3$. In this case, the ordering of the coordinates of $x$ is
\begin{equation}\label{8-ordering} (x_\emptyset,x_{\{3\}},x_{\{2\}},x_{\{2,3\}},x_{\{1\}},x_{\{1,3\}},x_{\{1,2\}},x_{[3]}).
\end{equation} 
If $x = 01001110$ then its $2$-blocks are $01, 00, 11, 10$, and its $1$-blocks are $0100, 1110$. 
\end{example}

\subsection{Automorphisms of the binary system}
\label{sec:binary-auto}

\begin{proof}[Proof of Lemma \ref{inv-max}]
We begin by defining some permutations of $\mathcal{P}[r]$ for which, we claim,  the corresponding coordinate permutations give elements of $\Aut(\sV)$.
Suppose that $1 \leq  j \leq  r$ and that $A \subset [j-1]$. Then we may consider the permutation $\pi(A,j)$ defined by
\[ 
\pi(A,j)(S) = \begin{cases} 	
				S \triangle \{j\} & \mbox{if $S \cap [j-1] = A$}, \\  
				S & \mbox{otherwise}. \end{cases}.
\]

To visualize the action of this permutation on the coordinates of a vector $x$, it is useful to order its coordinates as we explained above. The action of $\pi(A,j)$ is then to permute the two adjacent $j$-blocks $x(A,j)$ and $x(A\sqcup\{j\},j)$, which together form the $(j-1)$-block $x(A,j-1)$, as per Remark \ref{rem-blocks}(b). More concretely, below are some examples of the action of the permutations $\pi(A,j)$ in the setting of Example \ref{exm-blocks}:

\begin{center}
\begin{tikzpicture}[<->,>=stealth',auto,node distance=0.6cm,
  thick,main node/.style={circle,draw,font=\sffamily\Large\bfseries}]

\node (A) {$x_{\emptyset}$};
\node [right=of A] (B) {$x_{\{3\}}$ };
\node [draw, right=of B] (C) {$x_{\{2\}}$ };
\node [draw, right=of C] (D) {$x_{\{2,3\}}$};
\node [right=of D] (E) {$x_{\{1\}}$};
\node [right=of E] (F) {$x_{\{1,3\}}$};
\node [right=of F] (G) {$x_{\{1,2\}}$ };
\node [right=of G] (H) {$x_{[3]}$};

\path[every node/.style={font=\sffamily\tiny}]
    
    (C) edge[bend left = 50] node{$\pi (\{2\}, 3)$} (D);
    \end{tikzpicture}
\end{center}

\begin{center}
\begin{tikzpicture}[<->,>=stealth',auto,node distance=0.6cm,
  thick,main node/.style={circle,draw,font=\sffamily\Large\bfseries}]

\node (A) {$x_{\emptyset}$};
\node [right=of A] (B) {$x_{\{3\}}$ };
\node [right=of B] (C) {$x_{\{2\}}$ };
\node [right=of C] (D) {$x_{\{2,3\}}$};
\node [right=of D] (E) {$x_{\{1\}}$};
\node [right=of E] (F) {$x_{\{1,3\}}$};
\node [right=of F] (G) {$x_{\{1,2\}}$ };
\node [right=of G] (H) {$x_{[3]}$};

\node[below left = -0.2cm and -0.2cm of A] (J) {};
\node[above right = -0.2cm and -0.2cm of B] (K) {};
\node[above right = -0.2cm and 0.3cm of A](L){};
\draw[] (J) rectangle (K);

\node[below left = -0.2cm and -0.2cm of C] (J2) {};
\node[above right = -0.2cm and -0.2cm of D] (K2) {};
\node[above right = -0.2cm and 0.3cm of C](L2){};
\draw[] (J2) rectangle (K2);

\path[every node/.style={font=\sffamily\tiny}]
    
    (L) edge[bend left = 25] node{$\pi(\emptyset, 2)$} (L2);
    \end{tikzpicture}
\end{center}

\begin{center}
\begin{tikzpicture}[<->,>=stealth',auto,node distance=0.6cm,
  thick,main node/.style={circle,draw,font=\sffamily\Tiny\bfseries}]

\node (A) {$x_{\emptyset}$};
\node [right=of A] (B) {$x_{\{3\}}$ };
\node [right=of B] (C) {$x_{\{2\}}$ };
\node [right=of C] (D) {$x_{\{2,3\}}$};
\node [right=of D] (E) {$x_{\{1\}}$};
\node [right=of E] (F) {$x_{\{1,3\}}$};
\node [right=of F] (G) {$x_{\{1,2\}}$ };
\node [right=of G] (H) {$x_{[3]}$};

\node[below left = -0.2cm and -0.2cm of E] (J) {};
\node[above right = -0.2cm and -0.2cm of F] (K) {};
\node[above right = -0.2cm and 0.3cm of E](L){};
\draw[] (J) rectangle (K);

\node[below left = -0.2cm and -0.2cm of G] (J2) {};
\node[above right = -0.2cm and -0.2cm of H] (K2) {};
\node[above right = -0.2cm and 0.3cm of G](L2){};
\draw[] (J2) rectangle (K2);

\path[every node/.style={font=\sffamily\tiny}]
    
    (L) edge[bend left = 25] node{$\pi(\{1\}, 2)$} (L2);
    \end{tikzpicture}
\end{center}

\begin{center}

\begin{tikzpicture}[<->,>=stealth',auto,node distance=0.6cm,
  thick,main node/.style={circle,draw,font=\sffamily\Large\bfseries}]

\node (A) {$x_{\emptyset}$};
\node [right=of A] (B) {$x_{\{3\}}$ };
\node [right=of B] (C) {$x_{\{2\}}$ };
\node [right=of C] (D) {$x_{\{2,3\}}$};
\node [right=of D] (E) {$x_{\{1\}}$};
\node [right=of E] (F) {$x_{\{1,3\}}$};
\node [right=of F] (G) {$x_{\{1,2\}}$ };
\node [right=of G] (H) {$x_{[3]}$};

\node[below left = -0.2cm and -0.2cm of A] (J) {};
\node[above right = -0.2cm and -0.2cm of D] (K) {};
\node[above right = -0.2cm and 1.8cm of A](L){};
\draw[] (J) rectangle (K);

\node[below left = -0.2cm and -0.2cm of E] (J2) {};
\node[above right = -0.2cm and -0.2cm of H] (K2) {};
\node[above right = -0.2cm and 2.3cm of E](L2){};
\draw[] (J2) rectangle (K2);

\path[every node/.style={font=\sffamily\tiny}]
    
    (L) edge[bend left = 10] node{$\pi(\emptyset, 1)$} (L2);
    \end{tikzpicture}
\end{center}

If the readers wish, they may translate the arguments below in the above more visual language.

\medskip

\noindent 
\emph{Claim.} $\pi(A,j)$ preserves $V_i$ for all $i$, and therefore $\pi(A,j) \in \Aut(\sV)$.

\begin{proof}
Suppose that $x= (x_S)_{S \subset [r]} \in V_i$ and let us write for simplicity $\pi$ instead of $\pi(A,j)$.

Suppose first that $j > i$. Then $\pi(S) \cap [i] = S \cap [i]$ for all $S$, and so 
\[x_{\pi(S)}  = x_{\pi(S) \cap [i]} = x_{S \cap [i]} = x_S.\]
where the first and last steps used the fact that $\mathbf{x} \in V_i$. Thus the claim follows in this case.

Suppose now that $j \leq  i$. Let $t > i$. Then the conditions $(S \triangle \{t\}) \cap [j-1] = A$ and $S \cap [j-1] = A$ are equivalent. Hence, if $S \cap [j-1] = A$, then we find that
\[ 
x_{\pi(S \triangle \{t \})} = x_{S \triangle \{t\} \triangle \{j\}} = x_{S \triangle \{j\}} = x_{\pi(S)},
\]  
where we used that $x\in V_i$ and that $t>i$ at the second step. 
Similarly, if $S \cap [j-1] \neq A$, then 
\[ 
x_{\pi(S \triangle \{t\})}= x_{S\triangle\{t\}} = x_S = x_{\pi(S)}.
\]
In all cases, we have found that $x_{\pi(S \triangle \{t\})}  = x_{\pi(S)}$. Since this is true for all $t > i$, $\pi(x)$ indeed lies in $V_i$. This completes the proof of the claim.
\end{proof}

Suppose now that $W$ is an invariant subspace of $\sV$ satisfying the inclusions $V_{i-1}<W \leq  V_i$. We want to conclude that $W=V_i$. To accomplish this, we introduce some auxiliary notation. 

For each $A\subset[i-1]$, we consider the vector $y^A = (y^A_S)_{S \subset [r]}\in V_i$ that is uniquely determined by the relations $y^A_{A} = 1$, $y^A_{A \cup \{i\}} = -1$ and $y^A_S = 0$ for all other $S \subset [i]$. There are $2^{i-1}$ such vectors $y^A$. They are mutually orthogonal, hence linearly independent. In addition, together with $V_{i-1}$, they generate all of $V_i$. \rev{Since $V_{i-1}<W\leq V_i$, there must exist $A\subset[i-1]$ such that $y^A\in W$.} 

Now, it is easy to check that for any $j<i$ and any $A\subset [i-1]$, we have 
\[
\pi(A\cap [j-1],j) y^A =
y^{A \triangle \{j\}}.
\]
From the above relation and the invariance of $W$ under $\Aut(\sV)$, it is clear that if $W$ contains at least one vector $y^A$ with $A\subset[i-1]$, then it contains all such vectors. Since we also know that $V_{i-1}\leq W\leq V_i$, we must have that $W=V_i$, which \rev{completes the proof of Lemma \ref{inv-max}}.
\end{proof}

\begin{remark*}  A minor elaboration of the above argument in fact allows one to show that the subspaces of $\Q^{\mathcal{P}[r]}$ invariant under $\Aut(\sV)$ are the $V_i$, the orthogonal complements of $V_{i-1}$ in $V_i$, and all direct sums of these spaces. However, we will not need the classification in this explicit form.
\end{remark*}

\subsection{Cell structure and genotype}\label{cell-enum} 

The cosets of $V_i$ partition $\{0,1\}^{\mathcal{P}[r]}$ into sets which we call the \emph{cells at level $i$}. Our first task is to describe these explicitly.

Consider $\omega, \omega' \in \{0,1\}^{\mathcal{P}[r]}$. It is easy to see that $\omega - \omega' \in V_i$ (and so $\omega, \omega'$ lie in the same cell at level $i$) if and only if for every $A \subset [i]$ one of the following is true:
\begin{enumerate}
\item Both $\omega(A,i)$ and $\omega'(A,i)$ are constant blocks (that is, they both lie in $\{ \mathbf{0}_{r-i}, \mathbf{1}_{r-i}\})$.
\item $\omega(A, i) = \omega'(A,i)$, and neither of these blocks is constant (that is, neither is $\mathbf{0}_{r-i}$ nor $\mathbf{1}_{r-i}$).
\end{enumerate}
Thus a cell at level $i$ is completely specified by the \emph{positions} $A$ of its constant $i$-blocks, and the \emph{values} $\omega(A,i)$ (for an arbitrary $\omega \in C$) of its non-constant $i$-blocks.

\begin{example*}
With $r=3$ and $\omega = 01001110$, the level $2$ cell that contains $\omega$ is the set 
\[\{ \omega, 01111110, 01000010, 01000010\}.\]
Its constant $2$-blocks are at $A = \{2\}$ and $A = \{1\}$. Its non-constant $2$-blocks are at $A = \emptyset$ (taking the value $\omega(A,2) = 01$) and at $A = \{1,2\}$ (taking the value $\omega(A, 2) = 10$).
The level $1$ cell containing $\omega$ is just $\{\omega\}$.
\end{example*}

The positions of the constant $i$-blocks play an important role, and we introduce the name \emph{genotype} to describe these\footnote{The term genotype is appropriate, as each component in 
$g$ acts like recessive gene with respect to child cells.}.

\begin{definition}[Genotype]\label{gen-def} If $C$ is a cell at level $i$, its \emph{genotype} $g(C) \subset \mathcal{P}[i]$ is defined to be the collection of $A \subset [i]$ for which \rev{$\omega(A,i)\in\{\mathbf{0}_{r-i},\mathbf{1}_{r-i}\}$ for all $\omega\in C$}. We refer to any subset of $\mathcal{P}[i]$ as an \emph{$i$-genotype}. If $g, g'$ are two $i$-genotypes, then we write $g \leq g'$ to mean the same as $g \subseteq g'$. We write $|g|$ for the cardinality of $g$.
\end{definition}

\begin{example*} If $C$ is the cell at level $2$ containing $\omega = 01001110$, the genotype $g(C)$ is equal to \rev{$\big\{\{2\}, \{1\}\big\}$}. (We have listed these sets in the reverse binary ordering once again.)
\end{example*}

\begin{definition}[Consolidations]\label{consol-def}
If $g$ is an $i$-genotype, then its \emph{consolidation} is the $(i-1)$-genotype $g^*$ defined by $g^* := \{A' \subset [i-1] : A'\in g, A' \cup \{i\} \in g\}$ (cf.~Remark \ref{rem-blocks} (b)).
\end{definition}

Let us pause to note the easy inequality
\begin{equation}\label{ggstar} \frac{1}{2}|g| \geq |g^*| \geq |g| - 2^{i-1},
\end{equation} valid for all $i$-genotypes.

The genotype is intimately connected to the cell structure on $\{0,1\}^k$ induced by $\sV$, as the following lemma shows.

\begin{lemma}\label{genotype-lemma}
We have the following statements.
\begin{enumerate}
\item If $C$ is a cell, we have $|C| = 2^{| g(C)|}$.
\item Suppose that $g$ is an $i$-genotype. There are
$(2^{2^{r-i}}-2)^{2^i-| g |}$
 cells (at level $i$) with $g(C) = g$.
\item If $g(C) = g$, and if $C'$ is a child of $C$, then
$g(C') \leq g^*$. In particular, $|g(C')| \leq \frac{1}{2}|g(C)|$.
\item Suppose that $g(C) = g$. Suppose that $g'$ is an $(i-1)$-genotype and that $g' \leq g^*$. Then number of children $C'$ of $C$ with $g(C') = g'$ is $2^{| g | - | g^* | - | g' |}$.
\item Suppose that $C$ is a cell at level $i$ with $g(C) = g$. Then the number of children of $C$ (at level $i-1$) is $2^{| g | - 2 |g^* |} 3^{|g^* |}$.
\end{enumerate}
\end{lemma}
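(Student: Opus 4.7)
Parts (a) and (b) are immediate from the cell description given just above the lemma. A cell $C$ at level $i$ is determined by the common values of its non-constant $i$-blocks (which are fixed for all $\om \in C$) together with an independent choice of one of the two constants $\mathbf{0}_{r-i}, \mathbf{1}_{r-i}$ at each position in $g(C)$. Only the latter choices vary as $\om$ ranges over $C$, giving $|C|=2^{|g(C)|}$ and hence (a). For (b), counting the admissible non-constant block values at each of the $2^i-|g|$ non-constant positions yields $(2^{2^{r-i}}-2)^{2^i-|g|}$.

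The meat of the argument is in (c)--(e), which describe the level-$i$ to level-$(i-1)$ passage. The key fact I would use is Remark~\ref{rem-blocks}(b): the $(i-1)$-block $\om(A',i-1)$ at position $A'\subset[i-1]$ is the concatenation of the two $i$-blocks $\om(A',i)$ and $\om(A'\cup\{i\},i)$. Hence $\om(A',i-1)$ can be constant only if both halves equal that same constant, which forces $A',A'\cup\{i\}\in g(C)$, i.e.\ $A'\in g^*$. This gives (c); the bound $|g(C')|\le\tfrac{1}{2}|g(C)|$ then follows from the left inequality in \eqref{ggstar}.

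For (d), I would partition the $2^{i-1}$ positions $A'\subset[i-1]$ into four types according to which of $A',A'\cup\{i\}$ lie in $g$. Letting $a$ (resp.\ $b$) be the number of positions with exactly $A'\in g$ (resp.\ exactly $A'\cup\{i\}\in g$), a double count of $g$ gives $a+b=|g|-2|g^*|$. A child $C'$ with genotype $g'\subseteq g^*$ is specified by a choice of non-constant $(i-1)$-block at each $A'\notin g'$, compatible with $C$. These choices decouple over $A'$, and I would enumerate them per type: for $A'\in g^*\setminus g'$ the block is a non-constant concatenation of two distinct constants, giving $2$ options ($\mathbf{0}_{r-i}\mathbf{1}_{r-i}$ or $\mathbf{1}_{r-i}\mathbf{0}_{r-i}$); for each of the two ``mixed'' types the one free constant half contributes $2$ options; for the ``neither in $g$'' type the block is fully determined by $C$; and for $A'\in g'$ the cell carries no specified value. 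Multiplying gives $2^{|g^*|-|g'|}\cdot 2^{a+b}=2^{|g|-|g^*|-|g'|}$, establishing (d). Summing over $g'\subseteq g^*$ then yields (e):
\[
\sum_{g'\subseteq g^*}2^{|g|-|g^*|-|g'|}
=2^{|g|-|g^*|}\sum_{j=0}^{|g^*|}\binom{|g^*|}{j}2^{-j}
=2^{|g|-|g^*|}(3/2)^{|g^*|}
=2^{|g|-2|g^*|}3^{|g^*|}.
\]

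The only real subtlety, and therefore the main thing to get right, is the independence of the per-position choices in (d): one must check that the constraints encoded by the parent cell $C$ really act position-by-position on the $(i-1)$-blocks, with no hidden coupling. This is ultimately because specifying an $\om\in\{0,1\}^{\mathcal{P}[r]}$ is equivalent to specifying $\om(A',i-1)$ independently for every $A'\subset[i-1]$, after which membership in $C$ is determined coordinate-wise via the halves. Beyond this bookkeeping I do not anticipate any serious obstacle; the lemma is at heart a combinatorial consequence of the explicit description of cells via their genotypes.
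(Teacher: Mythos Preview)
Your proposal is correct and matches the paper's proof essentially line for line. The only cosmetic difference is in (d): the paper organises the count by classifying the level-$i$ positions $A\in g$ as ``productive'' ($A\triangle\{i\}\in g$, giving $2|g^*|$ positions in $|g^*|$ pairs) or not, whereas you partition the level-$(i-1)$ positions $A'$ into four types; the two bookkeepings are equivalent and yield the same product $2^{|g^*|-|g'|}\cdot 2^{|g|-2|g^*|}$.
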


\begin{proof}
(a) This is almost immediate: for each of the $A\subset g(C)$ of constant blocks, the are two choices ($\mathbf{0}_{r - i}$ or $\mathbf{1}_{r - i}$) for $\omega(A,i)$.

\medskip

(b) To determine $C$ completely (given $g$), one must specify the value of each of $2^i - | g |$ non-constant $i$-blocks. For each such block, there are $2^{2^{r-i}}-2$ possible non-constant values.

\medskip

(c) A set $A' \subset [i-1]$ can only possibly be the position of a constant block in some child cell of $C$ if both $A'$ and $A' \cup \{i\}$ are the positions of constant blocks in $C$, or in other words $A', A' \cup \{i\} \in g$, which is precisely what it means for $A'$ to lie in $g^*$.

Note that the child cell $C'$ containing $\omega$ only \emph{does} have a constant $(i-1)$-block at position $A'$ if $\omega(A', i) = \omega(A' \cup \{i\}, i)$, which may or may not happen.

The second statement is an immediate consequence of the first and \eqref{ggstar}.

\medskip

(d) Let $A \in g$. We say that $A$ is \emph{productive} if $A' :=  A \cap [i-1] \in g^*$, or equivalently if $A'$ and $A' \cup \{i\}$ both lie in $g$ (or, more succinctly, $A \triangle \{i\} \in g$). These are the positions which can give rise to constant $(i-1)$-blocks in children of $C$. There are $2|g^*|$ such positions, coming in $|g^*|$ pairs. To create a child $C'$ with genotype $g'$, we have a binary choice at $|g^*| - |g'|$ of these pairs: at each of them either $\omega(A', i) = \mathbf{0}_{r - i}$ and $\omega(A' \cup \{i\}, i) = \mathbf{1}_{r - i}$, or the other way around. There are $|g| - 2|g^*|$ non-productive positions $A \in g$, and for each of these there is also a binary choice, either $\omega(A,i) = \mathbf{0}_{r - i}$ or $\omega(A,i) = \mathbf{1}_{r - i}$. The total number of choices is therefore $2^{|g^*| - |g'|} \times 2^{|g| - 2|g^*|}$, which is exactly as claimed.

\medskip

(e) This is immediate from part (d), upon summing over $g' \subseteq g^*$.
\end{proof}

\subsection{The $f^C(\rho)$ and genotype}\label{fc-genotype}

We begin by recalling from \eqref{fc-eqs} the definition of the functions $f^C({\bm \rho})$. Here ${\bm \rho} = (\rho_1,\cdots, \rho_{r-1})$ is a sequence of parameters, and we define $\rho_0 = 0$. If $C$ has level $0$, we set $f^C({\bm \rho}) = 1$, whilst for $C$ at level $i \geq 1$ we apply the recursion
\[ f^C({\bm \rho}) = \sum_{C \rightarrow C'} f^{C'}({\bm \rho})^{\rho_{i-1}}.\]

\begin{proposition}\label{gen-fc} The quantities $f^C$ depend only on the genotype of $C$, and thus for any $i$-genotype $g$ we may define $F(g) := f^{C}({\bm \rho})$, where $C$ is any cell with $g(C) = g$. We have the recursion
\begin{equation}\label{geno-recurs} F(g) = \sum_{g' \leq g^*} 2^{|g| - |g^*| - |g'|} F(g')^{\rho_{i-1}}.\end{equation} 
\end{proposition}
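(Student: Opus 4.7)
The plan is to prove Proposition \ref{gen-fc} by a straightforward induction on the level $i$ of $C$, with all the combinatorial work having already been done in Lemma \ref{genotype-lemma}.

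For the base case $i = 0$, the cells are the cosets of $V_0 = \langle \mathbf{1}\rangle$, so each is a pair $\{\omega, \mathbf{1}-\omega\}$ of size $2$. By Lemma \ref{genotype-lemma}(a), every level-$0$ cell $C$ has $|g(C)|=1$, and since $g(C) \subseteq \mathcal{P}[0] = \{\emptyset\}$, we must have $g(C) = \{\emptyset\}$ for every such $C$. The definition $f^C(\bm\rho) = 1$ then gives $F(\{\emptyset\}) = 1$, unambiguously, at level $0$.

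For the inductive step, assume that $f^{C'}(\bm\rho)$ depends only on $g(C')$ for every cell $C'$ at level $i-1$, and write $F(g')$ for this common value. Fix a cell $C$ at level $i$ with $g(C) = g$. By Lemma \ref{genotype-lemma}(c), every child $C'$ of $C$ satisfies $g(C') \le g^*$, and by Lemma \ref{genotype-lemma}(d), for each $(i-1)$-genotype $g' \le g^*$ there are exactly $2^{|g|-|g^*|-|g'|}$ children of $C$ with $g(C') = g'$. Grouping the defining recursion $f^C(\bm\rho) = \sum_{C \to C'} f^{C'}(\bm\rho)^{\rho_{i-1}}$ by the genotype of the children and substituting the inductive hypothesis yields exactly
\[ f^C(\bm\rho) = \sum_{g' \le g^*} 2^{|g|-|g^*|-|g'|} F(g')^{\rho_{i-1}}.\]
The right-hand side depends only on $g$, which simultaneously proves that $f^C(\bm\rho)$ depends only on $g(C)$ (so the definition $F(g) := f^C(\bm\rho)$ is unambiguous) and establishes the asserted recursion \eqref{geno-recurs}.

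There is no serious obstacle here: the proposition is essentially a corollary of Lemma \ref{genotype-lemma}, and the only mild subtlety is checking the base case (that all level-$0$ cells genuinely share the same genotype $\{\emptyset\}$). All real combinatorial work -- understanding how cells split into children and how genotypes constrain this splitting -- has already been absorbed into parts (c) and (d) of that lemma.
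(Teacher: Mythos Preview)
Your approach is correct and matches the paper's own proof exactly: induction on the level $i$, using the recursion defining $f^C$ together with Lemma \ref{genotype-lemma}(c),(d). The inductive step is fine.

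There is, however, a small slip in your base case. Not every level-$0$ cell is a pair $\{\omega,\mathbf{1}-\omega\}$: two points $\omega,\omega'\in\{0,1\}^k$ lie in the same $V_0$-coset iff $\omega'-\omega\in\langle\mathbf{1}\rangle$, which forces $\omega'=\omega$ unless $\omega$ is constant. So the level-$0$ cells are the pair $\Gamma_0=\{\mathbf{0},\mathbf{1}\}$ (with genotype $\{\emptyset\}$) together with all the singletons $\{\omega\}$ for non-constant $\omega$ (each with genotype $\emptyset$). Lemma \ref{genotype-lemma}(a) confirms this: $|C|=2^{|g(C)|}$ gives $|g(C)|=0$ for the singletons. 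The base case is still trivial, of course, since $f^C(\bm\rho)=1$ for \emph{every} level-$0$ cell regardless of genotype, so both $F(\{\emptyset\})$ and $F(\emptyset)$ are well-defined and equal to $1$.
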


\begin{remark*} The $F(g)$ depend on ${\bm \rho}$, as well as on $i$ (where $g$ is an $i$-genotype) but we suppress explicit mention of this. For example, it should be clear from context that $g$ on the left is an $i$-genotype, but the sum on the right is over $(i-1)$-genotypes, since $g^*$ is an $(i-1)$-genotype by definition.
\end{remark*}

\begin{proof} This is a simple induction on the level $i$ using the definition of the $f^C({\bm \rho})$, and parts (c) and (d) of Lemma \ref{genotype-lemma}.\end{proof}

Let us pause to record two corollaries which we will need later.

\begin{corollary}\label{g1-g2}
Suppose that $g_1, g_2$ are two $i$-genotypes with $g_1 \leq g_2$. Then $F(g_1) \leq F(g_2)$.
\end{corollary}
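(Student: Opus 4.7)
My plan is to prove Corollary \ref{g1-g2} by reducing to the single-step case where $g_2 = g_1 \cup \{A\}$ for a single element $A \in \mathcal{P}[i] \setminus g_1$, since the general case $g_1 \le g_2$ follows by iterating. The main tool will be the recursion
\[ F(g) = \sum_{g' \leq g^*} 2^{|g| - |g^*| - |g'|} F(g')^{\rho_{i-1}} \]
established in Proposition \ref{gen-fc}, together with the elementary observation that $F(g') > 0$ for every genotype $g'$ (obvious from the recursion and the base value $F(\emptyset) = 1$ at level $0$). Note in particular that I will not need any monotonicity statement at lower levels as an inductive hypothesis: positivity of the lower-level $F$-values is enough.

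The key combinatorial input is to understand how passing from $g_1$ to $g_2 = g_1 \cup \{A\}$ affects the consolidation $g^*$ (Definition \ref{consol-def}). Writing $A' = A \cap [i-1]$, I will observe from the definition that $A' \in g_2^*$ if and only if both $A'$ and $A' \cup \{i\}$ lie in $g_2$, which splits the analysis into two cases according to whether or not the ``partner'' element $A \triangle \{i\}$ already lies in $g_1$. In Case~A (partner absent), one has $g_2^* = g_1^*$, while in Case~B (partner present) one has $g_2^* = g_1^* \cup \{A'\}$ with $A' \notin g_1^*$, and moreover $|g_2| = |g_1|+1$ in both situations.

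For Case~A, the exponent of $2$ in every term of the recursion increases by exactly $1$, so $F(g_2) = 2F(g_1) \geq F(g_1)$. For Case~B, the exponent $|g| - |g^*| - |g'|$ is unchanged on the overlapping index set $\{g' \le g_1^*\}$ (since both $|g|$ and $|g^*|$ grow by $1$), so splitting the sum for $F(g_2)$ according to whether $A' \in g'$ yields
\[ F(g_2) = F(g_1) + \tfrac{1}{2}\sum_{g'' \leq g_1^*} 2^{|g_1| - |g_1^*| - |g''|} F(g'' \cup \{A'\})^{\rho_{i-1}} \geq F(g_1), \]
where the inequality uses only positivity of $F$ on $(i-1)$-genotypes. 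There is no real obstacle here: the result is essentially a bookkeeping statement, and the only thing to be careful about is the case analysis for how $g^*$ changes when a single element is added to $g$.
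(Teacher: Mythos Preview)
Your proof is correct. The single-element reduction and the two-case analysis of how $g^*$ changes are carried out accurately, and positivity of $F$ at level $i-1$ is indeed all that is needed.

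The paper takes a slightly different and more direct route. Instead of adding elements one at a time, it observes (for arbitrary $g_1 \le g_2$) the two monotonicity facts $g_1^* \le g_2^*$ and $|g_1| - |g_1^*| \le |g_2| - |g_2^*|$, the latter because $|g|-|g^*|$ counts the $A \subset [i-1]$ with exactly one of $A,\,A\cup\{i\}$ in $g$. Rewriting the recursion as $F(g) = 2^{|g|-|g^*|}\sum_{g'\le g^*}2^{-|g'|}F(g')^{\rho_{i-1}}$, the inequality then follows immediately: the prefactor is smaller for $g_1$, and the sum for $g_1$ ranges over a subset of the terms for $g_2$, all of which are nonnegative. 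Your Cases~A and~B are effectively the one-step verification of these two monotonicity facts; the paper's argument packages them once and for all and avoids the split of the sum. Both approaches use only positivity (not monotonicity) at level $i-1$, so neither requires induction on the level.
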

\begin{proof}
Note that $g_1^* \leq g_2^*$, and also that $|g_1| - |g_1^*| \leq |g_2| - |g_2^*|$, since 
\[
\rev{
	|g| - |g^*| = |g^*|+ \# \{ A \subset \mathcal{P}[i-1] :\#(\{A,A\cup\{i\} \}\cap g)=1\}.
 }
\] 
Hence, by two applications of Proposition \ref{gen-fc}, 
\[ F(g_1) = 2^{|g_1| - |g_1^*|} \sum_{g' \leq g_1^*} 2^{-|g'|} F(g')^{\rho_{i-1}} \leq 2^{|g_2| - |g^*_2|} \sum_{g' \leq g^*_2} 2^{-|g'|} F(g')^{\rho_{i-1}} = F(g_2).
\qedhere
\]
\end{proof}

Recall that $\Gamma_i$ is the cell at level $i$ containing $\mathbf{0}$. Note that $g(\Gamma_i) =\mathcal{P}[i]$.

\begin{corollary}\label{Gamma-j-largest}
If $C \ne \Gamma_i$ is a cell of level $i$, then $
f^C({\bm \rho}) < f^{\Gamma_i}({\bm \rho})$.
\end{corollary}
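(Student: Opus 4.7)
The plan is to reduce the claim to a strict version of Corollary \ref{g1-g2}. By Proposition \ref{gen-fc}, the quantity $f^C(\bs\rho)$ depends only on $g(C)$, so it suffices to compare $F(g(C))$ with $F(\mathcal{P}[i]) = f^{\Gamma_i}(\bs\rho)$. First I would observe that $\Gamma_i$ is the unique cell at level $i$ with genotype equal to $\mathcal{P}[i]$: indeed, Lemma \ref{genotype-lemma}(b) says the number of cells with genotype $g$ is $(2^{2^{r-i}}-2)^{2^i - |g|}$, which equals $1$ precisely when $|g| = 2^i$, i.e.\ when $g = \mathcal{P}[i]$. Therefore $C \ne \Gamma_i$ forces $g(C) \subsetneq \mathcal{P}[i]$.

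The task thus reduces to proving the strict inequality $F(g) < F(\mathcal{P}[i])$ whenever $g \subsetneq \mathcal{P}[i]$ (and more generally $F(g_1) < F(g_2)$ whenever $g_1 \subsetneq g_2$ are $i$-genotypes). I would revisit the proof of Corollary \ref{g1-g2}, which from the recursion in Proposition \ref{gen-fc} gives
\[
F(g_j) \;=\; 2^{|g_j|-|g_j^*|} \sum_{g'\le g_j^*} 2^{-|g'|}\, F(g')^{\rho_{i-1}} \qquad (j=1,2),
\]
and now split into two cases depending on whether $g_1^* = g_2^*$ or $g_1^* \subsetneq g_2^*$ (one of these must hold since $g_1 \subseteq g_2 \Rightarrow g_1^* \subseteq g_2^*$). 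In the first case, $g_1 \subsetneq g_2$ together with $|g_1^*| = |g_2^*|$ forces $|g_1|-|g_1^*| < |g_2|-|g_2^*|$, so the prefactor alone gives a strict inequality while the sums are equal. In the second case, the sum for $g_2$ has strictly more terms than the sum for $g_1$, and the prefactor for $g_2$ is at least as big; it therefore suffices to know that $F(g') > 0$ for every genotype $g'$.

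The positivity $F(g) > 0$ is the last ingredient, and it follows by a straightforward induction on the level: $F(\emptyset) = 1$ at level $0$, and the recursion expresses $F(g)$ as a sum of positive powers of positive numbers multiplied by positive coefficients, provided the $\rho_j$ are nonnegative (which is the regime of interest; note that we treat $\bs\rho$ as a tuple of parameters here, and $\rho_0=0$ causes no issue because $F(g')^{0}=1$). There is no real obstacle in this argument — the entire content of the corollary is a careful case analysis extracted from the proof of Corollary \ref{g1-g2}, made possible by the fact that $g(\Gamma_i)=\mathcal{P}[i]$ is the unique maximal $i$-genotype.
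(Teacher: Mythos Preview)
Your proof is correct and follows essentially the same approach as the paper: reduce to genotypes via Proposition~\ref{gen-fc}, note that $C\ne\Gamma_i$ forces $g(C)\subsetneq\mathcal{P}[i]$, and extract strictness from the recursion underlying Corollary~\ref{g1-g2}. The one minor difference is that you prove a general strict version of Corollary~\ref{g1-g2} via a two-case split, whereas the paper observes the shortcut that $g\subsetneq\mathcal{P}[i]$ automatically forces $g^*\subsetneq\mathcal{P}[i-1]$ (since $g^*=\mathcal{P}[i-1]$ would require every $A'\subset[i-1]$ and every $A'\cup\{i\}$ to lie in $g$, i.e.\ $g=\mathcal{P}[i]$), so only your Case~2 is ever needed in this application.
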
\begin{proof} This is simply the special case $g_2 = \mathcal{P}[i]$ of the preceding corollary. The inequality is strict because if $g < \mathcal{P}[i]$, then $g^* < \mathcal{P}[i-1]$.\end{proof}

\subsection{Existence of the $\rho_i$}\label{rhoi-exist}

In this section we prove Proposition \ref{rho-binary-prop} (a), which asserts that for the binary flags there is a unique solution ${\bm \rho} = (\rho_1,\rho_2,\dots)$ to the $\rho$-equations \eqref{binary-rho-eqs}. In fact, we will prove the following more general fact which treats the $j$th $\rho$-equation in isolation, irrespective of whether the earlier ones have already been solved. 

\begin{proposition}\label{jth-rho-prop}
Let \rev{$j\in\N$ and let}  $\rho_1,\dots, \rho_{j-1} \in (0,1)$. Then there is a unique $\rho_j \in (0,1)$ such that the $j$th $\rho$-equation for the binary flag, $f^{\Gamma_{j+1}}(\brho) = e^{2^j} f^{\Gamma_j}(\brho)^{\rho_{j}}$, is satisfied. 
\end{proposition}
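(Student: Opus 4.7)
The plan is to view the $j$-th $\rho$-equation as a single-variable equation in $\rho_j$ and analyse the resulting function. Expanding $f^{\Gamma_{j+1}}(\brho)$ via the defining recursion \eqref{fc-eqs} applied to the children of $\Gamma_{j+1}$, then dividing both sides by $f^{\Gamma_j}(\brho)^{\rho_j}$, the equation becomes
\[
\Phi(\rho_j) = e^{2^j}, \qquad \Phi(\rho) := \sum_{C:\, \Gamma_{j+1}\to C} \bigl(f^C(\brho)/f^{\Gamma_j}(\brho)\bigr)^{\rho}.
\]
By Corollary \ref{Gamma-j-largest}, the ratio $r_C := f^C(\brho)/f^{\Gamma_j}(\brho)$ equals $1$ exactly when $C = \Gamma_j$ and lies strictly in $(0,1)$ otherwise. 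Hence $\Phi$ is continuous and strictly decreasing on $\R$, which immediately gives uniqueness of $\rho_j$ (not only in $(0,1)$, but on the whole real line).

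For existence in $(0,1)$, it suffices to verify $\Phi(0) > e^{2^j} > \Phi(1)$. The first inequality is easy: by Lemma \ref{genotype-lemma}(e), $\Gamma_{j+1}$ has exactly $3^{2^j}$ children (its genotype is $\mathcal{P}[j+1]$, whose consolidation has cardinality $2^j$), so $\Phi(0) = 3^{2^j} > e^{2^j}$. The substantive part of the argument, and the main obstacle, is to prove
\begin{equation}\label{eq:keyineq-plan}
f^{\Gamma_{j+1}}(\brho)\big|_{\rho_j=1} < e^{2^j}\, f^{\Gamma_j}(\brho).
\end{equation}

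To establish \eqref{eq:keyineq-plan}, I would switch to the genotype formalism and use Proposition \ref{gen-fc} twice to expand both sides as linear combinations of the quantities $F(g'')^{\rho_{j-1}}$ indexed by $(j-1)$-genotypes $g'' \subseteq \mathcal{P}[j-1]$. Combining Lemma \ref{genotype-lemma}(d) (to enumerate the children of $\Gamma_{j+1}$ by their genotype), the recursion for $F(g')$ with $j$-genotypes $g'$ grouped by their consolidation $g'^* = h$ (of which there are $3^{2^{j-1}-|h|}$ for a given $h$), and the elementary identity $\sum_{h \supseteq g''} c^{-|h|} = c^{-|g''|}(1+c^{-1})^{2^{j-1}-|g''|}$ applied with $c = 6$, the difference telescopes (after some bookkeeping) into the clean form
\[
f^{\Gamma_{j+1}}(\brho)\big|_{\rho_j=1} - e^{2^j} f^{\Gamma_j}(\brho)
= \sum_{g'' \subseteq \mathcal{P}[j-1]} F(g'')^{\rho_{j-1}} \cdot 2^{2^{j-1}-|g''|}\bigl(7^{2^{j-1}-|g''|} - e^{2^j}\bigr).
\]

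The crucial observation is then a numerical coincidence: every bracket on the right is strictly negative, because $|g''| \geq 0$ forces $7^{2^{j-1}-|g''|} \leq 7^{2^{j-1}} < e^{2^j}$, where the final inequality is equivalent to $7 < e^2 \approx 7.389$. Since $F(g'') > 0$ (immediate by induction on the level, starting from $F(\emptyset) = 1$ and using that the recursion is a sum of positive terms), each summand is strictly negative, so the whole sum is strictly negative. This establishes \eqref{eq:keyineq-plan}, and the intermediate value theorem applied to the continuous, strictly decreasing function $\Phi$ on $(0,1)$ yields the desired $\rho_j$.
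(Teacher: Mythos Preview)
Your proof is correct and follows essentially the same route as the paper's. Both arguments define the same monotone function (your $\Phi$ is the paper's $e^{2^j}-H$ up to sign), use Corollary~\ref{Gamma-j-largest} for strict monotonicity, compute $\Phi(0)=3^{2^j}$ from the child count, and reduce $\Phi(1)<e^{2^j}$ to the same genotype expansion (your combinatorial identity with $c=6$ reproduces exactly the paper's Lemma~\ref{small-calc-gen}), with the punchline in both cases being the inequality $7<e^2$. Your termwise formulation of the difference makes the sign slightly more transparent, and your argument handles $j=1$ uniformly rather than separately, but substantively the proofs are the same.
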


\begin{remark*} 
	We will prove in the next section (Lemma \ref{rho-j-rho-1}) that for the solution $\rho_1,\rho_2,\dots$ to the full set of $\rho$-equations we have $\rho_j \le \rho_1 = 0.30648\dots$ for all $j$. For a table of numerical values of the $\rho_j$, see Table \ref{rhoj} in Section \ref{sec:analytic}.
\end{remark*}

Before beginning the proof of Proposition \ref{jth-rho-prop}, we isolate a lemma.

\begin{lemma}\label{small-calc-gen}
Fix a $(j-1)$-genotype $g'$. Then
\[ \sum_{g :\, g^*\geq g'} 2^{-|g^*|} = 2^{-2^{j-1}}
7^{2^{j-1}-|g'|},\] where the sum is over all $j$-genotypes $g$.
\end{lemma}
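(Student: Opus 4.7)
The plan is to factor the sum over $j$-genotypes $g$ into independent contributions indexed by subsets $A' \subset [j-1]$, exploiting the fact that the constraint ``$g^* \geq g'$'' and the statistic $|g^*|$ both decompose as a product over the pairs $\{A', A' \cup \{j\}\} \subset \mathcal{P}[j]$.

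Specifically, I would first observe that specifying a $j$-genotype $g \subset \mathcal{P}[j]$ is equivalent to making, for each $A' \subset [j-1]$, an independent choice among the four options
\[
\{A', A' \cup \{j\}\} \cap g \ \in \ \{\emptyset,\ \{A'\},\ \{A' \cup \{j\}\},\ \{A', A' \cup \{j\}\}\}.
\]
By Definition \ref{consol-def}, $A' \in g^*$ precisely when the fourth option is chosen, and otherwise $A' \notin g^*$. In particular, $|g^*|$ decomposes as a sum over the $2^{j-1}$ subsets $A'$, and the constraint $g^* \geq g'$ is simply that the fourth option is selected for every $A' \in g'$.

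The sum therefore factorizes as
\[
\sum_{g :\, g^* \geq g'} 2^{-|g^*|} \;=\; \prod_{A' \in g'} \Big( \tfrac{1}{2} \Big) \prod_{A' \in \mathcal{P}[j-1] \setminus g'} \Big( 1 + 1 + 1 + \tfrac{1}{2}\Big),
\]
where the three $1$'s come from the first three options (which contribute $2^0$ to the factor $2^{-|g^*|}$) and the $\tfrac{1}{2}$ from the fourth. Simplifying gives $(\tfrac{1}{2})^{|g'|} (\tfrac{7}{2})^{2^{j-1} - |g'|} = (\tfrac{1}{2})^{2^{j-1}} 7^{2^{j-1} - |g'|}$, as claimed.

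There is no real obstacle here; the only thing to be careful about is the bookkeeping of the four-way classification of pairs $\{A', A' \cup \{j\}\}$ and matching it correctly to the definition of the consolidation.
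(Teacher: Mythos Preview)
Your proof is correct and essentially identical to the paper's: both factor the sum over $j$-genotypes as a product over $A' \subset [j-1]$, noting that for $A' \in g'$ only the option $\{A', A'\cup\{j\}\} \subset g$ is allowed (contributing $2^{-1}$), while for $A' \notin g'$ all four options contribute, giving $2^{-1} + 3 \cdot 2^0 = 7/2$.
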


\begin{proof} 
\rev{
	In order to determine $g$, we must determine for each $A\subset[j-1]$ whether $A$ and/or $A\cup\{j\}$ lie in $g$. Since we are only summing over $g$ whose consolidation $g^*$ contains $g'$, we must have that $A$ and $A\cup\{j\}$ belong to $g$ for all $A\in g'$, so the membership of $A$ and $A\cup\{j\}$ to $g$ is fully determined for all $A\in g'$. For any $A\subset[j-1]$ with $A\notin g'$, we have four choices, according to whether $A\in g$ and whether $A\cup\{j\}$. If both of these conditions hold, then we further have $A\in g^*$; in the other three cases, we have $A\notin g^*$. We conclude that \[
	\sum_{g :\, g^*\ge g'} 2^{-|g^*|} =\sprod{A\in g'} 2^{-1} \sprod{A\not\in g'}
	(\rev{1}\cdot 2^{-1}+3\cdot 2^{\rev{-0}})= 2^{-2^{j-1}} 7^{2^{j-1}-|g'|}. \]
	This completes the proof.
}	
%
\end{proof}

\begin{proof}[Proof of Proposition \ref{jth-rho-prop}]
For $j=1$, the equation to be satisfied is $3^{\rho_1} + 4 \cdot 2^{\rho_1} + 4 = e^2 3^{\rho_1}$.  It may easily be checked numerically that this has a unique solution $\rho_1 \approx 0.306481\dots$ in $(0,1)$. One may also proceed analytically as follows. Define
\[ 
G(x) = G_1(x) := e^2 3^x - (3^x + 4 \cdot 2^x + 4) = 3^x \big(e^2 - (1 + 4 \cdot (2/3)^x + 4/3^x)\big),
\] 
In particular, the roots of $G$ are in correspondence with the roots of $H(x)=e^2 - (1 + 4 \cdot (2/3)^x + 4/3^x)$. This is clearly a continuous and strictly increasing function. In addition, $H(0)=e^2-9<0$ and $H(1)=e^2-5>0$. Thus, $H$ has a unique root $\rho_1\in (0,1)$, and so does $G$.

Now assume $j\ge 2$. It turns out that much the same argument works, although the details are more elaborate.
Assume that $0<\rho_i<1$ for $1\le i<j$. Define
\[
G(x) := G_j(x) 
	 = e^{2^j} (f^{\Gamma_j}(\brho))^x - f^{\Gamma_{j+1}}(\rho_1,\ldots,\rho_{j-1},x) .
\]
Proposition \ref{gen-fc} implies that
\begin{align}
G(x) 
	& = e^{2^j} (F(\mathcal{P}[j]))^x - \sum_{g} 2^{2^j-|g|} F(g)^x  \label{g-exp} \\ 
	& = F(\mathcal{P}[j])^x \cdot H(x)\nonumber ,
\end{align}
where
\[
H(x) = e^{2^j} - 2^{2^j} \sum_{g} 2^{-|g|} \big(F(g) / F(\mathcal{P}[j])\big)^x
\]
and the sums over $g$ run over all genotypes $g \subset \mathcal{P}[j]$ at level $j$. Since (by an easy induction) $F(\mathcal{P}[j])>0$, it follows that $G$ and $H$ have the same roots. The latter is a continuous and strictly increasing function because \rev{Corollary \ref{Gamma-j-largest} implies that $F(g)/F(\mathcal{P}[j]) \leq 1$, with equality only when $g=\mathcal{P}[j]$}. Moreover, $H(0) = e^{2^j} - 3^{2^j} < 0$. Therefore to complete the proof it suffices to show that $H(1) > 0$.

To show this, we use \eqref{g-exp}. First note that
\begin{equation}\label{power-i-eq}
F(\mathcal{P}[j]) = (\sqrt{2})^{2^j} \sum_{g'} 2^{-|g'|} F(g')^{\rho_{j-1}},
\end{equation}
where the sum is over all genotypes $g'$ of level $(j-1)$.

Next, by Proposition \ref{gen-fc} and Lemma \ref{small-calc-gen} we have

\[ 
\sum_{g \subset \mathcal{P}[j]} 2^{-|g|} F(g) = \sum_{g} 2^{-|g^*|} \sum_{g' \le g^*}
2^{-|g'|} F(g')^{\rho_{j-1}} = \] \begin{equation}
= \sum_{g' \subset \mathcal{P}[j-1]} 2^{-|g'|}   F(g')^{\rho_{j-1}} \sum_{g:\, g^*\ge g'}2^{-|g^*|} = (7/2)^{2^{j-1}} \sum_{g'} 14^{-|g'|} F(g')^{\rho_{j-1}} .\label{second-g}\end{equation}
Putting \eqref{g-exp}, \eqref{power-i-eq} and \eqref{second-g} together we obtain
\[
H(1)\cdot F(\mathcal{P}[j])  = (e\sqrt{2})^{2^j} \sum_{g'} 2^{-|g'|} F(g')^{\rho_{j-1}}
- (\sqrt{14})^{2^j} \sum_{g'} 14^{-|g'|} F(g')^{\rho_{j-1}} .
\]
Since $e^2>7$, we have $\sqrt{14} < e\sqrt{2}$, and thus $H(1)>0$. This completes the proof.
\end{proof}

\subsection{Entropy inequalities for the binary systems}
\label{sec:binary-entropy}
We begin with a lemma which will be used a few times in what follows.

\begin{lemma}\label{largest-cell}
Let $C'$ be one of the children of $\Gamma_i$, thus $C'$ is a cell at level $(i-1)$. Then \[ \mu_i(C') \leq \mu_i(\Gamma_{i-1}) = e^{-2^{i-1}},\] and equality occurs only when $C' = \Gamma_{i-1}$.
\end{lemma}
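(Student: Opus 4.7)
The plan is to unwind the definitions. By Definition \ref{opt-meas-def}, since $\mu_i$ is the normalised restriction $\mu^*(\cdot)/\mu^*(\Gamma_i)$ on subsets of $\Gamma_i$, the ratio rule immediately gives
\[
\mu_i(C') \;=\; \frac{f^{C'}(\brho)^{\rho_{i-1}}}{f^{\Gamma_i}(\brho)}
\]
for any child $C'$ of $\Gamma_i$ at level $i-1$. Both halves of the lemma will follow by analysing this single expression.

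For the inequality, I would invoke Corollary \ref{Gamma-j-largest}, which identifies $\Gamma_{i-1}$ as the unique level-$(i-1)$ cell maximising $f^C(\brho)$, so that $f^{C'}(\brho) \leq f^{\Gamma_{i-1}}(\brho)$ with equality exactly when $C' = \Gamma_{i-1}$. Since $\rho_{i-1} \in (0,1)$ by Proposition \ref{rho-binary-prop}(a), raising to the power $\rho_{i-1}$ and dividing by $f^{\Gamma_i}(\brho)$ preserves both the inequality and the equality condition, giving $\mu_i(C') \leq \mu_i(\Gamma_{i-1})$ with equality iff $C' = \Gamma_{i-1}$.

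For the explicit value, I would substitute the $(i-1)$-th binary $\rho$-equation $f^{\Gamma_i}(\brho) = f^{\Gamma_{i-1}}(\brho)^{\rho_{i-1}} e^{2^{i-1}}$ from \eqref{binary-rho-eqs} into the formula for $\mu_i(\Gamma_{i-1})$; the powers of $f^{\Gamma_{i-1}}(\brho)$ cancel and the result collapses to $e^{-2^{i-1}}$. There is no serious obstacle: both conclusions are essentially mechanical consequences of the ratio rule of Definition \ref{opt-meas-def}, the maximisation statement of Corollary \ref{Gamma-j-largest}, and the $\rho$-equations. The only small technical point is that the $\rho$-equations are indexed by $j \geq 1$, so the computation of the explicit value formally requires $i \geq 2$; for $i = 1$ all children of $\Gamma_1$ are equimeasurable anyway, since $\rho_0 = 0$ trivialises the numerator.
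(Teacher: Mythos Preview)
Your proof is correct and follows essentially the same route as the paper's: both invoke Corollary \ref{Gamma-j-largest} together with the ratio rule \eqref{eq46} for the inequality, and the paper then cites \eqref{mu-gammaj} (itself a consequence of the $\rho$-equations) where you compute directly from \eqref{binary-rho-eqs}. Your remark on the $i=1$ edge case is apt --- the derivation of the value $e^{-2^{i-1}}$ via the $\rho$-equations genuinely requires $i \geq 2$, and at $i=1$ one has $\mu_1(C') = 1/3$ for all three children (so the strict inequality $\mu_1(C') < e^{-1}$ still holds, which is all that the applications need).
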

\begin{proof}
We showed in Corollary \ref{Gamma-j-largest} that $f^{C'}({\bm \rho}) < f^{\Gamma_{i-1}}({\bm \rho})$, for any choice of ${\bm \rho} = (\rho_1,\dots, \rho_{r-1})$, and for any child $C'$ of $\Gamma_i$ with $C' \neq \Gamma_{i-1}$.
Now that we know that the $\rho$-equations have a solution, it follows immediately from the definition of the optimal measures $\bmu^*$ in \eqref{eq46}, applied with $C = \Gamma_i$, that $\mu_i(C') < \mu_i(\Gamma_{i-1})$, again for any child $C'$ of $\Gamma_i$ with $C' \neq \Gamma_{i-1}$. Finally, observe that $\mu_i(\Gamma_{i-1}) = e^{-2^{i-1}}$ by \eqref{mu-gammaj}.
\end{proof}

\begin{proof}[Proof of Lemma \ref{binary-entropy-spec-1}] This follows almost immediately from Lemma \ref{largest-cell} with $i = m+1$. Indeed since $\mu_{m+1}(C) \leq e^{-2^m}$ for all cells $C$ at level $m$, with equality only for $C = \Gamma_m$, we have
\[
\HH_{\mu_{m+1}}(V_m) = \sum_C \mu_{m+1}(C) \log \frac{1}{\mu_{m+1}(C)} >
2^m \sum_C \mu_{m+1}(C) = 2^m.
\] This concludes the proof.
\end{proof}

\begin{proof}[Proof of Lemma \ref{binary-entropy-spec}] Let $\mu = \mu_i$ with $m<i\le r$. \rev{We must show that
		\begin{equation}\label{binary-entropy-spec: goal}
		\HH_\mu(V_{m-1})-\HH_\mu(V_m) <2^{m-1}. 
		\end{equation} 
	
	Let $C$ denote a cell at level $m$ and $C'$ a child of $C$ at level $(m-1)$. In addition, let the notations $g(C)$ and $g(C)^*$ refer to the genotype of $C$ and its consolidation, as defined in Definitions \ref{gen-def} and \ref{consol-def}.} By the definition of entropy, Lemma \ref{genotype-lemma} (e), and the concavity of $L(x)=-x\log x$ we find that
\begin{align}\nonumber
\HH_{\mu}(V_{m-1}) -  \HH_{\mu}(V_m) &= \sum_C \mu(C) \sum_{C'} L\pfrac{\mu(C')}{\mu(C)} \\ \nonumber
&\leq \sum_C \mu(C) \log (\# C') \\
&\rev{=}\sum_C \mu(C) \log \Big[ 2^{|g(C)|} (3/4)^{|g(C)^*|}\Big].\label{ten-5}
\end{align}

Now by \eqref{ggstar} we have $|g(C)^*| \geq |g(C)| - 2^{m-1}$, whence
\begin{equation}\label{children-crude-bound}
2^{|g(C)|} (3/4)^{|g(C)^*|} \leq 2^{|g(C)|}(3/4)^{|g(C)| - 2^{m-1}} = (3/2)^{|g(C)|}(4/3)^{2^{m-1}}.
\end{equation}
Since we also have that $|g(C)|\le2^m$, we infer that 
\begin{equation}\label{children-crude-bound-2}
2^{|g(C)|} (3/4)^{|g(C)^*|}\leq 3^{2^{m-1}}.
\end{equation} 
This and \eqref{ten-5} already imply the bound
\[ \HH_{\mu}(V_{m-1}) - \HH_{\mu}(V_m) \leq  2^{m-1} \log 3,\] which is only very slightly weaker than Lemma \ref{binary-entropy-spec}.

To make the crucial extra saving, write $S$ for the union of all cells $C$ at level $m$ with $|g(C)| > \frac{3}{4} 2^m$. We claim that \be\label{S-meas}
\mu(S) < \frac12.
\ee
We postpone the proof of this inequality momentarily and show how to use it to complete the proof of Lemma \ref{binary-entropy-spec}. 

Observe that if $C$ is not one of the cells making up $S$, that is to say if $|g(C)| \leq \frac{3}{4} 2^m$, then
\begin{align*}
\log \Big[ 2^{|g(C)|} (3/4)^{|g(C)^*|} \Big] 
	&\leq  \log \Big[ (3/2)^{|g(C)|}(4/3)^{ 2^{m-1}} \Big] \\
	&\leq 
		\( \frac{3}{2} \log (3/2) + \log(4/3) \) 2^{m-1} \\
	& \leq 0.9 \cdot 2^{m-1},
\end{align*}
where we used \eqref{children-crude-bound} to obtain the first inequality. Assuming the claim \eqref{S-meas}, it follows from this, \eqref{ten-5} and \eqref{children-crude-bound-2} that 
\[
\HH_{\mu}(V_{m-1}) - \HH_{\mu}(V_m) \leq 2^{m-1}(\log 3) \mu(S) +
0.9\cdot 2^{m-1} (1-\mu(S)) < 2^{m-1},
\]
which is the statement of Lemma \ref{binary-entropy-spec}.

It remains to prove \eqref{S-meas}. Recall that $1\le m<i\le r$.

\rev{When $1\le m\le 2$, the only integer in $(\frac{3}{4}2^m,2^m]$ is $2^m$. Hence, if a cell $C$ at level $m$ satisfies the inequality $|g(C)|>\frac{3}{4}2^m$, we must have $|g(C)|=2^m$. The only cell with this property is $\Gamma_m$.} Since we have
$\mu(\Gamma_m)=e^{2^m-2^i} \leq e^{-1}$ by \eqref{mu-gammaj}, our claim \eqref{S-meas} follows in this case.

Assume now that $m\geq3$. Let $\tilde{S}$ be the union of all children $\tilde C$ of $\Gamma_i$ (thus these are cells at level $i-1 \geq m$) which contain a cell $C$ in $S$. By repeated applications of Lemma \ref{genotype-lemma} (c) we have $|g(\tilde C)| > 2^{i-1-m} (\frac34 2^{\rev{m}})=\frac34 2^{i-1}$ for any such $\tilde C$. Lemma \ref{genotype-lemma} (d), applied with $C = \Gamma_i$, implies that the
number of such cells $\tilde{C}$ is at most 
\dalign{
\sum_{h>(3/4)2^{i-1}} \binom{2^{i-1}}{h} 2^{2^{i-1}-h}
\leq 2^{\frac14 2^{i-1}} 2^{2^{i-1}} = 2^{(5/4)2^{i-1}}.
}
By Lemma \ref{largest-cell} and our assumption that $i-1\ge m\ge 3$, it follows that
\[
\mu(S) \leq \mu(\tilde{S}) \leq (2^{5/4}/e)^{2^{i-1}} < 0.35 .
\] 
This completes the proof of the claim \eqref{S-meas} and hence of Lemma \ref{binary-entropy-spec}.
\end{proof}

\subsection{Existence of the optimal parameters $\cc^*$}\label{c-params-sec}

\begin{proof}[Proof of Proposition \ref{rho-binary-prop} (b)] 
\rev{We have $\Supp(\mu_j^*) =\Gamma_j$ by Remark \ref{rem:optimal measure} (b), and hence $|\Supp(\mu_j^*)| =2^{2^j}$ by Lemma \ref{cube-subspace}.} By Lemma \ref{ent-trivial}, when $j\ge m+2$ we deduce the inequality
\begin{equation}\label{auxillary-ent}
\HH_{\mu_j^*}(V_m) \leq \log |\Supp(\mu_j^*)| \leq 2^j \log 2 < 
2^j - 2^m.
\end{equation}
Now recall (Definition \ref{opt-param-def}) that the optimal parameters should satisfy the conditions \eqref{basic-m-e} (which are the fully written out version of \eqref{79-pre}). We wish to show that there is a solution with $1 = c_1^* > c^*_2 > \cdots > c^*_{r+1} > 0$.
Rearranging  \eqref{basic-m-e} and recalling $\dim(V_j)=2^j$, we
find that
\begin{align*} 
(c^*_{m+1} & - c^*_{m+2})\big( \HH_{\mu_{m+1}^*}(V_m) - 2^m\big) \\ & = \sum_{j = m+2}^r \big( 2^j-2^m - \HH_{\mu_j^*}(V_m)\big)(c^*_j - c^*_{j+1}) +(2^r-2^m) c^*_{r+1} 
\end{align*}
for $0\le m\le r-1$.    By Lemma \ref{binary-entropy-spec-1} and \eqref{auxillary-ent},
we may apply a downwards induction on $m = r-1, r-2,\cdots$
to solve these equations with $0 < c^*_{r+1} < c^*_r < \cdots < c^*_1$. Rescaling,
 we may additionally ensure that $c^*_1 = 1$.\end{proof}

\section{The limit of the $\rho_i$} \label{lim-rho-sec}

In the last section we showed that there is a unique solution $\brho = (\rho_1,\rho_2,\ldots)$ to the $\brho$-equations \eqref{binary-rho-eqs} for the binary system with $0 < \rho_j < 1$ for all $j$. In this section, we show that the limit $\lim_{j \rightarrow \infty} \rho_j $ exists.

\begin{proposition}\label{mainthm-limit}
$\rho = \lim_{j \rightarrow \infty} \rho_j$ exists.
\end{proposition}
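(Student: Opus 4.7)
The plan is to show that $(\rho_j)_{j\ge 1}$ is Cauchy by isolating a self-similar structure in the genotype recursion that becomes sharper as $j$ grows. I would first rewrite the defining relation in normalized form. Set $p_j(g) := F(g)/f^{\Gamma_j}$ for each $j$-genotype $g \subset \mathcal{P}[j]$, so that $p_j(\mathcal{P}[j]) = 1$ and $0 < p_j(g) \le 1$ by Corollary \ref{g1-g2}. Applying Proposition \ref{gen-fc} to $g = \mathcal{P}[j+1]$ and dividing through by $f^{\Gamma_j}^{\rho_j}$, the $j$th $\rho$-equation becomes the scalar relation
\[
e^{2^j} \;=\; \sum_{g \subset \mathcal{P}[j]} 2^{\,2^j - |g|}\, p_j(g)^{\rho_j},
\]
which determines $\rho_j$ as a continuous, strictly increasing function of the profile $p_j(\cdot)$ (monotonicity holds because every $p_j(g) \le 1$).

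The first technical step is to prove uniform two-sided bounds $0 < \rho_- \le \rho_j \le \rho_+ < 1$, independent of $j$. The upper bound is obtained by evaluating the right-hand side at $\rho_j = 1$, where it equals $3^{2^j}$ and hence exceeds $e^{2^j}$ by a factor growing doubly exponentially; sharpening this via Corollary \ref{g1-g2} (to bound $p_j(g)$ above by $1$ with strict slack on a positive-mass family of genotypes) yields a $\rho_+ < 1$. The lower bound comes from the reverse comparison at $\rho_j \approx 0$, where the sum becomes $3^{2^j} < e^{2^j \cdot \log 3}$, forcing $\rho_j$ to stay above some $\rho_- > 0$.

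The main step is to show that the profiles $p_j$ converge in an appropriate sense, which will in turn force $\rho_j$ to converge. From Proposition \ref{gen-fc}, for any $(j{+}1)$-genotype $h$ with consolidation $h^* \le \mathcal{P}[j]$,
\[
p_{j+1}(h) \;=\; e^{-2^j}\sum_{g \le h^*} 2^{\,|h| - |h^*| - |g|}\, p_j(g)^{\rho_j},
\]
so the map $p_j \mapsto p_{j+1}$ is, up to the normalising factor $e^{-2^j}$, a $\rho_j$-power-weighted positive combination of values of $p_j$. Because $\rho_j \le \rho_+ < 1$, this map is contractive in the supremum norm on $\log p$, provided one restricts attention to those genotypes that carry non-negligible mass in the scalar equation above. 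I would make this precise by reducing to a finite-dimensional state: show that only $O(1)$ "genotype classes" (parameterised, say, by $|g|/2^j$ and $|g^*|/|g|$, or by the small family eventually coinciding with the sequence $a_j$ of Theorem \ref{beta-rho}(b)) contribute, and that on this reduced state the dynamics are a uniform Banach contraction. Banach's fixed-point theorem then yields convergence $p_j \to p_\infty$, and continuity of $\rho_j$ in the profile yields $\rho_j \to \rho$.

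The hard part, and where I expect most of the work to lie, is the contraction step rather than mere compactness: a subsequential-limit argument alone would not identify a \emph{unique} limit, and the genotype set grows doubly exponentially so no naive uniform bound on $\log p_{j+1} - \log p_j$ will close. The payoff of a proper contraction argument is that one simultaneously gets convergence and an effective rate, and if the correct reduced state turns out to be the two-term recursion $a_j = a_{j-1}^2 + a_{j-1}^{\rho} - a_{j-2}^{2\rho}$ from Theorem \ref{beta-rho}(b), this will also set up the analytic identification of $\rho$ carried out in Section \ref{sec:analytic}.
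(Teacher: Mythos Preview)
Your proposal has the right instinct (self-similarity plus contractivity) but contains a genuine gap at the main step. The obstacle you yourself flag—that the genotype profile $p_j$ lives on a set of size $2^{2^j}$—is not resolved by your plan. The claimed contraction of $p_j \mapsto p_{j+1}$ in sup norm on $\log p$ is between spaces of different (and doubly-exponentially growing) dimensions, so Banach's fixed-point theorem does not apply directly. Your proposed cure, reducing to a finite state indexed by $|g|/2^j$ and $|g^*|/|g|$ (or by the recursion $a_j = a_{j-1}^2 + a_{j-1}^\rho - a_{j-2}^{2\rho}$), is speculative: the $a_j$ recursion is defined with a \emph{fixed} $\rho$ and is used in the paper to describe the limit once it is known to exist, so invoking it here would be circular, and there is no indication that a coarse two-parameter reduction of the genotype actually closes. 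Your lower-bound argument for $\rho_j$ is also garbled (at $\rho_j=0$ the sum is exactly $3^{2^j}=e^{2^j\log 3}$, which is \emph{larger} than $e^{2^j}$, not smaller), though in fact no lower bound is needed.

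The paper's approach sidesteps the dimension problem entirely. It exploits the product structure $\Gamma_j = \pi(\Gamma_{j-1}\times\Gamma_{j-1})$ to prove an approximate functional equation $\phi_j(\mathbf{x}) = \phi_{j-1}(\mathbf{x}) + O((\alpha/2)^j)$ for the scalar quantity $\phi_j(\mathbf{x}) := 2^{-j}\log f^{\Gamma_j}(x_{j-1},\ldots,x_1)$, uniformly in $0\le x_i\le\alpha$. This reduces the problem to comparing two consecutive $\rho$-equations written in terms of $\phi_j$ and $\phi_{j+1}$, and then controlling $|\rho_{j+1}-\rho_j|$ via the mean value theorem. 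The delicate point is that one needs explicit numerical bounds on the partial derivatives $\partial_m\phi_j$ (with $\alpha=0.31$, using the a priori bound $\rho_j\le\rho_1$) to make the resulting linear recursion for $|\eps_j|=|\rho_{j+1}-\rho_j|$ genuinely contractive; the paper devotes an entire subsection to this and the constants are tight. So the missing idea is to replace your infinite-dimensional profile dynamics by the single scalar function $\phi_j$ and its self-similarity under the product map.
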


\subsection{$\rho_1$ is the largest $\rho_j$}

The estimates required in the proof of Proposition \ref{mainthm-limit} are rather delicate, and to make them usable for our purposes we need the following \emph{a priori} bound on the $\rho_j$.

\begin{lemma}\label{rho-j-rho-1}
For all \rev{$j\ge1$, we have} $\rho_j \le \rho_1 = 0.30648\dots$
\end{lemma}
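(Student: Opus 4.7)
\emph{Proof plan.} The approach is by induction on $j$: the base case $j=1$ is trivial, so fix $j\ge 2$ and assume the inductive hypothesis that $\rho_i\le\rho_1$ for all $1\le i<j$.

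Following the characterization used in the proof of Proposition \ref{jth-rho-prop}, the value $\rho_j$ is the unique zero in $(0,1)$ of the \emph{strictly increasing} function
\[
H_j(x) \;=\; e^{2^j} \;-\; 2^{2^j}\sum_{g\subseteq\mathcal{P}[j]} 2^{-|g|}\Big(\tfrac{F(g)}{F(\mathcal{P}[j])}\Big)^x,
\]
where $F(g)=F(g)(\rho_1,\dots,\rho_{j-1})$ is as in Proposition \ref{gen-fc} and in particular does not depend on $\rho_j$. Consequently $\rho_j\le \rho_1$ if and only if $H_j(\rho_1)\ge 0$, which after clearing denominators becomes the clean inequality
\begin{equation}\label{plan-key}
e^{2^j}\, F(\mathcal{P}[j])^{\rho_1} \;\ge\; 2^{2^j}\sum_{g\subseteq\mathcal{P}[j]} 2^{-|g|}\, F(g)^{\rho_1}.
\end{equation}

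Next, I would \emph{telescope} \eqref{plan-key} one level down. Using the $\rho$-equation at index $j-1$ on the left to write $F(\mathcal{P}[j])=e^{2^{j-1}} F(\mathcal{P}[j-1])^{\rho_{j-1}}$, and the genotype recursion $F(g)=2^{|g|-|g^*|}\sum_{g'\le g^*} 2^{-|g'|}F(g')^{\rho_{j-1}}$ on the right, together with Lemma \ref{small-calc-gen} to collect the sum over $g$ with $g^*\ge g'$, both sides can be rewritten as explicit positive combinations of the quantities $F(g')^{\rho_{j-1}}$ for $(j-1)$-genotypes $g'$. The first key observation is then a concavity/power-mean step: since $\rho_1<1$, Jensen applied to $x\mapsto x^{\rho_1}$ allows one to replace the nonlinear expression $\big(\sum_{g'} w_{g'} F(g')^{\rho_{j-1}}\big)^{\rho_1}$ arising from $F(\mathcal{P}[j])^{\rho_1}$ by a suitable convex combination of $F(g')^{\rho_1\rho_{j-1}}$, producing a clean term-by-term comparison with the right-hand side.

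The second key ingredient is the inductive hypothesis $\rho_{j-1}\le\rho_1$, used in the form $F(g')^{\rho_1}\le F(g')^{\rho_{j-1}}\cdot F(\mathcal{P}[j-1])^{\rho_1-\rho_{j-1}}$ (valid since $F(g')\le F(\mathcal{P}[j-1])$ by Corollary \ref{g1-g2} and $\rho_1-\rho_{j-1}\ge 0$). This identity is precisely what converts the $\rho_1$-exponent on the right of \eqref{plan-key} into the $\rho_{j-1}$-exponent that matches the left, modulo a harmless multiplicative factor that can be absorbed into the $e^{2^j}$ via the equality $e^2\cdot 3^{\rho_1}=3^{\rho_1}+4\cdot 2^{\rho_1}+4$ (which is exactly \eqref{plan-key} for $j=1$). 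The computation then reduces \eqref{plan-key} at level $j$ to its analogue one level down, closing the induction.

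The main obstacle is executing the telescoping cleanly: both sides of \eqref{plan-key} are quite elaborate sums over genotypes, and naive application of Jensen or H\"older loses slack that cannot be recovered. The delicate point is identifying the right convex decomposition (with the weights dictated by the $2^{-|g|}$, $2^{-|g^*|}$ factors coming from Lemma \ref{small-calc-gen}) so that after one application of concavity the inequality reduces exactly to \eqref{plan-key} at level $j-1$ rather than to a strictly weaker statement. An alternative, possibly cleaner route would be to prove by induction the stronger monotonicity statement $\rho_j\le \rho_{j-1}$ (which by the base case and induction gives $\rho_j\le\rho_1$): then the corresponding ``key inequality'' would only need to compare consecutive levels, which is structurally much better suited to the one-step recursion.
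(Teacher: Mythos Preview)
Your plan and the paper's proof diverge significantly, and your plan has a real gap that you yourself flag but do not resolve.

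The paper's argument is \emph{not} inductive at all: for each fixed $j$ it works directly from the identity $e^{2^j}F(\mathcal{P}[j])^{\rho_j}=2^{2^j}\sum_{g\le\mathcal{P}[j]}2^{-|g|}F(g)^{\rho_j}$. The missing idea is a sharpening of Corollary~\ref{g1-g2} (stated as Lemma~\ref{strong-g1-g2}): for any $g_1\le g_2$ at level $j$,
\[
\frac{F(g_1)}{F(g_2)}\;\le\;\Big(\tfrac12\Big)^{|g_2|-|g_1|}\Big(\tfrac43\Big)^{|g_2^*|-|g_1^*|}.
\]
Taking $g_2=\mathcal{P}[j]$ gives an explicit upper bound for every ratio $F(g)/F(\mathcal{P}[j])$ depending only on $|g|$ and $|g^*|$. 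Substituting into the identity above, dividing by $F(\mathcal{P}[j])^{\rho_j}$, and evaluating the resulting genotype sum with the closed form $\sum_{g\le\mathcal{P}[j]}c_1^{|g|}c_2^{|g^*|}=(1+2c_1+c_1^2c_2)^{2^{j-1}}$ (applied with $c_1=2^{\rho_j-1}$, $c_2=(3/4)^{\rho_j}$) collapses everything to
\[
e^2\;\le\;1+4(2/3)^{\rho_j}+4\cdot 3^{-\rho_j},
\]
which is precisely the first $\rho$-equation with $\rho_1$ replaced by $\rho_j$. Monotonicity of the right side finishes. No telescoping, no Jensen, no appeal to $\rho_1,\dots,\rho_{j-1}$ beyond their appearance inside the $F$'s.

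Your inductive scheme, by contrast, never explains how the right side of your \eqref{plan-key} --- which involves $F(g)^{\rho_1}$ for \emph{every} $j$-genotype $g$ --- can be telescoped. Expanding each $F(g)$ by the recursion and then raising to the $\rho_1$ power does not linearize, and Jensen on the concave map $x\mapsto x^{\rho_1}$ goes the wrong way there (it lower-bounds, whereas you need an upper bound on the right side). Your proposed fix via $F(g')^{\rho_1}\le F(g')^{\rho_{j-1}}F(\mathcal{P}[j-1])^{\rho_1-\rho_{j-1}}$ still leaves you with $F(g)^{\rho_1}$, not $F(g')^{\rho_1}$, on the right. Finally, your alternative suggestion of proving the stronger statement $\rho_j\le\rho_{j-1}$ is simply false: the paper's numerical table gives $\rho_2\approx 0.2796<\rho_3\approx 0.2813$, so the sequence is not monotone.
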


The reader should recall the notion of genotype $g$ (Definition \ref{gen-def}) and of the function $F(g)$ (Proposition \ref{gen-fc}). 

The next lemma is a stronger version of Corollary \ref{g1-g2}, whose proof uses that result as an ingredient.

\begin{lemma}\label{strong-g1-g2}
  For any \rev{$j\ge1$} and $g_1 \le g_2$ at level $j$, we have
  \[
\frac{F(g_1)}{F(g_2)} \le \pfrac12^{|g_2|-|g_1|} \pfrac43^{|g_2^*|-|g_1^*|}.
  \]
\end{lemma}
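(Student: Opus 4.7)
The plan is to give a direct proof (not by induction on $j$) that deduces the desired inequality from the recursion \eqref{geno-recurs} of Proposition \ref{gen-fc} together with the monotonicity statement of Corollary \ref{g1-g2}. Throughout, I would set $\rho := \rho_{j-1}$ and, for any $(j-1)$-genotype $h$, write
\[
T(h) := \sum_{g' \le h} 2^{-|g'|} F(g')^{\rho}.
\]

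First, I would rewrite \eqref{geno-recurs} as $F(g) = 2^{|g|-|g^*|}\, T(g^*)$. Forming the ratio $F(g_1)/F(g_2)$ and collecting the powers of $2$, a short algebraic manipulation shows that the desired inequality is equivalent to the cleaner statement
\[
\frac{T(g_1^*)}{T(g_2^*)} \le \Big(\frac{2}{3}\Big)^{|g_2^*|-|g_1^*|}.
\]
Second, I would exploit the hypothesis $g_1 \le g_2$, which forces $g_1^* \le g_2^*$, to set up the bijection $g' \longleftrightarrow (g'\cap g_1^*,\, g'\cap(g_2^*\setminus g_1^*))$ between sub-genotypes $g' \le g_2^*$ and pairs $(g'_0, S)$ with $g'_0 \le g_1^*$ and $S \subseteq g_2^*\setminus g_1^*$. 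This yields
\[
T(g_2^*) = \sum_{g'_0 \le g_1^*}\ \sum_{S \subseteq g_2^*\setminus g_1^*} 2^{-|g'_0|-|S|}\, F(g'_0 \cup S)^{\rho}.
\]

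Third, I would apply Corollary \ref{g1-g2} in the form $F(g'_0 \cup S) \ge F(g'_0)$ (the nonnegativity of $\rho$ preserves the inequality after raising to the power $\rho$) to each inner summand. The resulting sum over $S$ is independent of the $F$'s and evaluates to $\sum_{S \subseteq g_2^*\setminus g_1^*} 2^{-|S|} = (3/2)^{|g_2^*|-|g_1^*|}$, so that
\[
T(g_2^*) \ge \Big(\frac{3}{2}\Big)^{|g_2^*|-|g_1^*|} T(g_1^*),
\]
which rearranges to precisely the inequality reached in the first step.

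There is no serious obstacle in this argument: it is essentially a bookkeeping exercise once one recognizes that monotonicity of $F$, together with the fact that $\sum_k \binom{d}{k} 2^{-k} = (3/2)^d$, is the right input. The only delicate point is the very first reduction, where the powers of $2$ coming from the recursion must be tracked carefully so that they cancel cleanly against the $(1/2)^{|g_2|-|g_1|}$ factor on the right-hand side. The tightness of the final bound (for instance, equality occurs at $j=1$ when $g_1 = \emptyset$ and $g_2 = \mathcal{P}[1]$) is a useful sanity check that no further improvement is possible by this approach.
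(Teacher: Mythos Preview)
Your proof is correct and is essentially identical to the paper's own argument: both expand $F(g_2)$ via the recursion \eqref{geno-recurs}, split the sum over $g' \le g_2^*$ according to the bijection $g' \leftrightarrow (g' \cap g_1^*,\ g' \cap (g_2^*\setminus g_1^*))$, apply Corollary \ref{g1-g2} to drop the second component, and sum the resulting $2^{-|S|}$ to produce the $(3/2)^{|g_2^*|-|g_1^*|}$ factor. The only cosmetic difference is that you introduce the auxiliary notation $T(h)$ and first isolate the reduced inequality $T(g_1^*)/T(g_2^*) \le (2/3)^{|g_2^*|-|g_1^*|}$, whereas the paper carries the powers of $2$ through in one chain of displayed inequalities.
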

\begin{proof} \rev{We have
\begin{align*}
F(g_2) & =   2^{|g_2|-|g_2^*|} \sum_{g\le g_1^*} 2^{-|g|} \sum_{g'\le g_2^*\setminus g_1^*}
2^{-|g'|} F(g \cup g')^{\rho_{j-1}} 
&\text{(by Proposition \ref{gen-fc})}\\ 
& \geq  2^{|g_2|-|g_2^*|} \sum_{g\le g_1^*} 2^{-|g|} \sum_{g'\le g_2^*\setminus g_1^*}
2^{-|g'|} F(g)^{\rho_{j-1}} &\text{(by Corollary \ref{g1-g2})}\\
&= 2^{|g_2|-|g_2^*|} \sum_{g\le g_1^*} 2^{-|g|}  F(g)^{\rho_{j-1}} (3/2)^{|g_2^*|-|g_1^*|}&\text{(by the binomial theorem)}\\
&=F(g_1) 2^{|g_2|-|g_1|} (3/4)^{|g_2^*|-|g_1^*|}&\text{(by Proposition \ref{gen-fc}).}
\end{align*}
This concludes the proof.}
\end{proof}

\begin{proof}[Proof of Lemma \ref{rho-j-rho-1}]
We begin by observing that 
\begin{equation}\label{binom-double} \sum_{g \leq \mathcal{P}[j]} c_1^{|g|} c_2^{|g^*|} =
\prod_{A\subset [j-1]} \Big( \sum_{a,b\in \{0,1\}} c_1^{a+b}c_2^{ab} \Big) = 
 (1 + 2c_1 + c_1^2 c_2)^{2^{j-1}}.
 \end{equation}
The $\rho$-equations \eqref{binary-rho-eqs}, translated into the language of genotypes, are $F(\mathcal{P}[j+1]) = e^{2^j} F(P[j])^{\rho_j}$. Therefore, by Proposition \ref{gen-fc} (with $g = \mathcal{P}[j+1]$) followed by Lemma \ref{strong-g1-g2} (with $g_2 = \mathcal{P}[j]$), we have  
  \begin{align*}
    e^{2^j} F(\mathcal{P}[j])^{\rho_j} &= F(\mathcal{P}[j+1]) = 2^{2^j} \sum_{g\leq\mathcal{P}[j]} 2^{-|g|} F(g)^{\rho_j} \\
    &\le  2^{2^j} \sum_{g\leq \mathcal{P}[j]} 2^{-|g|} F(\mathcal{P}[j])^{\rho_j}
    \Big[ (1/2)^{2^j-|g|}(4/3)^{2^{j-1}-|g^*|}   \Big]^{\rho_j}\\
    &\rev{=2^{2^j} (1/3)^{2^{j-1}\rho_j} F(\mathcal{P}[j])^{\rho_j} \sum_{g\leq P[j]} 2^{(\rho_j-1)|g|} (3/4)^{\rho_j |g^*|}}.
  \end{align*}
  Dividing through by $F(\mathcal{P}[j])^{\rho_j}$, and applying \eqref{binom-double} with $c_1 = 2^{\rho_j - 1}$ and $c_2 = (3/4)^{\rho_j}$, we find that
  \begin{align*}
    e^{2^j}  &\le (4/3^{\rho_j})^{2^{j-1}} \big( 1 + 2^{\rho_j} + 2^{2\rho_j-2} (3/4)^{\rho_j}\big)^{2^{j-1}}\\
    &=\big(\rev{4/3^{\rho_j} + 4(2/3)^{\rho_j} + 1} \big)^{2^{j-1}}.
  \end{align*}
  Therefore
  \[
3^{\rho_j} e^2 \le \rev{4+ 4 \cdot 2^{\rho_j} + 3^{\rho_j}}.
\]
However, the first $\rho$-equation \eqref{first-rho} is precisely that
\[ 3^{\rho_1} e^2 = \rev{4+ 4 \cdot 2^{\rho_1} + 3^{\rho_1}}.\]
The result follows immediately (using the monotonicity of the function $1 + 4(2/3)^t + 4(1/3)^t$ - \rev{see the proof of Proposition \ref{jth-rho-prop}}).
\end{proof}

\subsection{Preamble to the proof}

In this section, we set up some notation and structure necessary for the proof of Proposition \ref{mainthm-limit}. \rev{Since we wish to let $r\to\infty$, it is convenient to embed all binary $r$-step systems into a universal infinite binary system. To this end, and with a slight abuse of notation, we let
	\[
	V_j= \big\{ (x_A)_{A\subset\cP(\N)}: x_A\in\Q \ \text{and}\ x_{A}=x_{A\cap[j]} \ \text{ for all } A\subset\cP(\N) \big\}\quad\text{for}\ j=0,1,\dots. 
	\]
Clearly, $V_j\simeq \Q^{2^j}$ for all $j$, and the flag $\sV^r:V_0\le V_1\le \cdots\le V_r$ is isomorphic to the flag of the $r$-step binary system. 

In this notation, we have
\[
\Gamma_j = \big\{ \omega\in\Omega: \omega\equiv\mathbf{0}\ \md{V_j} \big\}\quad\text{for}\ j=0,1,\dots,
\]
where
\[
\Omega = \big\{\omega=(\omega_A)_{A\subset\cP(\N)}:
	\mbox{$\omega_A\in \{0,1\}$ for all $A\subset\cP(\N)$} \big\} 
\]
is the discrete unit cube. We further set
\[
\Gamma_\infty  = \bigcup_{j=0}^\infty \Gamma_j . 
\]

Lastly, for each $j\ge0$, we say that $C$ is a cell at level $j$ if $C\subset \Gamma_\infty$ and there exists some $x=(x_A)_{A\subset\cP(\N)}$ such that $x_A\in\Q$ for all $A$ and $C=\Omega\cap(x+V_j)$.  We may easily check that the collection of cells lying in $\Gamma_r$ forms the tree corresponding to the $r$-step binary system. 

We may now define the functions $f^C$ for our infinite binary flag.} It is convenient to reverse the indices in $f^C$.
Specifically, let $\x = (x_1,x_2,\ldots)\rev{\in[0,1]^\N}$. If $C$ is a cell at level \rev{$j\ge0$}, then we define
\[ 
\psi^C(\x) :=\log f^C(x_{j-1},\dots, x_1).
\]
\rev{In particular, $\psi^C(\x)=0$ when $j=0$, and $\psi^C(\x)=\log|C\setminus\{\mathbf{0}\}|$ when $j=1$}.

In the special case $C = \Gamma_j$ we define also
\[ 
\phi_j(\x) = 2^{-j} \psi^{\Gamma_j}(\x) = 2^{-j} \log f^{\Gamma_j} (x_{j-1},\cdots, x_1).
\]
Thus $\phi_1(\x) = \frac{1}{2} \log 3$ and $\phi_2(\x) = \frac{1}{4} \log (3^{x_1} + 4 \cdot 2^{x_1} + 4)$. 

Note that $\psi^C, \phi_j$ are increasing in each variable. Moreover we have the following simple bounds.

\begin{lemma}[Simple bounds]\label{simple-bounds}
We have $\frac{1}{2}\log 3 \leq \phi_j(\x) < \log 2$.
\end{lemma}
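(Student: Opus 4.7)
The plan is to prove both bounds by combining monotonicity of $\phi_j$ with exact evaluations at the two endpoints $\mathbf{0}$ and $\mathbf{1}$ of the unit cube.

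First I would establish monotonicity: the function $f^C(\brho)$ is non-decreasing in each variable $\rho_i$, hence so is $\phi_j$. To see this, I would first show by induction on level that $F(g) \geq 1$ for every genotype $g$. The base case is $F(\emptyset) = F(\{\emptyset\}) = 1$ at level $0$. For the inductive step, the recursion of Proposition \ref{gen-fc} gives $F(g) \geq 2^{|g|-|g^*|}F(\emptyset)^{\rho_{i-1}} \geq 1$, taking just the $g' = \emptyset$ term. Because every $f^{C'}(\brho) \geq 1$, each summand $f^{C'}(\brho)^{\rho_{i-1}}$ in the defining recursion $f^C(\brho) = \sum_{C \to C'} f^{C'}(\brho)^{\rho_{i-1}}$ is non-decreasing in $\rho_{i-1}$ and (by induction on level) in every $\rho_\ell$ with $\ell < i-1$. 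Therefore $\phi_j$ attains its minimum at $\x = \mathbf{0}$ and its maximum at $\x = \mathbf{1}$.

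Next, I evaluate at $\x = \mathbf{0}$. When $\brho = \mathbf{0}$, the recursion collapses to $f^C(\mathbf{0}) = \sum_{C \to C'} 1 = \#\{\text{children of }C\}$ at every level $\geq 1$. Lemma \ref{genotype-lemma}(e) with $g = g(\Gamma_j) = \mathcal{P}[j]$, $|g| = 2^j$, $|g^*| = 2^{j-1}$, gives $2^{|g|-2|g^*|}3^{|g^*|} = 3^{2^{j-1}}$ children. Hence $\phi_j(\mathbf{0}) = 2^{-j}\log 3^{2^{j-1}} = \tfrac{1}{2}\log 3$, giving the lower bound via monotonicity.

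For the upper bound I evaluate at $\x = \mathbf{1}$. When $\brho = \mathbf{1}$, the recursion becomes $f^C(\mathbf{1}) = \sum_{C \to C'} f^{C'}(\mathbf{1})$, so a straightforward induction shows that $f^C(\mathbf{1})$ equals the total number of level-$0$ cells contained in $C$. For $C = \Gamma_j$, the level-$0$ cells inside $\Gamma_j$ consist of the single pair $\{\mathbf{0},\mathbf{1}\}$ together with a singleton $\{\omega\}$ for each of the remaining $|\Gamma_j| - 2 = 2^{2^j} - 2$ elements of $V_j \cap \{0,1\}^k$, for a total of $2^{2^j} - 1$. Therefore
\[
\phi_j(\x) \leq \phi_j(\mathbf{1}) = 2^{-j}\log(2^{2^j}-1) < 2^{-j}\log 2^{2^j} = \log 2,
\]
which is strict because $2^{2^j} - 1 < 2^{2^j}$. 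The only subtlety is verifying the two evaluations carefully, and this is entirely routine once the cell counts from Section \ref{cell-enum} are in hand; there is no genuine obstacle.
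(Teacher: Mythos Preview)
Your proof is correct and follows essentially the same approach as the paper: both use monotonicity of $\phi_j$ in each variable (which the paper asserts without proof just before the lemma, whereas you justify it via $F(g)\ge 1$) and then evaluate at the endpoints $\x=\mathbf{0}$ and $\x=\mathbf{1}$, obtaining $f^{\Gamma_j}(\mathbf{0})=3^{2^{j-1}}$ and $f^{\Gamma_j}(\mathbf{1})=2^{2^j}-1$ by the same cell-counting considerations.
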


\begin{proof}
For the upper bound, note that $f^{\Gamma_j}(\x) \leq f^{\Gamma_j}(\mathbf{1})$. By the definition of $f^C$ (see \eqref{fc-eqs}), we have that $f^{\Gamma_j}(\mathbf{1})$ is equal to  the number of children of $\Gamma_j$ at level 0, which, in turn, is equal to $2^{2^j} - 1$. This proves the claimed upper bound on $\phi_j(\x)$. 

For the lower bound, observe that $f^{\Gamma_j}(\x) \geq f^{\Gamma_j}(\mathbf{0})$. Using again the definition of $f^C$, we find that $f^{\Gamma_j}(\mathbf{0})$ equals the number of children of $\Gamma_j$ at level $j-1$. Thus $f^{\Gamma_j}(\mathbf{0}) = 3^{2^{j-1}}$ by Lemma \ref{genotype-lemma}. This proves the claimed lower bound of $\phi_j(\x)$, thus completing the proof of the lemma.
\end{proof}

The $\rho$-equations \eqref{binary-rho-eqs} may be expressed in terms of the $\phi_j$ in the following simple form:

\begin{equation}\label{rho-eq-log}
\phi_{j+1}(\rho_j,\rho_{j-1},\ldots) = \frac{1}{2}\big(\rho_j \phi_j(\rho_{j-1},\rho_{j-2},\ldots) + 1\big).
\end{equation}

\subsection{Product structure of cells and self-similarity of the functions $\phi_j$}

There is a natural bijection $\rev{\pi : \Q^{\mathcal{P}(\N)} \times \Q^{\mathcal{P}(\N)} \rightarrow \Q^{\mathcal{P}(\N)}}$ defined by $\pi((x, x')) = y$, where $y_{A} = x_{A-1}$ and $y_{\{1\} \cup A} = x'_{A-1}$, for all $A \subset\{2,3,\dots\}$. Here, 
we write $A-1$ for the set $\{a-1:a\in A\}$. 
\rev{There is a finite version of this map that can be visualized as a concatenation map. For each $r$, let $\pi_r : \Q^{\mathcal{P}[r-1]} \times \Q^{\mathcal{P}[r-1]} \rightarrow \Q^{\mathcal{P}[r]}$ defined by $\pi((x, x')) = y$, where $y_{A} = x_{A-1}$ and $y_{\{1\} \cup A} = x'_{A-1}$, for all $A \subset\{2,3,\dots,r\}$. If we place the coordinates of $x$ and $x'$ in reverse binary order, as per the map $\{2,\dots,r\}\supset A\to \sum_{a\in A}2^{r-a}\in\{0,1,\dots,2^{r-1}-1\}$, then $\pi_r$ is the concatenation map that generates $y$ by placing first all coordinates of $x$, followed by all coordinates of $x'$.}

Now one may easily check that $\rev{\pi(V_{j-1} \times V_{j-1}) = V_j}$ for all $j=1,2,\dots$ Therefore if $C_1, C_2$ are two cells at level $(j-1)$ in the infinite binary system, then $\pi(C_1 \times C_2)$ is a cell at level $j$, and conversely every cell of level $j$  is of this form. The children $C'$ of $C$ are precisely $\pi(C'_1 \times C'_2)$ where $C_1 \rightarrow C'_1$, $C_2 \rightarrow C'_2$.

\medskip

The product structure established above manifests itself in a self-similarity property $\phi_j\approx \phi_{j-1}$. In this section, we will establish the following precise version of this.

\begin{proposition}\label{self-similar-prop}
Let $\alpha\in(0,1]$ and consider a vector $\rev{\mathbf{x}=(x_1,x_2,\dots)\in [0,\alpha]^\N}$. In addition, let $C = \pi(C_1 \times C_2)$ be a cell of level $j \geq 2$. Then we have
\begin{equation}\label{psis}\psi^{C_1}(\x) + \psi^{C_2}(\x) \leq \psi^C(\x) \leq  \psi^{C_1}(\x) + \psi^{C_2}(\x) + \alpha^{j-1} \log 2.\end{equation}
In particular, taking $C = \Gamma_j = \pi(\Gamma_{j-1} \times \Gamma_{j-1})$, we have 
\begin{equation}\label{basic-ss} \phi_{j-1}(\x) \leq \phi_j(\x) \leq \phi_{j-1}(\x) +  (\alpha/2)^j \frac{\log 2}{\alpha}  .\end{equation}
\end{proposition}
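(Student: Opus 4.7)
The plan is to prove the main bound \eqref{psis} by induction on $j\geq 2$; the statement \eqref{basic-ss} for $\phi_j$ then follows by specialising $C_1=C_2=\Gamma_{j-1}$ (so that $\pi(\Gamma_{j-1}\times\Gamma_{j-1})=\Gamma_j$) and dividing by $2^j$.

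The first step is to rewrite the basic recursion \eqref{fc-eqs} in terms of $\psi^C$. Using the index reversal in the definition of $\psi^C$ together with the left shift $S\x := (x_2, x_3, \dots)$, a direct calculation shows that for any cell $C$ at any level $\ell \geq 1$ (in either step $r$ or step $r-1$),
\[
\psi^C(\x) \;=\; \log \sum_{C \to C'} \exp\bigl(x_1\,\psi^{C'}(S\x)\bigr).
\]
Combining this with the product description of children---the children of $\pi(C_1\times C_2)$ are precisely the cells $\pi(C_1'\times C_2')$ with $C_i\to C_i'$---gives
\[
e^{\psi^C(\x)} \;=\; \sum_{C_1\to C_1',\,C_2\to C_2'} \exp\bigl(x_1\,\psi^{\pi(C_1'\times C_2')}(S\x)\bigr).
\]

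For the inductive step ($j\geq 3$), I would invoke the induction hypothesis on each $\psi^{\pi(C_1'\times C_2')}(S\x)$, valid because the components of $S\x$ still lie in $[0,\alpha]$. The upper bound inserts a factor $2^{x_1\alpha^{j-2}}$, and then the double sum factorises as a product of two single sums, each of which equals $e^{\psi^{C_i}(\x)}$ by the same recursion applied to $C_i$. Taking logs and using $x_1\leq\alpha$ yields the desired upper bound $\psi^C(\x) \leq \psi^{C_1}(\x)+\psi^{C_2}(\x)+\alpha^{j-1}\log 2$. The lower bound is strictly easier: the inductive lower bound on $\psi^{\pi(C_1'\times C_2')}(S\x)$ produces the same factorisation with no error term.

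The main obstacle is the base case $j=2$, which cannot be absorbed into the induction because the analogous statement at $j=1$ fails: already $\Gamma_1=\pi(\Gamma_0\times\Gamma_0)$ has three level-$0$ children, so $\psi^{\Gamma_1}=\log 3>\log 2$. I would resolve $j=2$ by direct combinatorial analysis. Each $C_i$ at level $1$ in step $r-1$ has some number $a_i$ of $\Gamma_0^{(r-1)}$-children and $b_i$ singleton children, and the four genotypes $\emptyset,\{\emptyset\},\{\{1\}\},\mathcal{P}[1]$ yield the three possibilities $(a_i,b_i)\in\{(0,1),(0,2),(1,2)\}$. Since level-$0$ cells in step $r-1$ are either $\Gamma_0^{(r-1)}$ or singletons, the value of $n_{\pi(C_1'\times C_2')}$ depends only on the types of $C_1',C_2'$ and takes the values $3,2,2,1$ in the four combinations. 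The required bound $\sum n_{\pi(C_1'\times C_2')}^{x_1} \leq 2^{x_1} n_{C_1}n_{C_2}$ then reduces to
\[
a_1 a_2 (3^{x_1} - 2^{x_1}) \;\leq\; b_1 b_2 (2^{x_1} - 1),
\]
which is trivial unless $a_1 a_2=1$; in that case $b_1 b_2=4$ and one verifies $5\cdot 2^{x_1} - 3^{x_1} - 4 \geq 0$ on $[0,1]$ by noting that it vanishes at $x_1=0$ and has strictly positive derivative $5(\log 2)\,2^{x_1} - (\log 3)\,3^{x_1}$ throughout $[0,1]$.
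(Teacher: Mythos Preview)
Your proposal is correct and follows essentially the same approach as the paper: induction on $j$ via the recursion $e^{\psi^C(\x)}=\sum_{C\to C'}e^{x_1\psi^{C'}(T\x)}$, with the inductive step factorising the double sum after inserting the hypothesis, and the base case $j=2$ handled by a direct case analysis reducing to the inequality $5\cdot 2^{x}-3^{x}-4\ge 0$ on $[0,1]$, proved by checking the value at $0$ and the sign of the derivative. The only cosmetic difference is that you organise the $j=2$ case via the pairs $(a_i,b_i)$ of $\Gamma_0$-children and singleton children, whereas the paper indexes the cases by the number of children of each $C_i$; the computations are identical, and your omission of the lower bound at $j=2$ is harmless since it follows trivially from the same setup.
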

\begin{proof}
We proceed by induction on $j$. When $j = 2$, we proceed by hand. \rev{Notice that at level 1, there are three different types of cells, having 4, 2 and 1 elements, respectively. There is only one cell with 4 elements, the cell $\Gamma_1$; it splits into three cells at level 0: one with two elements, and two unicells (singletons). All other cells at level 1 split into unicells at level 0. Hence, at level 2, there are six different types of cells $C = \pi(C_1 \times C_2)$ corresponding to the six possibilities for the unordered pair $\{|C_1|, |C_2|\}$. Their subcells are in 1-1 correspondence with the cells $\pi(C_1'\times C_2')$, where $C_1'$ is a subcell of $C_1$ (at level 0) and $C_2'$ is a subcell of $C_2$ (also at level 0).}

\rev{The three cases with $\max(|C_1|,|C_2|\}\le 2$ are trivial, because we then have that all the cells at level 1 are unicells, and thus we readily find that $f^C=f^{C_1}f^{C_2}=|C_1|\cdot |C_2|$.

The two other cases with $|C_1|\le2$ and $|C_2|=4$ (so that $C_2=\Gamma_1$) are only slightly harder: if $|C_1|=2$, then $f^C(\x) = 2 \cdot 2^{x_1} + 4$, $f^{C_1} = 2$, $f^{C_2} = 3$ and so the desired inequalities are $\log6\le \log (2 \cdot 2^{x_1} + 4) \leq \log 6 + x_1 \log 2$, which are immediately seen to be true for all $x_1 \geq 0$. Similarly, if $|C_1|=1$, then $f^C(\x) = 2^{x_1} + 2$, $f^{C_1} = 1$, $f^{C_2} = 3$, and so the desired inequalities are $\log3\le \log (2^{x_1} + 2) \leq \log 3 + x_1 \log 2$, which are again true for all $x_1\ge0$.}

A little trickier is the case $|C_1| = |C_2| = 3$, corresponding to $C = \Gamma_2 = \pi(\Gamma_1 \times \Gamma_1)$. In this case $f^C(\x) = 3^{x_1} + 4 \cdot 2^{x_1} + 4$, $f^{C_1} = f^{C_2} =  3$, so the desired inequalities are $2\log3\le \log(3^x + 4 \cdot 2^x + 4) \leq 2 \log 3 + x \log 2$.  \rev{The lower bound is evident. For the upper bound, we must equivalently show that} $g(x) := 5 \cdot 2^x - 3^x - 4\ge0$ for $x\in[0,1]$. Since $g(0) = 0$ and $g'(x)  = 5\log 2 \cdot 2^x - \log 3 \cdot 3^x > 0$ for $x\le1$, the desired inequality follows.

Now suppose that $j \geq 3$, and assume the result is true for cells at level $(j-1)$. 
By the recursive definition of $f^C$, if $C$ is a cell at level $j$, we have the recurrence
\begin{equation}\label{basic-recur} e^{\psi^C(\x)} = \sum_{C \rightarrow C'} e^{x_{1}\psi^{C'}(T\x)},\end{equation} where $T\x$ denotes the {\it shift operator}
\[ T\x = (x_2,x_3,\ldots).\]
\rev{For the upper bound, note that}
\[
e^{\psi^C(\x)}  = \sum_{C \rightarrow C'} e^{x_{1}\psi^{C'}(T\x)} 
		 \leq \sum_{\substack{C_1 \rightarrow C'_1 \\ C_2 \rightarrow C'_2}} 
			e^{x_{1}(\psi^{C'_1}(T\x) + \psi^{C'_2}(T\x) + \alpha^{j-2} \log 2  )} .
\]
Recalling that $x_1\le\alpha$, we conclude that
\[
e^{\psi^C(\x)}  \leq 2^{\alpha^{j-1}} \Big(\sum_{C_1 \rightarrow C'_1} 
			e^{x_{1} \psi^{C'_1}(T\x)}\Big)
				\Big(\sum_{C_2 \rightarrow C'_2} e^{x_{1} \psi^{C'_2}(T\x)}\Big) 
		 = 2^{\alpha^{j-1}} e^{\psi^{C_1}(\x)} e^{\psi^{C_2}(\x)} .
\]
\rev{The lower bound is proven similarly.} The result thus follows.
\end{proof}

\subsection{Derivatives and the limit of the $\rho_i$.}

\rev{Because of the implicit definition of the parameters $\rho_i$,} the self-similarity property \eqref{basic-ss} is not enough for us by itself. We will also require the following (rather ad hoc) derivative bounds.

Here, and in what follows, \rev{$\partial_m F(y_1,\ldots) := \frac{\partial F}{\partial y_m}(y_1,\ldots)$}, that is to say the derivative of the function $F$ with respect to its $m$th variable. Thus, for instance,
\begin{equation}\label{shift-der-basic} 
\partial_m \psi^C (T\x)  = 
\frac{\partial}{\partial x_{m+1}}\big[ \psi^C(T\x) \big].
\end{equation}

\begin{proposition}\label{der-bounds}
Set $\Delta_m := \sup_{j \geq 2} \sup_{\rev{\mathbf{x}\in[0,0.31]^\N}}| \partial_m \phi_j(\x)|$.
Then $\Delta_1 < 0.17$, $\Delta_2 < 0.05$, $\sum_{m \geq 3} \Delta_m < 0.01$ and $\Delta_m \ll 0.155^m$.
\end{proposition}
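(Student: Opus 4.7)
The proof proceeds by extracting a clean differentiation recurrence from the basic identity \eqref{basic-recur} and then iterating it. Differentiating $e^{\psi^C(\mathbf{x})} = \sum_{C\to C'}e^{x_1\psi^{C'}(T\mathbf{x})}$ with respect to $x_m$ yields
\[
\partial_1\psi^C(\mathbf{x}) = \mathbb{E}_{\nu_C}\!\bigl[\psi^{C'}(T\mathbf{x})\bigr],\qquad \partial_m\psi^C(\mathbf{x}) = x_1\,\mathbb{E}_{\nu_C}\!\bigl[\partial_{m-1}\psi^{C'}(T\mathbf{x})\bigr]\quad(m\ge 2),
\]
where $\nu_C(C')\propto e^{x_1\psi^{C'}(T\mathbf{x})}$ is the Gibbs probability measure on children of $C$. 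The factor $x_1\le 0.31$ in front of the $m\ge 2$ recurrence is exactly what will drive geometric decay in $m$.

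The bulk of the estimate follows from a universal bound. Introduce $A_m:=\sup_C 2^{-\operatorname{level}(C)}\sup_{\mathbf{x}\in[0,0.31]^\infty}|\partial_m\psi^C(\mathbf{x})|$ (sup over cells of every binary flag). The recurrence immediately gives $A_m\le (x_1/2)A_{m-1}\le 0.155\,A_{m-1}$ for $m\ge 2$, while the simple bound $|\psi^{C'}(T\mathbf{x})|\le 2^{\operatorname{level}(C)-1}\log 2$ from Lemma \ref{simple-bounds} gives $A_1\le (\log 2)/2<0.347$. Iterating yields $A_m\le 0.347\cdot 0.155^{m-1}$, which immediately proves both $\Delta_m\le A_m\ll 0.155^m$ and
\[
\sum_{m\ge 3}\Delta_m\;\le\;0.347\cdot\frac{0.155^2}{1-0.155}\;<\;0.01.
\]

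For $\Delta_2<0.05$ one needs a sharper bound: $\phi_2$ depends only on $x_1$, so $\partial_2\phi_2\equiv 0$; for $j\ge 3$, the recurrence gives $|\partial_2\phi_j(\mathbf{x})|\le (x_1/4)\,\phi_{j-2}(T^2\mathbf{x})$, and iterating Proposition \ref{self-similar-prop}\,\eqref{basic-ss} with $\alpha=0.31$ yields
\[
\phi_i(\mathbf{x})\;\le\;\phi_1+\frac{\log 2}{\alpha}\sum_{s\ge 1}\Bigl(\frac{\alpha}{2}\Bigr)^{s+1}\;<\;0.612\qquad(i\ge 1),
\]
so $\Delta_2\le 0.31\cdot 0.612/4<0.05$.

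The subtlest point, and the main obstacle, is the bound $\Delta_1<0.17$. The case $j=2$ is immediate from the explicit formula $\phi_2=\tfrac14\log(3^{x_1}+4\cdot 2^{x_1}+4)$ by calculus (the maximum on $[0,0.31]$ is near $0.12$). For $j\ge 3$, however, the naive chain $|\partial_1\phi_j(\mathbf{x})|\le \tfrac12\phi_{j-1}(T\mathbf{x})\le 0.306$ (from $\psi^{C'}\le \psi^{\Gamma_{j-1}}=2^{j-1}\phi_{j-1}$) is way off, so one must exploit the fact that the Gibbs measure $\nu_{\Gamma_j}$ spreads mass over all children, not just the extremal one $\Gamma_{j-1}$. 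Using Lemma \ref{genotype-lemma}(d) to count children of genotype $g'$ (there are $2^{2^{j-1}-|g'|}$ of them) and Lemma \ref{strong-g1-g2} to bound
\[
\psi^{C'}(T\mathbf{x})-\psi^{\Gamma_{j-1}}(T\mathbf{x})\;\le\;-(\log 2)(2^{j-1}-|g'|)+\log(4/3)\,(2^{j-2}-|g'^*|),
\]
one writes
\[
\partial_1\phi_j(\mathbf{x})\;=\;\tfrac12\phi_{j-1}(T\mathbf{x})+\tfrac{1}{2^j}\mathbb{E}_{\nu_{\Gamma_j}}\!\bigl[\psi^{C'}(T\mathbf{x})-\psi^{\Gamma_{j-1}}(T\mathbf{x})\bigr],
\]
and performs a generating-function style computation on the pair structure of $\mathcal{P}[j-1]$ (the contributions of the disjoint pairs $\{A,A\cup\{j-1\}\}$ decouple) to quantify how much the second, negative term drags the sum below $0.17$. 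The delicate balancing between the few extremal children (where $\psi^{C'}/2^{j-1}$ is close to $\phi_{j-1}$) and the exponentially many smaller children is the technical heart of the argument; once carried out uniformly in $j$, this completes the proof.
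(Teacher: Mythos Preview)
Your treatment of $\Delta_m$ for $m\ge 2$, of $\Delta_m\ll 0.155^m$, and of $\sum_{m\ge 3}\Delta_m<0.01$ is correct and in fact more direct than the paper's. The paper does not use your Gibbs-measure recurrence $\partial_m\psi^C = x_1\,\mathbb{E}_{\nu_C}[\partial_{m-1}\psi^{C'}(T\mathbf{x})]$ (which reduces $m$ and the level simultaneously); instead it proves a separate self-similarity statement for derivatives (Proposition~\ref{next-ders}), relating $\partial_m\phi_j$ to $\partial_m\phi_{j-1}$ at the \emph{same} $m$ via the product decomposition $\Gamma_j=\pi(\Gamma_{j-1}\times\Gamma_{j-1})$, and then iterates in $j$ down to $\partial_m\phi_m\equiv 0$. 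Your route avoids that auxiliary proposition entirely for $m\ge 2$. (One minor correction: the bound $\psi^{C'}\le 2^{j-1}\log 2$ for an arbitrary child $C'$ of $\Gamma_j$ needs Corollary~\ref{Gamma-j-largest} together with Lemma~\ref{simple-bounds}, not the latter alone.)

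The genuine gap is $\Delta_1<0.17$. Your decomposition $\partial_1\phi_j=\tfrac12\phi_{j-1}(T\mathbf{x})+2^{-j}\mathbb{E}_{\nu_{\Gamma_j}}[\psi^{C'}-\psi^{\Gamma_{j-1}}]$ has first term at least $\tfrac14\log 3\approx 0.275$, so you need the negative Gibbs average to contribute below $-0.10$ uniformly in $j$ and $\mathbf{x}$. This is not a formality: the Gibbs weights $\nu_{\Gamma_j}(C')\propto e^{x_1\psi^{C'}}$ are biased precisely toward the children where $\psi^{C'}-\psi^{\Gamma_{j-1}}$ is least negative, and the upper bound from Lemma~\ref{strong-g1-g2} only controls $\psi^{C'}-\psi^{\Gamma_{j-1}}$ from above (good for you), not the weights. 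Carrying out the promised ``generating-function style computation'' uniformly in $j$ would be a substantial argument that you have not supplied.

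The paper sidesteps this entirely. Proposition~\ref{next-ders} (the product-structure argument) gives $\partial_1\phi_j\le 2^{\alpha^{j-1}}\bigl(\partial_1\phi_{j-1}+(\alpha/2)^j\tfrac{\log 2}{\alpha^2}\bigr)$; iterating down to $j=2$ and using the explicit bound $\partial_1\phi_2\le 0.481/4\approx 0.120$ (Lemma~\ref{phi2-lem}) yields $\Delta_1\le 2^{\alpha^2/(1-\alpha)}\bigl(0.120+\tfrac{\alpha\log 2}{8(1-\alpha/2)}\bigr)<0.17$. The key point you are missing is that reducing $j$ at fixed $m=1$ anchors the bound at the computable $\phi_2$, rather than having to analyse the full Gibbs distribution over $3^{2^{j-1}}$ children.
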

The proof of this proposition is given in subsection \ref{derivative-sec}. Let us now show how this proposition, together with \eqref{basic-ss}, implies Proposition \ref{mainthm-limit}. 

\begin{proof}[Proof of Proposition \ref{mainthm-limit}]
Write $\eps_i := \rho_{i+1}- \rho_{i}$, $i = 1,2,3,\dots$ The $\rho$-equation at level $(j+1)$ is
\[ 
\phi_{j+2}(\rho_{j+1},\rho_j,\ldots) = \frac{1}{2}\big(\rho_{j+1} \phi_{j+1}(\rho_j,\rho_{j-1},\ldots) + 1\big)\]
by \eqref{rho-eq-log}. \rev{Recall that that $\rho_j\le \rho_1\le 0.31$ for all $j$, by Lemma \ref{rho-j-rho-1}. Hence, two applications of \eqref{basic-ss} (with $\alpha = 0.31$) yield the asymptotic formula}
\[ 
\phi_{j+1}(\rho_{j+1},\rho_j,\ldots) = \frac{1}{2}\big(\rho_{j+1} \phi_j(\rho_j,\rho_{j-1},\ldots) + 1\big) + O(0.155^j).
\] 
Subtracting \eqref{rho-eq-log}, the $\rho$-equation at level $j$, from this gives
\begin{align}\nonumber
\phi_{j+1} & (\rho_{j+1},\rho_j,\ldots) - \phi_{j+1}(\rho_j,\rho_{j-1},\ldots) \\ 
					& = \frac{\rho_{j+1}}{2}
						\big(\phi_j(\rho_j,\rho_{j-1},\ldots)
					- \phi_j(\rho_{j-1},\rho_{j-2},\ldots)\big) 
					+ \frac{\eps_j}{2} \phi_j(\rho_j,\rho_{j-1},\ldots) + O(0.155^j).\label{shift-minus}
\end{align}
Now by the mean value theorem,
\begin{equation}\label{mvt-1}
 |\phi_{j+1}  (\rho_{j+1},\rho_j,\ldots) - \phi_{j+1}(\rho_j,\rho_{j-1},\ldots)|
 	 \leq \Delta_1 |\eps_j| + \cdots + \Delta_j |\eps_1| 
 \end{equation} 
and
\begin{equation}\label{mvt-2} 
|\phi_j(\rho_j,\rho_{j-1},\ldots) - \phi_j(\rho_{j-1},\rho_{j-2},\ldots)|
	 \leq \Delta_1 | \eps_{j-1}| + \cdots + \Delta_{j-1} |\eps_1|.
\end{equation} 
Therefore, from \eqref{shift-minus}, the triangle inequality and the fact that $\frac{\rho_{j+1}}{2} \leq \frac{\rho_1}{2}\leq 0.155$, we have
\begin{equation}\label{eps-j-bd} 
\begin{split}
|\eps_j| \Big(\frac{1}{2}\phi_j(\rho_j,\rho_{j-1},\ldots) - \Delta_1\Big) 
		&\leq  (\Delta_2 + 0.155 \Delta_1) |\eps_{j-1}| + (\Delta_3 + 0.155 \Delta_2) |\eps_{j-2}| + \cdots \\
		&\quad +O(0.155^j).
\end{split}
\end{equation} 
Now by Lemma \ref{simple-bounds} and Proposition \ref{der-bounds}, 
\[ \frac{1}{2}\phi_j(\rho_j,\rho_{j-1},\ldots) - \Delta_1  > \frac{1}{4}\log 3 - 0.17  > 0.104.\] 
Also, by Proposition \ref{der-bounds} we have
\[ (\Delta_2 + 0.155\Delta_1) + (\Delta_3 + 0.155\Delta_2) + \cdots  < 0.096.  \] 
Assuming that $j\geq j_0$ with $j_0$ large enough, \eqref{eps-j-bd} implies a bound
\begin{equation}\label{eps-j-bd-new} 
|\eps_j| \leq c_1 |\eps_{j-1}| + c_2 |\eps_{j-2}| + \cdots + c_{j-1}|\eps_1| + 2^{-j},
\end{equation} 
where $c_1,c_2,\ldots$ are fixed nonnegative constants with $\sum_{i} c_i < \frac{0.096}{0.104} < 0.93$ and, by Proposition \ref{der-bounds}, $c_i \leq 2^{-i}$ for all $i \geq i_0$ for some $i_0$. It is convenient to assume that $i_0, j_0 \geq 10$, which we clearly may.

We claim that \eqref{eps-j-bd-new} implies exponential decay of the $\eps_j$, which of course immediately implies Theorem \ref{mainthm-limit}. To see this, take $\delta \in (0, \frac{1}{4})$ so small that $0.94 (1 - \delta)^{-i_0} < 0.99$, and then take $A \geq 100$ large enough that $|\eps_j| \leq A(1- \delta)^j$ for all $j \leq j_0$. We claim that the same bound holds for all $j$, which follows immediately by induction using \eqref{eps-j-bd-new} provided one can show that 
\begin{equation}\label{to-show} 
\sum_{i \geq 1} c_i (1 - \delta)^{-i} + \frac{1}{A}\Big(\frac{1}{2(1 - \delta)}\Big)^j < 1 
\end{equation}
for $j \geq j_0$. Since $\delta < \frac{1}{2}$ and $A \geq 100$, it is enough to show that $\sum_{i \geq 1} c_i(1 - \delta)^{-i} < 0.99$. The contribution to this sum from $i \leq i_0$ is at most $0.93 (1 - \delta)^{-i_0}$, whereas the contribution from $i > i_0$ is (by summing the geometric series) at most $\sum_{i > i_0} 2^{-i}(1 - \delta)^{-i} < 2 \cdot 2^{-i_0}(1 - \delta)^{-i_0} < 0.01 (1 - \delta)^{-i_0}$. Therefore the desired bound follows from our choice of $\delta$.
\end{proof}

\subsection{Self-similarity for derivatives}\label{derivative-sec}

Our remaining task is to prove Proposition \ref{der-bounds}. Once again we use self-similarity of the $\phi_j$, but now for their derivatives, the key point being that $\partial_m \phi_j \approx \partial_m \phi_{j-1}$. Here is a precise statement.

\begin{proposition}\label{next-ders}
Suppose that $C = \pi(C_1 \times C_2)$ is cell at level \rev{$j\ge1$}.  Let $\alpha\in[0,1)$ and $m \geq 1$, and suppose that \rev{$\mathbf{x}\in[0,\alpha]^\N$}. Then we have
\[ 
0\le \partial_m \psi^C(\x) 
	\leq 2^{\sum_{i = 1}^m \alpha^{j-i}} \big( \partial_m \psi^{C_1}(\x) 
	+ \partial_m\psi^{C_2}(\x) + \alpha^{j-2} \log 2 \big).
\]
In particular, taking $C = \Gamma_j = \pi(\Gamma_{j-1} \times \Gamma_{j-1})$, we have
\begin{equation}\label{ss-der}
0\le \partial_m \phi_j(\x) \leq 2^{\sum_{i = 1}^m \alpha^{j-i}} \Big(  \partial_m \phi_{j-1}(\x) +\pfrac{\alpha}{2}^j \frac{\log 2}{\alpha^2} \Big). 
\end{equation}
\end{proposition}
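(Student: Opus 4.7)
The plan is to prove Proposition \ref{next-ders} by induction on the level $j$, mirroring the structure of the proof of Proposition \ref{self-similar-prop}. The starting point is to differentiate the fundamental recurrence $e^{\psi^C(\mathbf{x})} = \sum_{C \to C'} e^{x_1 \psi^{C'}(T\mathbf{x})}$ with respect to $x_m$. For $m = 1$ this yields
\[
\partial_1 \psi^C(\mathbf{x}) = \sum_{C \to C'} p_{C'}\, \psi^{C'}(T\mathbf{x}),
\]
while for $m \geq 2$, noting that $\partial / \partial x_m$ applied to $\psi^{C'}(T\mathbf{x})$ produces $\partial_{m-1}\psi^{C'}(T\mathbf{x})$, one obtains
\[
\partial_m \psi^C(\mathbf{x}) = x_1 \sum_{C \to C'} p_{C'}\, \partial_{m-1}\psi^{C'}(T\mathbf{x}),
\]
where $p_{C'} = e^{x_1 \psi^{C'}(T\mathbf{x})}/e^{\psi^C(\mathbf{x})}$ is a probability distribution on the children of $C$. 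Nonnegativity of $\partial_m \psi^C$ follows immediately from this expression, since $x_1 \geq 0$ and $\psi^{C'} \geq 0$ (the latter because $f^{C'} \geq 1$).

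For the upper bound I would proceed by induction on $j$. The base case $j=2$ is essentially trivial, as cells at level $1$ give constants, so $\partial_m \psi^{C_i} = 0$, and the bound reduces either to a trivial $0 \leq 0$ (for $m \geq 2$, since $\psi^C$ then depends only on $x_1$) or to the simple inequality $\partial_1 \psi^C \leq \log 2 \leq 2^\alpha \log 2$ (for $m=1$). For the inductive step, the children $C'$ of $C = \pi(C_1 \times C_2)$ are precisely the products $\pi(C'_1 \times C'_2)$ with $C_i \to C'_i$, so the probability measure $p_{\pi(C'_1 \times C'_2)}$ on children of $C$ sits naturally alongside the product measure $q_{C'_1} q_{C'_2}$, where $q_{C'_i}$ is the analogous measure on children of $C_i$.

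The key step, which I expect to be the main obstacle, is the comparison
\[
p_{\pi(C'_1 \times C'_2)} \leq 2^{\alpha^{j-1}}\, q_{C'_1} q_{C'_2}.
\]
This follows by expanding both sides in terms of exponentials and invoking Proposition \ref{self-similar-prop} twice: the two-sided bound at level $j-1$ gives $\psi^{\pi(C'_1 \times C'_2)}(T\mathbf{x}) - \psi^{C'_1}(T\mathbf{x}) - \psi^{C'_2}(T\mathbf{x}) \in [0,\alpha^{j-2}\log 2]$, while the same at level $j$ gives $\psi^C(\mathbf{x}) - \psi^{C_1}(\mathbf{x}) - \psi^{C_2}(\mathbf{x}) \in [0,\alpha^{j-1}\log 2]$; combining these and using $x_1 \leq \alpha$ yields the ratio bound $2^{x_1\alpha^{j-2}} \leq 2^{\alpha^{j-1}}$. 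The subtlety is using the two-sided nature of Proposition \ref{self-similar-prop}: the upper bound in the numerator and the lower bound in the denominator must both be deployed.

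With this comparison in hand, the inductive step goes as follows. Applying the induction hypothesis at level $j-1$ to each $\partial_{m-1}\psi^{\pi(C'_1\times C'_2)}(T\mathbf{x})$ gives
\[
\partial_{m-1}\psi^{\pi(C'_1\times C'_2)}(T\mathbf{x}) \leq 2^{\sum_{i=1}^{m-1}\alpha^{j-1-i}}\bigl(\partial_{m-1}\psi^{C'_1}(T\mathbf{x}) + \partial_{m-1}\psi^{C'_2}(T\mathbf{x}) + \alpha^{j-3}\log 2\bigr),
\]
and substituting this into the recurrence, then using the ratio bound to replace $p_{\pi(C'_1\times C'_2)}$ by $2^{\alpha^{j-1}}q_{C'_1}q_{C'_2}$, regroups the double sum into $\partial_m \psi^{C_1}(\mathbf{x}) + \partial_m \psi^{C_2}(\mathbf{x})$. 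The factor accumulates correctly since $\sum_{i=1}^{m-1}\alpha^{j-1-i} + \alpha^{j-1} = \sum_{i=1}^m \alpha^{j-i}$, and the residual $\alpha^{j-3}\log 2$ term inherits a factor $x_1 \leq \alpha$, producing the stated $\alpha^{j-2}\log 2$. Inequality \eqref{ss-der} then follows by specialising to $C = \Gamma_j = \pi(\Gamma_{j-1}\times\Gamma_{j-1})$, dividing by $2^j$, and observing that $\alpha^{j-2}\log 2/2^j = (\alpha/2)^j \log 2/\alpha^2$.
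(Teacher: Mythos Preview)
Your approach is essentially the same as the paper's --- the key comparison $p_{\pi(C'_1\times C'_2)} \le 2^{\alpha^{j-1}} q_{C'_1}q_{C'_2}$, derived from both directions of Proposition~\ref{self-similar-prop}, is exactly the mechanism the paper uses, and your reduction of $\partial_m$ at level $j$ to $\partial_{m-1}$ at level $j-1$ is identical. There is, however, a small structural gap in the way you have set up the induction.

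You induct on $j$, with base case $j=2$. But your inductive step, as written, applies the hypothesis to $\partial_{m-1}\psi^{C'}$, and so only makes sense for $m\ge 2$. The case $m=1$ at a general level $j\ge 3$ is not covered: reducing $(m,j)\to(m-1,j-1)$ starting from $(1,j)$ has nowhere to go, and your base case only handles $(1,2)$. The paper avoids this by inducting on $m$ instead: the base case $m=1$ is proved directly for \emph{all} $j$ (using Proposition~\ref{self-similar-prop} applied to $\psi^{C'}$ itself rather than to a derivative), and the step $m\to m-1$ then proceeds exactly as you describe. The fix to your argument is simply to insert this $m=1$ computation at each level of your induction on $j$ --- it uses nothing beyond the ratio bound you already have together with $\psi^{C'}(T\mathbf{x})\le \psi^{C'_1}(T\mathbf{x})+\psi^{C'_2}(T\mathbf{x})+\alpha^{j-2}\log 2$.

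A second, more minor point: your base-case claim $\partial_1\psi^C\le\log 2$ at $j=2$ is correct but not quite as immediate as you suggest. For $C\ne\Gamma_2$ every level-$1$ child has at most two level-$0$ children, so the weighted average of $\log f^{C'}$ is at most $\log 2$; but for $C=\Gamma_2$ one child is $\Gamma_1$ with $f^{\Gamma_1}=3$, and one must check that $\frac{3^{x_1}\log 3 + 4\cdot 2^{x_1}\log 2}{3^{x_1}+4\cdot 2^{x_1}+4}\le \log 2$, i.e.\ $3^{x_1}\log(3/2)\le 4\log 2$, which holds since $3^{x_1}\le 3$.
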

\begin{proof} The lower bound follows by noticing that $\psi^C$ is increasing in each variable.  For the upper bound, we may assume that $m\le j-1$, for when
$m\ge j$, $\partial_m \phi_j(\x)$ is identically zero. We proceed by induction on $m$, first establishing the case $m = 1$.
Differentiating \eqref{basic-recur} gives
\[  e^{\psi^C(\x)} \partial_1 \psi^C(\x)= \sum_{C \rightarrow C'} \psi^{C'}(T\x) e^{x_1 \psi^{C'}(T\x)}.
\]
 By two applications of the upper bound in Proposition \ref{self-similar-prop} (to $C' = \pi(C'_1 \times C'_2)$), we obtain
\begin{equation}\label{eq2A} e^{\psi^C(\x)}\partial_1 \psi^C(\x)  \leq 2^{\alpha^{j-1}} \sum_{\substack{C_1 \rightarrow C'_1 \\ C_2 \rightarrow C'_2}} \big( \psi^{C'_1}(T\x) + \psi^{C'_2}(T\x) + \alpha^{j-2}\log 2 \big) e^{x_1(\psi^{C'_1}(T\x) + \psi^{C'_2}(T\x))}.
\end{equation}
On the other hand, for $i = 1,2$ we get by differentiating the recurrence 
\begin{equation}\label{eq3pre} e^{\psi^{C_i}(\x)} = \sum_{C_i \rightarrow C'_i} e^{x_1 \psi^{C'_i}(T\x)}\end{equation} with respect to $x_1$ that
\begin{equation}\label{eq3A}
e^{\psi^{C_i}(\x)} \partial_1 \psi^{C_i}(\x)  = \sum_{C_i \rightarrow C'_i} \psi^{C'_i}(T\x) e^{x_1 \psi^{C'_i}(T\x)}.
\end{equation} 
Substituting \eqref{eq3pre} and \eqref{eq3A} into \eqref{eq2A} gives
\begin{equation*} e^{\psi^C(\x)}  \partial_1 \psi^C(\x)  \leq 2^{\alpha^{j-1}}\big(  \partial_1 \psi^{C_1}(\x) + \partial_1 \psi^{C_2}(\x) + \alpha^{j-2} \log 2  \big) e^{\psi^{C_1}(\x) + \psi^{C_2}(\x)}.
\end{equation*}
Finally, Proposition \ref{self-similar-prop}
implies that $e^{\psi^{C_1}(\x) + \psi^{C_2}(\x)}\le e^{\psi^C(\x)}$.  Dividing both sides by $e^{\psi^C(\x)}$ gives the result when $m = 1$.

Now suppose that $m \geq 2$. Differentiating \eqref{basic-recur} with respect to $x_{m}$ and applying \eqref{shift-der-basic} gives
\begin{equation}\label{eq6} e^{\psi^C(\x)} \partial_m \psi^C(\x) = \sum_{C \rightarrow C'} x_{1} e^{x_1 \psi^{C'}(T\x)} \partial_{m-1} \psi^{C'}(T\x) .
\end{equation} 
 By the inductive hypothesis, if $C' = \pi(C'_1 \times C'_2)$ we have 
\begin{equation}\label{eq7}
 \partial_{m-1} \psi^{C'}(T\x) \leq 2^{\sum_{i=2}^m \alpha^{j - i}} \Big( \partial_{m-1} \psi^{C'_1}(T\x) + \partial_{m-1} \psi^{C'_2}(T\x)  + \alpha^{j-3} \log 2 \Big).
 \end{equation}
Also, by the upper bound in Proposition \ref{self-similar-prop}, we have
\begin{equation}\label{eq8} \psi^{C'}(T\x) \leq \psi^{C'_1}(T\x) + \psi^{C'_2}(T\x) + \alpha^{j-2} \log 2.\end{equation}
Substituting \eqref{eq7} and \eqref{eq8} into \eqref{eq6} and using the assumption that $0\le x_{1} \leq \alpha$ gives
\begin{align}
\nonumber e^{\psi^C(\x)} \partial_m \psi^C(\x) \nonumber &  \leq 2^{\sum_{i=1}^m \alpha^{j-i}} \times \\
&\sum_{\substack{C_1 \rightarrow C'_1 \\ C_2 \rightarrow C'_2}} x_1\Big[ \partial_{m-1} \psi^{C'_1}(T\x) + \partial_{m-1} \psi^{C'_2}(T\x)+ \alpha^{j-3}\log 2  \Big]  e^{x_1(\psi^{C'_1}(T\x)+ \psi^{C'_2}(T\x))}.
\label{eq9}\end{align}
Now, differentiating the recurrence \eqref{eq3pre} with respect to $x_{m}$ (using \eqref{shift-der-basic}) gives, for $i = 1, 2$,
\begin{equation}\label{eq10}
  e^{\psi^{C_i}(\x)}\partial_{m} \psi^{C_i}(\x)    = \sum_{C_i \rightarrow C'_i} x_{1}  e^{x_1 \psi^{C'_i}(T\x)}\partial_{m-1} \psi^{C'_i}(T\x).\end{equation} 
Substituting \eqref{eq3pre} and \eqref{eq10} into \eqref{eq9}, and using once again that $x_1\le\alpha$, gives
\[
 e^{\psi^C(\x)}\partial_{m} \psi^C(\x)  e^{\psi^C(\x)}
  \leq 2^{\sum_{i=1}^m \alpha^{j-i}} 
  \Big( \partial_{m} \psi^{C_1}(\x) + \partial_{m} \psi^{C_2}(\x)  +\alpha^{j-2}\log 2  \Big) e^{\psi^{C_1}(\x) + \psi^{C_2}(\x)  }. 
  \label{eq11}
  \]
Again, Proposition \ref{self-similar-prop} 
implies that $ e^{\psi^{C_1}(\x) + \psi^{C_2}(\x)  } \le e^{\psi^C(\x)}$, and so by
 dividing both sides by $e^{\psi^C(\x)}$, we obtain the stated result.
\end{proof}

Before proving Proposition \ref{der-bounds}, we isolate a lemma.

\begin{lemma}\label{phi2-lem} \rev{For $0 \leq x_1\leq 0.31$ we have $0 \leq 4\partial_1\phi_2(\mathbf{x}) \leq 0.481$.}
\end{lemma}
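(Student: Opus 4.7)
The proof is essentially a one-variable calculus exercise on an explicit elementary function, so my plan is simply to compute, bound, and verify numerically. Recall that $\phi_2(x) = \frac14 \log(3^x + 4 \cdot 2^x + 4)$, so
\[
4\phi_2'(x) = \frac{3^x \log 3 + 4 \cdot 2^x \log 2}{3^x + 4 \cdot 2^x + 4} =: f(x).
\]
The lower bound $f(x) \geq 0$ is immediate since numerator and denominator are both positive for every real $x$. The work is entirely in the upper bound.

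My plan for the upper bound is to first show that $f$ is non-decreasing on $\R$, so that $\max_{0 \leq x \leq 0.31} f(x) = f(0.31)$, and then verify by direct calculation that $f(0.31) \leq 0.481$. For monotonicity, writing $N(x) := 3^x \log 3 + 4 \cdot 2^x \log 2$ and $D(x) := 3^x + 4 \cdot 2^x + 4$, one computes $D'(x) = N(x)$ and $N'(x) = 3^x (\log 3)^2 + 4 \cdot 2^x (\log 2)^2$, so $f'(x) = (N'(x) D(x) - N(x)^2)/D(x)^2$. The key observation is that Cauchy--Schwarz applied to the three-term decomposition $N(x) = \sqrt{3^x}\cdot\sqrt{3^x}\log 3 + \sqrt{4\cdot 2^x}\cdot\sqrt{4\cdot 2^x}\log 2 + \sqrt{4}\cdot \sqrt{4}\cdot 0$ gives
\[
N(x)^2 \leq \bigl(3^x + 4\cdot 2^x + 4\bigr)\bigl(3^x(\log 3)^2 + 4\cdot 2^x(\log 2)^2\bigr) = D(x)\,N'(x),
\]
so $f'(x) \geq 0$, as required.

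For the numerical verification at $x = 0.31$, I would use the elementary estimates $3^{0.31} < 1.407$ and $2^{0.31} < 1.240$ (both provable by, for instance, comparing $(1.407)^{100/31}$ to $3$, or more cleanly by a short truncated power-series / convexity argument), together with $\log 3 < 1.0987$ and $\log 2 < 0.6932$, to bound
\[
N(0.31) < 1.407\cdot 1.0987 + 4 \cdot 1.240 \cdot 0.6932 < 4.984,
\]
and similarly $D(0.31) > 1.405 + 4\cdot 1.239 + 4 > 10.361$, whence $f(0.31) < 4.984/10.361 < 0.481$.

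There is no real obstacle here; the only mildly delicate point is arranging clean rigorous numerical inequalities for $3^{0.31}$ and $2^{0.31}$. If desired, one can avoid explicit decimal estimates of $3^{0.31}$ by instead bounding $f(0.31)$ by $f(y)$ for a convenient rational $y \geq 0.31$ at which the relevant quantities are easier to estimate, but the direct computation above is already adequate.
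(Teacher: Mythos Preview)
Your approach is correct in structure and essentially parallel to the paper's, but the route to monotonicity differs. The paper clears denominators first: the inequality $4\phi_2'(x)\le 0.481$ is rewritten as
\[
\tfrac14(\log 3 - 0.481)\,3^x + (\log 2 - 0.481)\,2^x \le 0.481,
\]
and since both coefficients on the left are positive this is \emph{manifestly} increasing in $x$, so one just evaluates at $x=0.31$. Your Cauchy--Schwarz argument showing $N(x)^2 \le D(x)N'(x)$ is a nice and more general device (it would apply to any quotient of this shape), whereas the paper's rearrangement is more ad~hoc but makes the monotonicity entirely trivial. Either way, both proofs reduce to a single evaluation at the right endpoint.

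One small numerical slip: with your stated bounds $3^{0.31}<1.407$, $2^{0.31}<1.240$, $\log 3<1.0987$, $\log 2<0.6932$, one actually gets $N(0.31)<4.9842$ (not $4.984$), and then $4.9842/10.361 \approx 0.4810$, which does \emph{not} quite beat $0.481$. You need marginally sharper inputs, e.g.\ $3^{0.31}<1.406$ and $2^{0.31}<1.2397$, which give $N(0.31)<4.982$ and hence $f(0.31)<4.982/10.361<0.481$. This is a trivial fix and does not affect the argument.
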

\begin{proof} \rev{We have $e^{4\phi(\mathbf{x})}= 3^{x_1} + 4\cdot 2^{x_1} + 4$, and thus}
\[ 
\rev{4\partial_1\phi_2(\mathbf{x})}  = \frac{\log 3 \cdot 3^{x_1} + \log 2 \cdot 4\cdot 2^{x_1}}{3^{x_1} + 4\cdot 2^{x_1} + 4}.
\] 
The lemma is therefore equivalent to $\frac{1}{4}(\log 3 - 0.481) 3^{x_1} + (\log 2 - 0.481)2^{x_1} \leq 0.481$. The left-hand side here is increasing in $x_1$ and, when $x_1 = 0.31$, it is equal to $0.480052\cdots$.
\end{proof}

\begin{proof}[Proof of Proposition \ref{der-bounds}]
Henceforth, set $\alpha := 0.31$ \rev{and fix two integers $m\ge1$ and $j\ge2$. Our goal is to bound $\partial_m\phi(\mathbf{x})$ uniformly for $\mathbf{x}\in[0,\alpha]^\N$. We} may assume that $j\ge m+1$, as $\partial_m \phi_j(\x)=0$ when $j\le m$. 

\rev{Now, let us define 
	\[ A_m := 2^{1 + \alpha + \cdots + \alpha^{m-1}}
	\quad\text{and}\quad
	B_m := 2^{\frac{1 + \alpha + \cdots + \alpha^{m-1}}{1 - \alpha}}.\]
Then,} if we apply \eqref{ss-der} $\ell$ times, we obtain
\begin{align}\nonumber 
0\le \partial_{m}  \phi_j(\x) 
	& \leq A_m^{\alpha^{j-m} + \cdots + \alpha^{j - m - (\ell - 1)}}\partial_{m}\phi_{j - \ell}( \x) + \frac{\log 2}{\alpha^2} \sum_{k = 0}^{\ell-1} A_m^{\alpha^{j-m} + \cdots + \alpha^{j - m - k}} \pfrac{\alpha}{2}^{j - k} \\ 
	& \leq B_m^{\alpha^{j - m - (\ell - 1)}}\partial_{m} \phi_{j - \ell}(\x) + \frac{\log 2}{\alpha^2} \sum_{k = 0}^{\ell - 1} B_m^{\alpha^{j - m - k}} \pfrac{\alpha}{2}^{j - k} \nonumber\\
	&\leq B_m^{\alpha^{j-m-\ell+1}} \Big( \partial_{m} \phi_{j - \ell}(\x) +  \frac{\log 2}{\alpha^2} \pfrac{\alpha}{2}^{j-\ell+1} \frac{1}{1 - \alpha/2} \Big) .
	\label{repeated}
\end{align}
\rev{Here, we observed that all the $B_m^{\alpha^t}$ terms in \eqref{repeated} have $t \geq s + 1 - m$; bounding them all above by $B_m^{\alpha^{s + 1 - m}}$ then allowed us to sum a geometric series.}

\rev{Let us fix some $s\in\{1,2,\dots,m+1\}$ independent of $j$. Then the number $j-s$ lies in $\{0,1,\dots,j-1\}$. Hence, applying \eqref{repeated} with $\ell = j - s$, and then taking the supremum over all $j\ge m+1$ and all $\mathbf{x}\in[0,\alpha]^\N$, we find that}
\begin{equation}\label{to-use-bd} 
\Delta_m
	 \leq B_m^{\alpha^{s + 1 - m}} 
	\Big( \sup_{\rev{\mathbf{x}\in[0,\alpha]^\N}}
		 |\partial_{m}\phi_s(\x)| + \frac{\log 2}{\alpha^2} \pfrac{\alpha}{2}^{s+1} \frac{1}{1 - \alpha/2} \Big).
\end{equation} 
\rev{When} $m = 1$, we take $s = 2$. Then Lemma \ref{phi2-lem} and relation \eqref{to-use-bd} give
\[ 
\Delta_1 \leq 2^{\alpha^2/(1 - \alpha)}\(\frac{0.481}{4} + \frac{\alpha \log 2}{8(1 - \alpha/2)}\) < 0.17,
\] 
as required. \rev{When} $m \geq 2$, we take $s = m$. Then $\partial_{m}\phi_{s} \equiv 0$ and so \eqref{to-use-bd} degenerates to 
\begin{equation}\label{use-new} 
\Delta_m \leq B_m^{\alpha} \frac{\log 2}{\alpha^2} \pfrac{\alpha}{2}^{m+1} \frac{1}{1 - \alpha/2}.
\end{equation}
This gives $\Delta_2 < 0.05$, and also confirms that $\Delta_m \ll 0.155^m$. To bound $\sum_{m \geq 3} \Delta_m$ we use \eqref{use-new} and the uniform bound $B_m \leq 2^{1/(1 - \alpha)^2}$, obtaining
\[ 
\sum_{m \geq 3} \Delta_m \leq \frac{\alpha^2 \log 2}{16(1 - \alpha/2)^2} 2^{\alpha/(1 - \alpha)^2} < 0.01.
\] 
This completes the proof of Proposition \ref{der-bounds}.
\end{proof}

\section{Calculating the $\rho_i$ and $\rho$}\label{sec:analytic}

In this section we conclude our analysis of the parameters \rev{$\rho_1,\rho_2,\ldots$} for the binary flags. The situation so far is that we have shown that these parameters exist, are unique and lie in $(0, 0.31)$. Moreover, their limit $\rho = \lim_{i \rightarrow \infty} \rho_i$ exists (Proposition \ref{mainthm-limit}).

None of this helps with actually computing the limit numerically or giving any kind of closed form for it, and the objective of this section is to provide tools for doing that. \rev{We prove two main results, Propositions \ref{rho-i-comp} and \ref{rho-comp} below. Recall the convention that $\rho_0 = 0$.}

\begin{proposition}\label{rho-i-comp}
\rev{Recall the convention that $\rho_0 = 0$.} Define a sequence \rev{$(a_{i,j})_{i\ge1,\,1\le j\le i+1}$} by the relations $a_{i,1}=2$, $a_{i,2}=2+2^{\rho_{i-1}}$ and \rev{\begin{equation}\label{aij-recur} a_{i,j} = a_{i,j-1}^2+a_{i-1,j-1}^{\rho_{i-1}}
	- a_{i-1,j-2}^{2\rho_{i-1}} \;\; (3\le j\le i+1).\end{equation}}
	Then 
	\begin{equation}\label{rho-eqs-a-form} a_{i,i+1} = a_{i-1,i}^{\rho_{i-1}} e^{2^{i-1}}\quad\rev{\text{for}\ i=2,3,\dots}
	\end{equation}
\end{proposition}
In practice, these relations are enough to calculate the $\rho_j$ to high precision. Indeed, a short computer program produced the data in Table \ref{rhoj}. (We suppress any discussion of the numerical precision of our routines.)

\begin{table}[h]
\begin{center}
\begin{tabular}{|c|c| c| c| }
\hline
$j$ & $\rho_j$ & $j$ & $\rho_j$ \rule[-8pt]{0pt}{22pt}\\
\hline\rule[-8pt]{0pt}{22pt}
1 & 0.3064810093305& 7&0.2812113502101\\
2 & 0.2796104150767 &8&0.2812113496729\\ 
3 & 0.2813005404710 & 9&0.2812113496974\\
4& 0.2812067224539& 10&0.2812113496963\\
5&0.2812115789381& 11&0.2812113496964\\
6&0.2812113387071&  12&0.2812113496964 \\
\hline
\end{tabular}
\medskip

\caption{Table of $\rho_j$}\label{rhoj}
\end{center}
\end{table}

Using Proposition \ref{rho-i-comp} we may obtain the following reasonably satisfactory description of $\rho$, \rev{which is equivalent to the statement of Theorem \ref{beta-rho} (c). }
\begin{proposition}\label{rho-comp}
\rev{For each $t \in (0,1)$, define a sequence $a_j(t)$ by 
\begin{equation}\label{a-recur} a_1(t) = 2, \; a_2(t) = 2 + 2^{t}, \; a_j(t) = a_{j-1}(t)^2 + a_{j-1}(t)^{t} - a_{j-2}(t)^{2t} \; \; (j \geq 3).\end{equation}
Then the limit $\rho = \lim_{i \rightarrow \infty} \rho_i$ is a solution (in the variable $t$) to the equation 
\be\label{rho-limit-eq-2}
\frac{1}{1 - t/2} = \lim_{j\to\infty} \frac{\log a_j(t)}{2^{j-2}}.
\ee
Furthermore, $\rho$ is the unique solution to \eqref{rho-limit-eq-2} in the interval $0\le t\le 1/3$.}
\end{proposition}
\rev{\emph{Remark.} This is easily seen to be equivalent to Theorem \ref{beta-rho} (c), but we have introduced $t$ as a dummy variable since $\rho$ now has the specific meaning $\rho = \lim_{i \rightarrow \infty} \rho_i$, and this will avoid confusion in the proof.

Before starting the proofs of Propositions \ref{rho-i-comp} and \ref{rho-comp}, let us pause to observe a simple link between the sequences $a_{i,j}$ and $a_j(t)$ defined in \eqref{aij-recur} and \eqref{a-recur} respectively.

\begin{lemma}\label{lemmax} For each fixed $j\ge1$, the limit $\lim_{i \rightarrow \infty} a_{i,j}$ exists and equals $a_j(\rho)$.  \end{lemma}
\begin{proof}
The existence of the limit follows by induction on $j$, using Proposition \ref{mainthm-limit}, noting that the result is trivial for $j = 1$ and immediate from Proposition \ref{mainthm-limit} when $j = 2$. The fact that the limit equals $a_j(\rho)$ then follows immediately by letting $i \rightarrow \infty$ in \eqref{aij-recur} and comparing with \eqref{a-recur}.
\end{proof}
}

\subsection{Product formula for $f^C(\rho)$ and a double recursion for the $\rho_i$}

Proposition \ref{rho-i-comp} is a short deduction from a product formula for $F(g)$, or equivalently for $f^C(\brho)$, given in Proposition \ref{recursiveformulas} below. Whilst is would be a stretch to say that this formula is of independent interest, it is certainly a natural result to prove in the context of our work. 

Before we state the formula, the reader should recall the notion of genotype $g$ (Definition \ref{gen-def}) and of the function $F(g)$ (Proposition \ref{gen-fc}). We require the following further small definition.

\begin{definition}[Defects]
\rev{Let $i,m\in\Z_{\ge0}$ and let} $g$ be an $i$-genotype. 

\rev{(a) If $m\le i$, then we define} the $m$th consolidation
\[ g^{(m)} := \{ A' \subset [i - m]: A' \cup X \in g \; \mbox{for all $X \subset \{i - m+1,\rev{\dots}, i\}$}\}.\] 
Otherwise, if $m \geq i+1$, then by convention we define $g^{(m)}$ to be empty.

\rev{(b) For $m\ge1$, we set}
\[ \Delta^m(g) := |g^{(m-1)}| - 2 |g^{(m)}|. \] 
\end{definition}

\begin{remark*} 
	Note that $g^{(0)} = g$, $g^{(1)} = g^*$ and $g^{(m)} = (g^{(m-1)})^*$. It is easy to see that $\Delta^m(g)$ is always a nonnegative integer. Observe that $\Delta^{i+1}(g) = 0$ unless $g = \mathcal{P}[i]$, in which case $\Delta^{i+1}(g) = 1$, and that $\Delta^m(g) = 0$ whenever $m > i+1$.
\end{remark*}

\begin{proposition}\label{recursiveformulas}
\rev{Let $i\in\N$ and} suppose that $g$ is an $i$-genotype. Then
\[\
 F(g) = \rev{\prod_{m=1}^{i+1}} a_{i,m}^{\Delta^m(g)},
\] with the $a_{i,m}$ defined as in Proposition \ref{rho-i-comp} above. 
	\end{proposition}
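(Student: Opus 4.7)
The plan is to prove the formula by induction on the level $i$, with the core of the argument encapsulated in a purely algebraic identity. The base case $i = 1$ amounts to checking the four $1$-genotypes directly against the definitions $a_{1,1} = 2$ and $a_{1,2} = 2 + 2^{\rho_0} = 3$.

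For the inductive step, Proposition \ref{gen-fc} gives $F(g) = 2^{|g|-|g^*|} \sum_{g' \leq g^*} 2^{-|g'|} F(g')^{\rho_{i-1}}$. Setting $b_m := a_{i-1,m}^{\rho_{i-1}}$ and invoking the inductive hypothesis $F(g') = \prod_m a_{i-1,m}^{\Delta^m(g')}$, this rewrites as $F(g) = 2^{|g|-2|g^*|} \cdot \mathcal{F}_2(g^*)$, where I introduce the auxiliary sum
\[
\mathcal{F}_\alpha(h) := \sum_{h' \leq h} \alpha^{|h|-|h'|} \prod_m b_m^{\Delta^m(h')}.
\]
The heart of the proof is the following lemma, which I would establish by induction on $N$: for any $N \geq 0$, any $h \subset \mathcal{P}[N]$, and any reals $\alpha, b_1, b_2, \dots$,
\[
\mathcal{F}_\alpha(h) = \prod_m d_m^{\Delta^m(h)}\qquad\text{with}\qquad d_1 := \alpha + b_1,\quad d_j := d_{j-1}^2 + b_j - b_{j-1}^2\ \ (j \geq 2).
\]

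For the induction, I would partition the elements of $h$ into the ``unpaired'' set $K := \{A \in h : A \triangle \{N\} \notin h\}$ (of size $\Delta^1(h)$) and pairs $\{A', A' \cup \{N\}\}$ indexed by $P := h^*$. A subset $h' \leq h$ is determined by independent binary choices for each $A \in K$ and one of four options (both, first only, second only, neither) for each pair. The $K$-contribution factors as $\prod_{A \in K}(\alpha + b_1) = d_1^{\Delta^1(h)}$. Writing $P_1 := (h')^*$ for the set of ``both'' pairs, the three remaining options for each pair in $P \setminus P_1$ contribute a combined factor $\alpha^2 + 2\alpha b_1 = d_1^2 - b_1^2 = d_2 - b_2$, by the definition of $d_2$. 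Hence
\[
\mathcal{F}_\alpha(h) = d_1^{\Delta^1(h)} \sum_{P_1 \subseteq P}(d_2 - b_2)^{|P|-|P_1|} \prod_{j \geq 1} b_{j+1}^{\Delta^j(P_1)},
\]
and the inner sum is exactly $\mathcal{F}^{(1)}_{\alpha'}(P)$ for the shifted data $\alpha' := d_2 - b_2$ and $b^{(1)}_j := b_{j+1}$. Since $P \subset \mathcal{P}[N-1]$, the inductive hypothesis applies; a quick check confirms $d^{(1)}_1 = \alpha' + b_2 = d_2$ and inductively $d^{(1)}_j = d_{j+1}$, so the inner sum equals $\prod_{m \geq 2} d_m^{\Delta^m(h)}$ via the identity $\Delta^m(h) = \Delta^{m-1}(h^*)$ for $m \geq 2$.

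Applying the lemma with $\alpha = 2$ and $b_m = a_{i-1,m}^{\rho_{i-1}}$, one reads off $d_1 = 2 + 2^{\rho_{i-1}} = a_{i,2}$, and inductively $d_j = a_{i,j+1}$ since the recursion for $d_j$ matches the one defining $a_{i,j+1}$. Together with $2^{\Delta^1(g)} = a_{i,1}^{\Delta^1(g)}$ and the shift of indices for $\Delta^m(g)$, this yields $F(g) = \prod_m a_{i,m}^{\Delta^m(g)}$, completing the induction on $i$. The main obstacle is identifying the correct generalization $\mathcal{F}_\alpha$ with a free ``background'' parameter $\alpha$, together with the closure identity $\alpha^2 + 2\alpha b_1 = d_2 - b_2$: this is precisely what allows the recursion on $N$ to replicate itself under the substitution $(\alpha, b_\bullet) \mapsto (d_2 - b_2, b_{\bullet + 1})$.
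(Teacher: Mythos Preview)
Your proof is correct and is essentially the same as the paper's. Your auxiliary function $\mathcal{F}_\alpha(h)$ coincides with the paper's $\Phi_{\theta,\mathbf{a}}(g)$, and your inductive step for the lemma (the unpaired/paired decomposition yielding the shift $(\alpha,b_\bullet)\mapsto(d_2-b_2,b_{\bullet+1})$) is exactly the content of the paper's Lemma~\ref{phi-functional}, namely $\Phi_{\theta,\mathbf{a}}(g)=(\theta+a_1)^{\Delta^1(g)}\Phi_{\theta^2+2a_1\theta,\,T\mathbf{a}}(g^*)$. The only difference is organisational: the paper first isolates the single-step functional equation (deriving it via the counting Lemma~\ref{first}, which is your paired/unpaired split) and then iterates it in the main proof, tracking the evolving parameter $\theta$ via auxiliary quantities $b_{i,j}$; you instead prove the fully iterated product formula for $\mathcal{F}_\alpha$ in one go by induction on the level, with the $d_j$-recursion emerging directly.
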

	
\begin{proof}[Proof of Proposition \ref{rho-i-comp}, given Proposition \ref{recursiveformulas}] 
\rev{Note that $\Delta^m(\mathcal{P}[i]) = 1_{m=i+1}$ for $1\le m\le i+1$. Together with Proposition \ref{recursiveformulas}, this implies that $F(\mathcal{P}[i]) = a_{i, i+1}$.} Thus $f^{\Gamma_i}({\bm \rho}) = F(\mathcal{P}[i]) = a_{i, i+1}$. The equation \eqref{rho-eqs-a-form} is then an immediate consequence of the $\rho$-equations \eqref{binary-rho-eqs}.
\end{proof}

Before turning to the proof of Proposition \ref{recursiveformulas}, we isolate a couple of lemmas from the proof.

\begin{lemma}\label{first}
Let $\alpha \in \R$ \rev{and $i\in\N$}. Let $g$ be an $i$-genotype, and suppose that $k$ is an $(i-1)$-genotype with $k \leq g^*$. Then
\[ \sum_{\substack{g' \leq g \\ (g')^* = k}} \alpha^{|g'|} = (1 + \alpha)^{\Delta^1(g)}  (1 + 2\alpha)^{|g^*| - |k|}\alpha^{2 |k|}.\]
\end{lemma}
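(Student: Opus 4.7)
The plan is to prove Lemma \ref{first} by a direct product decomposition. The key observation is that for each $A' \subset [i-1]$, the membership of the two elements $A'$ and $A' \cup \{i\}$ in $g'$ can be chosen independently of the choices made at other $A'$s, since both the constraint $g' \leq g$ and the condition $(g')^* = k$ are local: they factor as conditions on the pair $\big( \mathbf{1}_{A' \in g'},\, \mathbf{1}_{A' \cup \{i\} \in g'} \big) \in \{0,1\}^2$ indexed by $A' \subset [i-1]$.

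First I would rewrite the sum as
\[
\sum_{\substack{g' \leq g \\ (g')^* = k}} \alpha^{|g'|} = \prod_{A' \subset [i-1]} \Bigg( \sum_{(a,b) \in S_{A'}} \alpha^{a+b} \Bigg),
\]
where $S_{A'} \subseteq \{0,1\}^2$ is the set of admissible pairs $(a,b) = (\mathbf{1}_{A' \in g'},\,\mathbf{1}_{A' \cup \{i\} \in g'})$. The set $S_{A'}$ is determined by the status of $A'$ with respect to $g$ and $k$, splitting into three cases.

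Next I would enumerate the cases: (i) if $A' \in g^*$ (both $A'$ and $A' \cup \{i\}$ lie in $g$), then when $A' \in k$ we must have $(a,b) = (1,1)$, contributing $\alpha^2$, while if $A' \notin k$ we exclude $(1,1)$ and allow $(0,0), (1,0), (0,1)$, contributing $1 + 2\alpha$; (ii) if exactly one of $A', A' \cup \{i\}$ lies in $g$, then $A' \notin g^* \supseteq k$, and the included element can freely be in $g'$ or not, contributing $1 + \alpha$; (iii) if neither lies in $g$, the only choice is $(0,0)$, contributing $1$. Counting these positions gives $|k|$ factors of $\alpha^2$, $|g^*| - |k|$ factors of $1 + 2\alpha$, and $|g| - 2|g^*| = \Delta^1(g)$ factors of $1 + \alpha$; the remaining positions contribute trivially.

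There is no genuine obstacle here: the proof is bookkeeping once the product decomposition is set up. The only point to check carefully is that $k \leq g^*$ is what guarantees the first case is well-posed (so that $A' \in k$ forces $A' \in g^*$, and thus $(1,1)$ is actually an admissible choice relative to $g' \leq g$). Multiplying the local factors yields
\[
(1+\alpha)^{\Delta^1(g)} (1+2\alpha)^{|g^*|-|k|} \alpha^{2|k|},
\]
as claimed.
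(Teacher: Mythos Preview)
Your proof is correct and takes essentially the same approach as the paper: both decompose the sum as a product over $A' \subset [i-1]$ and perform a case analysis on the membership of $A'$ and $A' \cup \{i\}$ in $g$ (and of $A'$ in $k$), arriving at the same local factors $\alpha^2$, $1+2\alpha$, $1+\alpha$, and $1$ with the same multiplicities. The paper presents this via a table; your prose version is equivalent.
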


\begin{proof}
	\rev{We have $g=\{A\subset[i-1]:A\in g\}\cup\{A\subset[i-1]:A\cup\{i\}\in g\}$. Hence, if we let $X=\{A\subset[i-1]:A\in g,\ A\cup\{i\}\notin g\}$ and $Y=\{A\subset[i-1]:A\notin g,\ A\cup\{i\}\in g\}$, then we have $|g|=2|g^*|+|X|+|Y|$, and thus $\Delta^1(g)=|X|+|Y|$. 
	
	Now, in order to choose $g'\le g$ with $(g')^*=k$, we must decide independently for each $A\subset[i-1]$ whether $A\in g'$ and/or $A\cup\{i\}\in g'$. The condition that $g'\le g$ means that if $A\notin g$ (resp. if $A\cup\{i\}\notin g$), then we are forced to have $A\notin g'$ (resp. $A\cup\{i\}\notin g'$). Let us now examine all admissible options for the conditions ``$A\in g'$'' and ``$A\cup\{i\}\in g'$'':
	\begin{itemize}
		\item $A\in\ k$: since $(g')^*=k$, we are forced to have $A,A\cup\{i\}\in g'$. 
		\item $A\in g^*\setminus k$: we know in this case that $A,A\cup\{i\}\in g$, so the condition $g'\le g$ imposes no further restrictions on the membership of $A$ and of $A\cup\{i\}$ in $g'$. On the other hand, we know that $A\notin k=(g')^*$, and thus at most one out of $A$ and of $A\cup\{i\}$ may belong to $g'$. 
		\item $A\in X$: the condition $g'\le g$ implies the restriction that $A\cup\{i\}\notin g'$, and we may then choose freely among the two options of having $A\in g'$ or $A\notin g'$.
		\item $A\in Y$: the condition $g'\le g$ implies the restriction that $A\notin g'$, and we may then choose freely among the two options of having $A\cup\{i\}\in g'$ or $A\cup\{i\}\notin g'$.
	\end{itemize}
	By the above discussion, we have 
\begin{align*}
\sum_{\substack{g' \leq g \\ (g')^* = k}} \alpha^{|g'|}
	 = \alpha^{2|k|} \prod_{A\in g^*\setminus k} (1+\alpha+\alpha) \prod_{A\in X} (1+\alpha) \prod_{A\in Y}(1+\alpha).
\end{align*}	 
Since $|X|+|Y|=\Delta^1(g)$, the proof is complete.}
\end{proof}

For $\mathbf{a} = (a_1,a_2,\dots)$, and for some ($i$-)genotype $g$, write 
\begin{equation}\label{p-def} P_{\mathbf{a}}(g) := \rev{\prod_{m=1}^{i+1}} a_m^{\Delta^m(g)}.\end{equation}
\rev{ (Note that the $a_m$ here are just parameters, not related to the recursion \eqref{a-recur}, which does not feature in this subsection.)}
 If $\theta \in \R_{> 0}$, define \begin{equation}\label{phi-def} \Phi_{\theta,\mathbf{a}}(g) := \sum_{g' \leq g} \theta^{|g| - |g'|} P_{\mathbf{a}}(g').\end{equation}

\begin{lemma}\label{phi-functional}
We have the functional equation
\[
\Phi_{\theta,\mathbf{a}}(g) = (\theta + a_1)^{\Delta^1(g)} \Phi_{\theta^2 + 2a_1 \theta, T\mathbf{a}}(g^*).
\] 
As before, $T\mathbf{a}$ denotes the shift operator $T\mathbf{a} = (a_2,a_3,\cdots)$.
\end{lemma}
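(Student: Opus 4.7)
The plan is to prove the identity by splitting the outer sum in $\Phi_{\theta,\mathbf{a}}(g) = \sum_{g' \leq g} \theta^{|g| - |g'|} P_{\mathbf{a}}(g')$ according to the value of the consolidation $k := (g')^*$, which by definition satisfies $k \leq g^*$. The key preparation is to rewrite $P_{\mathbf{a}}(g')$ in a way that separates the contribution of $a_1$ (governed by $|g'|$ and $|k|$) from the contribution of the shifted sequence $T\mathbf{a}$ (governed entirely by $k$).

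Concretely, I will use the identity $\Delta^m(g') = \Delta^{m-1}(k)$ for $m \geq 2$, which follows directly from $(g')^{(m)} = (k)^{(m-1)}$, together with the definition $\Delta^1(g') = |g'| - 2|k|$. This gives the factorisation
\[
P_{\mathbf{a}}(g') = a_1^{|g'| - 2|k|} \prod_{m \geq 2} a_m^{\Delta^{m-1}(k)} = a_1^{|g'| - 2|k|}\, P_{T\mathbf{a}}(k).
\]
Substituting and rearranging,
\[
\Phi_{\theta,\mathbf{a}}(g) = \sum_{k \leq g^*} P_{T\mathbf{a}}(k)\, a_1^{-2|k|} \sum_{\substack{g' \leq g \\ (g')^* = k}} \theta^{|g| - |g'|} a_1^{|g'|}.
\]

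The main (and really the only) calculational step is the evaluation of the inner sum. Factoring out $\theta^{|g|}$ and applying Lemma \ref{first} with $\alpha = a_1/\theta$ yields
\[
\sum_{\substack{g' \leq g \\ (g')^* = k}} \theta^{|g| - |g'|} a_1^{|g'|} = \theta^{|g|} (1 + a_1/\theta)^{\Delta^1(g)}(1 + 2a_1/\theta)^{|g^*|-|k|}(a_1/\theta)^{2|k|}.
\]
Using the decomposition $|g| = \Delta^1(g) + 2(|g^*| - |k|) + 2|k|$ to redistribute the powers of $\theta$ on the right side converts this neatly to
\[
(\theta + a_1)^{\Delta^1(g)}\, (\theta^2 + 2a_1\theta)^{|g^*|-|k|}\, a_1^{2|k|}.
\]

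Plugging this back into the double sum, the factor $a_1^{2|k|}$ cancels against the $a_1^{-2|k|}$ already present, and the factor $(\theta + a_1)^{\Delta^1(g)}$ pulls out of the sum over $k$. What remains is exactly
\[
(\theta + a_1)^{\Delta^1(g)} \sum_{k \leq g^*} P_{T\mathbf{a}}(k)\,(\theta^2 + 2a_1\theta)^{|g^*|-|k|} = (\theta + a_1)^{\Delta^1(g)}\,\Phi_{\theta^2 + 2a_1\theta,\, T\mathbf{a}}(g^*),
\]
which is the desired functional equation. I expect no genuine obstacle: the identity is a bookkeeping consequence of Lemma \ref{first} once the shift identity $\Delta^m(g') = \Delta^{m-1}((g')^*)$ is recognised; the only care required is the arithmetic with the exponents of $\theta$ when redistributing $\theta^{|g|}$.
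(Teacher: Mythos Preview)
Your proof is correct and follows essentially the same approach as the paper: both use the factorisation $P_{\mathbf{a}}(g') = a_1^{\Delta^1(g')} P_{T\mathbf{a}}((g')^*)$, split the sum over $g'$ according to $k = (g')^*$, and apply Lemma~\ref{first} with $\alpha = a_1/\theta$. The paper simply leaves the redistribution of the powers of $\theta$ as ``a routine short calculation'', which you have carried out explicitly and accurately.
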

\begin{proof}
Using the relation $P_{\mathbf{a}}(g') = a_1^{\Delta^1(g')} P_{T\mathbf{a}}((g')^*)$, we have
\begin{align*}
\Phi_{\theta,\mathbf{a}}(g) 
	& = \theta^{|g|} \sum_{g' \leq g} \Big(\frac{a_1}{\theta}\Big)^{|g'|} 
		\Big(\frac{1}{a_1^2}\Big)^{|(g')^*|} P_{T\mathbf{a}}((g')^*) \\ 
	& = \theta^{|g|}\sum_{k \leq g^*} \Big(\frac{1}{a_1^2}\Big)^{|k|} 
	P_{T\mathbf{a}}(k) 
	\sum_{\substack{g' \leq g \\ (g')^* = k}} \Big(\frac{a_1}{\theta}\Big)^{|g'|}.
\end{align*}
The result now follows from Lemma \ref{first} and a routine short calculation.
\end{proof}

We are now in a position to prove Proposition \ref{recursiveformulas}.

\begin{proof}[Proof of Proposition \ref{recursiveformulas}]
Let $a_{i,m}$ be as in the statement of Proposition \ref{recursiveformulas}, and write $\mathbf{a}_i = (a_{i,1},a_{i,2},\dots)$. In the notation introduced above (cf.~\eqref{p-def}) the claim of Proposition \ref{recursiveformulas} is then that
\begin{equation}\label{to-prove-recurs} F(g) = P_{\mathbf{a}_i}(g).\end{equation}
We proceed by induction on $i$. \rev{Let us first consider the base case when $i=1$. 
	\begin{itemize}
		\item If $g=\mathcal{P}[1]$, we have $F(g)=f^{\Gamma_1}(\bs\rho)=3$. On the other hand, $P_{\mathbf{a}_1}(\mathcal{P}[1])=a_{1,2}=3$ in this case by the convention that $\rho_0=0$.
		\item If $g\subsetneqq\mathcal{P}[1]$, then $g^*=\emptyset$ and thus $\Delta^1(g)=|g|$ and $\Delta^2(g)=0$. So we conclude that $P_{\mathbf{a}_1}(g)=2^{|g|}$. On the other hand, for all such genotypes, the corresponding cell contains $2^{|g|}$ elements that all split into unicells at level 0. Consequently, $F(g)=2^{|g|}=P_{\mathbf{a}_1}(g)$ in this case too.
		\end{itemize}}	
	
Next, suppose that we have the result for $(i-1)$-genotypes \rev{for some $i\ge2$}, and let $g$ be an $i$-genotype. \rev{We know from \eqref{geno-recurs} that
\[
F(g) = \sum_{g' \leq g^*} 2^{|g| - |g^*| - |g'|} F(g')^{\rho_{i-1}}.
\]
By the induction hypothesis, we have $F(g')^{\rho_{i-1}} = P_{\mathbf{a}_{i-1}^{\rho_{i-1}}}(g')$ for all $g'\le g^*$, where $\mathbf{a}_{i-1}^{\rho_{i-1}}$ is shorthand for $(a_{i-1,1}^{\rho_{i-1}}, a_{i-1,2}^{\rho_{i-1}},\dots)$. Hence, it follows immediately that}
\begin{equation}\label{fphi} 
	F(g) = 2^{\Delta^1(g)} \Phi_{2,\mathbf{a}_{i-1}^{\rho_{i-1}}}(g^*).\end{equation} 
with $\Phi$ defined in \eqref{phi-def}. 
The fact that the right-hand side of \eqref{fphi} is a product $P_{\ast}(g)$ is now clear by an iterated application of Lemma \ref{phi-functional}. To get a handle on exactly which product, suppose that the result of applying Lemma \ref{phi-functional} $j-1$ times is that 
\begin{equation}\label{fphi-iterated} 
F(g) = \Big(\prod_{m = 1}^{j} \rev{b}_{i,m}^{\Delta^m(g)}\Big)
	 \Phi_{\rev{\theta}_{i, j}, T^{j-1} (\mathbf{a}_{i-1}^{\rho_{i-1}})} (g^{(j)}).
\end{equation}
Thus $\rev{b}_{i,1} = \rev{\theta}_{i,1} = 2$, and we have the relations 
\begin{equation}\label{eq-ab-1} 
b_{i,j+1} = \theta_{i,j} + a_{i-1, j}^{\rho_{i-1}}
\end{equation} 
and
\begin{equation}\label{eq-ab-2} 
\theta_{i,j+1} = \theta_{i,j}^2 + 2 a_{i-1,j}^{\rho_{i-1}} \theta_{i,j}
\end{equation}
\rev{for $j\in\{1,\dots,i\}$. We claim that $b_{i,j}=a_{i,j}$ for all $j\le i+1$. This will complete the proof of Proposition \ref{recursiveformulas}, because we may then apply \eqref{fphi-iterated} with $j=i+1$ to show that
	\[
	F(g) = \Big(\prod_{m = 1}^{i+1} a_{i,m}^{\Delta^m(g)}\Big)
	\Phi_{\rev{\theta}_{i, i+1}, T^{i+1} (\mathbf{a}_{i-1}^{\rho_{i-1}})} (g^{(i+1)})
	= \prod_{m = 1}^{i+1} a_{i,m}^{\Delta^m(g)}
	\]
because $g^{(i+1)}=\emptyset$ for all $i$-genotypes $g$. 

Let us now prove our claim that $b_{i,j}=a_{i,j}$ for all $j\le i+1$. We shall use induction on $j$. We have that $b_{i,1}=2=a_{i,1}$. In addition, $b_{i,2}=2+2^{\rho_{i-1}}=a_{i,2}$ by \eqref{eq-ab-1} with $j=1$ and by the fact that $\theta_{i,1}=2$. Now, assume that we have proven that $b_{i,j}=a_{i,j}$ for some $j\in\{2,\dots,i\}$. Relation \eqref{eq-ab-2} applied with $j-1$ in place of $j$ implies that 
	\[
	\theta_{i,j}+a_{i-1,j-1}^{2\rho_{i-1}} 
		= \big(\theta_{i,j-1}+ a_{i-1,j-1}^{\rho_{i-1}}\big)^2.
	\]
The right-hand side equals $b_{i,j}^2=a_{i,j}^2$ by applying \eqref{eq-ab-1} followed by the induction hypothesis. Thus, $\theta_{i,j}=a_{i,j}^2-a_{i-1,j-1}^{2\rho_{i-1}}$. Inserting this relation into \eqref{eq-ab-1} and using the recursive formula \eqref{aij-recur} shows that $b_{i,j+1}=a_{i,j+1}$. This completes the inductive step and thus the proof of Proposition \ref{recursiveformulas}.}
\end{proof}

\subsection{A single recurrence for $\rho$}\label{rho-i-to-rho}

In this section we deduce Proposition \ref{rho-i-to-rho}
from Proposition \ref{rho-i-comp} by a limiting argument.

\rev{To carry this out, we will need the following fairly crude estimates for the $a_{i,j}$ and the $a_j(t)$, defined in \eqref{aij-recur} and \eqref{a-recur} respectively.

\begin{lemma}\label{ai-crude}
We have
\begin{equation}\label{a-double} 
	a_{i,j+1} \leq a_{i,j}^2\quad\text{for}\ 1\le j\le i
\end{equation}
and
\begin{equation}\label{basic-aij-ineqs} 
	3^{2^{j-2}} \leq a_{i,j} \leq a_{i,2}^{2^{j-2}}\le 4^{2^{j-2}}
	\quad\text{for}\ 2\le j\le i+1.
\end{equation}
\end{lemma}
\begin{proof}
Since $\rho_{i-1}<1$ for all $i\ge1$ (cf.~Lemma \ref{rho-j-rho-1}), we have $a_{i,2}<4=a_{i,1}^2$. Hence, the inequality \eqref{a-double} follows from a simple induction using \eqref{aij-recur}. 

Using another simple induction, we readily confirm the inequality $a_{i,j}\le a_{i,2}^{2^{j-2}}$ in \eqref{basic-aij-ineqs}.

For the lower bound in \eqref{basic-aij-ineqs}, we know from \eqref{eq-ab-1} and \eqref{eq-ab-2} and from the fact that $b_{i,j}=a_{i,j}$ for all $j\le i+1$ that
\begin{equation}\label{eq-ab-1-again} 
	a_{i,j+1} = \theta_{i,j} + a_{i-1, j}^{\rho_{i-1}}
\end{equation} 
and that
\begin{equation}\label{eq-ab-2-again} 
	\theta_{i,j+1} = \theta_{i,j}^2 + 2 a_{i-1,j}^{\rho_{i-1}} \theta_{i,j}
\end{equation}
for $j\in\{1,\dots,i\}$. By a simple induction, these formulas imply that $a_{i,j}>1$ and $\theta_{i,j}>0$ for all $j\le i+1$, and thus $\theta_{i,j+1} + 1 \geq (\theta_{i,j} + 1)^2$ for $j=1,2,\dots,i$. By yet another induction, we find $\theta_{i,j} \geq 3^{2^{j - 1}} - 1$. Finally, the lower bound on the $a_{i,j}$ in \eqref{basic-aij-ineqs} follows from this and \eqref{eq-ab-1-again}.
\end{proof}

\begin{lemma}\label{a-crude}
	Let $t \in (0,1)$. We have 
		\begin{equation}\label{a-sings} 
			a_{j+1}(t) \leq a_{j}(t)^2\quad\text{for}\ j\ge1
		\end{equation} 
and
		\begin{equation}\label{basic-a-ineqs} 
			3^{2^{j-2}} \leq a_{j}(t) \leq a_{2}(t)^{2^{j-2}}\le 4^{2^{j-2}}
			\quad\text{for}\ j\ge 2.
		\end{equation}
\end{lemma}

\begin{proof}
The inequality \eqref{a-sings} follows from a simple induction using \eqref{a-recur}, and the upper bound in \eqref{basic-a-ineqs} follows with a further induction. 

For the lower bound, we first set up relations analogous to \eqref{eq-ab-1-again} and \eqref{eq-ab-2-again}, defining $\theta_j(t)$ for $j\ge1$ via the relation
\begin{equation} \label{eq-ab-single-1} 
	a_{j+1}(t) = \theta_j(t) + a_j(t)^t.
\end{equation} 
We then note that we also have
\begin{equation}\label{eq-ab-single-2} 
	\theta_{j+1}(t) = \theta_j(t)^2 + 2 a_j(t)^t \theta_j(t).\end{equation} 
Indeed, on the one hand, we have
\begin{align*}
	\theta_{j+1}(t)&= a_{j+2}(t)-a_{j+1}(t)^t = a_{j+1}(t)^2-a_j(t)^{2t}
\end{align*}
by \eqref{a-recur}. On the other hand,
\[
 \theta_j(t)^2 + 2 a_j(t)^t \theta_j(t) = \big(\theta_j(t) +a_j(t)\big)^2 - a_j(t)^{2t} = a_{j+1}(t)^2-a_j(t)^{2t}
\]
by \eqref{eq-ab-single-1}. 

Having proven \eqref{eq-ab-single-2}, we now proceed analogously to the proof of Lemma \ref{ai-crude}. We have $a_j(t)>1$ and $\theta_j(t)>0$ for all $j\ge1$, by a simple induction using \eqref{eq-ab-single-1} and \eqref{eq-ab-single-2}. Therefore, from \eqref{eq-ab-single-2}, we have that $\theta_{j+1}(t) +1  \geq (\theta_j(t) + 1)^2$. By induction, this implies that $\theta_{j}(t) \geq 3^{2^{j - 1}} - 1$. Finally, the lower bound on the $a_{j}(t)$ in \eqref{basic-a-ineqs} follows from this and \eqref{eq-ab-single-1}.
\end{proof}

We are now in a position to prove that the relation
\begin{equation}\label{rho-limit-eq-2-again}
\frac{1}{1 - t/2} = \lim_{j\to\infty} \frac{\log a_j(t)}{2^{j-2}}
\end{equation}
holds with $t=\rho$, which is one of the main statements of Proposition \ref{rho-comp}. Iterating \eqref{rho-eqs-a-form} gives
\begin{align*}
a_{i, i+1}  = \exp(2^{i-1}) a_{i-1, i}^{\rho_{i-1}}  
	&= \exp(2^{i-1} + \rho_{i-1} 2^{i-2}) a_{i-2,i-1}^{\rho_{i-2}\rho_{i-1}}
	= \cdots \\ 
	& = \exp \Big(2^{i-1} + \sum_{j=1}^{i-2}(\rho_{i-j} \cdots \rho_{i-1}) 2^{i - j - 1}   \Big) 
	a_{1,2}^{\rho_1 \cdots \rho_{i-1}}.
\end{align*}
By Proposition \ref{mainthm-limit} , we have $\rho_i\to\rho$. In addition, by Lemma \ref{rho-j-rho-1}, we have $0\le \rho_i\le \rho_1<0.31$ for all $i$. Thus, taking limits as $i \rightarrow \infty$ gives
\be\label{limaii1}
\lim_{i \rightarrow \infty} \frac{\log a_{i, i+1}}{2^{i-1}} 
		= 1 + \frac{\rho}{2} 
		+ \Big(\frac{\rho}{2}\Big)^2 + \ldots
		 = \frac{1}{1 - \rho/2}.
\ee

We now derive another expression for the left-hand side of \eqref{limaii1}.
A telescoping argument gives
\be\label{aii-telescope}
\frac{\log a_{i, i+1}}{2^{i-1}} = \log 4 + \sum_{j=1}^i \frac{1}{2^{j-1}} \log
\pfrac{a_{i,j+1}}{a_{i,j}^2}.
\ee
The terms on the right-hand side of \eqref{aii-telescope} are rapidly decreasing. Indeed, by \eqref{a-double} we have $1\ge a_{i,j+1}/a_{i,j}^2$ for all $j\ge1$. On the other hand, by \eqref{aij-recur} (with $j$ replaced by $j+1$ there) and by \eqref{basic-aij-ineqs}, we have
\[
\frac{a_{i,j+1}}{a_{i,j}^2} \ge 1 - \frac{a_{i-1,j-1}^{2\rho_1}}{a_{i,j}^2}
=1 + O\Big(\Big(\frac{2^{\rho_{i-1}}}{3}\Big)^{2^{j-1}}\Big).
\]
for all $j\in\{2,\dots,i\}$. Since $\rho_{i-1}\le \rho_1\le 0.31$, we have $2^{\rho_{i-1}}/3<1/2$. In conclusion,
\begin{equation}\label{1212a}
\log \pfrac{a_{i,j+1}}{a_{i,j}^2} = O(2^{-2^{j-1}})
\end{equation}
for all $j\in\{1,\dots,i\}$. By a simple limiting argument using relation \eqref{aii-telescope}  and Lemma \ref{lemmax}, we thus find that
\[
\lim_{i \rightarrow \infty} \frac{\log a_{i, i+1}}{2^{i-1}} = \log 4 +
\sum_{j=1}^\infty \frac{1}{2^{j-1}} \log\pfrac{a_{j+1}(\rho)}{a_j(\rho)^2} = 
\lim_{j\to\infty} \frac{\log a_j(\rho)}{2^{j-2}}.
\]
Here, we used \eqref{1212a} to bound the terms with $j$ large. 
Comparing this with \eqref{limaii1} confirms that indeed \eqref{rho-limit-eq-2-again}
is satisfied with $t=\rho$.
\vspace*{10pt}

We turn now to the final statement in Proposition \ref{rho-comp}, the statement that \eqref{rho-limit-eq-2-again} has a unique solution in $t \in [0,\frac{1}{3}]$ (which must, by the above discussion, be $\rho$). This is a purely analytic problem. 
Write
\[ W_j(t) := \frac{1}{1 - t/2} - \frac{\log a_j(t)}{2^{j-2}}, \quad W(t) := \lim_{j \rightarrow \infty} W_j(t).\]
We must show that there is only one solution to $W(t) = 0$. We already know $W(\rho)=0$, so it would suffice to show that $W$ is strictly increasing in $[0,1/3]$. This would certainly follow if we could show that 
\[ 
W_j(t') - W_j(t) \geq \frac{1}{6} (t' - t)
\] 
for all $j\ge2$ and all $0\le t\le t'\le 1/3$. Since the derivative of $\frac{1}{1 - t/2}$ is bounded below by $\frac{1}{2}$ on $[0,\frac{1}{3}]$, it is enough to establish the derivative bound
\[ 
\frac{\dee}{\dee t} \pfrac{\log a_j(t)}{2^{j-2}} \le \frac13 
\] 
for all $j \geq 2$ and all $t \in (0,\frac{1}{3})$. The remainder of the section is devoted to proving this bound, which it is convenient to write in the form
\begin{equation}\label{logaj-deriv} 
	\ell_j(t) \leq \frac{1}{3} \cdot 2^{j - 2},
\end{equation} 
where $\ell_j(t) := a'_j(t)/a_j(t)$. 

We begin by observing that, since $t \in (0,\frac{1}{3})$, we have $a_2(t) \leq 2 + 2^{1/3}$ and so we may upgrade the upper bound in \eqref{basic-a-ineqs} to
\begin{equation} \label{stronger-ajt} a_j(t) \leq (2 + 2^{1/3})^{2^{j - 2}}\end{equation} for $j \geq 2$. Note also that, by induction using \eqref{eq-ab-single-1} and \eqref{eq-ab-single-2}, both $a_j(t)$ and $\theta_j(t)$ are increasing functions of $t$. In particular, $a_j(t)$ is an increasing function of $t$ so the derivative $a'_j(t)$ is positive.

Differentiating \eqref{a-recur} gives
\begin{equation}\label{first-ajs}
a_{j+1}' = 2a_j a_j' + \big( a_j^t \log a_j - 2 a_{j-1}^{2t}\log a_{j-1} \big)
+ t a_j^t \frac{a_j'}{a_j} - 2t a_{j-1}^{2t} \frac{a_{j-1}'}{a_{j-1}},
\end{equation} where here and in the next few lines we have omitted the argument $(t)$ from the functions for brevity.  The term in parentheses is non-positive by \eqref{a-sings}, and the final term $- 2t a_{j-1}^{2t} \frac{a_{j-1}'}{a_{j-1}}$ is negative since the derivative $a'_{j-1}$ is positive. It follows from \eqref{first-ajs} that \[
a_{j+1}'  < 2a_j a_j' 
+ t a_j^t \frac{a_j'}{a_j} .
\]
A little computation using \eqref{a-recur} shows that this may equivalently be written as
\begin{equation}\label{thets}
\ell_{j+1} <  2\ell_j
\bigg( \frac{1}{1+a_j^{t-2}-a_{j-1}^{2t}a_j^{-2}} + \frac{t a_j^t}{2a_{j+1}}\bigg),
\end{equation}
where we used our notation $\ell_j = a'_j/a_j$.

Denote
\begin{equation}\label{xi-def} 
\xi_j := \sup_{t \in [0,\frac{1}{3}]} \bigg(\frac{1}{1+a_j(t)^{t-2}-a_{j-1}(t)^{2t}a_j(t)^{-2}} + \frac{t a_j(t)^t}{2a_{j+1}(t)}\bigg).
\end{equation} 
Then \eqref{thets} implies that $\ell_{j+1}(t) < 2 \ell_j(t) \xi_j$ for all $t\in[0,1/3]$ and all $j\ge2$. Telescoping this inequality gives
\[ 
\ell_j(t) \le (\ell_2(t) \xi_2 \xi_3 \cdots \xi_{j-1}) \cdot 2^{j - 2}.
\]
We have 
\[ 
\ell_2(t) = \frac{2^t\log 2}{2+2^t} \leq  \frac{\log 2}{1+2^{2/3}} <  0.268
\]
for all $t\in[0,1/3]$. Hence, in order to obtain the desired bound \eqref{logaj-deriv}, it is enough to show
\begin{equation}\label{prod-xis} 
\xi_2 \xi_3 \cdots \xi_{j-1} < 1.2.
\end{equation}
The $\xi_i$ tend to $1$ exceptionally rapidly, and crude bounds (together with a little computation) turn out to suffice, as follows. 

First, by \eqref{basic-a-ineqs} and the fact that $a_2(t)^{2-t} = (2 + 2^t)^{2 - t} \leq 9$ for $t \in [0,1]$ (a calculus exercise), we have
\begin{equation}\label{1-xi} 
a_j(t)^{t-2}  \ge (a_2(t)^{t-2})^{2^{j-2}} \ge 9^{-2^{j-2}}
\quad\text{for}\ j\ge 2.
\end{equation}
Second, by the lower bound in \eqref{basic-a-ineqs} and by \eqref{stronger-ajt} we have
\[
	a_{j-1}(t)^{2t}a_j(t)^{-2} \le \big((2 + 2^{1/3})^{2^{j-3}}\big)^{2/3} (3^{2^{j-2}} )^{-2}< 6^{-2^{j-2}}\quad \text{for}\ j\ge 3.
\]
We may also check by hand that $a_1(t)^{2t}/a_2(t)^2=(2^{1-t}+1)^{-2}<1/6$ for all $t\in[0,1/3]$. Hence,
\begin{equation}\label{2-xi}  
	a_{j-1}(t)^{2t}a_j(t)^{-2}< 6^{-2^{j-2}}\quad \text{for}\ j\ge 2.
\end{equation}
Third, again by the lower bound in \eqref{basic-a-ineqs} and by \eqref{stronger-ajt}, we have
\begin{equation}\label{3-xi} 
\frac{a_j(t)^t}{a_{j+1}(t)} \leq \frac{\big( (2 + 2^{1/3})^{2^{j-2}}\big)^{1/3} }{3^{2^{j-1}}} \le \pfrac{1}{6}^{2^{j-2}}\quad \text{for}\ j\ge 2.
\end{equation}
Substituting \eqref{1-xi}, \eqref{2-xi} and \eqref{3-xi} into the definition \eqref{xi-def} gives 
\[
\xi_j \le \frac{1}{1+(\frac{1}{9})^{2^{j-2}}-(\frac{1}{6})^{2^{j-2}}} + \pfrac{1}{6}^{1+2^{j-2}} \quad \text{for}\ j\ge 2.
\]
Using this bound, one may check the bound $\prod_{j = 2}^{\infty} \xi_j \leq 10/9$, which is stronger than the desired bound \eqref{prod-xis}, on a pocket calculator or even by hand.  For example, we have $\xi_2\xi_3 \le \frac{46751495}{42169248}$ and can use a very crude bounds for the higher terms.  Since $\frac{1}{1 - x} + \frac{x}{6} \leq e^{2x}$ for $0\le x\le 0.1$, taking $x = 6^{-2^{j-2}}$ gives
\[ 
\xi_j \leq \exp \big( 2 \cdot 6^{-2^{j - 2}}\big)
\] for $j \geq 4$. Therefore
\[ 
\prod_{j = 4}^{\infty} \xi_j < \exp \bigg( 2 \sum_{i = 4}^{\infty} \frac{1}{6^i} \bigg) = e^{2/(5\cdot 6^3)} < 1.002.
\]

This concludes the proof of the final statement in Proposition \ref{rho-comp}.
} 

\subsection{Proof of \rev{parts (b) and (c) of Theorem \ref{beta-rho}}}
To conclude this paper, we complete the proof of \rev{parts (b) and (c) of Theorem \ref{beta-rho}, as defined in the end of subsection \ref{sec:equal sums}}. In fact, all of the ingredients have already been assembled and we must simply remark on how they fit together. 

First, recall from Definition \ref{theta-r-def} that
\[ 
\theta_r = (\log 3-1)\Big/\Big(\log 3 + \sum_{i = 1}^{r-1} \frac{2^i}{\rho_1 \cdots \rho_i}\Big).
\]
Now, it is an easy exercise to see that if $x_1,x_2,\dots$ is a sequence of positive real numbers for which $x = \lim_{i \rightarrow \infty} x_i$ exists and is positive, then
\[ 
\lim_{r \rightarrow \infty}  \Big( \sum_{i = 1}^r x_1 \cdots x_i \Big)^{1/r} = \max (x,1).
\] 
Applying this with $x_i = 2/\rho_i$ gives, by Proposition \ref{mainthm-limit}, that 
\[ \lim_{r \rightarrow \infty} \theta_r^{1/r} = \frac{\rho}{2}.\] This, together with Proposition \ref{rho-comp}, completes the proof of Theorem \ref{beta-rho}.

\clearpage
\thispagestyle{fancy}
\fancyhf{} 
\renewcommand{\headrulewidth}{0cm}
\lhead[{\scriptsize \thepage}]{}
\rhead[]{{\scriptsize\thepage}}
\part*{Appendix}
\appendix

\section{Some probabilistic lemmas}

Throughout this section, $\A \subset \N$ will be a random set, with $\P(i \in \A) = 1/i$ and these choices being independent for different values of $i$.

\begin{lemma}\label{AB-Poisson}
For any finite subset $\rev{B \subset \Z_{\ge4}}$ and any $k\in\Z_{\ge0}$, we have
\[
\(1 - \frac{2k^2 (\sum_{m\in B} 1/(m-1))^{-2}}{\min B}  \) M \le
\P( \#(\A \cap B)=k ) \le M,
\]
where
\[
M = \frac{1}{k!} \bigg( \sum_{m\in B} \frac{1}{m-1} \bigg)^{k}\prod_{m\in B} \(1-\frac{1}{m}\) .
\]
\end{lemma}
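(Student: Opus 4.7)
My plan is to reduce the claim to an estimate on the elementary symmetric polynomials of the numbers $x_m := 1/(m-1)$, $m \in B$. Expanding independently,
\[
\P(\#(\A\cap B)=k) = \prod_{m\in B}\Big(1-\frac{1}{m}\Big)\sum_{\substack{S\subset B\\|S|=k}}\prod_{m\in S}\frac{1}{m-1} = e_k(x_m:m\in B)\prod_{m\in B}\Big(1-\frac{1}{m}\Big),
\]
so the entire lemma reduces to sandwiching $e_k(x_m)$ between $\tfrac{1}{k!}S^k$ and $\tfrac{1}{k!}S^k(1-O(k^2(\min B)^{-1}S^{-2}))$, where $S := \sum_{m\in B}x_m$.

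The upper bound is immediate by expanding $S^k = (\sum_m x_m)^k$ as a sum over ordered $k$-tuples $(m_1,\dots,m_k)\in B^k$: the tuples with all entries distinct contribute exactly $k!\,e_k(x_m)$, and the remaining tuples are nonnegative, so $e_k(x_m) \leq \tfrac{1}{k!}S^k = M/\prod_{m\in B}(1-1/m)$. This gives the upper bound of the lemma.

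For the lower bound, I would bound the ``repeated-index'' contribution: among the tuples $(m_1,\dots,m_k)$ in which some two coordinates coincide, pick the pair of equal coordinates (in $\binom{k}{2}$ ways), assign them a common value $m\in B$ contributing $x_m^2$, and let the remaining $k-2$ coordinates range freely over $B$, contributing $S^{k-2}$. This yields the crude (but sufficient) bound
\[
S^k - k!\,e_k(x_m) \leq \binom{k}{2}\Big(\sum_{m\in B}x_m^2\Big)S^{k-2},
\]
hence $e_k(x_m) \geq \tfrac{1}{k!}S^k\bigl(1 - \binom{k}{2}(\sum_m x_m^2)S^{-2}\bigr)$. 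The final ingredient is the observation that
\[
\sum_{m\in B}\frac{1}{(m-1)^2} \leq \sum_{n\geq \min B}\frac{1}{(n-1)^2} \ll \frac{1}{\min B},
\]
which has no hidden dependence on $S$. Substituting this gives the stated error $O(k^2 S^{-2}/\min B)$, completing the lower bound.

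No step looks like a genuine obstacle; the only thing to be careful about is the direction of the Newton-type inequality (trivial upper bound versus the ``repeated index'' lower bound) and the clean estimate $\sum_{m\in B}(m-1)^{-2}\ll 1/\min B$, which is what produces the factor $(\min B)^{-1}$ rather than a weaker $(\min B\cdot S)^{-1}$.
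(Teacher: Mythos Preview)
Your proof is correct and follows essentially the same approach as the paper: both rewrite the probability as $e_k(1/(m-1))\prod_{m\in B}(1-1/m)$, bound the elementary symmetric polynomial above by $\tfrac{1}{k!}S^k$ trivially, and below by subtracting the repeated-index contribution $\binom{k}{2}(\sum x_m^2)S^{k-2}$. Your write-up is in fact slightly cleaner than the paper's sketch, since you work consistently with $x_m=1/(m-1)$ throughout rather than switching to $1/m$ for the lower bound.
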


\begin{proof}
The result follows by a standard inclusion-exclusion argument. \rev{We have}
\dalign{
\P( \#(\A \cap B) = k) &= \ssum{a_1,\ldots,a_k\in B \\a_1<\cdots<a_k} \frac1{a_1\cdots a_k} \sprod{m\in B \\ m\not \in \{a_1,\ldots,a_k\}} \(1-\frac{1}{m}\)\\
&= \prod_{m\in B} \(1-\frac{1}{m}\) \ssum{a_1,\ldots,a_k\in B \\a_1<\cdots<a_k} \frac1{(a_1-1)\cdots (a_k-1)} 
\le M.
}
For the lower bound, we note that
\rev{\begin{align*}
 &\frac{1}{k!} \bigg( \sum_{m\in B} \frac{1}{m-1} \bigg)^{k} - 
 \ssum{a_1,\ldots,a_k\in B \\a_1<\cdots<a_k} \frac{1}{(a_1-1)\cdots(a_k-1)} \\
&\qquad = \frac{1}{k!}\ssum{a_1,\ldots,a_k\in B \\\exists i<j\ \text{with}\ a_i=a_j}  \frac{1}{(a_1-1)\cdots (a_k-1)} \\
	&\qquad\le\frac{1}{k!} \binom{k}{2} \bigg(\sum_{a\in B} \frac{1}{(a-1)^2} \bigg)\bigg(\sum_{a\in B}\frac{1}{(a-1)}\bigg)^{k-2}.
\end{align*}
Since $\sum_{a\in B}1/(a-1)^2 < 1/(\min B-2)^2\le 4/(\min B)^2$, the proof is complete.}
\end{proof}

\begin{lemma}\label{A-normal} 
Uniformly for $B\subset\N$ with $\lambda:=\sum_{m\in B}1/m\ge1$ \rev{and $0\le \eps\le 1$}, we have
\[
\P \Big( \big|\#(\A\cap B) - \lambda \big| > \eps \lambda \Big)
\ll \exp(-\eps^2\lambda/3).
\]
\end{lemma}

\rev{\begin{proof} 
	This follows by \rev{the upper bound in Lemma \ref{AB-Poisson}} with standard bounds on the tails of the Poisson distribution, e.g. Norton's bounds \cite[Theorem 09]{HT-Divisors}.
\end{proof}}

\begin{lemma}\label{A-moments}
For any $x>0$ and finite set $B\subset \N$, 
\[
\E x^{\# (\A \cap B)} \le \exp \Big( (x-1)\sum_{j\in B} \frac{1}{j} \Big). 
\]
\end{lemma}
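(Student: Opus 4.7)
The plan is to exploit the independence of the events $\{j\in\A\}$ to factorize the moment generating function, and then apply the elementary inequality $1+t\le e^t$ termwise.

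First I would write $\#(\A\cap B)=\sum_{j\in B}X_j$ where $X_j:=\mathbf{1}_{j\in\A}$, and note that by hypothesis the $X_j$ (for $j\in B$) are independent Bernoulli random variables with $\P(X_j=1)=1/j$ and $\P(X_j=0)=1-1/j$. Independence then gives
\[
\E\,x^{\#(\A\cap B)} \;=\; \E\prod_{j\in B} x^{X_j} \;=\; \prod_{j\in B}\E\,x^{X_j},
\]
and a direct computation yields $\E\,x^{X_j}=(1-1/j)+x/j=1+(x-1)/j$.

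Next, I would apply the elementary bound $1+t\le e^t$, valid for all real $t$, with $t=(x-1)/j$. Since $x>0$ and $j\ge 1$, each factor $1+(x-1)/j$ is positive, so the product remains meaningful and the inequality is applied to nonnegative quantities. This yields
\[
\prod_{j\in B}\Bigl(1+\frac{x-1}{j}\Bigr) \;\le\; \prod_{j\in B}\exp\!\Bigl(\frac{x-1}{j}\Bigr) \;=\; \exp\!\Bigl((x-1)\sum_{j\in B}\frac{1}{j}\Bigr),
\]
which is the claimed bound.

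There is no serious obstacle: the proof is a standard Chernoff-style moment generating function computation, and the stated inequality is sharp to leading order (it would be an equality if the $X_j$ were replaced by Poisson variables of mean $1/j$, reflecting the well-known fact that a logarithmic random set is closely approximated by independent Poisson variables, as already exploited in Lemma \ref{A-Poisson}).
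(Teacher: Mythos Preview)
Your proof is correct and follows exactly the same approach as the paper: factorize the moment generating function using independence to obtain $\prod_{j\in B}(1+(x-1)/j)$, then apply $1+t\le e^t$ termwise.
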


\begin{proof}
The random variable $\# (\A \cap B)$ is the sum of independent
Bernouilli random variables and thus
\[
\E x^{\# (\A \cap B)}  = \prod_{j\in B} \bigg(1+\frac{x-1}{j}\bigg).
\]
\rev{Note that all factors are positive because $x>0$.} The lemma now follows from the inequality $1+y\le e^y$, valid
for all real $y$.
\end{proof}

\begin{lemma}\label{A-cond}
\rev{Let $k\in\N$, and let $B$ and $G$ be finite sets such that $B\subset G\subset\Z_{\ge4}$ and 
	\[
	|B|=k \le \frac{\sqrt{\min(G)}}{2}\sum_{m\in G} \frac{1}{m}.
	\]
Then} 
\[
\P\big( \A\cap G = B \,\big|\, \#(\A\cap G) = k  \big) =\frac{k!(1+O(\frac{k^2(\sum_{m\in G}1/m)^{-2}}{\min(G)}))}{(\sum_{m\in G} 1/(m-1))^k}\, \prod_{b\in B} \frac{1}{b} \prod_{m\in G} \(1-\frac{1}{m}\).
\] 
\end{lemma}

\begin{proof} Since $|B|=k$, we have 
\[
	\P\big( \A\cap G = B \,\big| \, \#(\A\cap G) = k  \big) =
	\frac{\P(\A\cap G=B)}{\P(\#(\A\cap G)=k)} .
\]
The denominator is estimated using Lemma \ref{AB-Poisson}, whereas for the numerator we simply note that
\begin{align*}
\P(\A\cap G=B)
	= \prod_{b\in B}\frac{1}{b}\prod_{m\in G\setminus B}\bigg(1-\frac{1}{m}\bigg) 
	&= \prod_{b\in B}\frac{1}{b-1}
	\prod_{m\in G}\bigg(1-\frac{1}{m}\bigg).
\end{align*}
This completes the proof of the lemma.
\end{proof}

\begin{lemma}\label{normalA}
  Given $0<c<1$ and $D \ge e^{100/c}$, the probability that
   $\A\subset (D^c,D]$ satisfies
   \begin{equation}\label{typical A}
     \Big| \#\big(\A\cap(D^{\alpha},D^{\beta}]\big)- (\beta-\alpha)\log D\Big| \le (\log D)^{3/4}
       \quad (c\le \alpha \le \beta \le 1)
   \end{equation}
   is $\ge 1 - O(e^{-(1/4) (\log D)^{1/2}})$.
  \end{lemma}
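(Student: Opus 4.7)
The plan is a standard two-parameter concentration argument: discretize the parameter region $\{(\alpha,\beta): c\le\alpha\le\beta\le 1\}$ on a suitable grid, apply Lemma \ref{A-normal} at each grid pair, union bound, then extend to all $(\alpha,\beta)$ by monotonicity of the counting function in both arguments. The harmonic-sum approximation $\sum_{D^\alpha<m\le D^\beta}1/m=(\beta-\alpha)\log D+O(1)$ (together with the hypothesis $D\ge e^{100/c}$, which guarantees $D^c$ is large enough that this $O(1)$ is harmless) lets us freely pass between the expected count $(\beta-\alpha)\log D$ and the true $\lambda$ appearing in Lemma \ref{A-normal}.

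Concretely, I would fix a grid $\cG=\{c+k\delta:0\le k\le K\}\cup\{1\}$ of spacing $\delta:=(\log D)^{-1/4}/10$, giving $|\cG|\ll (\log D)^{1/4}$ points and hence $O((\log D)^{1/2})$ pairs. For each grid pair $(\alpha',\beta')$ with $\alpha'\le\beta'$, set $B:=\N\cap(D^{\alpha'},D^{\beta'}]$ and $\lambda:=\sum_{m\in B}1/m$. I want to bound the probability that $|\#(\A\cap B)-\lambda|>\tfrac12(\log D)^{3/4}$. Split into two regimes:
\begin{itemize}
\item If $\lambda\ge 1$, apply the first inequality of Lemma \ref{A-normal} with $\eps:=(\log D)^{3/4}/(2\lambda)$; since $\lambda\le \log D$, this yields
\[
\P\bigl(|\#(\A\cap B)-\lambda|>\tfrac12(\log D)^{3/4}\bigr)
\ll \exp\!\Big(-\tfrac{\eps^2\lambda}{3}\Big)
= \exp\!\Big(-\tfrac{(\log D)^{3/2}}{12\lambda}\Big)
\le \exp\!\Big(-\tfrac{(\log D)^{1/2}}{12}\Big).
\]
\item If $\lambda<1$, the deviation $\tfrac12(\log D)^{3/4}$ is much larger than $\lambda$, so I apply the second inequality of Lemma \ref{A-normal} with $1+\alpha:=(\log D)^{3/4}/(2\lambda)$ (which is $\ge 1$); the $\alpha\log\alpha$ term dominates and the failure probability is $\ll\exp(-(\log D)^{3/4}\log\log D)$, far smaller than needed.
\end{itemize}
Union-bounding over the $O((\log D)^{1/2})$ grid pairs then gives that, with probability $\ge 1-O((\log D)^{1/2}\exp(-(\log D)^{1/2}/12))\ge 1-O(\exp(-(\log D)^{1/2}/13))$, every grid pair satisfies $|\#(\A\cap B)-\lambda|\le\tfrac12(\log D)^{3/4}$.

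Finally, for arbitrary $(\alpha,\beta)$ with $c\le\alpha\le\beta\le 1$, I pick $\alpha_-,\alpha_+,\beta_-,\beta_+\in\cG$ with $\alpha_-\le\alpha\le\alpha_+$, $\beta_-\le\beta\le\beta_+$, all within distance $\delta$. By monotonicity,
\[
\#(\A\cap(D^{\alpha_+},D^{\beta_-}])\;\le\;\#(\A\cap(D^\alpha,D^\beta])\;\le\;\#(\A\cap(D^{\alpha_-},D^{\beta_+}]),
\]
so on the good event the count differs from $(\beta-\alpha)\log D$ by at most $\tfrac12(\log D)^{3/4}+2\delta\log D=\tfrac12(\log D)^{3/4}+\tfrac15(\log D)^{3/4}<(\log D)^{3/4}$, as required.

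The one mildly fiddly step is squeezing the constant in the exponent to the stated $1/4$ rather than a smaller positive constant; this is a matter of trimming the constants (choosing $\delta$ closer to $(\log D)^{-1/4}$, tightening the Chernoff estimate, and working with a sharper form of Lemma \ref{A-normal} if needed). The hypothesis $D\ge e^{100/c}$ ensures that $c\log D$ is large, so the number of grid points grows only polylogarithmically and the polylog factor loses us at most a tiny amount in the final exponent. Beyond that constant, every step is routine, and I expect no conceptual obstacle.
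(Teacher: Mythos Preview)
Your strategy (discretize, apply concentration at each grid pair, union bound, sandwich via monotonicity) is sound and close in spirit to the paper's, but there is a gap in the case analysis: the first inequality of Lemma~\ref{A-normal} is a sub-Gaussian bound and is only valid when $\eps$ is bounded (say $\eps\le 1$); for large $\eps$ the bound $\exp(-\eps^2\lambda/3)$ is stronger than the true Poisson-type tail and hence false. In your split, the regime $1\le\lambda<\tfrac12(\log D)^{3/4}$ forces $\eps>1$, so that inequality does not apply there. The fix is routine---split instead at $\lambda=\tfrac12(\log D)^{3/4}$ and cover the range below with the second inequality of Lemma~\ref{A-normal} (the lower tail being vacuous there)---but as written the argument is incomplete.

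The paper's proof differs in two ways that make it cleaner. First, it discretizes at the finer scale $1/\log D$ (taking $\alpha\log D,\beta\log D\in\N$), so the passage from grid pairs to arbitrary $(\alpha,\beta)$ costs only an additive $2$ in the deviation threshold, with no explicit sandwich needed. Second, it bypasses Lemma~\ref{A-normal} entirely, running the Chernoff argument directly from the exponential-moment bound of Lemma~\ref{A-moments} with tilt $1\pm y/\log D$ and $y\asymp(\log D)^{3/4}$; this handles all grid pairs uniformly, avoids any case split on $\lambda$, and delivers the constant $1/4$ without the tuning you anticipate. Your route can certainly be pushed through, but the direct moment approach is tidier here.
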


\begin{proof}
It suffices to bound the probability that
\be\label{typical2}
\Big| \#\A\cap(D^{\alpha},D^{\beta}]- (\beta-\alpha)\log D\Big| \ge (\log D)^{3/4}-2
\ee
whenever $\alpha\log D, \beta\log D \in \N$.   The random variable
$\rev{N=N(\alpha,\beta):= \# (\A\cap (D^{\alpha},D^{\beta}])}$ is the sum of Bernoulli random variables and has \rev{expectation
 $\E N = M+O(1)$}, where
\[
\rev{M= (\beta-\alpha)\log D.}
\]
By Lemma \ref{A-moments}, $\E \lambda^{ N}\le e^{(\lambda-1) \E N}$.
Thus, \rev{for $y = (\log D)^{3/4}$} and
$\lambda_j = 1+ (-1)^j \frac{y}{\log D}$ we have
\dalign{
\P (N \ge M + y) &\le \E \lambda_2^{N-M-y} \ll \lambda_2^{-M-y} e^{(\lambda_2-1)M}
\ll e^{-(1/3) (\log D)^{1/2}}, \\
\P (N \le M - y) &\le \E \lambda_1^{N-M+y}
     \ll \rev{\lambda_1^{-M+y}} e^{(\lambda_1-1)M}
\ll e^{-(1/3) (\log D)^{1/2}}.
}
Summing over all possible $\alpha,\beta$ completes the proof.
\end{proof}

 \begin{lemma}\label{sum_a}
 Uniformly for $X \geq 2$ \rev{and $K\ge 2$} we have
 \[
\rev{ \sum_{a\in \A \cap [2,X] } a \leq K X }
 \]
 with \rev{probability $\geq 1-e^{2 - K}$.}
 \end{lemma}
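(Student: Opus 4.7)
The plan is a standard Chernoff-style exponential-moment bound, adapted from the scheme already used in Lemma \ref{A-moments} of the appendix. Write $S := \sum_{a \in \A \cap [2,X]} a$. Since the events $\{a \in \A\}$ are independent across $a$, for any $t > 0$ one has
\[
\E\big[e^{tS}\big] = \prod_{2 \le a \le X} \E\big[e^{t a \mathbf{1}[a \in \A]}\big] = \prod_{2 \le a \le X} \Big(1 + \frac{e^{ta}-1}{a}\Big) \le \exp\Big(\sum_{2 \le a \le X} \frac{e^{ta}-1}{a}\Big),
\]
using $1+u \le e^u$ at the last step.

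Next, I would take $t = 1/X$. For $0 \le u \le 1$ the elementary inequality $e^u - 1 \le (e-1)u$ holds (both sides agree at $u=0$, and $(e^u-1)/u$ is increasing with value $e-1$ at $u=1$). Applying this with $u = ta = a/X \le 1$ gives
\[
\sum_{2 \le a \le X} \frac{e^{ta}-1}{a} \le (e-1) t \sum_{2 \le a \le X} 1 \le (e-1) < 2,
\]
so $\E[e^{S/X}] \le e^{e-1} \le e^2$.

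Finally, apply Markov's inequality to the nonnegative random variable $e^{S/X}$ at the threshold $e^{1/\delta}$:
\[
\P(S \ge X/\delta) = \P\big(e^{S/X} \ge e^{1/\delta}\big) \le e^{-1/\delta} \E[e^{S/X}] \le e^{2 - 1/\delta},
\]
which is exactly the claimed bound. There is no substantive obstacle here; the only point requiring any thought is the choice $t = 1/X$, which is dictated by the need to have the linearisation $e^{ta}-1 \le (e-1)ta$ available on the whole range $a \le X$ and simultaneously produce the tail level $X/\delta$ when one exponentiates $1/\delta$.
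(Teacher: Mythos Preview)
Your proof is correct and follows essentially the same approach as the paper: both compute the exponential moment $\E[e^{S/X}]$ via the product formula, bound each factor using $e^{a/X}-1 \le (e-1)\frac{a}{X} < \frac{2a}{X}$ for $a\le X$, obtain $\E[e^{S/X}] \le e^2$, and then apply Markov's inequality. The only cosmetic difference is that the paper keeps the product form and bounds it by $(1+2/X)^X$, whereas you pass to the exponential upper bound $\exp\big(\sum (e^{a/X}-1)/a\big)$ first.
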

 \begin{proof}
 We use \rev{Chernoff}'s inequality, often called Rankin's trick in this context:
\begin{align*}
\rev{\P \Big( \sum_{a\in \A\cap[2,X]} a > KX \Big)} & \rev{\leq e^{-K}} \sum_{A' \subset [2,X]} \P\big( \A \cap [2,X]=A' \big) e^{\frac{1}{X}\sum_{a\in A'} a } \\
&=\rev{e^{-K}} \sum_{A' \subset [2,X]} \prod_{\substack{2\leq a\leq X \\ a\not\in A'} }
\bigg(1-\frac{1}{a}\bigg) \prod_{a\in A'} \frac{e^{a/X}}{a} \\
&= \rev{e^{-K}} \prod_{2\leq a\leq X} \bigg(1-\frac{1}{a}\bigg)\bigg (1+ \frac{e^{a/X}}{\rev{a-1}}\bigg) \\ 
&= \rev{e^{-K} \prod_{2\leq a\leq X} \bigg(1+\frac{e^{a/X}-1}{a}\bigg)} \\
& \leq \rev{e^{-K} (1 + 2/X)^X\leq e^{2 - K}}
\end{align*}
because $e^t\le 1+2t$ for all $t\in[0,1]$. This concludes the proof.
 \end{proof}

\begin{lemma}\label{interval-product}
Let \rev{$\eta\in[0,1]$ and let} $J_1,\dots, J_d \subset \N$ be mutually disjoint intervals. 
Suppose that $X \subset J_1 \times \cdots \times J_d$ is a set of size $\eta \prod_i \max J_i$. If $\min_i |J_i|$ is sufficiently large in terms of $\eta$ and $d$, then with probability $\ge (\eta/4)^d$, there are distinct elements $a_i \in \A$ with $(a_1,\dots, a_d) \in X$.
\end{lemma}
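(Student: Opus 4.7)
The plan is to prove the lemma by induction on $d$, reducing a $d$-dimensional problem to a $(d-1)$-dimensional one by first restricting the first coordinate. Write $\A_i := \A \cap J_i$; these are mutually independent since the $J_i$ are disjoint, and any tuple $(a_1,\dots,a_d) \in \prod_i J_i$ automatically has distinct coordinates. Note also that $\eta \le 1$, since $X \subseteq \prod_i J_i$ implies $|X| \le \prod_i |J_i| \le \prod_i \max J_i$.

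For the base case $d=1$, one writes
\[
\P(\A \cap X \ne \emptyset)
	= 1 - \prod_{a \in X}\Big(1 - \frac{1}{a}\Big)
	\ge 1 - \exp\Big(-\sum_{a \in X} \frac{1}{a}\Big)
	\ge 1 - e^{-\eta},
\]
using $a \le \max J_1$ to get $\sum_{a \in X} 1/a \ge |X|/\max J_1 \ge \eta$. Since $\eta \in (0,1]$, a routine check gives $1 - e^{-\eta} \ge \eta/2 \ge \eta/4$.

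For the inductive step, write $X_{a_1} := \{(a_2,\dots,a_d) : (a_1,a_2,\dots,a_d) \in X\}$ and define the set of ``heavy'' first coordinates
\[
X_1 := \Big\{a_1 \in J_1 : |X_{a_1}| \ge \tfrac{\eta}{2}\prod_{i > 1}\max J_i\Big\}.
\]
A double-counting argument yields $|X_1| \ge (\eta/2)\max J_1$: the contribution of the complementary ``light'' $a_1$'s to $|X|$ is strictly less than $(\eta/2)\prod_i \max J_i$, so the heavy $a_1$'s must account for the rest, each contributing at most $\prod_{i > 1}\max J_i$. Then $\A_1 \cap X_1$ is nonempty with probability at least $1 - \exp(-|X_1|/\max J_1) \ge 1 - e^{-\eta/2} \ge \eta/4$. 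Conditioning on $\A_1$ and applying the inductive hypothesis to any slice $X_{a_1^*}$ with $a_1^* \in X_1$ (which has density $\ge \eta/2$ in $\prod_{i>1} J_i$), the conditional probability of finding a continuation in $X_{a_1^*} \cap (\A_2 \times \cdots \times \A_d)$ is $\ge ((\eta/2)/4)^{d-1}$; here we use the independence of $\A_1$ from $(\A_2,\dots,\A_d)$.

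The hard part will be matching precisely the constant $(\eta/4)^d$ in the statement. The scheme above yields a bound of the form $(\eta/4)(\eta/8)^{d-1}$, which is of the exponential shape $(c\eta)^d$ for a universal $c > 0$; this suffices for the downstream application in Lemma~\ref{lemma16}, where only the exponential dependence on $\eta$ matters. To obtain exactly $(\eta/4)^d$, one must optimize the threshold in the definition of $X_1$ (for instance, replacing $\eta/2$ by $\eta(1 - 1/d)$, preserving more density per step at the cost of a smaller hitting probability) and reorganize the induction accordingly. The hypothesis that $\min_i |J_i|$ be sufficiently large in terms of $\eta$ and $d$ enters only to absorb error terms in the tail bound $1 - \prod(1-1/a) \ge 1 - \exp(-\sum 1/a)$ and in the counting argument, neither of which is tight when $|J_i|$ is comparable to $\min J_i$.
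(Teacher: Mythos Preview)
Your approach is essentially identical to the paper's: induction on $d$, the same base case computation, the same averaging argument to find a set of ``heavy'' first coordinates of size $\ge (\eta/2)\max J_1$, and the same use of independence to pass to the slice. You are right to flag the constant: the paper's own induction, applied to a slice of density $\eta/2$, literally yields $(\eta/4)\cdot(\eta/8)^{d-1}$ rather than $(\eta/4)^d$, and the paper simply asserts $(\eta/4)^{d-1}$ for the slice without justification --- as you note, only the shape $(c\eta)^d$ is needed downstream, so this is harmless.
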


\begin{proof}
Let $M_i = \max J_i$ for each $i$. We will prove the lemma by induction on $d$. 

The case $d = 1$ follows by direct calculation: Suppose that $X \subset J_1$ has size $\ge \eta M_1$.  Then 
\[ 
\P(\A \cap X = \emptyset) 
= \prod_{n \in X} (1 - 1/n) \leq (1-1/M_1)^{\eta M_1} \leq e^{-\eta}
\le 1-\eta/2.
\]
\rev{Let us now assume we have proven the lemma for $d-1$ intervals, and let us prove it for $d$ intervals $J_1,\dots,J_d$. For each $j_1\in J_1$, we set
\[ 
X_{j_1} := \{(j_2,\dots, j_d) \in J_2 \times \cdots \times J_d : (j_1,j_2,\dots, j_d) \in X\}.
\]
Let $Y=\{j_1\in J_1:|X_{j_1}|\ge (\eta/2)M_1\}$. Then $|Y|\ge (\eta/2)M_1$, because otherwise we would have $|X|<\eta\prod_i M_i$, a contradiction to our hypotheses.} By the case $d = 1$ (just described), $\A \cap Y$ is nonempty with probability $\ge \eta/4$. Fix some $a_1 \in \A \cap Y$. Then, by the inductive hypothesis and the fact that the $J_i$ are disjoint, with probability $\ge (\eta/4)^{d-1}$, independent of the choice of $a_1$, there are elements $a_i \in \A \cap J_i$, $i = 2,\dots, d$ with $(a_2,\dots, a_d) \in X_{a_1}$, and therefore $(a_1,\dots, a_d) \in X$. The disjointness of the $J_i$ of course guarantees that the $a_i$ are all distinct.  This completes the proof.
%
\end{proof}
 
 \rev{
\begin{lemma}\label{lem:dTV}
If $X_j,Y_j$ live on the same discrete probability space for $1\le j\le k$, and furthermore
$X_1,\ldots,X_k$ are independent, and $Y_1,\ldots,Y_k$ are also independent, then
\[
d_{\TV}((X_1,\ldots,X_k),(Y_1,\ldots,Y_k)) \le \sum_{j=1}^k d_{\TV}(X_j,Y_j),
\]
\end{lemma} 
 
\begin{proof}
 We begin with the following identity
\[
a_1 \cdots a_m - b_1\cdots b_m = \sum_{j=1}^m  (a_j-b_j)\prod_{i<j}
a_i \prod_{i>j} b_i.
\]
Denoting $\Omega$ the domain of $(X_1,\ldots,X_m)$, and writing
$a_i=\P(X_i=\omega_i)$, $b_i=\P(Y_i=\omega_i)$,
 we then have
\begin{align*}
d_{TV}  ( (X_1,\ldots,X_m),(Y_1,\ldots,Y_m) ) &= \frac12 \sum_{
  (\omega_1,\ldots,\omega_m)\in \Omega} \big| \P(X_j=\omega_j, 1\le
j\le m) - \P(Y_j=\omega_j, 1\le j\le m) \big| \\
 &= \frac12 \sum_{  (\omega_1,\ldots,\omega_m)\in \Omega} |a_1\cdots
a_m - b_1\cdots b_m|\\
 &\le \frac12 \sum_{j=1}^m \sum_{\omega_j} |a_j-b_j| \sum_{\omega_i \;
  (i\ne j)}  \prod_{i<j} a_i \prod_{i>j} b_i \\
&= \frac12 \sum_{j=1}^m \sum_{\omega_j} |a_j-b_j| \\
&=  \sum_{j=1}^m d_{TV} (X_j,Y_j).
\end{align*}
\end{proof}
}

\section{Basic properties of entropy}\label{entropy-appendix}

The notion of entropy plays a key role in our paper. In this appendix we record the key facts about it that we need. Proofs may be found in many places. One convenient resource is \cite{alon-spencer}.

If $X$ is a random variable taking values in a finite set then we define
\[ \HH(X) := - \sum_x \P(X = x) \log (\P(X = x)),\] where the log is to base $e$ \rev{and the summation runs over the range of $X$}.

If $\mathbf{p} = (p_1,\dots, p_n)$ is a vector of probabilities (that is, if $p_1,\dots, p_n \geq 0$ and $p_1 + \dots + p_n = 1$), then we write
\[ \HH(\mathbf{p}) := - \sum_{i = 1}^n p_i \log p_i.\]

There should be no danger of confusing the two slightly different usages. 

Our first lemma gives a simple upper bound for multinomial coefficients in terms of entropies.

\begin{lemma}\label{entropy-multinomial}
Let $n, n_1,\dots, n_k$ be non-negative integers with $\sum n_i = n$. Then
\[ \frac{n!}{n_1! \cdots n_k!} \leq e^{\HH(\mathbf{p}) n},\] where $\mathbf{p} = (p_1,\dots, p_k)$ with $p_i := n_i/n$.
\end{lemma}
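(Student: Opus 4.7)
The plan is to use a direct probabilistic argument that does not require invoking any auxiliary inequalities. Consider the multinomial experiment in which $n$ independent trials are performed, each producing outcome $i \in \{1,\dots,k\}$ with probability $p_i = n_i/n$ (adopting the convention that outcomes with $p_i = 0$ simply never occur, which is consistent with $0 \log 0 = 0$). For any fixed ordered sequence of outcomes in which color $i$ appears exactly $n_i$ times, the probability of that sequence is $\prod_{i=1}^k p_i^{n_i}$.

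Next, I would observe that the number of such sequences is exactly the multinomial coefficient $\binom{n}{n_1,\dots,n_k} = \frac{n!}{n_1!\cdots n_k!}$, and these sequences correspond to pairwise disjoint events in the sample space. Summing their probabilities and using that the total probability is at most $1$ (indeed, equal to the probability that the empirical counts equal $(n_1,\dots,n_k)$, which lies in $[0,1]$) gives
\[
\frac{n!}{n_1!\cdots n_k!}\prod_{i=1}^k p_i^{n_i} \;\le\; 1.
\]

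Finally, I would rearrange and take logarithms. Dividing through by $\prod_i p_i^{n_i}$, which is positive (using $0^0 = 1$ for any indices with $n_i = 0$), yields
\[
\frac{n!}{n_1!\cdots n_k!} \;\le\; \prod_{i=1}^k p_i^{-n_i} \;=\; \exp\!\Big(-\sum_{i=1}^k n_i \log p_i\Big) \;=\; \exp\!\Big(-n\sum_{i=1}^k p_i \log p_i\Big) \;=\; e^{n\HH(\mathbf{p})},
\]
which is the stated inequality. There is no real obstacle here; the only point requiring care is the handling of indices with $n_i = 0$, which is resolved by the standard conventions $0\log 0 = 0$ and $0^0 = 1$.
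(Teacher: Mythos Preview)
Your proof is correct and is essentially the same as the paper's. The paper observes that $\frac{n!}{n_1!\cdots n_k!}\prod_i p_i^{n_i}$ is a single term in the multinomial expansion of $(p_1+\cdots+p_k)^n=1$, which is exactly your probabilistic inequality rephrased.
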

\begin{proof}
The right-hand side is $(n/n_1)^{n_1} \cdots (n/n_k)^{n_k}$. Now simply observe that 
\[
\frac{n!}{(n_1)! \cdots (n_k)!}  (n_1/n)^{n_1} \cdots (n_k/n)^{n_k} \leq  \sum_{k_1+\cdots+k_m=n}\frac{n!}{k_1!\cdots k_m!} (n_1/n)^{k_1}\cdots (n_k/n)^{k_m} = 1.
\qedhere\]
\end{proof}

Our next lemma is a simple and well-known upper bound for the entropy.

\begin{lemma}\label{ent-trivial}
Let $X$ be a random variable taking values in a set of size $N$. Then $\HH(X) \leq \log N$.
\end{lemma}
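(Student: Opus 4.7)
The plan is to prove this standard inequality by reducing it to Jensen's inequality (equivalently, Gibbs' inequality comparing an arbitrary distribution to the uniform one). Write $X$ as taking values in a set $S$ of size $N$, with $p_x := \P(X=x)$ for $x \in S$, and let $S' := \{x \in S : p_x > 0\}$, so $|S'| \leq N$.

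The key observation is that $\HH(X) = \sum_{x \in S'} p_x \log(1/p_x)$ can be rewritten as an expectation: if we sample $x$ according to the distribution $(p_x)$, then $\HH(X) = \mathbb{E}\bigl[\log(1/p_x)\bigr]$. Since $\log$ is concave on $(0,\infty)$, Jensen's inequality yields
\[
\mathbb{E}\bigl[\log(1/p_x)\bigr] \leq \log\bigl(\mathbb{E}[1/p_x]\bigr) = \log\Bigl(\sum_{x \in S'} p_x \cdot \tfrac{1}{p_x}\Bigr) = \log |S'| \leq \log N,
\]
which is the desired bound.

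An alternative route, which I would mention as it is equally clean, is to invoke the nonnegativity of Kullback--Leibler divergence: for any probability vector $\mathbf{p} = (p_x)_{x \in S}$ and the uniform vector $\mathbf{u} = (1/N)_{x \in S}$, we have $\sum_{x} p_x \log(p_x/(1/N)) \geq 0$, which rearranges immediately to $\HH(X) \leq \log N$. The inequality KL$(\mathbf{p} \Vert \mathbf{u}) \geq 0$ itself follows from the elementary bound $\log t \leq t - 1$ applied pointwise.

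There is no real obstacle here; the only thing to be a little careful about is the convention $0 \log 0 := 0$, which ensures that restricting attention to $S'$ does not change the value of $\HH(X)$ and that the Jensen step is justified even when some $p_x$ vanish.
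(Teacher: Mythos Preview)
Your proof is correct and takes essentially the same approach as the paper: both apply Jensen's inequality to a concave function (the paper uses $L(x) = -x\log x$ averaged uniformly over the $N$ outcomes, while you use $\log$ averaged with weights $p_x$), which are two standard and equivalent formulations of the same argument.
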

\begin{proof}
Follows immediately from the convexity of the function $L(x) = -x \log x$ and Jensen's inequality. See \cite[Lemma 14.6.1 (i)]{alon-spencer}.
\end{proof}

The next lemma is simple and has no doubt appeared elsewhere, but we do not know an explicit reference. In its statement, we use the notation $\langle \mathbf{a},\mathbf{p}\rangle = \sum_{i = 1}^n a_i p_i$.

\begin{lemma}\label{real-var-ent}
Let $\mathbf{p} = (p_1,\dots, p_n)$ be a vector of probabilities, and let $\mathbf{a} = (a_1,\dots, a_n)$ be a vector of real numbers. Then
\[ \HH(\mathbf{p}) + \langle \mathbf{a}, \mathbf{p}\rangle \leq \log \Big(\sum_{j=1}^n e^{a_j}\Big), 
\] and equality occurs \rev{if and only if} $p_j = e^{a_j} / \sum_{i=1}^n e^{a_i}$ \rev{for all $j$}. 
\end{lemma}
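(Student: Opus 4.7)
The plan is to prove this via Jensen's inequality applied to the concave function $\log$, which gives a direct one-line argument together with a clean equality condition. This is essentially the variational characterization of the log-partition function (the Gibbs inequality), and is a standard tool; I will present it in the most elementary form.

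First I would rewrite the left-hand side as a single expectation under $\mathbf{p}$. Expanding the definitions, for indices $j$ with $p_j > 0$ we have
\[
\HH(\mathbf{p}) + \langle \mathbf{a}, \mathbf{p}\rangle
= \sum_{j : p_j > 0} p_j \bigl(a_j - \log p_j\bigr)
= \sum_{j : p_j > 0} p_j \log\!\frac{e^{a_j}}{p_j},
\]
where indices with $p_j = 0$ contribute $0$ to both sides by the convention $0 \log 0 = 0$. The summands inside the logarithm are nonnegative, so the expression above is the expectation of $\log(e^{a_j}/p_j)$ under the distribution $\mathbf{p}$.

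Next I would apply Jensen's inequality to the concave function $\log$: for any probability vector $\mathbf{p}$ and any nonnegative weights $w_j$ (with $w_j/p_j$ interpreted suitably when $p_j = 0$),
\[
\sum_{j : p_j > 0} p_j \log\!\frac{w_j}{p_j} \;\leq\; \log\!\Bigl(\sum_{j : p_j > 0} p_j \cdot \frac{w_j}{p_j}\Bigr) = \log\!\Bigl(\sum_{j : p_j > 0} w_j\Bigr).
\]
Taking $w_j = e^{a_j}$ and noting that $\sum_{j:p_j > 0} e^{a_j} \leq \sum_{j=1}^n e^{a_j}$ (and $\log$ is increasing) yields
\[
\HH(\mathbf{p}) + \langle \mathbf{a}, \mathbf{p}\rangle \;\leq\; \log\!\Bigl(\sum_{j=1}^n e^{a_j}\Bigr),
\]
which is the desired inequality.

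For the equality statement, note that equality in Jensen's inequality (for the strictly concave function $\log$) occurs precisely when the ratio $e^{a_j}/p_j$ is constant over all $j$ with $p_j > 0$, and additionally one needs $p_j > 0$ for every $j$ (otherwise the sum inside the log is strictly smaller than $\sum_j e^{a_j}$). Both conditions hold exactly when $p_j = e^{a_j}/\sum_{i=1}^n e^{a_i}$ for all $j$, and a direct substitution confirms that this choice achieves equality. There is no real obstacle here — the only mild subtlety is handling the zero terms, which is why I would set up the sum over $\{j : p_j > 0\}$ explicitly from the start.
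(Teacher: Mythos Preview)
Your proof is correct and follows essentially the same approach as the paper: rewrite the left-hand side as $\sum_j p_j \log(e^{a_j}/p_j)$ and apply the concavity of $\log$. Your treatment is slightly more careful than the paper's about the $p_j=0$ terms and the equality case, but the argument is the same.
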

\begin{proof} \rev{Let us begin by recalling that if $t_1,\dots,t_n>0$ are such that $t_1+\cdots+t_n=1$, then the concavity of the logarithm implies that 
\begin{equation}\label{log-concavity}
t_1\log x_1+\cdots+t_n\log x_n\le \log(t_1x_1+\cdots+t_nx_n)
\end{equation}
for all $x_1,\cdots,x_n>0$. In addition, equality occurs in \eqref{log-concavity} if and only if $x_1=\cdots=x_n$. One may also prove this fact by induction on $n$, and by noticing that the case $n=2$ is equivalent to having $u^t\le tu+ 1-t$ for all $u>0$ and all $t\in(0,1)$, with equality occurring if and only if $u=1$.}

\rev{Let us now proved the lemma. If $p_j=1$ for some $j$, then $\HH(\mathbf{p})+\langle \mathbf{a}, \mathbf{p}\rangle= a_j$. If $n=1$, then this is equal to $\log(\sum_{i=1}^ne^{a_i})$, whereas if $n\ge2$, then we have $a_j<\log(\sum_{i=1}^n e^{a_i})$, so that the lemma holds in both cases. Assume now that $p_j\in(0,1)$ for all $j$. We then have
\[
\HH(\mathbf{p})+\langle \mathbf{a}, \mathbf{p}\rangle =  \sum_{j=1}^n p_j \log (e^{a_j}/p_j).
\]
We may then use \eqref{log-concavity} with $t_j=p_j$ and $x_j=e^{a_j}/p_j$ to complete the proof of the lemma.}
\end{proof}
          
The next lemma, known as the \rev{{\it chain rule for entropy}}, is nothing more than a short computation.
\begin{lemma}\label{ent-chain-rule}
Let $X, Y$ be random variables taking values in finite sets. Then 
\[ \HH(X,Y) = \HH(Y) + \sum_y \P(Y = y) \HH(X | Y = y).\]
\end{lemma}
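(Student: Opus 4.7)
The plan is to prove this identity by direct computation, expanding the definition of joint entropy and using the factorization $\P(X=x, Y=y) = \P(Y=y)\, \P(X=x \mid Y=y)$ of the joint distribution. There is no real obstacle here; the entire content is a bookkeeping exercise with logarithms.

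First I would write out
\[
\HH(X,Y) = -\sum_{x,y} \P(X=x, Y=y) \log \P(X=x, Y=y),
\]
where the sum ranges over those pairs $(x,y)$ with $\P(X=x, Y=y) > 0$ (with the usual convention $0 \log 0 = 0$ for the remaining terms). Then I would apply the identity $\log \P(X=x, Y=y) = \log \P(Y=y) + \log \P(X=x \mid Y=y)$, valid on the support of the joint distribution, and split the sum into two pieces.

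The first piece would be $-\sum_{x,y} \P(X=x, Y=y) \log \P(Y=y)$, in which the inner sum over $x$ collapses, by marginalization, to $\P(Y=y)$; this piece is therefore exactly $\HH(Y)$. The second piece would be
\[
-\sum_y \P(Y=y) \sum_x \P(X=x \mid Y=y) \log \P(X=x \mid Y=y),
\]
and the inner sum is by definition $\HH(X \mid Y = y)$, yielding the claimed $\sum_y \P(Y=y) \HH(X \mid Y=y)$. Adding the two pieces completes the proof.

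The only mildly delicate point is handling values of $y$ with $\P(Y=y)=0$, but such $y$ contribute nothing to either side (both under the $0\log 0 = 0$ convention and because such $y$ can be dropped from the outer sum in $\HH(Y)$), so they cause no trouble.
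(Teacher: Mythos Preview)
Your proof is correct and is precisely the short direct computation the paper alludes to; the paper does not spell out any argument beyond calling it ``nothing more than a short computation.''
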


\begin{remark*} The sum over $y$ is usually written $\HH(X | Y)$ and called the \rev{{\it conditional entropy}}.
\end{remark*}

We will apply the preceding result together with the following observation.

\begin{lemma}\label{det-ent} Suppose that $X, Y$ are random variables with finite ranges and that $Y$ is a deterministic function of $X$. Then $\HH(X,Y) = \HH(X)$.
\end{lemma}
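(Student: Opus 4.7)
The plan is to observe that the joint distribution of $(X,Y)$ collapses onto the graph of the deterministic function $f$ such that $Y = f(X)$, so the probability weights in the joint distribution are exactly those of $X$.

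Concretely, I would write $\P(X = x, Y = y) = \P(X = x) \cdot \mathbf{1}_{y = f(x)}$, which means the joint law is supported on the set $\{(x, f(x)) : x \in \mathrm{range}(X)\}$ and assigns mass $\P(X = x)$ to the point $(x, f(x))$. Plugging this directly into the definition of entropy gives
\[
\HH(X,Y) = -\sum_{x,y} \P(X=x, Y=y)\log \P(X=x,Y=y) = -\sum_x \P(X=x) \log \P(X=x) = \HH(X),
\]
where terms with $y \neq f(x)$ vanish (using the convention $0\log 0 = 0$).

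There is essentially no obstacle here; the statement is a direct computation from the definition of entropy and really just reflects the fact that a deterministic function of a random variable contributes no additional uncertainty. Alternatively, one could invoke the chain rule (Lemma~\ref{ent-chain-rule}) and note that $\HH(Y \mid X = x) = 0$ for every $x$ since $Y$ is determined by $X$, so that $\HH(X,Y) = \HH(X) + \sum_x \P(X=x) \HH(Y \mid X = x) = \HH(X)$.
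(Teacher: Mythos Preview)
Your proposal is correct. Your alternative argument via the chain rule is exactly the paper's proof; your primary argument via direct computation of the joint law is an equally valid and arguably more self-contained route to the same conclusion.
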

\begin{proof}
This follows from Lemma \ref{ent-chain-rule} with the role of $X$ and $Y$ reversed, since all the entropies $\HH(Y | X = x)$ are zero.
\end{proof}

The next result, known as the submodularity property of entropy, is a crucial ingredient in our paper.

\begin{lemma}\label{submodular-entropy}. Let $X, Y, Z$ be any random variables taking values in finite sets. Then
\[ \HH(X,Y) + \HH(X,Z) \geq \HH(X,Y,Z) + \HH(X).\] 
\end{lemma}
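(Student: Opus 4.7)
The plan is to reduce the submodularity inequality to the classical fact that \emph{conditioning reduces entropy}, and then to prove the latter by concavity. I would begin by applying the chain rule (Lemma \ref{ent-chain-rule}) repeatedly. Writing $\HH(W\mid U) := \sum_u \P(U=u)\HH(W\mid U=u)$, the chain rule gives
\[
\HH(X,Y) = \HH(X) + \HH(Y\mid X), \qquad \HH(X,Z) = \HH(X) + \HH(Z\mid X),
\]
and, applying it twice to the triple $(X,Y,Z)$,
\[
\HH(X,Y,Z) = \HH(X) + \HH(Y\mid X) + \HH(Z\mid X,Y).
\]
Substituting into the desired inequality, the terms $\HH(X)$ and $\HH(Y\mid X)$ cancel, and the statement reduces to
\[
\HH(Z\mid X) \;\geq\; \HH(Z\mid X,Y).
\]

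The next step is to prove this ``conditioning reduces entropy'' inequality. By the definition of conditional entropy, it suffices to show, for each fixed $x$ in the range of $X$, that
\[
\HH(Z\mid X=x) \;\geq\; \sum_y \P(Y=y\mid X=x)\,\HH(Z\mid X=x, Y=y);
\]
the full inequality then follows on multiplying by $\P(X=x)$ and summing over $x$. For fixed $x$, observe that the law of $Z$ given $X=x$ is a convex combination of the laws of $Z$ given $(X,Y)=(x,y)$, with weights $\P(Y=y\mid X=x)$:
\[
\P(Z=z\mid X=x) = \sum_y \P(Y=y\mid X=x)\,\P(Z=z\mid X=x,Y=y).
\]
Since the map $\mathbf p \mapsto \HH(\mathbf p)$ on probability vectors is concave (each summand $-p\log p$ is concave in $p$), Jensen's inequality applied to this convex combination yields exactly the required bound.

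The main (and only real) obstacle is isolating the correct reformulation after the chain-rule cancellation; once one sees that the inequality reduces to a statement about a single conditional entropy, the rest is a one-line application of concavity. No new ideas beyond the lemmas already collected in Appendix B (the chain rule and the concavity of $-x\log x$ used in Lemma \ref{ent-trivial}) are required.
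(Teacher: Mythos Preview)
Your argument is correct and is the standard proof of Shannon's submodularity inequality: reduce via the chain rule to $\HH(Z\mid X)\ge \HH(Z\mid X,Y)$, then deduce the latter from the concavity of $\mathbf p\mapsto \HH(\mathbf p)$. The paper itself does not give a proof of this lemma at all; it simply cites \cite[Lemma 14.6.1 (iv)]{alon-spencer}. So your write-up is in fact more self-contained than the paper's treatment, and it uses only tools already assembled in the appendix (the chain rule of Lemma~\ref{ent-chain-rule} and the concavity underlying Lemma~\ref{ent-trivial}).
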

\begin{proof} This is \cite[Lemma 14.6.1 (iv)]{alon-spencer}.\end{proof}

\section{Maier-Tenenbaum flags}\label{previous-app}

The purpose of this appendix is to say a little more about the bound \eqref{gamma-2r}, which corresponds in the language of this paper to \cite[Theorem 1.4]{MT09}. Numerically, this bound is $\tilde\gamma_{2^r} \gg (0.12885796477\ldots)^r$, which is a little weaker than the bound leading to Theorem \ref{beta-rho}, which is $\tilde\gamma_{2^r} \gg (0.140605674848\ldots)^r$. What is interesting, however, is that the flags $\sV$ which lead to \eqref{gamma-2r} are completely different to the binary flags which have been the main focus of our paper. The fact that these very different flags -- the ``Maier--Tenenbaum flags'' -- lead to a result which appears to be within 10 \% of optimal suggests that they will have a key role to play in any future upper bound arguments for these questions.

\begin{definition}[Maier--Tenenbaum flag of order $r$]
Let $k = 2^r$ be a power of two. Identify $\Q^k$ with $\Q^{\mathcal{P}[r]}$ and define a flag $\sV$, $\langle \mathbf{1}\rangle = V_0 \leq V_1 \leq \cdots \leq V_r  \leq \Q^{\mathcal{P}[r]}$, as follows: $V_i = \Span(\mathbf{1}, \omega^1,\dots, \omega^{\rev{i}})$, where $\omega^i_S = 1_{i \in S}$ for $S \subset [r]$.
\end{definition}

\begin{remark*} We have $\dim(V_i) = i+1$ and in particular $V_r$ is much smaller than $\Q^k$, in contrast to the situation for binary systems. We leave it to the reader to check that $\sV$ is nondegenerate.
\end{remark*}

Recall that $\sV$ gives rise to a tree structure, with the cells at level $i$ being the intersections of cosets $x + V_i$ with the cube $\{0,1\}^k$ (cf.~subsection \ref{rho-equations-sec}). It is easy to check that this tree structure has a very simple form, with the cell $\Gamma_i = V_i \cap \{0,1\}^k$ being $\{\mathbf{0},\mathbf{1}, \omega^1, \mathbf{1} - \omega^1,\dots, \omega^i, \mathbf{1} - \omega^i\}$, this dividing into three children at level $i-1$; the cell $\Gamma_{i-1}$ together with two singletons, $\{ \omega^i\}$ and $\{\mathbf{1} - \omega^i\}$.

The recursive definition of the quantities $f^C({\bm \rho})$ (see \eqref{fc-eqs}) therefore becomes  $f^{\Gamma_1}({\bm \rho}) = 3$, 
\begin{equation}\label{MT-recursion}
f^{\Gamma_{j+1}}({\bm \rho}) = f^{\Gamma_j}({\bm \rho})^{\rho_j} + 2.
\end{equation}
\rev{In addition}, the $\rho$-equations \eqref{rho-equations} become  
\begin{equation}\label{MT-rho}
f^{\Gamma_{j+1}}({\bm \rho}) = e (f^{\Gamma_j}({\bm \rho}))^{\rho_j}.
\end{equation}
\rev{On the one hand, iterating \eqref{MT-rho} yields that $\log f^{\Gamma_j}(\bs\rho) = \rho_1\cdots\rho_{j-1}\log 3 + \sum_{i=0}^{j-2}\rho_{j-1}\cdots\rho_{j-i}$ for all $j\ge1$. On the other hand, combining \eqref{MT-recursion} and \eqref{MT-rho}, we find that $\rho_j\log f^{\Gamma_j}(\bs\rho)=\log2-\log(e-1)$, and thus $\rho_1\cdots\rho_j\log 3 + \sum_{i=0}^{j-2}\rho_j\rho_{j-1}\cdots\rho_{j-i}=\log2-\log(e-1)$ for all $j\ge1$. Hence, we obtain the formulas}
\[ 
\rho_1 = \frac{\log 2 - \log(e-1)}{\log 3},\qquad 
\rho_2 = \rho_3 = \cdots = \frac{\log 2 - \log (e-1)}{\log 2 + 1 - \log (e - 1)} =: \kappa.
\] 
\rev{Let us also note that the above discussion implies that
\begin{equation}\label{MT-f}
	\log f^{\Gamma_j}({\bm \rho}) = \frac{\log2-\log(e-1)}{\rho_j} 
		= \begin{cases} 
				\log 3&\text{if}\ j=1,\\ 
				\log2-\log(e-1)+1&\text{if}\ j\ge2.
			\end{cases}
\end{equation}}

Now, assuming that the conditions of Proposition \ref{main-optim} hold, we therefore have
\[
\gamma_k^{\res}(\sV) 
	= (\log 3 - 1)\Big/\Big(\log 3 + \frac{1}{\rho_1}\Big(1 + \frac{1}{\kappa}+\cdots + \frac{1}{\kappa^{r-2}}\Big) \Big)
	= \Big(1 - \frac{1}{\log 3}\Big) \kappa^{r-1}.
\]
Now it can be shown by explicit calculation that the conditions of Proposition \ref{main-optim} do hold. The optimal measures $\mu_i^*$ are all induced from the measure $\mu^*$ in which 
\[ 
\rev{ \mu^*(\omega^j) = \mu^*(\mathbf{1} - \omega^j) = \mu^*(\Gamma_j) \cdot \frac{1}{f^{\Gamma_j}(\bs\rho)}}  
	=  \begin{cases} 
		 \frac{1}{3}e^{1-r} &\text{if}\ j=1,\\ 
		\frac{e-1}{2e}  e^{j - r} &\text{if}\ j\ge2.
	\end{cases}
\] 
In addition, we have
\[
\rev{ \mu^*(\mathbf{0}) =  \mu^*(\mathbf{1})  = \frac{\mu^*(\Gamma_0)}{2} = \frac{1}{6} e^{1-r} .}
\]
\rev{We may then prove by a slightly lengthy computation whose details we leave to the reader that the optimal parameters $\cc^*$ are given by}
\[ 
c^*_1 = 1,
	\quad  c^*_j = \frac{1}{\kappa^2}\Big(\frac{e - \kappa}{e - 1}\Big) 
		\Big(1 - \frac{1}{\log 3}\Big)  \kappa^j,		
		\quad c^*_{r+1} 
	 	= \Big(1 - \frac{1}{\log 3}\Big) \kappa^{r-1}.
\] 
It can also be shown that $\gamma^{\res}_k(\sV) = \gamma_k(\sV)$, by showing that the full entropy condition \eqref{ent-condition} follows from the restricted conditions \eqref{79-pre}. This is a little involved, but a fairly direct inductive argument can be made to work and this is certainly less subtle than the arguments of Section \ref{entropy-binary-alt}. In this way one may establish the bound
\begin{equation}\label{gamm-sec} 
\gamma_{2^r} 
		\geq \bigg(1 - \frac{1}{\log 3}\bigg) 
		\bigg(\frac{\log 2 - \log (e-1)}{\log 2 + 1 - \log (e - 1)}\bigg)^{r-1} 
		\gg ( 0.131810543\dots)^r.
\end{equation} 
Finally, a relatively routine perturbative argument yields the same bound for $\tilde\gamma_{2^r}$.

It will be noted that \eqref{gamm-sec} is strictly stronger than \eqref{gamma-2r}, the bound obtained in \cite{MT09}. This is because, in essence, Maier and Tenenbaum chose slightly suboptimal measures and parameters on the system $\sV$, roughly corresponding \rev{to $\mu(\omega^j) \sim 3^{j - r-1}$,} which then leads to $c_j \sim \big( \frac{1 - 1/\log 3}{1 - 1/\log 27} \big)^j$.


\bibliography{delta}
\bibliographystyle{siam}


\end{document}